\lstdefinelanguage{Maple}%
{morekeywords={and,assuming,break,by,catch,description,do,done,%
elif,else,end,error,export,fi,finally,for,from,global,if,%
implies,in,intersect,local,minus,mod,module,next,not,od,%
option,options,or,proc,quit,read,return,save,stop,subset,then,%
to,try,union,use,uses,while,xor,with,permute,assume,simplify},%
sensitive=true,%
morecomment=[l]\#,%
morestring=[b]",%
morestring=[d]"%
}[keywords,comments,strings]% 
\definecolor{marin}{rgb}   {0.,   0.1,   0.5} 
\definecolor{rouge}{rgb}   {0.8,   0.,   0.} 
\definecolor{sepia}{rgb}   {0.4,   0.25,   0.} 
\definecolor{mag}{rgb}   {0.3,   0,   0.3} 
\newtheorem{theorem}{Theorem}
\newtheorem*{informaltheorem}{Informal Theorem}
\newtheorem*{assumption}{Assumption}
\newtheorem{lemma}{Lemma}[section]
\newtheorem{proposition}[lemma]{Proposition}
\newtheorem{corollary}[lemma]{Corollary}
\theoremstyle{definition}
\newtheorem{definition}[lemma]{Definition}
\newtheorem{remark}[lemma]{Remark}
\newcommand{\Irr}{\mathcal{I}rr}
\newcommand{\last}{\mathrm{last}}
\newcommand{\M}{\mathcal M}
\newcommand{\E}{\mathcal E}
\newcommand{\J}{\mathcal J}
\newcommand{\Rc}{\mathcal R}
\newcommand{\Z}{\mathbb{Z}}
\newcommand{\hb}{\mathbf{h}}
\newcommand{\kb}{\mathbf{k}}
\newcommand{\kt}{\kappa_\tau}
\newcommand{\T}{\mathbb{T}}
\newcommand{\R}{\mathbb{R}}
\newcommand{\eps}{\varepsilon}
\newcommand{\dotDelta}{{\vphantom{\Delta}\mathpalette\d@tD@lta\relax}}
\newcommand{\d@tD@lta}[2]{%
  \ooalign{\hidewidth$\m@th#1\mkern-1mu\cdot$\hidewidth\cr$\m@th#1\Delta$\cr}%
}
\begin{document}

\title[Long time dynamics for KdV and BO]{ Long time dynamics for generalized Korteweg-de Vries and Benjamin-Ono equations}

\author{Joackim Bernier }
\address{Institut  de  Math\'ematiques  de  Toulouse  ;  UMR5219,  Universit\'e  de  Toulouse  ;  CNRS,  Universit\'e  Paul Sabatier, F-31062 Toulouse Cedex 9, France} 
\email{joackim.bernier@math.univ-toulouse.fr}

 \author{ Beno\^it Gr\'ebert}
\address{Laboratoire de Math\'ematiques Jean Leray, Universit\'e de Nantes, UMR CNRS 6629\\
2, rue de la Houssini\`ere \\
44322 Nantes Cedex 03, France}
\email{benoit.grebert@univ-nantes.fr}

\keywords{Birkhoff normal forms, dispersive equations, rational normal forms, resonances}

\subjclass[2010]{35Q35, 35Q53, 37K55}

\maketitle

\begin{abstract}

We provide {an} accurate description of the long time dynamics of the solutions of the generalized  Korteweg-De Vries \eqref{gKdV} and Benjamin-Ono   \eqref{gBO} equations on the one dimension torus,  without external parameters, and that are issued from almost any (in probability and in density) small  and smooth initial data. We stress out that these two equations have unbounded  nonlinearities.\\ In particular, we prove a long-time stability result in Sobolev norm:  given a large constant $r$ and a sufficiently small parameter $\eps$, for generic initial datum $u(0)$ of size $\eps$,  we control the Sobolev norm of the solution $u(t)$ for times of order $\eps^{-r}$. These results are obtained by putting the system in rational normal form : we conjugate, up to some high order remainder terms, the vector fields of these equations to integrable ones on large open sets surrounding the origin in high Sobolev regularity.

%\medskip
%
%{\red OLD :}
%We provide a accurate description of the long time dynamics of the solutions of the generalized  Korteweg-De Vries \eqref{gKdV} and Benjamin-Ono   \eqref{gBO} equations on the circle,  without external parameters, and that are issued from almost any (in probability and in density) small initial data.
%To obtain that result we construct for these two classes of equations and under a very weak hypothesis of  non degeneracy of the nonlinearity, rational normal forms on open sets surrounding the origin in high Sobolev regularity. With this new tool we can precise the long time dynamics of the respective flows. In particular, we prove a long-time stability result in Sobolev norm:  given a large constant $M$ and a sufficiently small parameter $\eps$, for generic initial datum $u(0)$ of size $\eps$,  we control the Sobolev norm of the solution $u(t)$ for time of order $\eps^{-M}$. {\color{blue} Ce qui m'allait pas trop : on répétait $2$ fois qu'on décrivait le flot et on dit pas ce que font les formes normales rationnelles.}
\end{abstract}

\setcounter{tocdepth}{1} 
\tableofcontents

\section{Introduction}

During the last decades remarkable advances have been realized in the perturbation theory of Hamiltonian partial differential equations. On the one hand, some extensions of the KAM theory succeed to prove the existence of plenty of invariant tori for many systems (let us cite the pioneering  works \cite{Kuk87,Way90}, the works concerning  Korteweg-De Vries \cite{KP} and Benjamin-Ono equations   \cite{LY11} and a recent review paper \cite{Ber19}); but these invariant tori correspond to exceptional initial data and, most of time, they are only finite dimensional. 
On the other hand, considering only solutions in neighborhoods of elliptic equilibrium points, some extensions of the Birkhoff normal form theory enable a quite precise description of the dynamics (typically that it behaves like an integrable system) or, at least, some properties of stability for very long times ($\eps^{-r}$ where $\eps\ll 1$ is the size of the perturbation and $r\gg 1$ can be chosen arbitrarily large). These techniques have been designed for many kinds of models (see e.g. \cite{Bam03,BG06,BDGS07,GIP,Del12,FGL13,BD17,FI19,BMP20,FI20}) but, by nature, the system has to be non-resonant (i.e. the eigenvalues of the linearized systems have to satisfy some kind diophantine conditions).

For resonant systems, the dynamics can be much more complex. For example, some migrations of the energy to arbitrarily small spacial scales have been exhibited (for the nonlinear wave equation on the one dimensional torus \cite{Bou96}, for the nonlinear Schr\"odinger's equation on the two dimensional torus \cite{CKSTT10,CF12,GK15}, for the half-wave equation \cite{GG12}) but also some energy exchanges (for a cubic nonlinear Schr\"odinger's equation on the circle \cite{GV}, for a quintic nonlinear Schr\"odinger's equation on the one dimensional torus \cite{GT12,HP17}). 

However a large class of resonant Hamiltonian systems (among them the  nonlinear Schr\"odinger equation, the Korteweg-De Vries equation and the Benjamin-Ono equation on the one dimensional torus) enjoy a special property: they have no third order resonant term and the fourth order resonant terms are integrable. In that case, after two steps of resonant normal form, the new Hamiltonian system is integrable up to order four. It turns out that this fact can be used to obtain  stability results for non trivial times (see e.g. \cite{BFP20} and perhaps less obviously \cite{BFF20}). But an other consequence interests us even more in this article: the integrable  terms of order four provide a nonlinear correction to the linear frequencies and thus the initial data can be used as parameters to make the system become non-resonant. 
%their lower order non-resonant terms are trivial (e.g. they commute with some super-actions or they are integrable with respect to the action-angle variables of the linearized system). As a consequence, a small number of Birkhoff normal form steps provides a stability result for non trivial times (see e.g. \cite{BFP20,BFF20}). When there is no third order resonant term and the fourth order resonant terms are integrable, these normal form steps provide a nonlinear correction to the linear frequencies : the initial data can be used as parameters to make the system become non-resonant. 
Taking advantage of this property, in \cite{KuP}, Kuksin and P\"oschel proved, using a KAM approach, the existence of quasi-periodic solutions to the nonlinear Schr\"odinger equation on the one dimensional torus $\mathbb{T} = \mathbb{R}/\mathbb{Z}$
\begin{equation}
\tag{NLS}\label{nls}
i\partial_t u=-\partial_x^2 u+ \varphi(|u|^2) u, 
\end{equation}
where $\varphi\in \mathcal{C}^{\infty}( \R; \R)$ is an analytic function on a neighborhood of the origin satisfying $\varphi'(0) \neq 0$. But as we said before, these solutions are exceptional, actually they correspond to finite dimensional invariant tori. Yet these nonlinear corrections to the frequencies has also be used by several authors to study the dynamics and the stability of the solutions of \eqref{nls} living outside of these invariant tori. First, in \cite{Bam99}, with a geometrical approach, Bambusi proved the stability in $H^1$ (the energy space), on exponentially long times, of the odd solutions of \eqref{nls} essentially finitely supported in Fourier. Then Bourgain, in \cite{Bou00}, proved the stability of the small generic solutions of \eqref{nls} in $H^s$, $s\gg 1$. His proof relies on a very local normal form construction: somehow, he linearized the system around some non-resonant initial data (i.e. making the system become non-resonant). Finally, in \cite{KillBill}, introducing a new normal form process based on rational Hamiltonians, and called \emph{rational normal form}, up to some high order remainder term, we conjugate \eqref{nls} to an integrable system on large open set surrounding the origin. It provides a more uniform stability result than \cite{Bou00} and it enables an accurate description of the typical dynamics of \eqref{nls}.  To the best of our knowledge, the nonlinear Schr\"odinger equations on $\T$ constitute the only family of equations for which this kind of result have been established (the Schr\"odinger-Poisson equation on $\T$ is also considered in \cite{KillBill}).

In this paper, extending our technic of rational normal forms, we make this kind of result available for two other important classes of dispersive partial differential equations: the generalized Korteweg-de-Vries equation on $\mathbb{T} = \mathbb{R}/\mathbb{Z}$
\begin{equation}
\label{gKdV}
\tag{gKdV}
\partial_t u = \partial_x (-\partial_x^2 u + f(u))
\end{equation}
and the generalized Benjamin-Ono equations on $\mathbb{T}$
\begin{equation}
\label{gBO}
\tag{gBO}
\partial_t u = \partial_x (|\partial_x| u + f(u))
\end{equation}
where $f\in \mathcal{C}^\infty(\mathbb{R};\mathbb{R})$ is a smooth function, analytic in a neighborhood of the origin, and $u(t):\mathbb{T}\to \mathbb{R}$ satisfies\footnote{This assumption is usual for both KdV and BO. equations but it is not really necessary: we can work in a moving frame to avoid it.}
$$
\int_{\mathbb{T}} u(t,x) \ \mathrm{d}x = 0.
$$
For these two classes of PDEs, as a dynamical consequence of our construction of rational normal form we obtain, not only a stability result in Sobolev norm, but a fairly accurate description of the dynamics for long times and for almost any (in a sense to be defined) small initial datum. Note that it answers (partially) to the problem Problem 5.18. of the survey paper of Guan and Kuksin on KdV \cite{GK13}.

Let us introduce some notations to state the result that we prove in this article for \eqref{gKdV} and \eqref{gBO}.\\

With a given function $u \in L^2(\mathbb{T})$ of zero average, we associate the Fourier coefficients $(u_a)_{a\in\Z^*} \in \ell^2$ defined by 
$$
u_k = \int_{\T} u(x) e^{- 2i\pi kx} \mathrm{d} x.
$$
In the remainder of the paper we identify the function  with its sequence of Fourier coefficients $u = (u_k)_{k\in\Z^*}$, 
and  we consider the Sobolev spaces ($s\geq0$)
\begin{equation}
\label{Hs}
\dot H^s = \{\,  u = (u_k)_{k\in\Z^*} \in L^2(\mathbb{T}) \quad \mid \quad \|u\|_{\dot{H}^s}^2 := \sum_{k \in \Z^*}  |k|^{2s} |u_k|^2 < +\infty \, \}.
\end{equation}
Let $F$ be the primitive of $f$ vanishing in $0$ and denote by  $(a_m)_{m\geq 3}$  the sequence of the Taylor coefficients of $F$ at the origin :
\begin{equation}
\label{def:am}
F(y) \mathop{=}_{y\to 0} \sum_{m =3}^\infty a_m y^m.
\end{equation}

To prove that the set of functions we describe in this paper is not empty, we need the following non degeneracy assumptions on the nonlinearly.
\begin{assumption} 
To deal with {\rm \ref{gKdV}}, we have to assume that
\begin{equation}
\label{assump:gKdV}
\tag{$\mathcal{A}_{\mathrm{gKdV}}$}
\forall k \in \mathbb{N}^*, \ 4 \, \pi^2 \, k^2\, a_4  + a_3^2 \neq 0
\end{equation}
whereas to deal with {\rm \ref{gBO}}, we just have to assume that
\begin{equation}
\label{assump:gBO}
\tag{$\mathcal{A}_{\mathrm{gBO}}$}
a_4\neq 0.
\end{equation}
\end{assumption}

Our main result gives an approximation of the flows of \ref{gBO} and \ref{gKdV} during very long times, for initial datum in open subsets of $\dot H^s$ surrounding the origin. 
\begin{theorem}
\label{mainPDE} Let $\mathcal{E}\in \{\mathrm{\ref{gBO},\ref{gKdV}}\}$ and assume $\mathcal{A}_{\mathcal{E}}$. For all $r>0$, there exists $s_0\equiv s_0(r)$ and for all $s> s_0$ there exists an open set $\mathcal{V}_{r,s}^{\mathcal{E}}\subset \dot H^s$ such that 
for all $u\in \mathcal{V}_{r,s}^{\mathcal{E}}$, while $|t|\leq \|u\|_{\dot{H}^s}^{-r}$, the solution of $\mathcal{E}$ initially equals to $u$, and denoted $\Phi_t^\E(u)$, exists and satisfies
\begin{equation}
\label{desc:norm}
 \|\Phi_t^\E(u)\|_{\dot H^s}\leq 2 \|u\|_{\dot H^s}
\end{equation}
and there exist $C^1$ functions $\theta_k:\R_+\mapsto \R$, $k\in\Z^*,$ satisfying the following estimates
\begin{equation}
\label{desc:dynamique}
 \|\Phi_t^\E(u) -\sum_{k\in \mathbb{Z}^*} u_k e^{i\theta_k(t)} e^{2i\pi kx}\|_{\dot H^{s-1}}\leq  \|u\|^{3/2}_{\dot H^s},
\end{equation} 
\begin{equation}
\label{desc:modulation}
\forall k \in \mathbb{Z}^*, \ |\dot\theta_k(t)-\omega^{\E}_{k}(u)|\leq |k| \|u\|^{5/2}_{\dot H^s}.
\end{equation}
where $\omega^{\E}_{-k} = -\omega^{\E}_{k}$ and if $k>0$
$$
\omega^{\mathrm{\ref{gKdV}}}_{k}(u) = (2\pi k)^{3}  + 12 \pi \,k \, a_4\, \| u\|_{L^2}^2 -  (4 \, \pi^2 \, k^2\, a_4  + a_3^2) \frac{6|u_k|^2}{k\pi}
$$
$$
\omega^{\mathrm{\ref{gBO}}}_{k}(u) = (2\pi k)^{2} +  12 \pi \,k \, a_4\, \| u\|_{L^2}^2 -  24\, \pi k \,a_4 |u_k|^2-  36\, a_3^2 \sum_{\ell=1}^{\infty}  \frac{k}{\max(k,\ell)} |u_\ell|^2 ,
$$
Moreover, the open set $\mathcal{V}_{r,s}^{\mathcal{E}}$ is invariant by translation of the angles
\begin{equation}
\label{def:inv_trans_ang}
\sum_{k\in \mathbb{Z}^*} u_k e^{2i\pi kx} \in \mathcal{V}_{r,s}^{\mathcal{E}}  \iff \sum_{k\in \mathbb{Z}^*} |u_k| e^{2i\pi kx} \in \mathcal{V}_{r,s}^{\mathcal{E}},
\end{equation}
 is asymptotically dense, i.e.
\begin{equation}
\label{def:asympt_dense}
\exists \, \varepsilon_{r,s}>0,\forall v \in \dot{H}^s, \ \| v \|_{\dot{H}^s} \leq \varepsilon_{r,s} \ \Rightarrow \ \exists u \in  \mathcal{V}_{r,s}^{\mathcal{E}}, \ \| v - u \|_{\dot{H}^s} \leq \frac{\| v  \|_{\dot{H}^s}}{|\log \| v  \|_{\dot{H}^s}|}
\end{equation}
and asymptotically of full measure : if $u \in \dot{H}^s$ is a random function with real Fourier coefficients, i.e.
$$
u(x) := 2 \sum_{k=1}^\infty \sqrt{I_k} \cos(2\pi k x),
$$
whose actions, denoted $I_k$, are independent and uniformly distributed in $(\beta,1)\,k^{-2s-\nu}$, where $\beta \in [0,1/2]$ and $1< \nu \leq 9$, then there exists $\varepsilon_{r,s,\nu}>0$ such that  
\begin{equation}
\label{def:asympt_full_measure}
\forall \varepsilon\leq \varepsilon_{r,s,\nu}, \ \mathbb{P}(\varepsilon u \in \mathcal{V}_{r,s}^{\mathcal{E}}) \geq 1-\varepsilon^{\frac1{35}}.
\end{equation}
\end{theorem}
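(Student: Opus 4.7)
The plan is to conjugate each equation to a rational normal form that is integrable up to order $r$. First, I would view \ref{gKdV} and \ref{gBO} as Hamiltonian systems in the Fourier coordinates $(u_k)_{k\in\Z^*}$, with quadratic part $H_2 = \sum_k \omega_k^{(0)} |u_k|^2$ carrying the linear frequencies $\omega_k^{(0)}=(2\pi k)^3$ (for \ref{gKdV}) or $\omega_k^{(0)} = 4\pi^2 k|k|$ (for \ref{gBO}), and nonlinear part $P = \int_\T F(u)\,dx$. The first two Birkhoff normal form steps are classical: the cubic terms are non-resonant (momentum conservation $k_1+k_2+k_3=0$ combined with the dispersion relation forbids three-wave interactions on the zero-mean torus) and the quartic resonant module is integrable. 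They yield $H = H_2 + Z_4(I) + R_{\geq 5}$ with $Z_4$ depending only on the actions $I_k = |u_k|^2$, and the frequencies $\omega_k^\mathcal{E}(u)$ of the statement emerge as $\partial_{I_k}(H_2+Z_4)$; the non-degeneracy assumptions $\mathcal{A}_\mathcal{E}$ ensure these frequencies really twist.

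The heart of the proof is to push the normalization through order $r$. The obstacle is that $\omega_k^\mathcal{E}(u)$ depends only weakly on the actions (via $\|u\|_{L^2}^2$, $I_k$, and for \ref{gBO} the weighted tail $\sum_\ell \frac{k}{\max(k,\ell)}I_\ell$), so the small divisors $\omega_{k_1}^\mathcal{E}\pm\cdots\pm\omega_{k_N}^\mathcal{E}$ cannot be inverted pointwise by any classical polynomial Birkhoff scheme. Following the strategy of \cite{KillBill}, I would allow the generating Hamiltonians to be rational in the actions: the small divisors appear as denominators rather than being divided away. Iterating $r-2$ such steps produces a canonical transformation $\tau_\mathcal{E}$, defined on an open subset $\mathcal{V}_{r,s}^\mathcal{E}$ of a small ball in $\dot H^s$, conjugating the Hamiltonian to $H_2+Z_{\leq r}(I) + R_{r+1}(u)$ with $Z_{\leq r}$ depending only on the actions and $R_{r+1}$ having vector field of size $\|u\|_{\dot H^s}^{r+1}$. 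The set $\mathcal{V}_{r,s}^\mathcal{E}$ is defined as those $u$ for which all the rational denominators appearing in the construction are bounded below by a fixed power of $\|u\|_{\dot H^s}$, so that the scheme is well-posed.

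I expect the main technical difficulty to lie in the unboundedness of the nonlinearity: $\partial_x f(u)$ loses one derivative, so naive generating Hamiltonians would lose more and more derivatives at each normal form step. To handle this I would work in weighted classes of polynomial and rational vector fields tailored to the dispersion relations, exploiting momentum conservation to enforce cancellations on monomials involving the highest indices, and prove tame estimates of the form $\|X_{R_{r+1}}(u)\|_{\dot H^s}\lesssim \|u\|_{\dot H^s}^{r+1}$. Once the normal form is in place, the dynamical conclusions follow: the actions evolve only through $R_{r+1}$, so integration over $|t|\le \|u\|_{\dot H^s}^{-r}$ yields the norm bound \eqref{desc:norm}; the angles $\theta_k := \arg((\Phi_t^\mathcal{E} u)_k)$ satisfy $\dot\theta_k = \omega_k^\mathcal{E}(u) + O(|k|\|u\|^{r+1})$, giving \eqref{desc:modulation}; and \eqref{desc:dynamique} follows by combining both estimates with the smallness of the action drift, after applying $\tau_\mathcal{E}^{-1}$ to return to the original coordinates.

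Finally, for the genericity of $\mathcal{V}_{r,s}^\mathcal{E}$: angle invariance \eqref{def:inv_trans_ang} is automatic since the defining conditions depend only on the actions. For density \eqref{def:asympt_dense}, the complement of $\mathcal{V}_{r,s}^\mathcal{E}$ is contained in a finite union of semi-algebraic varieties in the action space, whose codimension is positive thanks to $\mathcal{A}_\mathcal{E}$; a perturbation of the actions of size $\|v\|_{\dot H^s}/|\log \|v\|_{\dot H^s}|$ then suffices to escape a neighborhood of them. For the measure estimate \eqref{def:asympt_full_measure}, the independence and uniform distribution of the rescaled $I_k$ let one bound the probability that any given small-divisor condition fails by an explicit power of $\varepsilon$; a union bound over the finitely many resonance patterns relevant at order $r$, carefully organized according to the decay weights $k^{-2s-\nu}$ and the momentum constraint, yields the quoted $1-\varepsilon^{1/35}$ lower bound.
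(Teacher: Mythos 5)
Your overall architecture --- resonant normal form to kill the cubic and diagonalize the quartic, then rational normal form \`a la Kuksin--P\"oschel/\cite{KillBill} to push to order $r$, then probability estimates for the small divisors --- is the approach of the paper. But there is a genuine gap at the core of the rational step. You propose to average along the flow of $Z_2+Z_4$, i.e.\ all your denominators are linear combinations $\sum_j m_jk_j\partial_{I_{k_j}}Z_4(I)$. The paper shows in Section~\ref{sec:bnf} (and this is the same obstruction already met in \cite{KillBill}) that these $Z_4$-denominators are too degenerate: the coefficient of $I_p$ in $\Delta^{(4)}_{m,k}$ is supported near $p=\kappa^\E_{m,k}$ (which can be as small as $k_{\mathrm{last}}$), so the best lower bound one can hope for is $\gtrsim \gamma\,(\kappa^\E_{m,k})^{-2s}\|u\|_{\dot H^s}^2$, and the averaging step costs up to $2s$ derivatives. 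By order $\approx10$ the accumulated losses can no longer be compensated by the smallness of the numerator and the scheme breaks. The remedy, which your sketch omits entirely, is to average along $Z_2+Z_4+Z_{6,\leq N^3}$ from order $7$ onward; the $Z_6$-denominator $\Delta^{(4,6)}_{m,k,N}$ is strictly less degenerate (quadratic in the actions, and the paper's Lemma~\ref{lem:proba:Z6} gives the crucial alternative $\min(\gamma\,k_{\mathrm{last}}^{2s+\nu-3+2\alpha_\E},\sqrt\gamma\,k_1^{6n})$). Without this second twist your ``iterating $r-2$ such steps'' does not close, and the non-degeneracy assumption $\mathcal A_\E$ in the statement (especially $a_4\neq 0$ for \ref{gBO}, which has nothing to do with $Z_4$ alone) is precisely what makes $Z_6$ usable. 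This is why the BO case with Liouville $a_3^2/(\pi a_4)$ forces the continued-fraction Lemma~\ref{lem:continuedfractions}, which a $Z_4$-only scheme would never encounter.

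Two smaller but substantive points. First, your treatment of unboundedness is under-specified: the paper does not rely only on ``weighted classes'' but on a Fourier truncation in the homological equations, restricting $\chi_r$ to indices in $\mathcal J_{r,3N^3}^\E$ (Definition~\ref{J}); the key Lemma~\ref{lem:origin} shows that for the excluded indices either the third-largest index is $\gtrsim N^{\alpha_\E/(1+\alpha_\E)}$ or an action $I_\ell$ with $\ell\gtrsim N$ factors out, so they can be dumped into remainders controllable by Proposition~\ref{prop-Ns}. Without a truncation parameter $N$ and an explicit choice $N\sim\|u\|_{\dot H^s}^{-r/s}$, the Lie transforms are not even well defined in $\dot H^s$. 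Second, your density argument (``the complement is contained in a finite union of semi-algebraic varieties of positive codimension'') is not what the paper uses, and I do not see how to make it run: the set $\mathcal V_{r,s}^\E$ is a union over continua of parameters $\varepsilon$, $\gamma$, and truncations $N$ (see \eqref{def_of_Vrs}), so its complement is not a fixed finite union of algebraic sets; the paper instead deduces density from the measure estimate (Proposition~\ref{prop:ultimate:proba}) by a positive-probability argument. Finally, defining $\theta_k$ directly as $\arg((\Phi_t^\E u)_k)$ is not how the paper proceeds and does not obviously give \eqref{desc:modulation}: the $\theta_k$ must be built from the normal-form frequencies via a Duhamel formula in the transformed variables (cf.~\eqref{def:theta:primi}), then transported back using the closeness of the $\tau^{(j)}$ to the identity.
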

We comment this result and in particular the relevance of the explicit constants, in section \ref{comments}.

The defocusing cubic nonlinear Schr\"odinger equation (\eqref{nls} with $\phi\equiv1$), the Korteweg-De Vries and modified Korteweg-De Vries equation (\eqref{gKdV} with respectively $f(x)=3x^2$ and $f(x)=2x^3$) and the Benjamin-Ono equation (\eqref{gBO} with $f(x)=x^2$)  are certainly the most celebrated examples of integrable PDEs.  Moreover they are all integrable in the strongest possible sense: they admit global Birkhoff coordinates on the space $L^2(\T)$ (see \cite{KP} for KdV,  \cite{KST} for mKdV,  \cite{GK14} for  NLS and \cite{GK19} for BO). These are coordinates which allow to integrate the corresponding PDE by quadrature in the manner of a generalized Fourier transform. In particular for these integrable versions the $H^s$-norm is almost conserved by the flow  for all time and without restriction on the initial datum. Thus our result can be interpreted as a reminiscence of this integrability. Besides, we will make crucial use of the fact that the generalized equations  \eqref{nls}, \eqref{gKdV} and \eqref{gBO} are still integrable up to order four (see Lemma \ref{int-order4}). We notice that, surprisingly, our assumption \eqref{assump:gBO} does not allow us to include the integrable version of the Benjamin-Ono equation in our Theorem. In fact, we will see that  the resonant normal form of the Benjamin-Ono equation contains no 6th-order integrable terms and these play an important role in our construction\footnote{In fact the results of \cite{GK19} suggest that the normal form of the Benjamin-Ono equation is equal to its development of order 4 without remainder }.  In any case, we are not claiming that our assumptions are necessary to obtain the dynamical consequences described in the Theorem \ref{mainPDE}. Rather, let us say that we need some non-degeneracy of the resonant normal form and that different sets of assumptions may suffice, of which \eqref{assump:gKdV} and \eqref{assump:gBO} are the most natural. 

\medskip

Before ending this introduction, note that the result for  \eqref{gKdV} and \eqref{gBO}  is not a simple transposition of the method developed in  \cite{KillBill}. We encounter here two new major problems:
\begin{itemize}
\item the symplectism generates vector fields which are unbounded (in \eqref{poisson1} there is an extra $\partial_x$) and it is therefore all the more difficult to control the dynamics they generate.
\item the odd order terms in the nonlinearity (especially those associated with $a_3$ and $a_5$) generate new kinds of nonlinear interactions (a large part of the technicalities of this paper are devoted to control them).
\end{itemize}

For more details on the new difficulties we have had to face, the reader can refer to  sections \ref{unbounded} and \ref{complications}.

\vspace{2em}
\noindent{\bf Acknowledgments.} This work was initiated after a discussion between one of us (J.B.) and P. G\'erard. It seemed improbable at that time to be able to overcome all the problems we could imagine, but this discussion convinced us to insist.  J.B. also thanks J.F. Coulombel for pointing out some relevant references. Finally both of us give special thanks to E. Faou for many enthusiastic discussions at the beginning of this work.\\
During the preparation of this work, J.B. was supported by ANR project "NABUCO", ANR-17-CE40-0025 and B.G benefited from the support of the Centre Henri Lebesgue ANR-11-LABX-0020-0 and was supported by ANR-15-CE40-0001-02  "BEKAM"  and ANR-16-CE40-0013 "ISDEEC" of the Agence Nationale de la Recherche. \\
Some computations in this paper were performed by using Maple\texttrademark.

\section{Strategy of proof and overview of difficulties}
Our result is based on a normal form approach. After a commented statement of the normal form result in section \ref{sec:NF}, we detail in section \ref{sec:dif} some of the difficulties we had to face in the proofs and we finish in section \ref{comments} with technical comments on Theorem \ref{mainPDE}.
\subsection{Hamiltonian setting and normal form result}\label{sec:NF}.

%With a given function $u(x) \in L^2$ of zero average, we associate the Fourier coefficients $(u_a)_{a\in\Z^\star} \in \ell^2$ defined by 
%$$
%u_k = \frac{1}{2\pi}\int_{\T} u(x) e^{- 2i\pi kx} d x.
%$$
%In the remainder of the paper we identify the function  with its sequence of Fourier coefficients $u = (u_k)_{k\in\Z^\star}$, 
%and  we consider the Sobolev spaces ($s\geq0$)
%\begin{equation}
%\label{Hs}
%\dot H^s = \{\,  u = (u_k)_{k\in\Z^\star} \in \ell^2 \quad \mid \quad \Norm{u}{s}^2 := \sum_{k \in \Z^\star}  |k|^{2s} |u_k|^2 < +\infty \, \}.
%\end{equation}
The phase space $\dot H^s$ (see \ref{Hs}) is equipped  with the Poisson bracket 
\begin{equation}\label{poisson1}
\{ P,Q\}(u) = \int_{\mathbb{T}} \nabla P(u) \partial_x \nabla Q(u) \ \mathrm{d}x 
\end{equation}
which reads in Fourier variables
\begin{equation}\label{poisson2}
\{ P,Q\} (u)= \sum_{k\in \mathbb{Z}^{*}} (\partial_{u_{-k}} P) (2i\pi k)  (\partial_{u_k} Q)
\end{equation}

Equations \eqref{gKdV} and \eqref{gBO} are Hamiltonian systems associated with the following Hamiltonians
$$
\left\{\begin{array}{llccc}
 H_{\mathrm{\ref{gKdV}}}(u) &=& \displaystyle \int_{\mathbb{T}} \frac12 u (-\partial_x^2) u &+& F\circ u \ \ \mathrm{d}x \vspace{0.2cm} \\
H_{\mathrm{\ref{gBO}}}(u) &=& \displaystyle  \int_{\mathbb{T}}  \frac12 u |\partial_x| u &+& F\circ u \ \ \mathrm{d}x
\end{array}\right.
$$
where $F$ is the primitive of $f$ vanishing in $0$. Recalling that $(a_k)_{k\geq 3}$ are the Taylor coefficients of $F$ at the origin (see \eqref{def:am}) and that by assumption $f$ is analytic in a neighborhood of the origin, the Hamiltonian $H_{\mathcal{E}}$ writes on a neighborhood of the origin in $\dot{H}^s$
\begin{equation}\label{HE}
H_{\mathcal{E}}(u) = Z_2^{\mathcal{E}}(I)  + \sum_{m \geq 3} a_m  \int_{\mathbb{T}} (u(x))^m \ \mathrm{d}x.
\end{equation}
where $I_k = |u_k|^2$, $k\in\Z^*$ and
\begin{equation}\label{def:Z2}
Z_2^{\mathcal{E}}(I) = \sum_{k\in \mathbb{Z}^{*}} \frac{|2\pi k|^{\alpha_\E}}2 I_k
\end{equation}
with 
\begin{equation}
\label{def:alpha}
\alpha_{\mathrm{\ref{gBO}}} = 1 \quad \mathrm{and}\quad \alpha_{\mathrm{\ref{gKdV}}} = 2.
\end{equation}
We obtain Theorem \ref{mainPDE} as a dynamical consequence of a normal form result saying that the Hamiltonian system associated to   \eqref{gKdV} (resp. \eqref{gBO}) is integrable up to remainder terms.
We  now give an informal version of our normal form result that allow us to explain our strategy. A more precise version (but more technical!) is given in Theorem \ref{thm:KtA}.
\begin{informaltheorem} 
\label{mainth0}
Let $H$ equals $H_{\mathrm{\ref{gKdV}}}$ (resp. $H_{\mathrm{\ref{gBO}}}$), assume \eqref{assump:gKdV} (resp. \eqref{assump:gBO} and 
let  $r\gg 7$.  For all $s\geq  s_0(r)=O(r^2)$,  for all $N \geq N_0(r,s)$  and for all $0<\gamma<\gamma_0(r,s)$ there exists a large open set $\mathcal{C}_{r,s,\gamma,N}^{\mathcal{E}}$ and $\tau$ a symplectic change of variable close to the identity, defined on $\mathcal{C}_{r,s,\gamma,N}^{\mathcal{E}}$ and taking values in $\dot H^s$, that puts  $H$ in normal form up to order $r$:
$$
H \circ \tau(u)  = Z^\E(I) + R^\E_1(u)+R^\E_2(u)
$$
where 
$Z^\E(I)$ is a smooth function  of the actions and $R^\E(u)=R^\E_1(u)+R^\E_2(u)$ is a remainder term  in the following sense
there exist universal positive constant $\mu, \nu$ such that
\begin{align}\label{NR1}
\left|\{\|\cdot\|_{\dot H^s}^2, R^{\E}_1\}(u)\right|&\lesssim_{s,r} N^{\mu r}(N^{-s+\nu r}+ \|u\|_{\dot H^s}^{r+1})\\
\label{XR1} \| \partial_x \nabla_u R^{\E}_1(u)\|_{\dot H^{s-1}}&\lesssim_{s,r} N^{\mu r}(N^{-s+\nu r}+ \|u\|_{\dot H^s}^r);
\end{align}
 there exist universal positive constant $\mu',\nu'$ such that
\begin{equation}
\label{XR2} \| \partial_x \nabla_u R^{\E}_2(u)\|_{\dot H^{s}}\lesssim_{s,r} N^{\mu' r^2}\gamma^{-\nu'r} \|u\|_{\dot H^s}^r .
\end{equation}
%Furthermore there exist universal positive constant $\lambda$ such that
%\begin{equation}\label{gg}
% \mathbb{P}(u \in \mathcal{C}_{r,s,\gamma,N}^{\mathcal{E}}) \geq 1-\lambda \gamma^{\frac1{35}}.
%\end{equation}
\end{informaltheorem}
(This statement is informal in the sense that \eqref{XR1} is in fact valid only for a part of $R^\E_1$ and that we do not precise the meaning of {\it large} open set and of {\it close to} the identity.)

In this Theorem, and in all the paper, $N$ is a troncation parameter in Fourier modes: we remove all the monomials of order greater than 5 containing\footnote{The exact truncation we use is in fact more complicated: it is given by Definition \ref{J} which in turn is related to the third largest  index by Corrolary \ref{cor:origin}.} at least three indices of size greater than $N$. The remainder term, as already noticed, see for instance \cite{Bam03,Bou00, BG06, G}, has thus a vector field  of order $N^{-s}\|u\|_{\dot H^s}^4$. This justifies the term $N^{-s}$ in \eqref{NR1}, \eqref{XR1} and will lead us to chose $N\sim \|u\|_{\dot H^s}^{-\frac rs}$. The parameter $\gamma$ is related to the control of the so called small divisors and thus measures the size of the set  $\mathcal{C}_{r,s,\gamma,N}^{\mathcal{E}}$. The set $\mathcal{V}_{r,s}$ of Theorem \ref{mainPDE} will be essentially (but not exactly!) constructed as the union over $N\geq N_0(r,s)$ and over $0<\gamma<\gamma_0(r,s)$ of $\mathcal{C}_{r,s,\gamma,N}^{\mathcal{E}}$.\\
 The informal Theorem distinguishes two types of remainders:  $R^{\E}_1$ will be generated by the resonant normal form while $R^{\E}_2$ will be the product of the rational normal form ( see section \ref{sec:bnf} just below). We notice that \eqref{XR2}, saying that the Hamiltonian vector field of $R^{\E}_2$ is controlled  in $\dot H^s$-norm, implies the control of $|\{\|\cdot\|_{\dot H^s}^2, R^{\E}_2\}(u)|$ and $\| \partial_x \nabla_u R^{\E}_2(u)\|_{\dot H^{s-1}}$. The estimate of 
$|\{\|\cdot\|_{\dot H^s}^2, R^{\E}\}(u)|$ will be used to obtain the stability result \eqref{desc:norm} while the estimate of $\| \partial_x \nabla_u R^{\E}(u)\|_{\dot H^{s-1}}$ will be used to obtain the leading terms of the dynamics, i.e. \eqref{desc:dynamique} and \eqref{desc:modulation} with $\omega_k^{\E}=\partial_{I_k} (Z_2^\E+Z_4^\E)$ where $Z_4^\E$ is the homogeneous part of degree  4 (in $u$) of $Z$ and is given by \eqref{Z4-KdV}, \eqref{Z4-BO}.
%We note that {\color{red} c'est pas vrai avec cette définition de la norme $H^s$; je crois qu'on peut y supprimer car ca sert à rien }
%$$Z_2^{\mathrm{\ref{gKdV}}}(I)=\| u\|_{\dot{H}^1}^2,\quad Z_2^{\mathrm{\ref{gBO}}}(I)=\| u\|_{\dot{H}^{\frac12}}^2.$$  
%All along this paper we will do the following assumptions on the non linear term:
%\begin{itemize}
%\item When we will deal with \ref{gKdV}, we always assume that
%\begin{equation}
%\label{assump:gKdV}
%\tag{$\mathcal{A}_{\mathrm{gKdV}}$}
%\forall \ell \in \mathbb{N}^*, \ 4 \, \pi^2 \, \ell^2\, a_4  + a_3^2 \neq 0. 
%\end{equation}
%Note that it implies that $a_3\neq 0$ or $a_4\neq 0$. Furthermore, to satisfy \ref{assump:gKdV}, it would be enough to assume that $a_4>0$.

\subsection{Overview of difficulties}\label{sec:dif}

As we said in the introduction, after getting a normal-form result for \eqref{nls}, it was logical to want to extend it to \eqref{gKdV} and \eqref{gBO} which are, like \eqref{nls}, hamiltonian perturbations of integrable nonlinear PDEs. In this section, we want to recall the main steps of such an undertaking, but above all we want to emphasize the new problems we have encountered in their implementation for these two equations.

\subsubsection{A first obstruction to bypass: the vector fields are unbounded}\label{unbounded}
Before even tackling the generalization of rational normal form, we are faced with a major problem: \eqref{nls} is a semi-linear equation while \eqref{gKdV} and \eqref{gBO} are not.  In particular the vectorfield $\partial_x \nabla P(u)$ of a smooth Hamiltonian $P$ is not bounded as a map from $\dot H^s$ to $\dot H^s$, even when $P$ is a polynomials. Under these conditions, how can we define the Lie transforms and thus construct our changes of variable? So let's start by explaining how we deal with this problem:
\begin{itemize}
\item First we see, using a commutator Lemma (a bit in the spirit of pseudo-differential calculus: the commutator gains one derivative), that even if the vector field of a regular polynomial does not send $\dot H^s$ into $\dot H^s$, it generates a flow that preserves the $H^s$ norm. This can be seen in the Proposition \ref{prop-Ns} and in particular in \eqref{k1k2}.
\item Then our  changes of variable are Lie transforms associated with very particular Hamiltonians $\chi$: solutions of homological equations. This particularity means that all the  monomials $u^k$, $k\in(\Z^*)^n$, appearing in $ \chi$  are divided by a so called small divisor that reads here $D_k=k_1|k_1|^{\alpha_{\mathcal{E}}}+\dots+k_r|k_r|^{\alpha_{\mathcal{E}}}$.  But it turns out that a small divisor is not always small! In fact we show in the Lemma \ref{lem:origin} that either $D_k$ is large with respect to the largest index of $k$ or the third largest index (i.e. the third largest number among $|k_1|,\cdots,|k_n|$) is large. This crucial Lemma, which will be reused many times, is the key that led us to tie ourselves up in this work. The first consequence is the Lemma \ref{lem-chi} that ensures that our Lie transforms are well defined.
\end{itemize}

\subsubsection{The principle of rational normal forms inherited from \cite{KillBill}}\label{sec:bnf}  
We will be rather brief in this paragraph since this is an approach that has already been implemented for \eqref{nls} in \cite{KillBill}. We note\footnote{This was brought to our attention by R. De La Llave.} that the idea of using rational normal forms rather than polynomials goes back to Moser \cite{Mos60} and Glimm \cite{Gli64} in finite dimensional context.\\
The purpose of a Birkhoff normal form  is to iteratively kill all the non-integrable terms in the non-linear part of the Hamiltonian. To do this we first average the Hamiltonian along the linear flow generated by $Z^\E_2$. This is the resonant normal form step (resonant because unfortunately the linear frequencies, $2\pi k|k|^{\alpha_\E}$, $k\in\Z^*$, are resonant).  Despite these linear resonances, this step allows us to get rid of degree three terms and to keep order four terms only those that are integrable, i.e. depending only on actions. This last point is crucial in order to continue the procedure: the resonant  terms of degree 4 must be integrable terms. This was true for \eqref{nls} and it is still true for \eqref{gKdV} or \eqref{gBO}, we can see it as a consequence of the fact that these three equations are perturbations of integrable equations. So after two steps of resonant normal form we end up with a Hamiltonian in the form 
$$Z_2^{\E}(I)+Z_4^{\E}(I)+O(u^5).$$
Of course we still have terms of degree 5 and more which are resonant but  not integrable. The idea then is to average, not along the flow of $Z_2^{\E}$, but along the flow of $Z_2^{\E}+Z_4^{\E}$.\\ This approach is similar to the one used by Kuksin-P\"oschel in \cite{KuP}: we use the integrable part of the nonlinearity to destroy the linear resonances by modulating linear frequencies with terms dependent on actions and thus directly related to the initial conditions. In this sense it is a fundamentally different approach from earlier work on Birkhoff's normal forms where external parameters were used to get rid of linear resonances. Here we use internal parameters: the initial datum.\\ This approach has a high cost: this time the small divisors, which are derivatives with respect to actions of $Z_2^{\E}+Z_4^{\E}$, are linear functions of the actions and the new Hamiltonians are no longer polynomials but rational fractions. It is thus a question of justifying their existence by controlling very precisely the cancellation places of these small divisors and by following step by step the type of rational fractions we generate. This step is similar to the one we implemented in \cite{KillBill} although here we have a somewhat more natural presentation (see section \ref{ratHam}). As in this previous work, it turns out that $Z_4$ is not enough to solve all our problems. In fact the small divisor of the monomials $u^k$, $k\in (\mathbb{Z}^*)^n$, associated with $Z_4$ is too degenerate: it is controlled by $\mu_{\mathrm{min}}(k)^{2s}$ where $\mu_{\mathrm{min}}(k)$ denotes the smallest number among $|k_1|,\cdots,|k_n|$. So the averaging step costs potentially $2s$ derivatives which  of course is not acceptable.
And even if we try to compensate this denominator by the smallness of the numerator, for some combinations, involving various small divisors, that appear from order 10 of the process, we can no longer control the Hamiltonian in $\dot H^s$.
So, as in \cite{KillBill}, we have to go and look for the integrable terms of degree 6. Fortunately it turns out that the small divisor associated with $Z_6^\E$ is less degenerate and so we can complete the normal form procedure in $\dot H^s$. The reader can get an idea of the open sets on which we control the rational Hamiltonians in the Definition \ref{U}.
\subsubsection{Complications specific to \eqref{gKdV} and \eqref{gBO}}\label{complications}
In this paragraph we want to highlight the technical difficulties (but not only!) that are specific to the case of \eqref{gKdV} and \eqref{gBO}. Their resolution required new ideas. The first major obstruction is described in section \ref{unbounded}, here is a (non exhaustive) list of the other novelties:
\begin{itemize}
\item The reader who is somewhat familiar with normal forms may be surprised by the length of the section devoted to the resonant normal form. The normal form at order  6 for  \eqref{gKdV} is already well known (see \cite{KP} ). This said, this calculation was only formal and here we need to control very precisely the remainders to obtain the dynamical consequences we are looking for. This is what is obtained in the Proposition \ref{prop-Ns} and is of course related to the problems of unbounded vector fields mentioned in the section \ref{unbounded}.
\item The second problem related to the resonant normal form lies in the explicit calculation of the terms of order 6 within the framework of the generalized equations. Already in \cite{KP} the formal computation of $Z_6$ in a more restricted framework (the integrable KdV equation), leads to an appendix of nine pages. Here in the more general context (and including \eqref{gBO}) a by hand calculation would require many more pages. This naturally led us to use a computer software (in this case we have chosen Maple) but also to use graphs to describe the computation and allow to follow the details more easily (see section \ref{formal}).
\item In Section 2, probability estimates are based on relatively basic lemmas, in particular Lemme \ref{lem:proba:poly}. Nevertheless, to deal with the more degenerate cases, some refinements were required. For example, in the case of \eqref{gBO}, if $a_3^2/(\pi a_4)$ is a Liouville number (i.e. is not diophantine), the continued fraction theory must be used (see Lemme \ref{lem:continuedfractions}). 
\item We  mention that, as in \cite{KillBill}, the truncation in Fourier modes is particularly important to have non resonance conditions that are stable when actions are slightly moving (see section \ref{sec:stab}). 
\item We have already mentioned above the difficulty of the formal computation of $Z_6$. In fact in Theorem \ref{thm-BNF} we do not calculate all the terms of $Z_6$, it would be tedious and fortunately useless. For example the resolution of terms of order 5 leads to terms of order 6 which are certainly integrable but not polynomial. These terms are grouped under the notation $Z_6^{\mathrm{fr}}$ (see \eqref{def:hom:chi_6}) and were not present in \cite{KillBill}. 
We consider these terms as remainder terms and therefore we do not include them in the $Z_6$ we use to average (otherwise it would generate too complicated rational fractions).  But because of this, at each step of the normal form to the order $r\geq7$, which is supposed to kill the non-integrable terms of order $r$, these terms will make new terms of order $r$ appear. It seems then that we are at an impasse. Yet it turns out that the averaging step is in a certain sense regularizing. It is this phenomenon, well known to specialists (the resolution of a homological equation gives a little extra regularity), that allows us in section \ref{sub:sub:trans} to get rid of these new terms by a transmutation procedure in which the Lemma \ref{lem-chi} again plays a crucial role (see section \ref{sub:sub:trans} to understand this unusual procedure quite difficult to explain in the introduction).

\end{itemize}

\subsection{Comments on Theorem \ref{mainPDE}}\label{comments}
 For simplicity and to make disappear many irrelevant constants, we did not try to optimized many of the explicit constants in Theorem \ref{mainPDE}. 
 %{\color{red} Vu que ces remarques sont un peu lourdes, elles arrivent un peu tôt, non ?}
\begin{itemize}
\item In order to have estimates quite uniform in $\nu$ in the proof, we choose arbitrarily to fix the upper bound $\nu\leq 9$. Nevertheless, we could have fixed any other upper bound of the type $1<\nu\leq \nu_0$.  
\item The denominator $|\log \| v  \|_{\dot{H}^s}|$ in  \eqref{def:asympt_dense} is not optimal. Following the proof, for any fixed $\tau>0$, it could be replaced by $|\log \| v  \|_{\dot{H}^s}|^{\tau}$ ($\varepsilon_{r,s}$ would depend on $\tau$).
\item The exponent in $1/35$ in \eqref{def:asympt_full_measure} is not optimal. It seems possible to improve it paying more attention to the exponents of $\gamma$.
\item It is proven that $s_0$ can be chosen equal to $25\cdot 10^7 \, r^2$. Of course, we did not try to optimized this huge constant. It is only relevant to note that in this construction, the minimal value of $s$ growths at least like $r^2$.
\item The exponent $3/2$ (resp. $5/2$) in \eqref{desc:dynamique} (resp. \eqref{desc:modulation}) could be chosen arbitrarily close to $2$ (resp. $3$) nevertheless the exponent $1/35$ in \eqref{def:asympt_full_measure} would become arbitrarily small.
\end{itemize}

\subsection{Outline of the work} In Section \ref{sec:res}, we put \ref{gBO} and \ref{gKdV} in resonant normal form: we remove the terms of $H_\E$ that do not commute with $Z_2^\E$. In Section \ref{sec:sd}, we define the small divisors we will need to kill the non-integrable resonant terms, we introduce some open sets where they are under control and we prove that these open sets are large. In Section \ref{ratHam}, we introduce a class of rational Hamiltonians, we prove its stability by Poisson bracket and we develop some tools to control them. In Section \ref{section6}, we put  \ref{gBO} and \ref{gKdV} in rational normal form : expanding the Hamiltonian in the previous class, we remove the non-integrable resonant terms. Finally, as a corollary of this normal form result, in Section \ref{sec:dyn} we prove the Theorem \ref{mainPDE}.

\subsection{Notations} We introduce some convenient notations to work with Fourier coefficients and multi-indices.

\medskip

\noindent \emph{$*$ Monomials.} As explained in introduction, we always identify functions in $L^2(\T)$ with their sequence of Fourier coefficients, i.e. if $u\in L^2(\T)$ then
$$
u=(u_k)_{k\in \mathbb{Z}^*}  \quad \mathrm{where} \quad u_k = \int_{\T} u(x) e^{- 2i\pi kx} \mathrm{d} x.
$$
Note that since $u$ is real valued, we always have
$$
u_{-k} = \bar{u}_k.
$$
If $\ell \in (\mathbb{Z}^*)^n$, for some $n\geq 1$, is a multi-index and $u\in L^2(\T)$ then the \emph{monomial} $u^\ell$ is defined by
$$
u^{\ell} = u^{(\ell_1,\dots,\ell_n)} = u_{\ell_1} \dots u_{\ell_n}.
$$
If $k\in \mathbb{Z}^*$ is an integer then $I_k$ denotes the \emph{action} of index $k$. It is the monomial defined by
$$
I_k = u_{-k}\, u_k = |u_k|^2.
$$
We extend the multi-index notation for the actions : if $\ell \in (\mathbb{Z}^*)^n$ then $I^{\ell} = I_{\ell_1} \dots I_{\ell_n}$.

\medskip

\noindent \emph{$*$ Norms.} If $S$ is a set and $1\leq p \leq \infty$, we equip $\mathbb{C}^S$ with the usual $\ell^p$ norm : 
$$
\forall S, \forall b\in \mathbb{C}^S, \ \| b \|_{\ell^p} = \left\{ \begin{array}{cll} \displaystyle \big(\sum_{j\in S} |b_j|^p\big)^{\frac1p} & \mathrm{if} \ p<\infty, \\
														\displaystyle \sup_{j\in S} |b_j| & \mathrm{if} p=\infty.
\end{array}\right.
$$
Sometimes, we also use the following shortcut
$$
 |\cdot|_1=\| \cdot \|_{\ell^1} . 
$$
Similarly, we endow the real valued measurable functions on $\T$ with the usual Lebesgue norms denoted $\| \cdot\|_{L^p}$. Note that, since we identify the functions in $L^2$ with their sequences of Fourier coefficients, their $\ell^p$ norms refer to the norms of their Fourier coefficients. In particular, by Parseval we have
$$
\forall u \in L^2(\T), \ \|u\|_{L^2} = \|u\|_{\ell^2}.
$$

\medskip

\noindent \emph{$*$ Sets of multi-indices.}
In all this paper we consider the following sets of indices :
$$
\mathcal{D}_n=\{ k\in (\mathbb{Z}^*)^n \ | \ |k_1|\geq \dots \geq |k_n| \},  \quad \Irr_n=\{ k\in \mathcal{D}_n \ | \ \forall j\in \llbracket 1,n-1\rrbracket, \ k_j\neq -k_{j+1}  \}
$$
$$
\mathcal{M}_n = \{ k\in (\mathbb{Z}^*)^n \ | \ k_1+\dots+k_n = 0\}, \quad  \mathcal{R}_n^{\mathcal{E}} = \{ k\in \mathcal{M}_n \ | \ k_1|k_1|^{\alpha_{\mathcal{E}}}+\dots+k_n|k_n|^{\alpha_{\mathcal{E}}} = 0\}.
$$
where $n\geq 0$, $\mathcal{E}\in \{\mathrm{\ref{gBO},\ref{gKdV}}\}$ and we denote 
$$
\mathcal{M}= \bigcup_{n\geq 3} \mathcal{M}_n ,\hspace{1cm} \mathcal{D}= \bigcup_{n\geq 2} \mathcal{D}_n , \hspace{1cm} \mathcal{R}^{\mathcal{E}}= \bigcup_{n\geq 2} \mathcal{R}_n^{\mathcal{E}}, \hspace{1cm} \Irr =  \bigcup_{n\geq 0} \Irr_n.
$$
Being given $k\in (\mathbb{Z}^*)^n$, for some $n\geq 1$, the \emph{irreducible part} of $k$, denoted $\Irr(k)$, is the element of $\Irr$ such that there exists $\ell \in (\mathbb{N}^*)^m$, for some $m\geq 0$, such that, for all $u\in L^2(\T)$, we have
$$
u^k = I^\ell u^{\Irr(k)}.
$$
To handle efficiently multi-indices, we introduce some natural but quite unusual notations.
\begin{itemize}
\item If $S$ is a set then with a small abuse of notation we denote by $\emptyset$ the element of $S^{0}$. In other words, every $0$-tuple is denoted $\emptyset$.%rigorously it is (S,\emptyset,\emptyset).
\item To avoid the use of two many parentheses, very often, we identify naturally sets of the form $(S_1^{S_2})^{S_3}$ with sets of the form $S_1^{S_2\times S_3}$. In other words, if $e\in (S_1^{S_2})^{S_3}$, $s_2\in S_2$ and $s_3\in S_3$ then $(e_{s_2})_{s_3}$ and $e_{s_2,s_3}$ denote the same thing. 
\item If $S$ is a set, $n\geq 0$ and $x \in S^n$ then $\# x$ denotes the \emph{length} of $x$, i.e.
$$
\# x = n. 
$$
\item If $S$ is a set, $n\geq 1$ and $x\in S^n$ then $x_{\last}$ denotes the last coordinate of $x$, i.e. 
$$
x_{\last}= x_n = x_{\# x}.
$$
\item If $S$ is a set, $n\geq 1$ and $x\in S^n$, $\mathrm{ss}(x)$ denotes the set of its \emph{subsequences}, i.e. $y\in \mathrm{ss}(x)$ if and only if there exists an increasing sequence $1\leq \sigma_1 < \dots < \sigma_m \leq n$ such that $y = (x_{\sigma_1},\dots, x_{\sigma_m})$. Furthermore, $\mathrm{ss}_m(x)$ denotes the set of the subsequences of $x$ of length $m$.
\end{itemize}

\medskip

\noindent \emph{ $*$ A specific set of multi-indices.} To estimate the probability to draw a non-resonant initial datum (i.e. such that the dynamical consequences of Theorem \ref{mainPDE} hold) it is crucial to take into account the multiplicities of the irreducible multi-indices. However, the set we have defined just before is not well suited to manage them. Consequently, in Section \ref{sec:sd} (and exceptionally in Section \ref{sec:dyn}), we use the following set of multi-indices
\begin{equation}
\label{def:MImult}
\mathcal{MI}_{\mathrm{mult}} = \bigcup_{n\geq 2} \{ (m,k) \in (\mathbb{Z}^*)^{n}\times \mathbb{N}^n \ | \ m\cdot k = 0, \ \ k_1>\dots >k_n >0 \ \ \mathrm{and} \ \  |m|_1\geq 5 \}.
\end{equation}
If $\ell \in \Irr_n \cap \mathcal{M}$ is an irreducible multi-index of length $n\geq 5$, then there exists an unique $(m,k) \in \mathcal{MI}_{\mathrm{mult}}$ such that we have
\begin{equation}
\label{rel:l->(m,k)}
\forall u \in L^2(\T), \ u^\ell = \left(\prod_{m_j > 0} u_{k_j}^{m_j} \right) \left( \prod_{m_j < 0} u_{-k_j}^{-m_j}\right).
\end{equation}
This relation provides a correspondance between the elements of $\Irr \cap \mathcal{M}$ and those of $\mathcal{MI}_{\mathrm{mult}}$.

\begin{remark} \label{Il faut faire attention} We warn the reader that in Section \ref{sec:sd}, we define many objects (e.g. the small divisors) indexed by multi-indices in $\mathcal{MI}_{\mathrm{mult}}$ but in the other sections they are indexed by multi-indices in $\Irr \cap \mathcal{M}$. Implicitly, if an object is indexed by an element $\ell$ of $\Irr \cap \mathcal{M}$, it just refer to the same object indexed by the multi-index $(m,k)$ given by \eqref{rel:l->(m,k)}.
\end{remark}

\medskip

\noindent \emph{$*$ Other notations.} If $P$ is a property then $\mathbb{1}_P = 1$ if $P$ is true while $\mathbb{1}_P = 0$ if $P$ is false. Similarly, if $S$ is a set $\mathbb{1}$ denotes the characteristic function of $S$ : $\mathbb{1}_S(x)=1$ if $x\in S$ and $\mathbb{1}_S(x)=0$ else.

If $p$ is a parameter or a list of parameters and $x,y\in \mathbb{R}$ then we denote $x\lesssim_p y$ if there exists a constant $c(p)$, depending continuously on $p$, such that $x\lesssim c(p) \, y$. Similarly, we denote $x\gtrsim_p y$ if $y\lesssim_p x$ and $x\simeq_p y$ if $x\lesssim_p y\lesssim_p x$.
%\section{Some notations and preparatory lemmas}
%{\color{blue}L'idée ici c'est de mettre seulement les notations et lemmes qui reviendront dans plusieurs sections ou qui sont tellement important qu'on veut les mettre en avant. C'est aussi la section pour que le lecteur se mettent en jambe avec des lemmes pas trop durs.}
%{\color{sepia} Certaines notations un peu lourde de l'intro peuvent aussi être mises ici}.
%
%\medskip

\section{The resonant normal form}
\label{sec:res}

In this section, being given $r>0$, we put our Hamiltonian systems in resonant normal form up to order $r$. In particular, we compute explicitly the integrable terms of order $4$ and partially those of order $6$. Furthermore, we expand analytically the remainder terms.

Before stating the main Theorem of this section. Let us precise how we ensure that our Hamiltonian are real valued and that our entire series converge.
\begin{definition}[reality condition] If $S \subset \M$ is symmetric, i.e. $-S=S$, then a family $b\in \mathbb{C}^S$ satisfies the \emph{reality condition} if
$$
\forall k \in S, \ \overline{b_k} = b_{-k}.
$$  
\end{definition}

\begin{lemma}
\label{lem:jaiunbeaurayon}
 Let $s\geq 1$, $M>0$ and $b\in \mathbb{C}^\M$. If $(b_k M^{-\#k})_{k\in \M}$ is bounded and $b$ satisfies the reality condition then the entire series
\begin{equation}
\label{monpremierpetitP}
P(u):=\sum_{k\in\M }b_k u^k
\end{equation}
converges and define a smooth real valued function on $B_s(0,\frac1{c_sM})$, the ball centered at the origin and radius $\frac1{c_sM}$ in ${\dot{H}^s}$ where $c_s=\big(\sum_{j\in\Z^*}\frac1{j^{2s}}\big)^{\frac12}\leq \frac{\pi}{\sqrt3}$.
\end{lemma}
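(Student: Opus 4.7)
The proof is essentially a Weierstrass M-test argument resting on one basic inequality: for every $u\in \dot H^s$ with $s\geq 1$, the Cauchy--Schwarz inequality gives
\begin{equation*}
\sum_{j\in\Z^*}|u_j| \;=\; \sum_{j\in\Z^*} |j|^{-s}\,|j|^s |u_j| \;\leq\; \Big(\sum_{j\in\Z^*}|j|^{-2s}\Big)^{1/2}\|u\|_{\dot H^s} \;=\; c_s\,\|u\|_{\dot H^s}.
\end{equation*}
Since $s\geq 1$ we have $c_s^2=2\zeta(2s)\leq 2\zeta(2)=\pi^2/3$, which yields the stated bound $c_s\leq \pi/\sqrt 3$.

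With this in hand, the plan is to majorize the series \eqref{monpremierpetitP} layer by layer. Let $C:=\sup_{k\in\M}|b_k|M^{-\#k}<\infty$ by hypothesis. For each $n\geq 3$, simply dropping the zero-momentum constraint $k_1+\dots+k_n=0$ gives
\begin{equation*}
\sum_{k\in\M_n}|b_k|\,|u^k| \;\leq\; CM^n\sum_{k\in(\Z^*)^n}|u_{k_1}|\cdots|u_{k_n}| \;=\; CM^n\Big(\sum_{j\in\Z^*}|u_j|\Big)^n \;\leq\; C\bigl(c_s M\|u\|_{\dot H^s}\bigr)^n.
\end{equation*}
Summing over $n\geq 3$ yields a convergent geometric series as soon as $c_sM\|u\|_{\dot H^s}<1$, that is, on the whole ball $B_s(0,1/(c_sM))$. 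This proves absolute and locally uniform convergence of $P(u)$.

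For the reality of $P$: using the reality conditions $\overline{u_k}=u_{-k}$ (recall $u$ is real-valued) and $\overline{b_k}=b_{-k}$, together with the fact that $k\mapsto -k$ is an involution of $\M_n$, a direct change of summation index shows $\overline{P(u)}=P(u)$. Finally, for smoothness: the same majorization applied to the formal Fréchet derivatives, which are series of the same form with an extra combinatorial factor $\#k\leq n$, converges absolutely on every closed sub-ball, so termwise differentiation is justified to all orders. Equivalently, one may observe that $P$ is the sum of a convergent power series in the real and imaginary parts of the Fourier coefficients, hence real-analytic and in particular $C^\infty$ on $B_s(0,1/(c_sM))$.

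The proof is essentially routine; the only subtle point is to verify that dropping the constraint $k\in\M_n$ to $k\in(\Z^*)^n$ is admissible, which is precisely what makes the product $(\sum|u_j|)^n$ appear and is the reason why the estimate closes on $\dot H^s$ with $s\geq 1$ (where $\ell^1$ summability of $(u_j)$ follows from Cauchy--Schwarz).
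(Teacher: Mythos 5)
Your proof is correct and uses the same strategy as the paper: decompose $P$ into homogeneous layers $P_n$, bound each one by Cauchy--Schwarz via $\sum_j|u_j|\leq c_s\|u\|_{\dot H^s}$, and sum the resulting geometric series on the ball $c_sM\|u\|_{\dot H^s}<1$. Your additional remarks on reality and term-by-term differentiation fill in details the paper leaves implicit, but the core estimate is identical.
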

\begin{proof} By Cauchy-Schwarz, the homogeneous polynomials $P_n(u)=\sum_{k\in\M_n}b_k u^k$ satisfies
\begin{equation*}
|P_n(u)|\lesssim M^n\big(\sum_{j\in\Z^*}\frac1{|j|^{s}}|j|^s|u_j|\big)^n\lesssim \big( c_s M \| u\|_{\dot H^s}\big)^n
\end{equation*}
\end{proof}

The following Theorem is the main result of this section. In the first subsection below, we justify that the remainder terms are small (which is not so obvious a priori). Somehow we prove that they do not contribute to the growth of the $\dot{H}^s$-norm. Then, in the second subsection, we put the system in resonant normal form without paying attention to the explicit expression of the fourth and sixth order integrable terms while the last subsection is devoted to their algebraic computation and to the conclusion of the proof of the following theorem.

\begin{theorem}[Resonant normal form] \label{thm-BNF} %There exists an increasing sequence of positive number denoted $\beta$ such that 
Being given $\mathcal{E}\in \{ \mathrm{\ref{gKdV}}, \mathrm{\ref{gBO}}\}$, $r\geq 6$, $s\geq 1$, $N\gtrsim_r 1$ and $\eps_0 \lesssim_{r,s} N^{-3}$, there exist two symplectic maps, $\tau^{(0)},\tau^{(1)}$, preserving the $L^2$ norm, making the following diagram to commute
\begin{equation}
\label{monbeaudiagramme_roidesforets}
\xymatrixcolsep{5pc} \xymatrix{  B_s(0,\eps_0) \ar[r]^{ \tau^{(0)} } \ar@/_1pc/[rr]_{\mathrm{id}_{\dot{H}^s}} & B_s(0,2\eps_0)  \ar[r]^{ \tau^{(1)} }  & 
 \dot{H}^s(\mathbb{T})  } 
\end{equation}
and close to the identity
\begin{align}\label{tau}
\forall \sigma\in \{0,1\}, \ \|u\|_{\dot{H}^s} <2^{\sigma}\eps_0 \ \Rightarrow \  \|\tau^{(\sigma)}(u)-u\|_{\dot{H}^s} \lesssim_r N^{3} \|u\|_{\dot{H}^s}^2
\end{align}
such that, on $B_s(0,2\eps_0)$, $H_{\mathcal{E}} \circ \tau^{(1)}$ writes
$$
H_{\mathcal{E}} \circ \tau^{(1)} = Z_2^{\mathcal{E}} + Z_4^{\mathcal{E}} + Z_{6,\leq N^3}^{\mathcal{E}} + \mathrm{Res}_{\leq N^3} + \mathrm{R}^{(\mu_3>N)} + \mathrm{R}^{(I_{> N^3})}+ \mathrm{R}^{(or)}
$$
where $Z_2^{\mathcal{E}}$ is given by \eqref{def:Z2}, $Z_4^{\mathcal{E}}$ is an integrable Hamiltonian of order $4$ given by the formulas
\begin{equation}\label{Z4-KdV}
Z_4^{\mathrm{\ref{gKdV}}}(I) = 3 \, a_4\, \| u\|_{L^2}^4 - \sum_{k=1}^{+\infty} \left(6\, a_4+ \frac{3\, a_3^2}{2\, \pi^2 \, k^2} \right) I_k^2
\end{equation}
\begin{equation}\label{Z4-BO}
Z_4^{\mathrm{\ref{gBO}}}(I) = 3 \, a_4\, \| u\|_{L^2}^4 - \sum_{k=1}^{+\infty} \left(6\, a_4+ \frac{9\, a_3^2}{ \pi \, k} \right) I_k^2- \sum_{0<p<q} \frac{18\, a_3^2}{\pi\, q} I_p I_q,
\end{equation}
 $Z_{6,\leq N^3}^{\mathcal{E}}$ is an integrable Hamiltonian that can be written
\begin{equation}
\label{premieredefZ6}
Z_{6,\leq N^3}^{\mathcal{E}}(I)=\sum_{0<p\leq q\leq\ell\leq N^3}c_{p,q}^\E(\ell)I_pI_qI_\ell \ \ \mathrm{with} \ \ |c_{p,q}^\E(\ell)|\lesssim \ell
\end{equation}
and $c_{p,q}^\E(\ell)\in \mathbb{R}$. If $0<2\,p<q$, the coefficient $c_{p,p}^\E(q)$ is explicitly given by 
$$
c_{p,p}^{\mathrm{\ref{gKdV}}}(q) =180 \, a_6 -\,{\frac {3\, a_{3}^{4}}{{\pi}^{6} {p}^{2} (p^2-q^2)^2}}
-\,{\frac {24\, a_{3}^2\,a_{4}}{{\pi}^{4} (p^2-q^2)^2}}-\,{\frac {48\, {p}^{2}\,a_4^{2}}{{\pi}^{2} (p^2-q^2)^2}} 
 -\,{\frac {60\, a_{3}\,a_{5}}{{\pi}^{2}{p}^{2}}}
$$
$$
c_{p,p}^{\mathrm{\ref{gBO}}}(q)  =180 \, a_6-{\frac {288\,(p-q)\, a_{4}^{2}}{\pi \left( p+2\,q \right)  \left( 3\,p-2\,q \right) }
} 
 -\frac{360 \, a_3 \, a_5 \, (p+q)}{\pi \,p\, q} + {\frac { 108\, \left( {p}^{2}+6\,p\,q-6\,{q}^{2} \right) }{ \pi^2 \,
p\, \left( p-q \right) {q}^{2}}}a_{3}^{2} a_{4}.
$$
 The four remaining Hamiltonians are some analytic functions with coefficients satisfying the reality condition and are of the form 
\begin{align}\label{Res}
\mathrm{Res}_{(\leq N^3)}(u) &= \sum_{\substack{k \in \mathcal{R}^{\mathcal{E}}\cap \mathcal D \\ 5\leq \#k \leq r \\ |k_1| \leq N^3 \\ \mathrm{Irr}(k)\neq \emptyset \ \mathrm{if} \  \#k  = 6 }} c_k \, u^k \ \ \mathrm{with} \  |c_k| \lesssim_{\#k}   N^{3{ \#k -9}},\\
\label{Rmu}
\mathrm{R}^{(\mu_3>N)}(u) &=  \sum_{\substack{k \in \mathcal{M}\cap \mathcal D \\ 4\leq  \#k  \leq r \\ k_3 \geq N }} c_k\, u^k \ \ \mathrm{with} \ \ |c_k| \lesssim_{\#k }   N^{3{ \#k -9}} ,\\
\label{RI}
\mathrm{R}^{(I_{> N^3})}(u) &= \sum_{\ell = N^3+1}^{\infty} \sum_{\substack{k \in \mathcal{M}\cap \mathcal D \\ 3\leq  \#k  \leq r-2 }} c_{\ell,k} \, I_\ell \, u^k \ \ \mathrm{with} \ \  |c_{\ell,k}| \lesssim_{ \#k }   N^{3( \#k +2) -9},\\
\label{Ror}
\mathrm{R}^{(or)}(u) &= \sum_{\substack{k \in \mathcal{M} \\ \#k  \geq r+1 }} c_k \, u^k \ \ \mathrm{with} \ \  |c_k| \leq \rho^{\#k} N^{3 \#k -9}  \ \mathrm{and} \ \rho \lesssim_r 1.
\end{align}
\end{theorem}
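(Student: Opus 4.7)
The plan is to carry out a Birkhoff normal form iteration of $H_{\mathcal{E}}$, order by order from $3$ up to $r$. I build auxiliary Hamiltonians $\chi^{(3)},\dots,\chi^{(r)}$, each homogeneous of degree $n\in\{3,\dots,r\}$ in $u$, and define $\tau^{(1)}$ as the composition of the time-one Lie transforms of the $\chi^{(n)}$'s. Since each $\chi^{(n)}$ will be a zero-momentum polynomial, these flows preserve $\|\cdot\|_{L^2}$; the map $\tau^{(0)}$ is then taken as the inverse of $\tau^{(1)}$ on $B_s(0,\varepsilon_0)$, which makes \eqref{monbeaudiagramme_roidesforets} commute. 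At step $n$ the Hamiltonian has the form $Z_2^{\mathcal{E}}+Z_4^{\mathcal{E}}+\cdots+Z_{n-1}^{\mathcal{E}}+P^{(n)}+(\text{higher-degree terms})$, and $\chi^{(n)}$ is chosen to solve the cohomological equation $\{Z_2^{\mathcal{E}},\chi^{(n)}\}+P^{(n)}=Z^{(n)}_{\mathrm{res}}$, where $Z^{(n)}_{\mathrm{res}}$ is the restriction of $P^{(n)}$ to the resonant set $\mathcal{R}_n^{\mathcal{E}}$. Writing $P^{(n)}=\sum_{k\in\mathcal{M}_n}c_k u^k$ and using that $\{Z_2^{\mathcal{E}},u^k\}=i(2\pi)^{1+\alpha_{\mathcal{E}}}\bigl(k_1|k_1|^{\alpha_{\mathcal{E}}}+\cdots+k_n|k_n|^{\alpha_{\mathcal{E}}}\bigr)u^k$, the formal solution is $\chi^{(n)}=\sum_{k\in\mathcal{M}_n\setminus\mathcal{R}_n^{\mathcal{E}}}(c_k/D_k^{\mathcal{E}})u^k$.

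The first real difficulty, specific to \ref{gKdV} and \ref{gBO}, is that because of the $\partial_x$ in \eqref{poisson1} the vector fields of polynomial Hamiltonians are unbounded on $\dot H^s$, so their Lie flows are not obviously well defined. The remedy is to truncate: in $\chi^{(n)}$ we keep only the monomials with third-largest index $\mu_3(k)\leq N$, pushing the discarded monomials into the remainder $\mathrm{R}^{(\mu_3>N)}$. On the truncated range, the third-largest-index lemma advertised in section~\ref{unbounded} gives a polynomial lower bound for $|D_k^{\mathcal{E}}|$ in terms of the largest index, so each division by a small divisor only costs a factor $N^3$; this accounts for the exponent $3\# k-9$ appearing in every coefficient bound of the theorem, since roughly $\# k -3$ normal form steps have been performed by the time a degree-$\# k$ monomial is produced. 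Combined with Lemma~\ref{lem-chi} this ensures that each time-one Lie transform is well defined on $B_s(0,2\varepsilon_0)$, and combined with Proposition~\ref{prop-Ns}, which controls $\{\|\cdot\|_{\dot H^s}^2,\chi^{(n)}\}$ via a commutator estimate gaining one derivative, it yields the closeness bound \eqref{tau}.

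The algebraic core of the theorem is the explicit identification of $Z_4^{\mathcal{E}}$ and of the coefficients $c_{p,p}^{\mathcal{E}}(q)$ of $Z_{6,\leq N^3}^{\mathcal{E}}$. The formulas \eqref{Z4-KdV} and \eqref{Z4-BO} come from the resonant part of $a_4\int u^4\,\mathrm{d}x+\tfrac12\{\chi^{(3)},P^{(3)}\}$: one enumerates $\mathcal{M}_4\cap\mathcal{R}_4^{\mathcal{E}}$ and regroups the surviving monomials into action combinations of the form $I_p I_q$. Extracting the relevant part of $Z_6^{\mathcal{E}}$ is much more involved, since it requires iterated Poisson brackets of $\chi^{(3)}$ and $\chi^{(4)}$ with the cubic, quartic and quintic parts of $H_{\mathcal{E}}$, which in the generalized setting produce a sizeable number of rational terms in the indices. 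Following the strategy of section~\ref{formal}, I would encode these contributions combinatorially as labelled graphs and evaluate the coefficients with Maple. The bound $|c_{p,q}^{\mathcal{E}}(\ell)|\lesssim \ell$ is then read off from the degree of the rational functions thus produced, and the truncation at $\ell\leq N^3$ is exactly what keeps these rational expressions controlled; the discarded tail, which carries at least one factor $I_\ell$ with $\ell>N^3$, is collected into $\mathrm{R}^{(I_{>N^3})}$.

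Finally, $\mathrm{Res}_{\leq N^3}$ gathers the non-integrable resonant monomials of degrees $5,\dots,r$ truncated at $N^3$ which cannot be removed by the iteration (the condition $\mathrm{Irr}(k)\neq\emptyset$ at $\# k=6$ only excludes the purely integrable degree-six monomials, already put into $Z_{6,\leq N^3}^{\mathcal{E}}$), while $\mathrm{R}^{(or)}$ collects everything of degree $\geq r+1$ generated by the iteration and by the tail of the Taylor expansion of $F$. The exponential bound $|c_k|\leq \rho^{\# k}N^{3\# k-9}$ follows by combining Lemma~\ref{lem:jaiunbeaurayon} (for the analytic input provided by $F$) with the one-$N^3$-per-step rule from the small-divisor control. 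The main obstacles, as flagged in section~\ref{sec:dif}, will be (i) controlling the unbounded vector fields through the $\mu_3\leq N$ truncation together with the third-largest-index lemma and the commutator argument of Proposition~\ref{prop-Ns}, and (ii) the purely computational but unavoidable task of extracting the explicit formulas for $c_{p,p}^{\mathcal{E}}(q)$ from a long composition of iterated Poisson brackets.
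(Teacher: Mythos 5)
Your general blueprint — iterate the Birkhoff normal form from degree $3$ to $r$, truncate the solution of each homological equation to defeat the unbounded $\partial_x$ in \eqref{poisson1}, then compute $Z_4$ and $Z_6$ explicitly with a combinatorial/Maple argument — is the paper's strategy. But the specific truncation you propose has a genuine gap.

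You write that in $\chi^{(n)}$ one should ``keep only the monomials with third-largest index $\mu_3(k)\leq N$, pushing the discarded monomials into the remainder $\mathrm{R}^{(\mu_3>N)}$.'' That truncation does \emph{not} make the Lie transform well-defined. Consider $k\in\mathcal{M}_n\setminus\mathcal{R}_n^\E$ with $k_1=-k_2$, $|k_1|$ arbitrarily large, and $|k_3|,\dots,|k_n|\leq N$. Then $\mu_3(k)\leq N$, so your truncated $\chi^{(n)}$ keeps this monomial. But $D_k^\E= k_3|k_3|^{\alpha_\E}+\dots+k_n|k_n|^{\alpha_\E}$ is a nonzero integer of size $O(1)$, independent of $k_1$, while the vector field $\partial_x\nabla\chi^{(n)}$ picks up a factor $2i\pi k_1$ on the mode $k_1$: the resulting contribution scales like $|k_1|^{s+1}|u_{k_1}|/|D_k|$, which is not controlled by $\|u\|_{\dot H^s}$. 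So $X_{\chi^{(n)}}$ is still unbounded on $\dot H^s$ and the flow $\Phi_{\chi^{(n)}}^1$ is not defined.

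What the paper does instead is truncate directly on the ratio: $\chi_r$ is supported on $\J_{r,3N^3}^\E$, i.e.\ on $k$ with $\max_j|k_j|\leq 3N^3\,|D_k^\E|$, which is exactly the condition needed in Lemma \ref{lem-chi} to make $X_{\chi}$ map $\dot H^s$ into itself with norm $\lesssim N$. Corollary \ref{cor:origin} then analyses the complement of $\J$: a non-resonant $k\notin\J_{n,N}$ either has a pair $k_1=-k_2$ with $|k_1|\geq N$ (so $u^k=I_a u^{k'}$ with $a\geq N$, and the term goes into $\mathrm{R}^{(I_{>N^3})}$), or has $|k_3|$ large (and the term goes into $\mathrm{R}^{(\mu_3>N)}$). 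Thus $\mathrm{R}^{(I_{>N^3})}$ arises primarily from the truncation of the homological equation, not, as you suggest, from the cutoff at $\ell\leq N^3$ in $Z_6$. You also miss that the paper takes the threshold $3N^3$ (not $N$) in $\J_{n,\cdot}$ precisely so that the explicit formula for $Z_{6,\leq N^3}^\E$ captures all contributions with indices up to $N^3$ (Remark \ref{3N^3}). Without replacing your $\mu_3$-cutoff by the $\J$-cutoff and the dichotomy of Corollary \ref{cor:origin}, the proof cannot go through.
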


\subsection{Analysis of the remainder terms} We would like to control directly the vector field generated by the remainder terms of the Theorem \ref{thm-BNF} (i.e. $\mathrm{R}^{(\mu_3>N)},\ \mathrm{R}^{(I_{> N^3})}$ and $\mathrm{R}^{(or)}$). Unfortunately, due to the symplectic structure that imposes a lost of one derivative, in general their vector field $X_R:=\partial_x \nabla R$ does not map $\dot H^s$ into $\dot H^s$.
Nevertheless, we can control the $H^s$-norm of the flow generated by such a Hamiltonian, this is the purpose of the Proposition \ref{prop-Ns}. Actually, we prove a result even stronger, if these remainder Hamiltonians are composed with a symplectic change of coordinates then their Poisson brackets with the $H^s$-norm are small (such a refinement will be crucial to prove Theorem \ref{mainPDE}, see subsection \ref{ma jolie sous section}).

\begin{proposition}\label{prop-Ns}
Let $s\geq2$. We assume that $\tau$ is a symplectic change of variable defined on an open set included in the ball $B_s(0,1)$, taking values in $\dot H^s$ and that there exists $\kappa_\tau\geq1$ such that
\begin{itemize}
\item[(A1)]
$
\| \tau(u) \|_{\dot{H}^s} \leq \kappa_\tau\|u\|_{\dot{H}^s}
$
\item[(A2)]
$
\|(\mathrm{d}\tau (u))^{-1}\|_{\mathscr{L}(\dot{H}^s)} \leq \kappa_\tau.
$

\item[(A3)]
$\forall k\in \mathbb{Z}^*, \ |k|>N  \Rightarrow \  \tau(u)_k = u_k
$
\end{itemize}
Then

\begin{itemize}
%\item[(i)] Let $P_n=\sum_{k\in\M_n}a_k u^k$ a homogeneous polynomials of degree $n\geq3$ in $\A$ then
%$$\big|\{\|\cdot\|_{\dot{H}^s}^2,P_n\circ\tau\}(u)\big|\leq c(s)Nn^{s+3} C_\tau^{n+1}c_0^nC(a)M(a)^n\|u\|_{\dot{H}^s}^n$$
%where $c(s)$ is a constant depending only on $s$ and $c_0$ is an universal constant.
\item[(i)] Let $R=\sum_{n\geq r+1}\sum_{k\in\M_n}b_k u^k$, with $r\geq 1$, be a analytic Hamiltonian of order $r+1$ with coefficients $b$ satisfying the estimate $|b_{\#k}|\leq M^{\#k}$ for some $M>0$ then 
$$\big|\{\|\cdot\|_{\dot{H}^s}^2,R\circ\tau\}(u)\big|\lesssim_{s,r,\kappa_\tau} N (M \|u\|_{\dot{H}^s})^{r+1} \quad \text{for } \|u\|_{\dot H^s}\leq (2\kappa_\tau c_0 M)^{-1}$$
where $c_0$ is a universal constant.
\item[(ii)] Let $P_n$ be a homogeneous polynomials of degree $n$ of the form
$$P_n(u)=\sum_{\substack{k\in\M_n\\\mu_3(k)\geq K}}b_k u^k,$$
 where $\mu_3(k)$ denotes the third largest number among $|k_1|,\cdots, |k_n|$, $K\leq N$ and $b$ are some bounded coefficients then 
$$
\big|\{\|\cdot\|_{\dot{H}^s}^2,P_n\circ\tau\}(u)\big|\lesssim_{s,n,\kappa_\tau}N K^{-s+2}\| b\|_{\ell^\infty}\|u\|_{\dot H^s}^n.
$$
\item[(iii)] Let $P_{n+2}$ be a homogeneous polynomials of degree $n+2$ of the form
$$P_{n+2}(u)=\sum_{\substack{k\in\M_n\\ j\geq N}}b_{(j,-j,k)} I_ju^k,$$
where $b$ are some bounded coefficients then
$$\big|\{\|\cdot\|_{\dot{H}^s}^2,P_{n+2}\circ\tau\}(u)\big|\lesssim_{s,n,\kappa_\tau}N^{-2(s-1)} \|b\|_{\ell^\infty} \|u\|^{n+2}_{\dot{H}^s}.$$

\end{itemize}
\end{proposition}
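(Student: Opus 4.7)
My strategy for all three bounds is to expand the Poisson bracket in Fourier coordinates, exploit the symplecticity of $\tau$ together with its high-mode preservation (A3) to reduce to a quantity depending only on $\tau(u)$ up to a finite-dimensional correction, and then use the zero-momentum constraint to extract a \emph{commutator gain} of one derivative that compensates the apparent $2s+1$-derivative loss carried by $\{\|\cdot\|_{\dot{H}^s}^2, \,\cdot\,\}$.

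For (i), by symplecticity
\begin{equation*}
\{\|\cdot\|_{\dot{H}^s}^2, R\circ\tau\}(u) = \{\psi, R\}(\tau(u)), \qquad \psi(v) := \|\tau^{-1}(v)\|_{\dot{H}^s}^2.
\end{equation*}
Since (A3) transfers to $\tau^{-1}$ (as $\tau$ is a bijection fixing the modes $|k|>N$), the correction $\rho := \psi - \|\cdot\|_{\dot{H}^s}^2$ depends only on the finitely many variables $\{v_k : |k|\leq N\}$, yielding the clean splitting $\{\psi, R\} = \{\|\cdot\|_{\dot{H}^s}^2, R\} + \{\rho, R\}$. For the main term I expand $R(v) = \sum_m b_m v^m$ and get
\begin{equation*}
\{\|\cdot\|_{\dot{H}^s}^2, R\}(v) = 2i\pi \sum_m b_m v^m \, \sigma(m), \qquad \sigma(m) := \sum_j m_j |m_j|^{2s},
\end{equation*}
and use $\sum_j m_j = 0$ to rewrite $\sigma(m) = \sum_{j\geq 2} m_j(|m_j|^{2s} - |m_1|^{2s})$; a mean-value estimate combined with the momentum relation $|m_1|\lesssim_n |m_2|$ yields the commutator gain and makes the resulting series absolutely convergent. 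The explicit factor $N$ is produced by the correction $\{\rho, R\}$, which involves Fourier sums restricted to $|k|\leq N$: a direct Cauchy-Schwarz against the Poisson weight $|k|\leq N$ gives this prefactor, while (A1), (A2) and the $L^2$-preservation of $\tau$ control the size of $\rho$.

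For (ii), the same algebraic setup applies, with the additional information that every surviving monomial has $\mu_3(k)\geq K$. I distribute the Sobolev weights so that the third-largest index pays $K^{-s}$; combined with the commutator gain (which absorbs the two largest Sobolev weights through $|m_1|\sim |m_2|$) the remaining two powers of $K$ in the exponent $-s+2$ come from the marginal weight redistribution (one from the symbol factor $|m_2|$ and one from the slack $K\geq 1$). The $N$-prefactor again arises from the low-mode correction. For (iii), the explicit presence of $I_j = |v_j|^2$ with $j\geq N$ supplies two free Sobolev weights $|j|^{-s}\leq N^{-s}$, giving $N^{-2s}$; combined with the usual commutator bookkeeping (which costs the canonical factor $N^2$) this yields $N^{-2(s-1)}$ as stated.

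The main technical obstacle is the weight distribution against $|\sigma(m)|$: the naive bound $|\sigma(m)|\lesssim_n |m_1|^{2s+1}$ gives a divergent sum, so one must use the sharper estimate that follows from applying zero-momentum twice (first to telescope $\sigma(m)$, then to control $|m_1|$ by the remaining indices). The hypothesis $s\geq 2$ is what ensures absolute convergence of the residual series $\sum_j |m_j|^{-s}$. Rigorously justifying that $\rho$ depends only on the low modes also needs the invertibility (A2) of $d\tau$ so that the implicit description of $\tau^{-1}$ restricts correctly to the finite-dimensional low-mode subspace.
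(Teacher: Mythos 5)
Your route is genuinely different from the paper's: you conjugate the norm functional through $\tau^{-1}$ and split $\psi:=\|\tau^{-1}(\cdot)\|_{\dot{H}^s}^2=\|\cdot\|_{\dot{H}^s}^2+\rho$, whereas the paper starts from the duality estimate \eqref{geodiff}, namely $|\{\|\cdot\|_{\dot{H}^s}^2,Q\circ\tau\}(u)|\leq 2\kappa_\tau\|u\|_{\dot H^s}\|\partial_x\nabla Q(\tau(u))\|_{\dot H^s}$, and then decomposes $Q$ into pieces $P_n^{(i)}$ according to the number of indices larger than $nN$. Both arguments ultimately rest on the same commutator gain (mean value for $\big|k_1|k_1|^{2s}+k_2|k_2|^{2s}\big|$ combined with zero momentum). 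However two of your steps, as written, are not actually established, and the second is the crux of the whole estimate.

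\emph{The structural claim about $\rho$.} You assert that $\rho$ depends only on $(v_k)_{|k|\leq N}$ because ``(A3) transfers to $\tau^{-1}$'' and, later, because of the invertibility (A2). Neither is sufficient. (A3) for $\tau^{-1}$ only says that $\tau^{-1}(v)_k=v_k$ for $|k|>N$; it is silent about whether the low modes $\tau^{-1}(v)_k$, $|k|\leq N$, depend on the high modes of $v$, and (A2) does not help either (a non-symplectic $\tau$ fixing high modes can easily let its low-mode output depend on the high-mode input). The claim is true, but its proof genuinely uses symplecticity: write $d\tau$ in block form with respect to the symplectically orthogonal splitting $\dot H^s=\Pi_{\leq N}\dot H^s\oplus\Pi_{>N}\dot H^s$; (A3) forces the lower row to be $(0,\ I)$, and then the symplecticity relation $(d\tau)^*\partial_x d\tau=\partial_x$ forces the upper-right block to vanish, so $\Pi_{\leq N}\tau(u)$ depends on $u_{\leq N}$ alone, and likewise for $\tau^{-1}$. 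You need to state and prove this lemma; it is not a triviality.

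\emph{The estimate of $\{\rho,R\}$.} You say ``a direct Cauchy--Schwarz against the Poisson weight $|k|\leq N$ gives this prefactor, while (A1), (A2) and the $L^2$-preservation of $\tau$ control the size of $\rho$.'' This hides the real difficulty. Although $\nabla\rho$ is supported on $|k|\leq N$, it carries the Sobolev weights $|\ell|^{2s}$ for $|\ell|\leq N$, which can be as large as $N^{2s}$: indeed $\nabla\psi(v)=(d\tau^{-1}(v))^*|\partial_x|^{2s}\tau^{-1}(v)$ up to constants. An $L^2\times L^2$ Cauchy--Schwarz gives $|\{\rho,R\}|\leq\|\nabla\rho\|_{L^2}\cdot 2\pi N\cdot\|\Pi_{\leq N}\nabla R\|_{L^2}$ with $\|\nabla\rho\|_{L^2}\sim N^s\|v\|_{\dot H^s}$, producing a total of order $N^{s+1}$, far worse than the claimed $N$. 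To get the right factor you must pair $\nabla\rho$ against $\partial_x\nabla R$ in the $\dot H^{-s}\times\dot H^s$ duality, observing that $\|\nabla\rho\|_{\dot H^{-s}}\lesssim_{\kappa_\tau}\|v\|_{\dot H^s}$ (using (A2) on the adjoint $(d\tau^{-1})^*$ in $\dot H^{-s}$). That is precisely the content of the paper's inequality \eqref{geodiff}; once one is forced to invoke it, the paper's subsequent decomposition of $P$ by the count of large indices is arguably the simpler bookkeeping and, unlike yours, does not require the block-structure lemma at all. Also note that $L^2$-preservation of $\tau$ is not among the hypotheses (A1)--(A3) and is not actually needed once the $\dot H^{-s}$ duality is used.
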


\proof 

We recall that the Poisson bracket of two Hamiltonians $F$ and $G$ reads
$$\{ F,G\} (u)=( \nabla F, \partial_x \nabla G)= \sum_{k\in \mathbb{Z}^{*}} (\partial_{u_{-k}} P) (2i\pi k)  (\partial_{u_k} Q)$$ 
where $(\cdot,\cdot)$ denotes the canonical scalar product on $L^2$: $( u,v)=\int_\T uv \mathrm{d}x=\sum_{k\in\Z^*}u_{-k}v_k.$\\
Then we note that,  since $\tau$ is symplectic, we have
 \begin{align*}
 \{\|\cdot\|_{\dot{H}^s}^2,Q\circ\tau(u)\}=( \nabla\|\cdot\|_{\dot{H}^s}^2(u),\partial_x\nabla (Q\circ\tau)(u))&=( \nabla\|\cdot\|_{\dot{H}^s}^2(u),\partial_x(\mathrm{d}\tau (u))^{\star}\nabla Q(\tau(u)))\\ \nonumber &=( \nabla\|\cdot\|_{\dot{H}^s}^2(u),(\mathrm{d}\tau (u))^{-1}\partial_x\nabla Q(\tau(u))).
 \end{align*}
Consequently, since $\nabla\|\cdot\|_{\dot{H}^s}^2(u) = 2 (2\pi)^{-2s} |\partial_x|^{2s} u$, it follows of the Cauchy Schwarz inequality and the assumption [A2] that 
\begin{equation}
\label{geodiff}
 |\{\|\cdot\|_{\dot{H}^s}^2,Q\circ\tau(u)\}| \leq 2 \kappa_{\tau} \| u\|_{\dot{H}^s}  \| \partial_x\nabla Q(\tau(u)) \|_{\dot{H}^s}.
\end{equation}

First, we focus on (ii) and we consider 
$$
P_n(u)=\sum_{\substack{k\in\M_n\\ \mu_3(k)\geq K}}b_k u^k.
$$ We decompose $P$ in $n+1$ parts, 
\begin{equation}\label{PP}P=P_n^{(0)}+P_n^{(1)}+\cdots+P_n^{(n)} \end{equation}
 where $$P_n^{(i)}(u)= \sum_{\substack{k\in\M_n\ \mu_3(k)\geq K\\ \sharp\{j\ \mid\ |k_j|> nN\}=i}} b_k u^k.$$ 
We begin with the control of $P_n^{(0)}+P_n^{(1)}$.
First we notice that when $\sharp\{j\mid |k_j|> nN\}=1$ and $k\in\M_n$ we have $\max(|k_1|,\cdots,|k_n|)\leq (n-1)nN$ and thus the operator $\partial_x$ is controlled by $2\pi n^2N$ when applied to $\nabla (P_n^{(0)}+P_n^{(1)})$.
Thus using  \eqref{geodiff}, we get
$$
|\{\|\cdot\|_{\dot{H}^s}^2,(P_n^{(0)}+P_n^{(1)})\circ\tau(u)\}|\leq \kt 4\pi n^2N\| u\|_{\dot H^s} \| \nabla (P_n^{(0)}+P_n^{(1)})(\tau(u))\|_{\dot H^s} .
$$
By symmetry on the estimate of $b_k$ we have
$$
| \partial_{u_\ell}(P_n^{(0)}+P_n^{(1)})|\leq n \|b\|_{\ell^\infty} \sum_{\substack{k\in(\Z^*)^{n-1}\\(\ell,k)\in\M_n,\ \mu_3(k,\ell)\geq K}}|u_{k_1}|\cdots|u_{k_{n-1}}|
$$
 and thus,  ordering the two first indices of $k$ (second inequality) and using the zero momentum condition (third inequality), we get successively
\begin{align*}\| \nabla (P_n^{(0)}+P_n^{(1)})\|^2_{\dot H^s}
&\leq (n\|b\|_{\ell^\infty})^2\sum_{\ell\in\Z^*}\ell^{2s}\big| \sum_{\substack{k\in(\Z^*)^{n-1}\\(\ell,k)\in\M_n,\ \mu_3(k,\ell)\geq K}}|u_{k_1}|\cdots|u_{k_{n-1}}|\big|^2\\
&\leq(n\|b\|_{\ell^\infty})^2\sum_{\ell\in\Z^*}\ell^{2s}\big| n^2\sum_{\substack{k\in(\Z^*)^{n-1}\\(\ell,k)\in\M_n,\ |k_1|,|k_2|\geq K}}|u_{k_1}|\cdots|u_{k_{n-1}}|\big|^2\\
&\leq (n\|b\|_{\ell^\infty})^2n^{4+2s}\sum_{\ell\in\Z^*}\big| \sum_{\substack{k\in(\Z^*)^{n-1}\\(\ell,k)\in\M_n,\ |k_1|,|k_2|\geq K}}|k_1|^s|u_{k_1}|\cdots|u_{k_{n-1}}|\big|^2\\
&= (n\|b\|_{\ell^\infty})^2n^{4+2s} \|v \, w \,  \nu^{n-3} \|_{L^2}^2
\end{align*}
where in the last line the functions $\nu$, $w$ and $v$ are respectively defined through their Fourier coefficients by $\nu_j=|u_j|$, $w_j=|j|^s|u_j| \mathbb{1}_{|j|\geq K}$ and
  $v_j=|u_j| \mathbb{1}_{|j|\geq K}$ for $j\in\Z^*$. Consequently, since $\|\cdot\|_{L^\infty} \leq \|\cdot\|_{\ell^1}$ we have
  \begin{align*}
  \|v \, w \,  \nu^{n-3} \|_{L^2} \leq  \|w\|_{L^2 } \|\nu\|_{L^\infty }^{n-3} \|v\|_{L^\infty } \leq \|u\|_{\dot{H}^s} \|v\|_{\ell^1}\|\nu\|^{n-3}_{\ell^1}\leq
 c_0^{n-2} K^{-s+1}\|u\|^{n-1}_{\dot H^s}
  \end{align*}
  where we used that $\|v\|_{\ell^1}\leq c_0 \|u\|_{\dot H^s}$ and $\sum_{|j|\geq K}|u_j|\leq c_0 K^{-s+1}\|u\|_{\dot H^s}$ (with $c_0\leq \frac{\pi}{\sqrt3}$).
 Therefore we conclude
\begin{equation*}|\{\|\cdot\|_{\dot{H}^s}^2,(P_n^{(0)}+P_n^{(1)})\circ\tau(u)\}|\lesssim_{\kt}   n^{5+s}Nc_0^{n} K^{-s+1} \|b \|_{\ell^\infty}\| \tau(u)\|^n_{\dot H^s}.\end{equation*}
and then using assumption [A1], 
\begin{equation}\label{P0P1}|\{\|\cdot\|_{\dot{H}^s}^2,(P_n^{(0)}+P_n^{(1)})\circ\tau(u)\}|\lesssim_{\kt}   n^{5+s}NK^{-s+1}\|b \|_{\ell^\infty} (\kt c_0)^n\| u\|^n_{\dot H^s}.\end{equation}

Now let us estimate $\{\|\cdot\|_{\dot{H}^s}^2,P_n^{(i)}\circ\tau(u)\}$ for $i\geq2$. We have
\begin{align*}|\{\|\cdot\|_{\dot{H}^s}^2,P_n^{(i)}\circ\tau(u)\}|
&=|\{\|\cdot\|_{\dot{H}^s}^2, \sum_{\substack{k\in\M_n,\ \mu_3(k)\geq K\\ \sharp\{j \ \mid \ |k_j|> nN\}=i}}b_k\tau(u)^k\}|\\
&=   |\{\|\cdot\|_{\dot{H}^s}^2,\sum_{\substack{k\in\M_n,\ \mu_3(k)\geq K\\ |k_{\sigma_k(i)}|> nN\geq |k_{\sigma_k(i+1)}|}} b_k\tau(u)^k\}|
\end{align*}
where $\sigma_k$ is a permutation that makes the modulus of the indices $k_1,\cdots,k_n$ become non-increasing :
$$
|\sigma_k(1)|\geq \dots \geq |\sigma_k(n)|.
$$ 
Then using assumption [A3] we decompose this sum in two parts:
\begin{equation}
\label{Pi}
|\{\|\cdot\|_{\dot{H}^s}^2,P_n^{(i)}\circ\tau(u)\}|\leq  \Sigma_1+\Sigma_2
\end{equation}
where 
\begin{align*}
\Sigma_1=&\Big| \sum_{\substack{k\in\M_n,\ \mu_3(k)\geq K\\ |k_{\sigma_k(i)}|> nN\geq |k_{\sigma_k(i+1)}|}}\prod_{j=1}^i u_{\sigma_k(j)} \{\|\cdot\|_{\dot{H}^s}^2,b_k\prod_{j=i+1}^n\tau(u)_{\sigma_k(j)}\}\Big|\\
\Sigma_2=&\| b\|_{\ell^\infty}\sum_{\substack{k\in\M_n,\ \mu_3(k)\geq K\\ |k_{\sigma_k(i)}|> nN\geq |k_{\sigma_k(i+1)}|}}\prod_{j=i+1}^n|\tau(u)_{\sigma_k(j)}|\ |\{\|\cdot\|_{\dot{H}^s}^2,\prod_{j=1}^{i}u_{\sigma_k(k_j)}\}|.
\end{align*}
%and $k^{(i)}=(k_{\sigma_k(i+1)},\cdots,k_\sigma(n))$. \\
To estimate $\Sigma_1$ we notice that by ordering the first indices of $k$ we have
$$\Sigma_1\leq n^i \sum_{|k_1|,\cdots,|k_i|\geq nN}\prod_{j=1}^{i}|u_{k_j}||\{\|\cdot\|_{\dot{H}^s}^2,\sum_{\substack{|k_{i+1}|,\cdots|k_{n}|\leq  nN\\k_{i+1}+\cdots+k_{n}=-k_1-\cdots-k_i}}b_k\prod_{j=i+1}^n\tau(u)_{k_j}\}|.$$
Then, as in the control of  $\{\|\cdot\|_{\dot{H}^s}^2,(P_n^{(0)}+P_n^{(1)})\circ\tau(u)\}$,
 we use again \eqref{geodiff} to get
 \begin{multline*}
 \Sigma_1 \leq   n^{i}\sum_{|k_1|,\cdots,|k_i|\geq nN}\prod_{j=1}^{i}|u_{k_j}|4\pi \kt nN \|u\|_{\dot{H}^s} \\
 \times \|b\|_{\ell^\infty}(n-i)\left(\sum_{|\ell|\leq nN}\ell^{2s}\big| \hspace{-1cm}\sum_{\substack{|k_{i+1}|,\cdots,|k_{n-1}|\leq  nN\\k_{i+1}+\cdots+k_{n-1}=-\ell-k_1-\cdots-k_i}}|\tau(u)_{k_{i+1}}|\cdots|\tau(u)_{k_{n-1}}|\big|^2\right)^{\frac12}.
 \end{multline*}
We observe that by Jensen and symmetry we have
 \begin{equation*}
 \begin{split}
 &\sum_{|\ell|\leq nN}\ell^{2s}\big| \hspace{-1cm}\sum_{\substack{|k_{i+1}|,\cdots,|k_{n-1}|\leq  nN\\k_{i+1}+\cdots+k_{n-1}=-\ell-k_1-\cdots-k_i}}|u_{k_{i+1}}|\cdots|u_{k_{n-1}}|\big|^2 \\
  &\leq  \sum_{\ell \in \mathbb{Z}^*}\ell^{2s}\big| \sum_{k_{i+1}+\cdots+k_{n-1}=-\ell}|u_{k_{i+1}}|\cdots|u_{k_{n-1}}|\big|^2\\
  &\leq (n-i-1)^{2s} \sum_{\ell \in \mathbb{Z}^*} \big| \sum_{k_{i+1}+\cdots+k_{n-1}=-\ell} |k_{i+1}|^{2s}|u_{k_{i+1}}|\cdots|u_{k_{n-1}}|\big|^2\\
  &\leq (n^s \|u\|_{\dot{H}^s} \| u\|_{\ell^1}^{n-i-2})^2.
 \end{split}
 \end{equation*} 
Using that $\| \cdot \|_{\ell^1}\leq c_0 \|\cdot\|_{\dot H^s}$ where $c_0\leq \pi/\sqrt{3}$ and the assumption [A1], we get 
\begin{align*}
\Sigma_1&\lesssim_{\kt}  \|b\|_{\ell^\infty} c_0^{n-i} \kt^{n} n^{s+i+2} N\|u\|^{n-i}_{\dot H^{s}}\sum_{|k_1|,\cdots,|k_i|\geq nN}\prod_{j=1}^{i}|u_{k_j}| \\
&\lesssim_{\kt} \|b\|_{\ell^\infty} c_0^{n}\kt^{n}  n^{s+i+2} N (nN)^{-(s-1)i}\|u\|^{n}_{\dot H^{s}}
\end{align*}
where we used that $\sum_{|j|\geq K}|u_j|\leq c_0K^{-s+1}\|u\|_{\dot H^s}$. So since $s\geq 2$ and $i\geq2$ we conclude
\begin{equation}\label{S1}
\Sigma_1\lesssim_{\kt}  \|b\|_{\ell^\infty} c_0^{n}n^{s+2}  \kt^{n} N^{-2(s-2)}\|u\|_{\dot H^s}^n.
\end{equation}

%We also notice for further use that if $i\geq3$ \eqref{S1} can be replaced by 
%\begin{align}
%\label{S1bis}\Sigma_1\leq 2\pi c_s^i N^{-s+1}C_\tau nN^3\|u\|^{n-i+2}_{\dot H^{s}}\|u\|^{i-2}_{\dot H^1}
%\end{align}

We now estimate $\Sigma_2$, we have:
\begin{align*}\Sigma_2&\leq 2\pi \|b\|_{\ell^\infty} \sum_{\substack{k\in\M_n,\ \mu_3(k)\geq K\\ |k_{\sigma_k(i)}|> nN\geq |k_{\sigma_k(i+1)}|}}
|(k_{\sigma_k(1)}|k_{\sigma_k(1)}|^{2s}+\cdots+k_{\sigma_k(i)}|k_{\sigma_k(i)}|^{2s})|\tau(u)^{k}|
\}|\\
&\leq 2\pi \|b\|_{\ell^\infty} n^3\sum_{\substack{k\in\M_n,\ |k_1|,|k_2|\geq nN,\ |k_3|\geq K\\|k_1|\geq|k_2|\geq|k_3|\geq |k_4|,\cdots,|k_n| }}(\big|k_1|k_1|^{2s}+k_2|k_2|^{2s}\big|+(i-2)|k_3|^{2s+1})|\tau(u)^{k}|
\end{align*}
Then we notice that applying the Young inequality, we have 
\begin{equation}
\label{ilyaquecaquisert}
\sum_{\ell \in \M_n} |\ell_1|^s |\ell_2|^s |\ell_3|^{s-1} |u^\ell| \leq c_0^{n-2} \| u\|_{\dot{H}^s}^n.
\end{equation}
Consequently, to estimate $\Sigma_2$, we just have to control each term by \eqref{ilyaquecaquisert} and [A1].
\begin{itemize}
\item Since we have 
$$
|k_3|^{2s+1}\leq |k_3|^{s-1}|k_2|^{s}|k_1|^2 \leq N^{-s+2} |k_1|^{s}|k_2|^{s}|k_3|^{s-1},$$
by \eqref{ilyaquecaquisert} and [A1], we get
$$
\sum_{\substack{k\in\M_n,\ |k_1|,|k_2|\geq nN\\|k_1|\geq|k_2|\geq|k_3|\geq |k_4|,\cdots,|k_n| }}|k_3|^{2s+1} 
|\tau(u)^k| \leq N^{-s+2}c_0^{n-2}\kt^n \|u\|^n_{\dot H^s}.
$$
\item If $k_1k_2>0$ then, by the zero momentum condition, 
$$
|k_1|,|k_2|\leq n|k_3| \ \mathrm{and} \ |k_1|\leq n|k_2|
$$
 and thus 
 $$
 \big|k_1|k_1|^{2s}+k_2|k_2|^{2s}\big|\leq 2 n^{s+1} |k_3|^{s-1} |k_1|^{s} |k_2|^2 \leq 2n^3 N^{-s+2}  |k_1|^{s}|k_2|^{s}|k_3|^{s-1}.
 $$  
 Then as above, by \eqref{ilyaquecaquisert} and [A1], we get
$$
\sum_{\substack{k\in\M_n,\ |k_1|,|k_2|\geq nN,\ k_1k_2>0\\|k_1|\geq|k_2|\geq|k_3|\geq |k_4|,\cdots,|k_n| }}\hspace{-1cm}\big|k_1|k_1|^{2s}+k_2|k_2|^{2s}\big||\tau(u)^k| 
\leq 2n^3 N^{-s+2}c_0^{n-2}\kt^n \|u\|^n_{\dot H^s}.
$$
\item If $k_1k_2<0$ then, since $\partial_x (x |x|^{2s}) = (2s+1) |x|^{2s}$, by the mean value inequality we get
$$
|k_1|k_1|^{2s}+k_2|k_2|^{2s}|\leq(2s+1) \big| |k_1|-|k_2|\big| |k_1|^{2s}\leq (2s+1)|k_1|^sn^s|k_2|^s n |k_3|
$$
 where we used the zero momentum condition. Consequently, we have
 $$
 |k_1|k_1|^{2s}+k_2|k_2|^{2s}|\leq  (2s+1) n^{s+1} K^{-s+1} |k_1|^s |k_2|^s  |k_3|^{s-1}
 $$
and so  by \eqref{ilyaquecaquisert} and [A1], we get
\begin{align}\label{k1k2}
\sum_{\substack{k\in\M_n,\ |k_1|,|k_2|\geq nN,\ |k_3|\geq K,\ k_1k_2<0\\|k_1|\geq|k_2|\geq|k_3|\geq |k_4|,\cdots,|k_n| }}\hspace{-1,5cm}\big|k_1|k_1|^{2s}+k_2|k_2|^{2s}\big||\tau(u)^k| 
&\leq K^{-s+1}(2s+1) n^{s+1} c_0^{n-2}\kt^n \|u\|^n_{\dot H^s}.
\end{align}
%and we notice that for $i\geq3$
%\begin{align*}\sum_{\substack{k\in\M_n,\ |k_1|,|k_2|,|k_3|\geq N,\ k_1k_2<0\\|k_1|\geq|k_2|\geq|k_3|\geq |k_4],\cdots,|k_n| }}\big|k_1|k_1|^{2s}+k_2|k_2|^{2s}\big||\tau(u)^k| 
%&\leq N^{-s+2}(2s+1) n^{s+1} c_0^{n-1}C_\tau^n \|u\|^n_{\dot H^s}.
%\end{align*}
\end{itemize}
Combining these three estimates yields for $K\leq N$ and $1\leq c_0 \leq \pi/\sqrt{3}$
\begin{equation}
\label{S2}\Sigma_2\lesssim_s \|b\|_{\ell^{\infty}} K^{-s+2} n^{s+4} c_0^{n}\kt^n \|u\|^n_{\dot H^s}.
\end{equation}
Inserting \eqref{S1}, \eqref{S2} in \eqref{Pi} we get
\begin{equation}\label{Pi2}
|\{\|\cdot\|_{\dot{H}^s}^2,P_n^{(i)}\circ\tau(u)\}|\lesssim_{s,\kt} K^{-s+2} n^{s+4} \|b\|_{\ell^{\infty}} (c_0\kt)^{n} \|u\|^n_{\dot H^s},\quad \text{for }i\geq2.
\end{equation}
Finally inserting \eqref{Pi2} and \eqref{P0P1} in \eqref{PP} yields
\begin{equation}\label{ouf}\big|\{\|\cdot\|_{\dot{H}^s}^2,P_n\circ\tau\}(u)\big|\lesssim_{s,\kt}N n^{s+5}K^{-s+2}\|b \|_{\ell^\infty} (c_0\kt \|u\|_{\dot{H}^s})^n.\end{equation}

\bigskip

Using \eqref{ouf} we can now easily prove the different assertions of the Proposition:
\begin{itemize}
\item To prove  assertion (i) we take $K=1$ in \eqref{ouf} and we get for $c_0\kt M\|u\|_{\dot{H}^s}\leq \frac12$ (where we recall that by assumption $|b_k|\leq M^{\#k}$)
\begin{align*}
\big|\{\|\cdot\|_{\dot{H}^s}^2,R\circ \tau \}(u)\big|&\lesssim_{s,\kt} N \sum_{n\geq r+1} n^{s+5} (c_0\kt M\|u\|_{\dot{H}^s})^n\\
&\lesssim_{s,\kt,r} N(c_0\kt M\|u\|_{\dot{H}^s})^{r+1}.\end{align*}
\item Assertion (ii) is a direct consequence of \eqref{ouf}.
\item To prove (iii) we just notice that 
$\{\|\cdot\|_{\dot{H}^s}^2,I_j u^k\}= I_j \{\|\cdot\|_{\dot{H}^s}^2, u^k\}$ and $$I_j\leq j^{-2s}\|u\|_{\dot{H}^s}^2\leq N^{-2s}\|u\|_{\dot{H}^s}^2.$$
\end{itemize}

\endproof

\subsection{The resonant normal form process}
In this subsection, we aim at putting \ref{gKdV} and \ref{gBO} in resonant normal without paying attention to the explicit expression of the fourth and sixth order integrable terms.

In order to realize this process, we will have to solve some homological equations of the form
\begin{equation}
\label{mapremiereequationhomologique}
\{ \chi,Z_2^{\mathcal{E}}\} + \sum_{k\in M_r \setminus \Rc_r^\E} b_k u^k =0.
\end{equation}
A natural solution is obtained by observing that if $k\in \M$ then
\begin{equation}
\label{resolvator}
\{ u^k , Z_2^{\mathcal{E}} \} = - i\, \Omega_{\E}(k) \,u^k
\end{equation}
where $\Omega_{\E}(k)$ is given by the following definition.
\begin{definition}[Denominators $\Omega_\E$] If $\E\in \{ \mathrm{\ref{gBO},\ref{gKdV}}\}$ and $k\in \M$, we set
$$
\Omega_{\E}(k) := 2^{-1} (2\pi)^{1+\alpha_{\E}} (k_1 |k_1|^{\alpha_{\E}} + \dots + k_\last |k_\last|^{\alpha_\E}).
$$
\end{definition}
In view of \eqref{resolvator}, a natural solution of the homological equation \eqref{mapremiereequationhomologique} is
$$
\chi(u) = \sum_{k\in M_r \setminus \Rc_r^\E} \frac{b_k}{i\Omega_\E(k)} u^k.
$$
Following the classical strategy to put our system in resonant normal form (see for instance \cite{Bam03, BG06, G, KillBill}), we will have to consider the change of variable induced by the Hamiltonian flow generated by $\chi$ at time $t=1$. However, a priori, the Hamiltonian vector field generated by $\chi$, $X_\chi := \partial_x \nabla \chi$, does not map $\dot{H}^s$ into $\dot{H}^s$. Consequently, a priori this flow does not make sense. To overcome this issue, we only solve homological equations of the form \eqref{mapremiereequationhomologique} where the coefficients $b_k$ are supported in the following sets of indices.
\begin{definition}[$\J_{n,N}^\E$ sets] If $N\geq2$ and $n\geq 3$, we set
\begin{equation}
\label{J}\J_{n,N}^\E:=\{k\in \M_n \setminus \Rc_n^\E\mid \left| \frac{\max(|k_1|,\cdots,|k_n|)}{k_1|k_1|^{\alpha_{\mathcal{E}}}+\dots+k_n|k_n|^{\alpha_{\mathcal{E}}}} \right| \leq N\}.
\end{equation}
As usual, we also set $\J_{N}^\E = \bigcup_{n\geq 3} \J_{n,N}^\E$.
\end{definition}
As stated in the following Lemma, if the coefficients $b_k$ are supported in these sets of indices, the Lie transforms are well defined.
\begin{lemma}\label{lem-chi} Let $N\geq2$, $n\geq 3$, $s\geq 1$.
If $\chi$ is an homogeneous polynomial of degree $n$ of the form
\begin{equation}
\label{monpetitchi}
\chi(u)=\sum_{k\in\J_{n,N}^\E}\frac{b_k}{i\Omega_\E(k)} u^k
\end{equation}
 where $b$ is bounded and satisfies the reality condition then its vector field $X_\chi:= \partial_x \nabla \chi$ maps ${\dot{H}^s}$ into itself and, for all $u\in \dot{H}^s$, we have
$$
 \| X_\chi(u)\|_{\dot{H}^s}\lesssim_n   \| b\|_{\ell^\infty}  N \|u\|_{\dot{H}^s}^{n-1}.
$$
As a consequence, there exists $\eps_0 \gtrsim_{n,s} [N \| b\|_{\ell^\infty}  ]^{-\frac{1}{n-2}}$ such that $\chi$ generates a Hamiltonian flow $\Phi_\chi^t$ on $B_s(0,\eps_0)$, for $0\leq t\leq 1$, that is close to the identity: if $u\in B_s(0,\eps_0)$ and $0\leq t\leq 1$ we have
\begin{equation}\label{antalgiques}
\| \Phi_\chi^t(u)-u\|_{\dot{H}^s} \lesssim_{n,s}   \| b\|_{\ell^\infty} N   \| u \|_{\dot{H}^s}^{n-1}.
\end{equation}
\end{lemma}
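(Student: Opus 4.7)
The whole argument rests on one key observation drawn from definition \eqref{J}. Since $\Omega_\E(k)$ is a fixed positive multiple of $k_1|k_1|^{\alpha_\E}+\dots+k_n|k_n|^{\alpha_\E}$, the membership $k\in\J_{n,N}^\E$ is equivalent to
$$
\frac{\max(|k_1|,\dots,|k_n|)}{|\Omega_\E(k)|} \lesssim N.
$$
A priori, the Hamiltonian vector field $X_\chi=\partial_x\nabla\chi$ of a polynomial loses one derivative. But in the particular solution \eqref{monpetitchi} of the homological equation, the small divisor $\Omega_\E(k)$ in the denominator precisely compensates that derivative loss on the support $\J_{n,N}^\E$, uniformly with budget $N$. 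This is the raison d'\^etre of the set $\J_{n,N}^\E$ and the conceptual heart of the lemma.

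Writing $(X_\chi(u))_\ell = 2i\pi\,\ell\,\partial_{u_{-\ell}}\chi(u)$ and differentiating \eqref{monpetitchi}, every surviving monomial comes from a $k\in\J_{n,N}^\E$ with some coordinate equal to $-\ell$, whence $|\ell|\leq\max_j|k_j|$ and the factor $\ell/\Omega_\E(k)$ is $O(N)$. After symmetrizing $b$ over the permutations of the entries of $k$ (an $n!$ factor absorbed in $\lesssim_n$) I obtain
$$
|(X_\chi(u))_\ell| \lesssim_n  N\,\|b\|_{\ell^\infty}\sum_{\substack{k'\in(\Z^*)^{n-1}\\ k'_1+\dots+k'_{n-1}=\ell}} |u_{k'_1}|\cdots|u_{k'_{n-1}}|.
$$
Since $|\ell|\leq(n-1)\max_j|k'_j|$, a further symmetrization placing the maximum in the first slot (paying a factor $n-1$) gives $|\ell|^s\lesssim_n |k'_1|^s$. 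Setting $v_k=|k|^s|u_k|$ and $w_k=|u_k|$, the norm $\|X_\chi(u)\|_{\dot H^s}^2$ is then controlled, up to $(N\|b\|_{\ell^\infty})^2$, by the squared $\ell^2$ norm of the convolution $v*w^{*(n-2)}$. Young's convolution inequality together with $\|w\|_{\ell^1}\leq c_s\|u\|_{\dot H^s}\leq(\pi/\sqrt 3)\|u\|_{\dot H^s}$ (valid for $s\geq 1$) produces the announced estimate
$$
\|X_\chi(u)\|_{\dot H^s}\lesssim_n  N\,\|b\|_{\ell^\infty}\,\|u\|_{\dot H^s}^{n-1}.
$$

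The existence of the flow then follows from a standard Picard--bootstrap argument. The same kind of Young-type estimate applied to the difference $X_\chi(u)-X_\chi(v)$ shows that $X_\chi$ is Lipschitz on $B_s(0,R)$ with constant of order $N\|b\|_{\ell^\infty}R^{n-2}$; choosing $\eps_0\simeq_{n,s}[N\|b\|_{\ell^\infty}]^{-1/(n-2)}$ small enough ensures local well-posedness of the ODE $\dot{u}=X_\chi(u)$ together with the bootstrap bound $\|\Phi_\chi^t(u)\|_{\dot H^s}\leq 2\|u\|_{\dot H^s}$ on $B_s(0,\eps_0)$ for $|t|\leq 1$. The reality condition on $b$ makes $\chi$ real valued, so $\Phi_\chi^t$ is genuinely symplectic. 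Finally, integrating $\Phi_\chi^t(u)-u=\int_0^t X_\chi(\Phi_\chi^\sigma(u))\,d\sigma$ and plugging in the vector field bound above immediately gives \eqref{antalgiques}. The main (really, only) subtle step is the first paragraph: without the restriction to $\J_{n,N}^\E$, the derivative loss from $\partial_x$ could not be reabsorbed, and the whole construction of the Lie transform would collapse.
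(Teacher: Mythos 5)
Your proposal is correct and follows essentially the same route as the paper: extract one coordinate $\ell=-k_j$ via symmetrization, use the defining property of $\J_{n,N}^{\E}$ to bound $|\ell|/|\Omega_\E(k)|$ by $O(N)$, redistribute $|\ell|^s$ onto the remaining indices, control the resulting convolution by Young together with $\|u\|_{\ell^1}\lesssim\|u\|_{\dot H^s}$, and conclude with the standard Picard/bootstrap argument (the paper uses the Lipschitz bound from $\mathrm d X_\chi$ and Cauchy--Lipschitz in exactly the way you describe).
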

\proof  First, note that since $b$ satisfies the reality condition, $\Omega_\E$ is odd and thanks to the $i$ denominator in \eqref{monpetitchi}, $\chi$ is real valued. 

By symmetry on the estimate of $b_k$, we have
$$
| \partial_{u_\ell}\chi|(u)\leq n \| b \|_{\ell^\infty} \sum_{\substack{k\in\J_{n,N}^\E\\  \ell = k_n}}\frac{|u_{k_1}|\cdots|u_{k_{n-1}}|}{|\Omega_{\E}(k)|}.
$$
Consequently, we have
\begin{align*}
\| X_\chi(u)\|^2_{\dot{H}^s}&= 4\pi^2 \sum_{\ell\in\Z^*}|\ell|^{2s+2}|\partial_{u_\ell} \chi(u)|^2\\
&\leq 4\pi^2n^2 \| b \|_{\ell^\infty}^2 \sum_{\ell\in\Z^*}|\ell|^{2s+2}\Big|  \sum_{\substack{k\in\J_{n,N}^\E\\  \ell = k_n}}\frac{|u_{k_1}|\cdots|u_{k_{n-1}}|}{|\Omega_{\E}(k)|}    \Big|^2\\
&\stackrel{\eqref{J}}{\leq} 4\pi^2n^2 \| b \|_{\ell^\infty}^2  N^2\sum_{\ell\in\Z^*}|\ell|^{2s}\Big| \sum_{\substack{k\in(\Z^*)^{n-1}\\k_1+\cdots+k_{n-1}=-\ell}} |u_{k_1}|\cdots|u_{k_{n-1}}|          \Big|^2
\end{align*}
Observing that by Jensen
$$
|\ell|^{s} = |k_1+\dots+k_{n-1}|^{s}  \leq (n-1)^{s-1} (|k_1|^{s}+\dots+|k_{n-1}|^{s})
$$
and applying the Young convolutional inequality and the Cauchy-Schwarz inequality, we get
$$
\| X_\chi(u)\|^2_{\dot{H}^s} \leq 4\pi^2n^{2s+2} \| b \|_{\ell^\infty}^2  N^2 \| u\|_{\dot{H}^s}^2 \| u\|_{\ell^1}^{2(n-2)} \leq 4\pi^2n^{2s+2} \| b \|_{\ell^\infty}^2  N^2 \| u\|_{\dot{H}^s}^{2(n-1)} (\frac{\pi^2}3)^{n-2}
$$

Now we turn to the Lie transform $\Phi_\chi^t$. Noticing that $X_\chi(u)$ is an homogeneous polynomial, the previous estimates natural yield
$$
\|\mathrm{d}X_\chi(u) \|_{\mathscr{L}(\dot{H}^s)}^2 \leq 4\pi^2n^{2s+4} \| b \|_{\ell^\infty}^2  N^2 \| u\|_{\dot{H}^s}^{2(n-1)} (\frac{\pi^2}3)^{n-2}.
$$

Therefore the vector field $X_{\chi}(u)$ is locally Lipschitz in ${\dot{H}^s}$ and we deduce from  the Cauchy-Lipschitz Theorem that the flow $\Phi_{\chi}^t$ is locally well defined on $\dot H^s$. Furthermore as long as $\| \Phi_{\chi}^t(u) \|_{\dot{H}^s} \leq 2 \|u\|_{\dot{H}^s}$ we have
$$
\|\Phi_\chi^t(u)-u\|_{\dot{H}^s}\leq \left| \int_0^t  \|X_\chi(\Phi_\chi^{\mathfrak{t}}(u))\|_{\dot{H}^s}\mathrm{d}\mathfrak{t} \right| \leq \kappa_{n,s} t \| b\|_{\ell^\infty}N \| u\|_{\dot{H^s}}^{n-1}
$$
where $\kappa_{n,s}$ is a constant depending only on $n$ and $s$. Thus we conclude by a bootstrap argument that, if $\|u\|_{\dot{H}^s} < (\kappa_{n,s} \| b\|_{\ell^\infty}N)^{\frac1{n-2}}=:\eps_0$ then $\Phi_{\chi}^t(u)$ is well defined for $0\leq t\leq 1$ and satisfies \eqref{antalgiques}. 
\endproof

A priori we could fear that the unsolved terms (i.e. those associated with indices in $\M \setminus \J_{n,N}^\E$) contribute to the dynamics and will have to be solved by an other way. Hopefully, the following Lemma and its corollary prove that they are remainder terms in the sense of Proposition \ref{prop-Ns}.
\begin{lemma}
\label{lem:origin}
 If $\mathcal{E}\in \{\mathrm{\ref{gBO},\ref{gKdV}}\}$ and $k\in \mathcal{M}_n$ with $n\geq 3$ and satisfies $k_1+k_2\neq 0$ then we have 
$$
\max \left( (n-2)^{\alpha_{\mathcal{E}}} |k_3|^{1+\alpha_{\mathcal{E}}} , \left| \sum_{j=1}^n k_j |k_j|^{\alpha_{\mathcal{E}}}  \right| \right)\geq \frac{|k_1|^{\alpha_{\mathcal{E}}}}2.
$$
\end{lemma}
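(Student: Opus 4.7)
My plan is to isolate the two largest terms in the small divisor and exploit the zero momentum condition through the integrality of the indices. Adopting the natural convention $|k_1|\geq |k_2|\geq \cdots\geq |k_n|$ (consistent with the role assigned to $|k_3|$), I would decompose
\[
\sum_{j=1}^n k_j|k_j|^{\alpha_\E} = U + T, \qquad U := k_1|k_1|^{\alpha_\E}+k_2|k_2|^{\alpha_\E},\qquad T:=\sum_{j=3}^n k_j|k_j|^{\alpha_\E},
\]
so that the triangle inequality gives $|T|\leq (n-2)|k_3|^{1+\alpha_\E}$ immediately.

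The heart of the argument is the claim that $|U|\geq |k_1|^{\alpha_\E}$ as soon as $k_1+k_2\neq 0$. Setting $m_i=|k_i|$ and $\epsilon_i=\mathrm{sgn}(k_i)$ so that $U=\epsilon_1 m_1^{1+\alpha_\E}+\epsilon_2 m_2^{1+\alpha_\E}$, I would split on signs. If $\epsilon_1=\epsilon_2$, then $|U|=m_1^{1+\alpha_\E}+m_2^{1+\alpha_\E}\geq m_1^{\alpha_\E}$ trivially since $m_1\geq 1$. If $\epsilon_1\neq\epsilon_2$, the equation $k_1+k_2=\epsilon_1(m_1-m_2)\neq 0$ together with integrality forces $m_1-m_2\geq 1$, and the factorization
\[
|U|=m_1^{1+\alpha_\E}-m_2^{1+\alpha_\E}=(m_1-m_2)\bigl(m_1^{\alpha_\E}+m_1^{\alpha_\E-1}m_2+\cdots+m_2^{\alpha_\E}\bigr)\geq m_1^{\alpha_\E}
\]
settles the claim, since the second factor contains the term $m_1^{\alpha_\E}$ and all other terms are non-negative.

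From there the conclusion is a short dichotomy. If $(n-2)|k_3|^{1+\alpha_\E}\leq |k_1|^{\alpha_\E}/2$, then the triangle inequality yields $|U+T|\geq |U|-|T|\geq |k_1|^{\alpha_\E}/2$, so the second entry of the $\max$ already suffices. Otherwise, using $(n-2)^{\alpha_\E}\geq n-2$ (true for $n\geq 3$ and $\alpha_\E\in\{1,2\}$), the first entry satisfies $(n-2)^{\alpha_\E}|k_3|^{1+\alpha_\E}\geq (n-2)|k_3|^{1+\alpha_\E}>|k_1|^{\alpha_\E}/2$.

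The only subtlety I anticipate is recognizing that one can only hope for $|U|\geq |k_1|^{\alpha_\E}$ rather than the dimensionally natural $|k_1|^{1+\alpha_\E}$: in the opposite-sign case one pays a full power of $|k_1|$ in order to extract the gap $m_1-m_2\geq 1$ from integrality. This asymmetry is exactly what fixes the lower bound in the lemma at $|k_1|^{\alpha_\E}/2$ instead of $|k_1|^{1+\alpha_\E}/2$, and it is what will later make it worthwhile to rule out resonant pairs $k_1=-k_2$ in the definition of $\Irr$.
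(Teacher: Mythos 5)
Your proof is correct and takes essentially the same route as the paper's: the factorization of $|k_1|^{1+\alpha_{\E}}-|k_2|^{1+\alpha_{\E}}$ together with integrality ($|k_1+k_2|\geq 1$) to get $|U|\geq|k_1|^{\alpha_{\E}}$, the bound $|T|\leq(n-2)|k_3|^{1+\alpha_{\E}}$, and the concluding dichotomy all match. The one cosmetic divergence is in the same-sign case: you prove $|U|\geq|k_1|^{\alpha_{\E}}$ there too and keep the dichotomy uniform, while the paper instead uses the zero momentum condition to deduce $|k_1|\leq k_1+k_2=-(k_3+\cdots+k_n)\leq(n-2)|k_3|$ directly, so that in that case the first entry of the $\max$ already dominates without any appeal to the small divisor.
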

\begin{proof} Without loss of generality, we assume that $k_1$ is positive.

First, let us observe that if  $k_2$ is positive then, since $k\in \mathcal{M}_n$, we have
$$
k_1\leq k_1+k_2 = -(k_3+\dots+k_n) \leq (n-2) |k_3|.
$$
Now, if $k_2$ is negative we have
$$
k_1|k_1|^{\alpha_{\mathcal{E}}}+k_2|k_2|^{\alpha_{\mathcal{E}}} = k_1^{1+\alpha_{\mathcal{E}}}-|k_2|^{1+\alpha_{\mathcal{E}}} = (k_1+k_2)(\sum_{j=0}^{\alpha_{\mathcal{E}}} k_1^{\alpha_{\mathcal{E}} - j} |k_2|^j).
$$
But, by assumption, we have $k_1+k_2\neq 0$. As a consequence, we have
$$
R_{\mathcal{E}}:= \left| \sum_{j=1}^n k_j |k_j|^{\alpha_{\mathcal{E}}}  \right| \geq k_1|k_1|^{\alpha_{\mathcal{E}}}+k_2|k_2|^{\alpha_{\mathcal{E}}} - (n-2) |k_3|^{1+\alpha_{\mathcal{E}}} \geq k_1^{\alpha_{\mathcal{E}}} - (n-2) |k_3|^{1+\alpha_{\mathcal{E}}}.
$$
As a consequence, if $R_{\mathcal{E}} \leq k_1^{\alpha_{\mathcal{E}}}/2$, we have $(n-2)|k_3|^{1+\alpha_{\mathcal{E}}} \geq k_1^{\alpha_{\mathcal{E}}}/2$.

\end{proof}
\begin{corollary}
\label{cor:origin}
 Let $k\in \mathcal{D}_n\cap(\mathcal{M}_n \setminus \mathcal{R}_n^{\mathcal{E}})$ for some $n\geq 3$. If $N>2$ is such that
\begin{equation}
\label{eq:hp:cor:lem:or}
\left| \frac{k_1}{k_1|k_1|^{\alpha_{\mathcal{E}}}+\dots+k_n|k_n|^{\alpha_{\mathcal{E}}}} \right| \geq N
\end{equation}
then there exists $k'\in \mathcal{M}_{n-2}$ such that
\begin{equation}
\label{case:relou1}
u^k = I_a u^{k'} \ \mathrm{where} \ a\geq N
\end{equation}
or
\begin{equation}
\label{case:relou2}
|k_3|^{1+\alpha_{\mathcal{E}}} \geq \frac{N^{\alpha_{\mathcal{E}}}}{2(n-2)^{\alpha_{\mathcal{E}}}}.
\end{equation}
\end{corollary}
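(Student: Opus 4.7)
The plan is to perform a case split on whether $k_1 + k_2 = 0$ or not. The vanishing case will directly produce an action factor $I_{|k_1|}$ and give alternative~\eqref{case:relou1}, while the non-vanishing case is exactly the situation covered by Lemma~\ref{lem:origin}, which will deliver the lower bound on $|k_3|$ needed for alternative~\eqref{case:relou2}. A common ingredient in both cases is that the small divisor $\sum_j k_j|k_j|^{\alpha_{\mathcal{E}}}$ is a nonzero integer (nonzero because $k\notin \mathcal{R}_n^{\mathcal{E}}$), so that the hypothesis \eqref{eq:hp:cor:lem:or} immediately forces $|k_1|\geq N$.

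Suppose first that $k_1 + k_2 = 0$. Since $k \in \mathcal{D}_n$ we have $k_2 = -k_1$, hence $u^k = I_{|k_1|}\,u^{k'}$ with $k' := (k_3,\dots,k_n)$; the zero-momentum condition on $k$ transfers to $k'$, so $k' \in \mathcal{M}_{n-2}$. Setting $a = |k_1|$, the only remaining check is $a \geq N$, which is exactly the integrality-based lower bound noted above. This yields \eqref{case:relou1}.

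Suppose now that $k_1 + k_2 \neq 0$. Then Lemma~\ref{lem:origin} applies and yields
$$
\max\!\Bigl((n-2)^{\alpha_{\mathcal{E}}} |k_3|^{1+\alpha_{\mathcal{E}}},\ \Bigl|\sum_j k_j |k_j|^{\alpha_{\mathcal{E}}}\Bigr|\Bigr) \geq \frac{|k_1|^{\alpha_{\mathcal{E}}}}{2}.
$$
I expect the second alternative to be quickly incompatible with \eqref{eq:hp:cor:lem:or}: combined with \eqref{eq:hp:cor:lem:or} it would force $|k_1|^{1-\alpha_{\mathcal{E}}}\geq N/2$, which for $N>2$ is impossible in both regimes $\alpha_{\mathcal{E}}\in\{1,2\}$ (using $|k_1|\geq 1$). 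Hence only the first alternative can hold, and combining it with $|k_1|\geq N$ and dividing by $(n-2)^{\alpha_{\mathcal{E}}}$ produces exactly \eqref{case:relou2}. There is no really delicate step; the only point requiring mild care is the derivation of the common lower bound $|k_1|\geq N$, which is immediate once one notices the integrality of the small divisor.
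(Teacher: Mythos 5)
Your proof is correct and follows exactly the same route as the paper's: deduce $|k_1|\geq N$ from the integrality of the small divisor, split on $k_1+k_2=0$, and in the non-vanishing case invoke Lemma~\ref{lem:origin} with the observation that for $N>2$ only the first branch of the max can occur. You simply spell out a few steps (why the second alternative of the lemma is impossible, and the harmless use of $I_{|k_1|}$ in place of $I_{k_1}$) that the paper leaves implicit.
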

\begin{proof}
By assumption, $k$ is non-resonant, i.e. $k_1|k_1|^{\alpha_{\mathcal{E}}}+\dots+k_n|k_n|^{\alpha_{\mathcal{E}}}\in \mathbb{Z}^*$. Consequently, by \eqref{eq:hp:cor:lem:or}, we have $|k_1|\geq N$.

 If $k_1+k_2 = 0$ then $u^k$ is of the form $u^k = I_{k_1} u^{(k_3,\dots,k_n)}$. Consequently, \eqref{case:relou1} is satisfied. Else if, $k_1+k_2 \neq 0$, since $N> 2$, applying Lemma \ref{lem:origin}, we have \eqref{case:relou2}.
\end{proof}

In the Birkhoff normal form process, naturally, we generate Hamiltonians obtained by computing Poisson brackets with the Hamiltonian $\chi$ we used to generate the change of coordinates (see \eqref{monpetitchi}) . A priori, due to the unbounded operator $\partial_x$ in the Poisson bracket, the coefficients of these new Hamiltonians may be unbounded. However, since the coefficients of $\chi$ are supported in some sets $\J_{n,N}^\E$, we can prove in the following Lemma that, up to a factor $N$, they are still bounded.
\begin{lemma} \label{poisson}
Let $N\geq2$, $r\geq 3$, $n\geq 3$. If $P,\chi$ are homogeneous polynomials of degree $n$ (resp. $r$) of the form
$$
P(u)= \sum_{k\in \M_n} c_k u^k \quad \mathrm{and} \quad \chi(u)=\sum_{k\in\J_{n,N}^\E}\frac{b_k}{i\Omega_\E(k)} u^k
$$
where $c$ and $b$ are bounded and satisfy the reality condition then $\{ \chi,P\}$ is an homogeneous polynomial of the form
\begin{equation}
\label{nemo}
\{  \chi,P\}(u) = \sum_{k\in \M_{n+r-2}} d_k \, u^k
\end{equation}
where $d$ satisfies the reality condition and is bounded :
\begin{equation} \label{cab} \| d\|_{\ell^\infty}\leq 2 (2\pi)^{-\alpha_{\E}}Nnr \| c\|_{\ell^\infty} \|b\|_{\ell^\infty}. 
\end{equation}
\end{lemma}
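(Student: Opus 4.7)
The plan is to compute $\{\chi,P\}$ directly from the Fourier-space formula \eqref{poisson2} and then verify the three claims (homogeneity/zero-momentum, reality, and the $\ell^\infty$ bound) by inspection. Assuming without loss that $b$ and $c$ are symmetric in their indices (we may symmetrize because the monomials $u^k$ only see the multiset of indices), we expand
\[
\{\chi,P\}(u)\;=\;\sum_{j\in\mathbb{Z}^*}(2i\pi j)\,(\partial_{u_{-j}}\chi)(u)\,(\partial_{u_j}P)(u).
\]
Differentiating term by term, the derivative $\partial_{u_{-j}}\chi$ picks up a factor $r$ (one per slot, by symmetry) and freezes the last index of the underlying multi-index to $-j$, and similarly $\partial_{u_j}P$ produces a factor $n$. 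Grouping the remaining $(r-1)+(n-1)=n+r-2$ free indices into a single multi-index $m$, the zero-momentum condition is automatic since the indices of $\chi$ and $P$ each sum to zero and the two extra indices $-j$ and $j$ cancel. Hence $\{\chi,P\}$ has the form \eqref{nemo}.

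The central estimate is the bound of the ratio $|2\pi j|/|\Omega_\E(\ell)|$ when $\ell\in\J_{r,N}^\E$ and $j$ is one of the coordinates of $\ell$. By the very definition of $\J_{r,N}^\E$,
\[
|j|\;\leq\;\max_{1\leq i\leq r}|\ell_i|\;\leq\;N\,\bigl|\ell_1|\ell_1|^{\alpha_\E}+\dots+\ell_r|\ell_r|^{\alpha_\E}\bigr|
\;=\;\frac{2N}{(2\pi)^{1+\alpha_\E}}\,|\Omega_\E(\ell)|,
\]
so that $|2\pi j|/|\Omega_\E(\ell)|\leq 2N(2\pi)^{-\alpha_\E}$. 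This is precisely where the hypothesis $\ell\in\J_{r,N}^\E$ is used, and it is the single step that makes the whole scheme work: without it, the unbounded factor $|j|$ coming from the Poisson bracket would be uncontrolled.

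For a fixed $m\in\mathcal{M}_{n+r-2}$, each contribution to $d_m$ comes from choosing one slot in $\chi$ (containing $-j$, $r$ choices), one slot in $P$ (containing $+j$, $n$ choices), and a decomposition of $m$ into the remaining $r-1$ entries of $\ell$ and the remaining $n-1$ entries of $p$; by the symmetry of $b$ and $c$, all these contributions equal $\pm b_{\ell}c_p/(i\Omega_\E(\ell))$ times $2i\pi j$. Combining the uniform estimate above with the $nr$ combinatorial factor yields
\[
|d_m|\;\leq\;nr\cdot\frac{2N}{(2\pi)^{\alpha_\E}}\,\|b\|_{\ell^\infty}\|c\|_{\ell^\infty},
\]
which is \eqref{cab}. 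Finally, the reality condition for $d$ follows from those of $b$ and $c$ together with the identities $\Omega_\E(-\ell)=-\Omega_\E(\ell)$ and the odd parity of the map $j\mapsto 2i\pi j$: sending $m\mapsto -m$ amounts to sending all summation indices to their opposites, and the product of conjugations and sign flips leaves the expression invariant up to complex conjugation. The main (and essentially only) obstacle is the ratio bound in the second paragraph; the rest is bookkeeping.
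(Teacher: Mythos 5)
Your proof is correct and follows essentially the same route as the paper's own: expand the Poisson bracket in Fourier slots, observe that the connecting index $\pm j$ is a coordinate of the $\chi$-multi-index so that the $\J_{r,N}^\E$ membership bounds $|2\pi j|/|\Omega_\E(\ell)|$ by $2N(2\pi)^{-\alpha_\E}$, and count the $nr$ slot pairings. The paper simply writes the resulting coefficient formula more explicitly, while you symmetrize $b,c$ first, but the arithmetic, the combinatorics, and the single nontrivial inequality are identical.
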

\begin{proof}
We note that $\chi$ writes
$$
\chi(u) = \sum_{k\in \M_r} \widetilde{b}_k u^k
$$
where $ \widetilde{b}_k = 0$ if $k\notin J^\E_{r,N}$ and $ \widetilde{b}_k  = b_k/i\Omega_\E(k)$ else. Consequently, $\{  \chi,P\}$ is of the form \eqref{nemo}, where
$$
d_k=2i\pi\ell  \sum_{i=1}^n \sum_{i'=1}^r c_{k_1\cdots k_{i-1},-\ell,k_{i},\cdots,k_{n-1}}   \widetilde{b}_{k_n\cdots k_{n+i'-2},\ell,k_{n+i'-1},\cdots,k_{n+r-2}}.
$$
where $\ell=k_1+\cdots+k_{n-1}=-k_n-\cdots-k_{r+n-2}$. Thus, by definition of $\J_{r,N}^\E$, using that $ \widetilde{b}_k=0$ if $k\notin J^\E_{r,N}$ we get that $d$ satisfies the estimate \eqref{cab}.
\end{proof}

In the following proposition, we realize the Birkhoff normal form process. In particular, we pay lot of attention to the estimate of the coefficients of the Hamiltonian. The proof of the Theorem \ref{thm-BNF} (in the next subsection) will rely on this proposition and its proof.
\begin{proposition}
\label{ellefaitlegrosduboulot}
Being given $\mathcal{E}\in \{ \mathrm{\ref{gKdV}}, \mathrm{\ref{gBO}}\}$, $r\geq 2$, $s\geq 1$, $N\gtrsim_r 1$ and $\eps_0 \lesssim_{r,s} N^{-3}$, there exist two symplectic maps, $\tau^{(0)},\tau^{(1)}$, preserving the $L^2$ norm, making the diagram \eqref{monbeaudiagramme_roidesforets} to commute
and close to the identity (i.e. satisfying \eqref{tau}), such that, on $B_s(0,2\eps_0)$, $H_{\mathcal{E}} \circ \tau^{(1)}$ writes
\begin{equation}
\label{eq:firstnormalform}
H_{\mathcal{E}} \circ \tau^{(1)} = Z_2^{\mathcal{E}} + \sum_{k\in \M} c_k u^k.
\end{equation}
where $c$ satisfies the reality condition and is such that
\begin{itemize}
\item[i)] $c_k = 0$ if $3\leq \#k \leq r$ and $k\in \J_{3N^3}^\E$
\item[ii)] $|c_k| \lesssim_{\#k} N^{3\#k -9}$ if $3\leq \#k \leq r$ and $k\notin \J_{3N^3}^\E$
\item[iii)] $|c_k|\leq \rho^{\#k}  N^{3\#k - 9}$ for $k\in \M$ and some $\rho \lesssim_r 1$.
\end{itemize}
\end{proposition}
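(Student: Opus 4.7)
The plan is an iterative Birkhoff normal form, processing one degree at a time from $3$ up to $r$. Setting $H^{(2)} := H_\E = Z_2^\E + \sum_{m\geq 3} a_m \int u^m \, dx$ and assuming inductively that
\[
H^{(n-1)} = Z_2^\E + \sum_{k\in\M} c_k^{(n-1)} u^k
\]
already satisfies conditions (i)--(iii) for degrees at most $n-1$, I would remove the non-resonant degree-$n$ terms indexed by $\J_{n,3N^3}^\E$. Using the resolvator identity \eqref{resolvator}, one defines
\[
\chi_n(u) := \sum_{k \in \J_{n,3N^3}^\E} \frac{c^{(n-1)}_k}{i\,\Omega_\E(k)}\, u^k,
\]
so that $\{\chi_n, Z_2^\E\}$ exactly cancels the $\J$-part of the degree-$n$ piece of $H^{(n-1)}$. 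The reality condition on the coefficients is preserved at each step because dividing by $i\Omega_\E$ and Poisson-bracketing both respect it. The truncation parameter $3N^3$ is chosen so that terms excluded from $\J$ fall inside one of the two regimes of Corollary \ref{cor:origin} (and thereby line up with the $\mathrm{R}^{(\mu_3>N)}$ and $\mathrm{R}^{(I_{>N^3})}$ categories of Theorem \ref{thm-BNF}).

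Lemma \ref{lem-chi} then produces a symplectic flow $\Phi_{\chi_n}^t$ on a ball $B_s(0,\eps_n)$ with $\eps_n \gtrsim_{r,s} N^{-3}$, satisfying the closeness bound \eqref{antalgiques}; since every monomial in $\chi_n$ has zero momentum, $\{L^2, \chi_n\} = 0$, so $\Phi_{\chi_n}^t$ also preserves the $L^2$-norm. I then set $H^{(n)} := H^{(n-1)} \circ \Phi_{\chi_n}^1$ and expand it through the Lie series $\sum_{j\geq 0} \tfrac{1}{j!}\,\mathrm{ad}_{\chi_n}^j H^{(n-1)}$. Each nested bracket is controlled by Lemma \ref{poisson} applied with truncation $3N^3$: it multiplies the $\ell^\infty$ norm of coefficients by at most $\lesssim_r N^3$ times the bare (pre-$\Omega$) size of $\chi_n$'s coefficients, which by induction is $\lesssim_n N^{3n-9}$. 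A standard combinatorial bookkeeping then shows that degree-$m$ coefficients of $H^{(n)}$ remain $\lesssim_m N^{3m-9}$ for $m\leq r$: the exponent $3m-9$ is the correct normalization since at degree $3$ it reduces to a universal constant ($|a_3|$), and each additional degree costs at most one Poisson bracket, hence one factor of $3N^3$. The analytic tail (iii) is produced simultaneously by summing the remaining terms of the Lie series and invoking Lemma \ref{lem:jaiunbeaurayon} applied to the convergent majorizing series.

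After step $r$, set $\tau^{(1)} := \Phi_{\chi_3}^1 \circ \cdots \circ \Phi_{\chi_r}^1$; a telescoping application of \eqref{antalgiques}, dominated by the $n=3$ term, gives \eqref{tau} provided $\eps_0 \lesssim_{r,s} N^{-3}$, and symplecticity as well as $L^2$-preservation are inherited from each factor. The map $\tau^{(0)}$ is then defined as the local inverse of $\tau^{(1)}$ on $B_s(0,\eps_0)$, which exists and takes values in $B_s(0,2\eps_0)$ by the quantitative inverse function theorem (the differential being $N^3\eps_0$-close to the identity). The main obstacle is closing the induction on coefficients sharply: one must ensure that the exponent $3\#k-9$ propagates without loss at every step. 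This requires carefully exploiting the asymmetry in Lemma \ref{poisson} (only the generator $\chi_n$, not the full $H^{(n-1)}$, pays a factor $3N^3$ per bracket) together with the smallness $N^3 \|u\|_{\dot H^s} \lesssim 1$ granted by the radius restriction $\eps_0 \lesssim_{r,s} N^{-3}$; without this the Lie series for the analytic tail would not converge and the bound $\rho \lesssim_r 1$ in (iii) would fail.
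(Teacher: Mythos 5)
Your proposal follows the same iterative Birkhoff normal form strategy as the paper, invoking the same key ingredients (the resolvator identity \eqref{resolvator}, Lemma \ref{lem-chi} for the well-posedness of the Lie transforms, Lemma \ref{poisson} for the coefficient growth under Poisson bracket, and the self-consistent exponent $3\#k-9$), with $\tau^{(1)}$ built as the composition $\Phi_{\chi_3}^1\circ\cdots\circ\Phi_{\chi_r}^1$; the only cosmetic divergence is that the paper constructs $\tau^{(0)}$ explicitly as the reverse composition of the backward flows $\Phi_{-\chi_n}^1$ rather than by the inverse function theorem, which makes symplecticity and $L^2$-preservation of $\tau^{(0)}$ immediate rather than requiring a separate verification. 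Your phrase \emph{standard combinatorial bookkeeping} does hide the most delicate computation in the paper's proof --- the control of $\prod_{i=1}^{\ell}\frac{n+(i-1)(r-2)}{i}$ by $r^\ell n^n/n!$ and of $n^n/n!$ by $e^{n-1}$, which is what keeps $\rho\lesssim_r 1$ in condition (iii) --- so you have correctly identified the main obstacle but not quite exhibited the estimate that closes it.
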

The index $3N^3$ in $\J_{3N^3}^\E$ will be crucial in the formal computation of the sixth order integrable term $Z^\E_{6,N^3}$ (we refer to Remark \ref{3N^3} for a more detailed explanation). Before proving this proposition, let us explain in details where does the exponent $3\# k - 9$ come from (we will often use similar technics to get some explicit exponent, nevertheless we will not explain anymore how we get them, we will just check that they work). 
\begin{remark} \label{about3k-9} Somehow the bound $|c_k| \lesssim_{\#k} N^{3\#k -9}$ is natural to get a class of Hamiltonian stable by the changes of coordinates of the Birkhoff normal form process. Indeed, it will generates new terms of the form
$$
\big\{ \sum_{k\in \J_{r,3N^3}^\E} \frac{c_k}{i\Omega(k)}u^k, \sum_{k\in \M_n} c_k u^k\big\} = \sum_{k\in \M_{n+r-2}} \widetilde{c}_k u^k.
$$ 

By Lemma \ref{poisson}, we know that the coefficients $\widetilde{c}_k$ satisfy the estimate
$$
\widetilde{c}_k \lesssim_{\#k} N^3 \| (c_k)_{\#k = r} \|_{\ell^\infty} \| (c_k)_{\#k = n} \|_{\ell^\infty} \lesssim_{\#k} N^3 N^{3 r -9} N^{3n-9}.
$$
Consequently, since $\#k =  n+r-2$, we deduce that
$$
\widetilde{c}_k \lesssim_{\#k}  N^{3 (n+r-2) -9} \simeq_{\#k} N^{3 \#k -9}
$$
which is the same estimate we had for $c_k$ : the class is stable.

Now let us explain how we got this bound. Assume that we are looking for a bound a the form $|c_k|\lesssim_{\# k} N^{\alpha \#k - \beta}$ with $\alpha,\beta\geq 0$ such that the bound is satisfied by $H_{\E}$ and it is stable by the Birkhoff normal form process (i.e. $\widetilde{c}_k$ satisfies the same bound). 

The coefficients of $H_{\E}$ are independent of $N$, consequently a priori the best estimate we know is $|c_k|\lesssim_{\# k} 1$. Consequently, $\alpha$ and $\beta$ have to satisfy $\alpha n - \beta \geq 0$ for $n\geq 3$. Of course, since $\alpha \geq 0$, it is enough that it is satisfied for $n=3$, i.e. to have
\begin{equation}
\label{premierechoseaverifier}
3 \, \alpha  - \beta \geq 0.
\end{equation}

Now, if we want $\widetilde{c}_k$ to satisfy the same estimate as $c_k$, $\alpha$ and $\beta$ have to be such that
$$
N^3 N^{\alpha r - \beta} N^{\alpha n - \beta} \leq  N^{\alpha (r+n-2) - \beta}.
$$
Consequently, they have to satisfy the estimate 
$$
3 +  \alpha r - \beta + \alpha n - \beta \leq \alpha (r+n-2) - \beta
$$
which is equivalent to
\begin{equation}
\label{secondechoseaverifier}
2\alpha -\beta \leq -3.
\end{equation}
Finally,  we observe that $\alpha = 3$ and $\beta = -9$ is the sharpest possible choice to ensure that both \eqref{premierechoseaverifier} and \eqref{secondechoseaverifier} are satisfied.
%\begin{figure}[h]
%\centering
% \includegraphics[scale = 0.66]{3k9.pdf}
%\vspace{-0.5cm}
%\caption{  The hatched area is the set of the numbers $\alpha,\beta$ satisfying \eqref{premierechoseaverifier} and \eqref{secondechoseaverifier} (i.e.  $\beta \leq 3 \, \alpha  $ and $\beta \geq 2\alpha  + 3$).}
%\label{Fig1}
%\end{figure}

\end{remark}
\begin{proof}[Proof of Proposition \ref{ellefaitlegrosduboulot}.] We prove this Proposition by induction on $r$. 

First, we note if $r=2$ (i.e. initially) it is satisfied. Indeed, we have assumed that the nonlinearity $f$ is analytic. Consequently, $H_\E$ is of the form
$$
H_\E = Z_2^{\mathcal{E}} + \sum_{k\in \M} c_k u^k 
$$
where $c_k=a_{\# k}\in \mathbb{R}$ satisfies $|c_k| \lesssim \rho^{\#k}$ for some $\rho>0$ depending only on $f$. A fortiori, $c$ also satisfies the reality condition and $|c_k| \lesssim N^{3\#k - 9}$ for $k\in \M$.

Now, we assume that the result of Proposition \ref{ellefaitlegrosduboulot} holds at the index $r-1\geq 2$ and we aim at proving that it holds at the index $r$. For $n\geq 3$, we denote by $P_n$ the homogeneous term of degree $n$ of the nonlinearity :
\begin{equation}
\label{monpetitPn}
P_n = \sum_{k\in \M_n } c_k u^k.
\end{equation}
Following the general strategy of  Birkhoff normal forms (see for instance \cite{Bam03, BG06, G, KillBill}), we aim at killing the non-resonant terms of $P_{r}$. Consequently, in view of \eqref{resolvator}, we set 
\begin{equation}
\label{monpetitchir}
\chi_r = \sum_{k\in \J_{r,3N^3}^\E } \frac{ c_k}{ i\Omega(k)} u^k.
\end{equation}
Roughly speaking $\chi_r $ is the solution of the homological equation
$$
\{\chi_r , Z_2^\E\} + P_r = \mathrm{(resonant \ terms)} +  \mathrm{(remainder \ terms)}.
$$
We refer the reader to the proof of Theorem \ref{thm-BNF} in the next section for details about this decomposition and to Remark \ref{3N^3} for choice of $3N^3$ (it is crucial in the formal computation of the sixth order integrable term $Z^\E_{6,N^3}$). Here, more precisely, $\chi_r $ is the solution of the homological equation
\begin{equation}
\label{PBchirZ2}
\{\chi_r , Z_2^\E\} + P_r =  \sum_{k\in \M_r \setminus \J_{r,3N^3}^\E} c_k u^k =:R_r.
\end{equation}
 Using the induction hypothesis we have $N^3 \|(c_k)_{\#k = r}\|_{\ell^\infty} \lesssim_r N^{3} N^{3 r - 9} \simeq_r N^{3(r-2)}$ and, so applying Lemma \ref{lem-chi}, provided that $\eps_0$ satisfies an estimates of the form $ \eps_0 \lesssim_{r,s} N^{-3}$, $-\chi_r$ generates an Hamiltonian flow $\Phi_{-\chi_r}^t$, $0\leq t \leq 1$ mapping $B_s(0,(3/2)\varepsilon_0)$ into $B_s(0, \varepsilon_0)$ and close to the identity :
$$
\forall t \in (0,1), \ \| \Phi_{-\chi_r}^t(u) - u\|_{\dot{H}^s} \lesssim_{r,s}  N^3  \|(c_k)_{\#k = r}\|_{\ell^\infty}  \|u\|_{\dot{H}^s}^{r-1} \lesssim_{r,s}  N^{3(r-2)}   \|u\|_{\dot{H}^s}^{r-1} \leq N^3 \|u\|_{\dot{H}^s}^{2}.
$$
Similarly, $\chi_r$ generates an Hamiltonian flow $\Phi_{\chi_r}^t$, $0\leq t \leq 1$ mapping $B_s(0,2\varepsilon_0)$ into $B_s(0,3\varepsilon_0)$ and close to the identity, i.e. $\| \Phi_{\chi_r}^t(u) - u\|_{\dot{H}^s}  \leq N^3 \|u\|_{\dot{H}^s}^{2}$.
Note that by construction, $\Phi_{\chi_r}^t$ and $\Phi_{-{\chi_r}}^t$ are symplectic and $\Phi_{\chi_r}^t\circ \Phi_{-\chi_r}^t(u) = u$. Furthermore, since $\J_{r,3N^3}^\E \subset \M_r$, $\chi$ commutes with $\| \cdot\|_{L^2}^2$. Consequently, applying the Noether's theorem, $\Phi_{\chi_r}^t$ and $\Phi_{-{\chi_r}}^t$ preserve the $L^2$ norm.

Provided that $\eps_0$ satisfies an estimates of the form $ \eps_0 \lesssim_{r,s} N^{-3}$, since $\tau^{(0)}$ and $\tau{(1)}$ are close to the identity (i.e. they satisfy \eqref{tau}), without loss of generality, we can assume that $\tau^{(0)}$ maps $B_s(0, \varepsilon_0)$ into $B_s(0,(3/2)\varepsilon_0)$ and $\tau^{(1)}$ maps $B_s(0,3 \varepsilon_0)$ into $\dot{H}^s$ (and that the decomposition \eqref{eq:firstnormalform} holds on $B_s(0,3 \varepsilon_0)$). Thus it makes sense to consider $\tau^{(1)}\circ \Phi_{\chi}^t$ and $\Phi_{-\chi_r}^t \circ \tau^{(0)}$, and we have $\tau^{(1)}\circ \Phi_{\chi_r}^t\circ \Phi_{-\chi_r}^t \circ \tau^{(0)}(u)=u$. Note that of course $\tau^{(1)}\circ \Phi_{\chi_r}^t$ and $\Phi_{-\chi_r}^t \circ \tau^{(0)}$ are symplectic, preserve the $L^2$ norm and are close to the identity (see \eqref{tau}).
 
Consequently, now, we only have to focus on the Taylor expansion of $H_\E \circ \tau^{(1)}\circ \Phi_{\chi}^1$. First, we recall that, since $\Phi_{\chi}^1$ is a Hamiltonian flow, for any $j\geq 2$, we have on $B_s(0,2 \varepsilon_0)$
\begin{equation}
\label{monpetitdevdetaylor}
H_\E \circ \tau^{(1)}\circ \Phi_{\chi}^1 =\sum_{\ell=0}^j \frac{1}{\ell !} \mathrm{ad}_{\chi_r}^\ell(H_\E \circ \tau^{(1)} ) + \int_0^1  \frac{(1-t)^j}{j !} \mathrm{ad}_{\chi_r}^{j+1} (H_\E \circ \tau^{(1)} )\circ \Phi_{\chi}^t \mathrm{d}t
\end{equation}
where $\mathrm{ad}_{\chi_r} := \{ \chi_r, \cdot\}$. We aim at proving that the remainder term goes to $0$ as $j$ goes to $+\infty$ and to control the convergence of the entire series. Recalling that by induction hypothesis $H_\E \circ \tau^{(1)} = Z_2^\E + \sum_{n\geq 3} P_n$ (see \eqref{monpetitPn}), we have to estimate the coefficients of $\frac1{\ell !} \mathrm{ad}_{\chi_r}^\ell P_n$.

\noindent \emph{$*$ Estimation of $\frac1{\ell !} \mathrm{ad}_{\chi_r}^\ell P_n$.} Considering the definition of $P_n$ in \eqref{monpetitPn} and $\chi_r$ in \eqref{monpetitchir},  applying iteratively the Lemma \ref{poisson} we deduce that $(\ell !)^{-1} \mathrm{ad}_{\chi_r}^\ell P_n$ is an homogeneous polynomial of degree $n+\ell(r-2)$ of the form
\begin{equation}
\label{defdnl}
\frac1{\ell !} \mathrm{ad}_{\chi_r}^\ell P_n = \sum_{k\in \M_{n+\ell(r-2)}} d_k(n,\ell) \, u^k
\end{equation}
where $d(n,\ell)\in \mathbb{C}^{\M_{n+\ell(r-2)}}$ satisfies the reality condition and the estimate
$$
\| d(n,\ell) \|_{\ell^\infty} \leq  \frac1{\ell !}   \| (c_k)_{\# k =n}\|_{\ell^\infty} (N^3 r \|(c_k)_{\#k = r}\|_{\ell^\infty})^{\ell} \prod_{i=1}^\ell n + (i-1)(r-2).
$$
Consequently, using the induction hypothesis and recalling that $n+\ell(r-2)$ is the degree of $(\ell !)^{-1} \mathrm{ad}_{\chi_r}^\ell P_n$, we have
\begin{equation*}
\begin{split}
\| d(n,\ell) \|_{\ell^\infty} &\leq  \rho^n N^{3n-9}  (\rho^r N^{(3r-9)} N^3 )^{\ell}  r^\ell \prod_{i=1}^\ell \frac{n + (i-1)(r-2)}i \\
				&= (r\rho^2)^{\ell} (\rho N^3)^{n+\ell(r-2)} N^{-9}  \prod_{i=1}^\ell \frac{n + (i-1)(r-2)}i.
\end{split}
\end{equation*}
Then let us estimate the product.  For $\ell\geq n$ we write 
\begin{align*} \prod_{i=1}^\ell \frac{n+(i-1)(r-2)}{i}=& \prod_{i=1}^n \frac{n+(i-1)(r-2)}{i} \prod_{i=n+1}^\ell \frac{n+(i-1)(r-2)}{i} \\
\leq& (r-1)^n \prod_{i=1}^n \frac ni \ (r-1)^{\ell-n}\prod_{i=n+1}^\ell\frac{i-1}i\leq r^\ell\frac{n^n}{n!}\end{align*}
while for $\ell\leq n$ we have  
$$\prod_{i=1}^\ell \frac{n+(i-1)(r-2)}{i}\leq  (r-1)^\ell \prod_{i=1}^\ell \frac ni\leq r^\ell \frac{n^n}{n!} .$$
Therefore using that $ \frac{n^n}{n!} \leq e^{n-1}$, for $n\geq 1$, (it results of an elementary Maclaurin–Cauchy test), we get
\begin{equation}
\label{est_dnl_final}
\| d(n,\ell) \|_{\ell^\infty} \leq e^{n-1} (r\rho)^{2\ell}  (\rho N^3)^{n+\ell(r-2)} N^{-9}.
\end{equation}
\noindent \emph{$*$ Estimation of $\frac1{\ell !} \mathrm{ad}_{\chi_r}^\ell Z_2^\E$.} Recalling that by construction (see \eqref{PBchirZ2}), we have
$$
\{\chi_r, Z_2^\E\} = -\sum_{k\in M_r} \mathbb{1}_{k\in \J_{r,3N^3}^\E} c_k u^k.
$$
The previous analysis proves that $ \mathrm{ad}_{\chi_r}^\ell Z_2^\E$ is an homogeneous polynomial of degree $\ell (r-2)+2$ of the form
$$
\frac1{\ell !} \mathrm{ad}_{\chi_r}^\ell Z_2^\E = \sum_{k\in \M_{\ell (r-2)+2}} \widetilde{d}(\ell) \, u^k
$$
where $\widetilde{d}(\ell)$ satisfies the reality condition and the estimate
\begin{equation}
\label{est_tidnl_final}
\| \widetilde{d(\ell)} \|_{\ell^\infty} \leq \ell^{-1} e^{r-1} (r\rho)^{2(\ell-1)}  (\rho N^3)^{r+(\ell-1)(r-2)} N^{-9}.
\end{equation}

\noindent \emph{$*$ Convergence of the remainder term.} We recall that by induction hypothesis, $H_\E \circ \tau^{(1)}$ writes on $B_s(0,3\eps_0)$
$$
H_\E \circ \tau^{(1)} = Z_2^{\mathcal{E}} + \sum_{k\in \M} c_k u^k = Z_2^{\mathcal{E}} + \sum_{n\geq 3} P_n.
$$
Here implicitly, $3\eps_0\leq (c_s \rho N^3)^{-1}$, with $c_s\leq \pi/\sqrt3$ to ensure that the above entire series is absolutely convergent (see Lemma \ref{lem:jaiunbeaurayon}). Consequently, on $B_s(0,2\eps_0)$, the remainder term of the Taylor expansion \eqref{monpetitdevdetaylor} writes
\begin{multline*}
\mathrm{Rem}^{(j)} = \int_0^1  \frac{(1-t)^j}{j !} \mathrm{ad}_{\chi_r}^{j+1} (H_\E \circ \tau^{(1)} )\circ \Phi_{\chi}^t \mathrm{d}t =  \int_0^t  \frac{(1-t)^j}{j !} (\mathrm{ad}_{\chi_r}^{j+1} Z_2^\E)\circ \Phi_{\chi}^t \mathrm{d}t \\+ \sum_{n\geq 3} \int_0^t  \frac{(1-t)^j}{j !} (\mathrm{ad}_{\chi_r}^{j+1} P_n)\circ \Phi_{\chi}^t \mathrm{d}t=: R_2^{(j)} + \sum_{n\geq 3} R_n^{(j)}.
\end{multline*}
We aim at proving that provided $N^3 \eps_0$ is small enough then for $u\in B_s(0,2\eps_0)$, $\mathrm{Rem}^{(j)}(0)$ goes to $0$  as $j$ goes to $+\infty$.

By definition of $d(n,j+1)$ (see \eqref{defdnl}), for $u\in \dot{H}^s$, we have
\begin{multline*}
\left|\frac1{(j+1)!}\mathrm{ad}_{\chi_r}^{j+1} P_n(u)\right| =  \! \! \sum_{k\in \M_{n+(j+1)(r-2)}}  \! \! |d_k(n,j+1) \, u^k| \leq \|d(n,j+1)\|_{\ell^\infty} (c_s \|u\|_{\dot{H}^s})^{n+(j+1)(r-2)}\\
\mathop{\leq}^{\eqref{est_dnl_final}}  e^{n-1} (r\rho)^{2(j+1)}  (c_s \|u\|_{\dot{H}^s} \rho N^3)^{n+(j+1)(r-2)} N^{-9}.
\end{multline*}
Consequently, since $\Phi_{\chi_r}^{t}$ maps $B_s(0,2\eps_0)$ in $B_s(0,3\eps_0)$, for $u\in B_s(0,2\eps_0)$, we have
$$
|R_n^{(j)}(u)|=\left| \int_0^t  \frac{(1-t)^j}{j !} (\mathrm{ad}_{\chi_r}^{j+1} P_n)( \Phi_{\chi}^t(u) )\mathrm{d}t \right| \leq e^{n-1} (r\rho)^{2(j+1)}  (3 c_s \eps_{0} \rho N^3)^{n+(j+1)(r-2)} N^{-9}.
$$
Therefore, provided that $ e 3\, c_s \eps_{0} \rho N^3 < 1$ and $(r\rho)^{\frac{2}{r-2}} 3\, c_s \eps_{0} \rho N^3<1$, for $u\in B_s(0,2\eps_0)$ the series $\sum_{n\geq 3} R_n^{(j)}(u)$ is absolutely convergent and goes to $0$ as $j$ goes to $+\infty$.

Similarly, using the estimate \eqref{est_tidnl_final} of $\widetilde{d}(\ell)$, for $u\in B_s(0,2\eps_0)$, we have
$$
|R_2^{(j)}(u)| \leq (j+1)^{-1} e^{r-1} (r\rho)^{2j}  (3\, c_s \eps_{0} \rho N^3)^{r+j(r-2)} N^{-9}.
$$
Consequently, provided that $(r\rho)^{\frac{2}{r-2}} 3\, c_s \eps_{0} \rho N^3\leq1$, for $u\in B_s(0,2\eps_0)$, $R_2^{(j)}(u)$ goes to $0$ as $j$ goes to $+\infty$.

\noindent \emph{$*$ Description and convergence of the series.} We have proven that, provided that $\eps_0 N^3$ is small enough with respect to $r^{-1}$, on $B_s(0,2\varepsilon_0)$, the remainder term of the Taylor expansion \eqref{monpetitdevdetaylor} goes to $0$ as $j$ goes to $+\infty$. Consequently, if $u\in B_s(0,2\eps_0)$, we have
$$
H_\E \circ \tau^{(1)}\circ \Phi_{\chi}^1(u)  =\sum_{\ell=0}^{+\infty} \frac{1}{\ell !} \mathrm{ad}_{\chi_r}^\ell(H_\E \circ \tau^{(1)} )(u) = \sum_{\ell=0}^{+\infty} \frac{1}{\ell !} \mathrm{ad}_{\chi_r}^\ell Z_2^\E(u)  + \sum_{\ell=0}^{+\infty} \sum_{n\geq 3}\frac{1}{\ell !} \mathrm{ad}_{\chi_r}^\ell P_n(u).
$$
First, to order the terms of these series as we want, let us check that they are absolutely convergent. Indeed, realizing the same kind of estimates we did to control the remainder terms, for $u\in B_s(0,2\eps_0)$, we have
$$
\left| \frac{1}{\ell !} \mathrm{ad}_{\chi_r}^\ell P_n(u) \right| \leq N^{-9} (2\,e\, c_s \eps_0  \rho N^3)^n    (2\, c_s\, (r\rho)^{\frac{2}{r-2}} \eps_0  \rho N^3)^{\ell(r-2)} 
$$ 
$$
\left| \frac{1}{(\ell+1) !} \mathrm{ad}_{\chi_r}^{\ell+1} Z_2(u) \right| \leq N^{-9} (2\,e\, c_s \eps_0  \rho N^3)^r    (2\, c_s\, (r\rho)^{\frac{2}{r-2}} \eps_0  \rho N^3)^{\ell(r-2)}  .
$$
Consequently, provided that $ 2\, e \, c_s \eps_{0} \rho N^3 < 1$ and $2\, (r\rho)^{\frac{2}{r-2}}  c_s \eps_{0} \rho N^3<1$, the series are absolutely convergent.
Therefore, defining, for $m\geq 3$, a homogeneous polynomial of degree $m$, denoted $Q_m$, by
$$
Q_m(u) = \sum_{n+\ell(r-2)=m} \frac{1}{\ell !} \mathrm{ad}_{\chi_r}^\ell P_n(u) + \frac{\mathbb{1}_{n=r} }{(\ell+1) !} \mathrm{ad}_{\chi_r}^{\ell+1} Z_2(u) = \sum_{k\in \M_m} \widetilde{c}_k  u^k
$$
where
$$
\widetilde{c}_k = \sum_{n+\ell(r-2)=\#k} d(n,\ell) + \mathbb{1}_{n=k} \widetilde{d}(\ell),
$$
we have on $B_s(0,2\eps_0)$
$$
H_\E \circ \tau^{(1)}\circ \Phi_{\chi}^1 = Z_2^\E + \sum_{m\geq 3} Q_m.
$$ 
Finally, let us check the properties $i),ii)$ and $iii)$.
\begin{itemize}
\item If $m< r$ then the only solution of the equation $n+\ell(r-2)=m$ is $n=m$ and $\ell=0$. Consequently, we have $Q_m = P_m$ and so $\widetilde{c}_k = c_k$  if $3\leq\#k<r$. Therefore $i)$ and $ii)$ are satisfied if $3\leq\#k<r$.
\item If $m=r$ then the only solution of the equation $n+\ell(r-2)=m$ is $n=r$ and $\ell=0$. Consequently, we have $Q_r = P_r + \{\chi_r,Z_2^\E\}$. Therefore, by construction of $\chi_r$ (see \eqref{monpetitchir} and \eqref{PBchirZ2}), if $\#k=r$, we have $\widetilde{c}_k = \mathbb{1}_{k\notin \J_{n,3N^3}^\E}c_k$ and so $i)$ and $ii)$ are satisfied.
\item Finally, we have to establish the general control $iii)$ on $\widetilde{c}_k$. Using the estimate \eqref{est_dnl_final} (resp. of \eqref{est_tidnl_final}) of  $d(n,\ell)$ (resp.  $\widetilde{d}(\ell)$), we have
\begin{multline*}
|\widetilde{c}_k| \leq 2\sum_{n+\ell(r-2)=\#k}  e^{n-1} (r\rho)^{2\ell}  (\rho N^3)^{\#k} N^{-9} \leq N^{-9} (e\, \rho N^3)^{\#k} \sum_{0\leq \ell \leq \frac{\#k}{r-2}} (r^2\rho^2 e^{2-r})^\ell \\
\leq N^{-9} (\widetilde{\rho} N^3)^{\# k}
\end{multline*}
where $\widetilde{\rho} = e^{\frac1{r-2}} \rho \max(r^2\rho^2,e)$ (we have used that the number of terms of the sum above is smaller than or equal to $e^{\frac{\#k}{r-2}}$).
\end{itemize}
\end{proof}

\subsection{Proof of Theorem \ref{thm-BNF} and formal computations}\label{formal}
This section is devoted to the proof of Theorem \ref{thm-BNF} and more particularly to the computation of the fourth order integrable terms and some of the sixth order integrable terms of the resonant normal form. 

Nevertheless, before entering into this proof let us introduce two preparatory lemmas. First, let us prove that the third and fourth order resonant terms are integrable.
\begin{lemma}\label{int-order4} For all $\mathcal{E}\in \{\mathrm{\ref{gBO},\ref{gKdV}}\}$, we have $\mathcal{R}_3^{\mathcal{E}} = \emptyset$ and if $k\in\mathcal{R}_4^{\mathcal{E}}$ then $\Irr(k)=\emptyset$.
\end{lemma}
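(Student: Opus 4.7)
The plan is to verify both claims by direct algebraic computation, separately for the two equations.

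\textbf{Order 3.} For gKdV I would invoke the classical identity
\[
k_1^3+k_2^3+k_3^3 - 3\,k_1 k_2 k_3 = (k_1+k_2+k_3)(k_1^2+k_2^2+k_3^2 - k_1k_2 - k_2k_3 - k_3k_1),
\]
which, under the zero-momentum assumption, collapses the resonance $k_1^3+k_2^3+k_3^3=0$ into $k_1k_2k_3=0$, contradicting $k\in(\mathbb{Z}^*)^3$. For gBO, since the problem is invariant under permutations of the $k_i$'s and under the global sign flip $k\mapsto -k$, I may assume $k_1\geq k_2\geq k_3$ and $k_1>0$ (forcing $k_3<0$). A short case split on the sign of $k_2$ then reduces the resonance $k_1|k_1|+k_2|k_2|+k_3|k_3|=0$, after substituting $|k_3|=k_1+k_2$ (if $k_2>0$) or $k_1=|k_2|+|k_3|$ (if $k_2<0$), to $2k_1k_2=0$ or $2|k_2||k_3|=0$ respectively, both impossible.

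\textbf{Order 4.} Here $\Irr(k)=\emptyset$ exactly means that the four indices pair up as $\{a,-a,b,-b\}$ (so that $u^k$ is a pure product of actions). For gKdV I would use the identity
\[
a^3+b^3+c^3 - (a+b+c)^3 = -3(a+b)(b+c)(c+a),
\]
applied with $(a,b,c)=(k_1,k_2,k_3)$ and $a+b+c=-k_4$. The resonance then rewrites as
\[
(k_1+k_2)(k_2+k_3)(k_3+k_1) = 0,
\]
and each vanishing factor, combined with $k_1+k_2+k_3+k_4=0$, produces the complementary pair that sums to zero, yielding the desired pairing. For gBO, I would again order $k_1\geq k_2\geq k_3\geq k_4$ and argue by signs. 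If exactly one of the $k_i$ has opposite sign to the others, say $k_1,k_2,k_3>0>k_4$, the identity $|k_4|=k_1+k_2+k_3$ substituted into the resonance $k_1^2+k_2^2+k_3^2=k_4^2$ gives $k_1k_2+k_2k_3+k_3k_1=0$, impossible for positive integers. In the remaining case $k_1,k_2>0>k_3,k_4$, the resonance gives $k_1^2+k_2^2=k_3^2+k_4^2$ while momentum gives $k_1+k_2=-(k_3+k_4)$; squaring the latter and subtracting yields $k_1k_2=k_3k_4$, so $\{k_1,k_2\}$ and $\{-k_3,-k_4\}$ are the roots of the same monic quadratic, forcing the pairing $k_1=-k_4$, $k_2=-k_3$.

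The main obstacle is the gBO analysis at $n=4$: because the dispersion $\xi\mapsto\xi|\xi|$ is only piecewise polynomial, no single algebraic identity handles every configuration, and one must unfold the absolute values by a sign case analysis before reducing to a quadratic symmetric-functions argument. The gKdV cases, by contrast, follow at once from standard cubic identities.
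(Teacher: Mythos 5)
Your proof is correct, but it takes a genuinely different route from the paper's. The paper handles both equations uniformly with one geometric argument: after a sign/cardinality reduction it excludes the $(n-1)$-vs-$1$ sign split via the equality case in Minkowski's inequality for the $\ell^{1+\alpha_\E}$ norm, and then settles the $2$-vs-$2$ split for $n=4$ by observing that a line meets a strictly convex sphere in at most two points. That argument is dispersion-agnostic — it works verbatim for any $\xi\mapsto\xi|\xi|^\alpha$ with $\alpha>0$, and it never touches an explicit polynomial identity. You instead compute: for \ref{gKdV} you reduce via the factorizations $k_1^3+k_2^3+k_3^3=3k_1k_2k_3$ (under zero momentum) and $a^3+b^3+c^3-(a+b+c)^3=-3(a+b)(b+c)(c+a)$; for \ref{gBO} you unfold the absolute values by a sign case split and conclude from symmetric-function identities (same sum, same product $\Rightarrow$ same multiset). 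Your computation is more elementary and gives explicit closed forms for the resonance set — which is pleasant and in fact makes the pairing structure of $\mathcal{R}_4$ visible at a glance — but it is specific to $\alpha\in\{1,2\}$, and the \ref{gBO} branch requires the case analysis you flag as the obstacle. The paper's convexity proof trades this explicitness for uniformity and a shorter write-up. Both are complete and valid.
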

\begin{proof} Let $k\in \mathcal{R}_n^{\mathcal{E}}$ with $n\in \{3,4\}$, i.e. $k\in (\mathbb{Z}^*)^n$ satisfies
\begin{equation}
\label{mon_petit_system}
\left\{ \begin{array}{ccc} k_1+\dots+k_n=0, \\
				 k_1 |k_1|^{\alpha_\E} + \dots + k_n |k_n|^{\alpha_\E} = 0.
\end{array}\right. 
\end{equation}
We aim at proving that $n=4$ and $\Irr \, k = \emptyset$. We note that if $k$ is solution of \eqref{mon_petit_system} then $-k$ is solution of \eqref{mon_petit_system} and it is clear that $\Irr \, k=\emptyset$ if and only if $\Irr (-k)=\emptyset$. Consequently, without loss of generality, we assume that
\begin{equation}
\label{plusduncotequedelautre}
\# \{ j \ | k_j >0 \} \geq \# \{ j \ | k_j <0 \}.
\end{equation}
Furthermore since $k_1+\dots+k_n = 0$, there exists $i,j$ such that $k_i<0<k_j$. Consequently, we deduce that $\# \{ j \ |  \ k_j <0 \}\in \{1,2\}$.

\noindent $\bullet$ First, let us prove by contradiction that $\# \{ j \ |  \ k_j <0 \}\neq 1$. By symmetry of \eqref{mon_petit_system} by permutation of the coordinates, without loss of generality, we assume that $k_n <0$ and $k_1,\dots,k_{n-1}>0$.

Therefore, since $-k_n = k_1+\dots+k_{n-1}$, we have
\begin{multline*}
\|  k_1 e_1 + \dots+ k_{n-1} e_{n-1} \|_{1+\alpha_\E} := (k_1^{1+\alpha_{\mathcal{E}}}+\dots+k_{n-1}^{1+\alpha_{\mathcal{E}}})^{\frac1{1+\alpha_\E}} = k_1+\dots+k_{n-1} \\= \|  k_1 e_1 \|_{1+\alpha_\E} + \dots+ \|k_{n-1} e_{n-1} \|_{1+\alpha_\E}
\end{multline*}
where $(e_1,\dots,e_{n-1})$ denotes the canonical basis of $\mathbb{R}^{n-1}$. As a consequence, the vectors $(k_j e_j )_{j=1\dots n-1}$ satisfy the equality case of the Minkowski inequality. Consequently, they should be all collinear which is impossible since, by assumption, $k_j \neq 0$ for all $j\in \llbracket1,n\rrbracket$.

\noindent $\bullet$ We have proven that $\# \{ j \ |  \ k_j <0 \}=2$. Consequently, we deduce of \eqref{plusduncotequedelautre} that $n\neq 3$ and so $n=4$. Without loss of generality, we assume that $k_1,k_2>0$ and we denote $h=(k_1,k_2)$ and $\ell=-(k_3,k_4)$. Consequently, \eqref{mon_petit_system} writes 
\begin{equation}
\label{mon_petit_system_le_retour}
\left\{ \begin{array}{ccc} h_1 + h_2 &=& \ell_1+\ell_2 \\
				h_1^{1+\alpha_{\mathcal{E}}}+h_2^{1+\alpha_{\mathcal{E}}}&=&\ell_1^{1+\alpha_{\mathcal{E}}}+\ell_2^{1+\alpha_{\mathcal{E}}}
\end{array}
\right. .
\end{equation}
To prove that $\Irr k=\emptyset$, we just have to prove that if $(h,\ell)$ is solution of \eqref{mon_petit_system_le_retour} then $h=\ell$ or $h=(\ell_2,\ell_1)=:\ell^{\mathrm{sym}}$. 

If $(h,\ell)$ is a solution of \eqref{mon_petit_system_le_retour} such that $k_1=k_2$ and $\ell_1=\ell_2$ then by the equation, we deduce that $k_1=\ell_1$ and so $h=\ell$. 

Consequently, by symmetry, without loss of generality, we assume that $\ell$ is fixed, that it satisfies $\ell_1\neq \ell_2$ and we consider $h$ as the unknown of the system \eqref{mon_petit_system_le_retour}. First, we observe that $h=\ell$ and $h=\ell^{\mathrm{sym}}$ are two distinct triviales solutions of \eqref{mon_petit_system_le_retour}. Then, we observe that the solution of \eqref{mon_petit_system_le_retour} belong to the intersection between a straight line and a sphere for the $\| \cdot\|_{1+\alpha_{\mathcal{E}}}$ norm. Consequently, since by Minkowski, the norm $\| \cdot\|_{1+\alpha_{\mathcal{E}}}$ is strictly convex on $\mathbb{R}^2$, the number of solution of \eqref{mon_petit_system_le_retour} is not larger than $2$. Therefore, $h=\ell$ and $h=\ell^{\mathrm{sym}}$ are the only solutions of \eqref{mon_petit_system_le_retour}.
\end{proof}

Now let us prove that $\mathcal{M}_3 =\J_{3,1}^\E$, i.e. that we have killed all the cubic terms in the resonant normal form process.
\begin{lemma}\label{lemma3} If $k\in \mathcal{M}_3 $ and $\mathcal{E}\in \{\mathrm{\ref{gBO},\ref{gKdV}}\}$ then
$$
\left| \frac{k_1}{k_1|k_1|^{\alpha_{\mathcal{E}}}+k_2|k_2|^{\alpha_{\mathcal{E}}}+k_3|k_3|^{\alpha_{\mathcal{E}}}} \right| \leq 1.
$$
\end{lemma}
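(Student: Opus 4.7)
The denominator is symmetric in $k_1,k_2,k_3$, so by relabelling one may assume $|k_1|=\max(|k_1|,|k_2|,|k_3|)$; it will suffice to prove that this maximum is bounded by the absolute value of the denominator. Since $k_1+k_2+k_3=0$ and every $k_i\in\mathbb{Z}^*$, exactly one of the three integers has the opposite sign to the other two; that distinguished index has absolute value equal to the sum of the other two, hence is the maximum. Under our normalisation, this distinguished index is $k_1$, so $|k_1|=|k_2|+|k_3|$ and $k_1$ has sign opposite to $k_2,k_3$.

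For \ref{gBO} ($\alpha_\mathcal{E}=1$), one computes directly
\[
k_1|k_1|+k_2|k_2|+k_3|k_3|\;=\;\mathrm{sign}(k_1)\bigl(|k_1|^2-|k_2|^2-|k_3|^2\bigr)\;=\;\mathrm{sign}(k_1)\,2|k_2||k_3|,
\]
using $|k_1|=|k_2|+|k_3|$. Thus the claim reduces to $|k_2|+|k_3|\le 2|k_2||k_3|$, which holds for $|k_2|,|k_3|\ge 1$ (it is the obvious inequality $|k_2|(|k_3|-1)+|k_3|(|k_2|-1)\ge 0$).

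For \ref{gKdV} ($\alpha_\mathcal{E}=2$), we have $k_i|k_i|^2=k_i^3$, and the classical identity
\[
k_1+k_2+k_3=0\ \Longrightarrow\ k_1^3+k_2^3+k_3^3=3k_1k_2k_3
\]
gives $|k_1|k_1|^2+k_2|k_2|^2+k_3|k_3|^2|=3|k_1||k_2||k_3|$. The bound $|k_1|\le 3|k_1||k_2||k_3|$ is then immediate from $|k_2|,|k_3|\ge 1$.

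There is no real obstacle: the only tiny subtlety is the reduction to $|k_1|=\max$, which is legitimate because the denominator is a symmetric function of the three indices, so the fraction $|k_1/\mathrm{denom}|$ is majorised by $\max_i|k_i|/|\mathrm{denom}|$ and this quantity is invariant under permutations.
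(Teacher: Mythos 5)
Your proof is correct and takes essentially the same route as the paper: reduce by the symmetry of the denominator and the overall sign symmetry $k\mapsto -k$ to the case where $k_1$ is the index with the distinguished sign (hence $|k_1|=|k_2|+|k_3|$ is the maximum), then compute the denominator exactly—for gBO it equals $\pm 2|k_2||k_3|$, for gKdV it is $3k_1k_2k_3$ by the identity $a+b+c=0\Rightarrow a^3+b^3+c^3=3abc$. The paper organises the same computation as an enumeration of sign cases "up to natural symmetries", while you make the reduction explicit, which is slightly cleaner but not a different argument.
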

\begin{proof} Since $k\in \mathcal{M}_3$, we have $k_1=-(k_2+k_3)$. Then, up to some natural symmetries, we just have to deal with the following cases.

\begin{itemize}
\item If $\mathcal{E}=\mathrm{\ref{gKdV}}$ then $\left| \frac{k_2+k_3}{k_2^3+k_3^3 - (k_2+k_3)^3} \right| = \frac1{3|k_2k_3|} \leq \frac13$.
\item If $\mathcal{E}=\mathrm{\ref{gBO}}$ and $k_2>0, k_3>0$ then $\left| \frac{k_2+k_3}{k_2^2+k_3^2 - (k_2+k_3)^2} \right| = \frac1{2k_2} + \frac1{2k_3} \leq 1.$
\item If $\mathcal{E}=\mathrm{\ref{gBO}}$ and $k_2>-k_3>0$ then $\left| \frac{k_2+k_3}{k_2^2-k_3^2 - (k_2+k_3)^2} \right| = \frac1{-2k_3} \leq \frac12.$
\end{itemize}
\end{proof}

Now, we focus on the proof of Theorem \ref{thm-BNF}. Naturally, it relies on Proposition \ref{ellefaitlegrosduboulot} and its proof, where we have realized the Birkhoff normal form process.
\begin{proof}[Proof of Theorem \ref{thm-BNF}]
Until the last step, to get convenient notations, we omit the index $\mathcal{E}$. We adopt the same notations as in the proof of Proposition \ref{ellefaitlegrosduboulot}.
Furthermore, during this proof, if $k\in \M$, we denote by $\mu_n(k)$ the $n^{\mathrm{th}}$ largest index among $|k_1|,\dots,|k_\last|$.
\medskip

\noindent \emph{$\bullet$ Step 1 : Identification of the non integrable terms.} \
In this proposition, we have proven that on $B_{s}(0,2\eps_0)$ we have the decomposition
$$
H \circ \tau^{(1)} = Z_2^{\mathcal{E}} +  \sum_{\substack{k\in \M \setminus \J_{3N^3} \\ \#k\leq  r}} c_k u^k + \sum_{\substack{k\in \M\\ \#k\geq r+1}} c_k u^k
$$
where $c$ satisfies $ii)$ and $iii)$. Naturally, we just have to set
$$
\mathrm{R}^{(or)}(u) := \sum_{\substack{k\in \M\\ \#k\geq r+1}} c_k u^k.
$$
By applying Lemma \ref{lemma3}, we know that $ \J_{3,3N^3} = \M_3$, consequently there are no third order terms in the resonant Hamiltonian, i.e.
$$
\sum_{\substack{k\in \M \setminus \J_{3N^3} \\ \#k\leq  r}} c_k u^k = \sum_{\substack{k\in \M \setminus \J_{3N^3} \\ 4\leq \#k\leq  r}} c_k u^k.
$$
By applying Corollary \ref{cor:origin}, for $n\geq 4$, we get a partition of $\M_{n} \setminus \J_{n,3N^3}$
$$
\M_{n} \setminus \J_{n,3N^3} = P^{(1)}_n \cup P^{(2)}_n  \cup P^{(3)}_n
$$
where the sets $P^{(j)}_n$ are symmetric (i.e. $P^{(j)}_n=-P^{(j)}_n$) and satisfy
\begin{itemize}
\item if $k\in \Rc_n$ then $k\in P^{(1)}_n$,
\item if $k\in \M_{n} \setminus \J_{n,3N^3}$ and $\mu_3(k)\geq (\frac{(3N^3)^{\alpha}}{2(n-2)^{\alpha}})^{\frac1{1+\alpha}}$ then $k\in P^{(2)}_n$,
\item if $k\in (\M_{n} \setminus \J_{n,3N^3}) \setminus (P^{(1)}_n \cup P^{(2)}_n)$ and $-\ell,\ell$ are two coordinates of $k$ for some $\ell \geq 3N^3$ then $k\in P^{(3)}_n$.
\end{itemize} 

Then, observe that if $k\in P^{(3)}$ then $\#k \geq 5$. Indeed, if $k\in P^{(3)}_4$ then using the zero momentum condition, we deduce that $u^k$ is integrable and so $k$ should belong to $P^{(1)}_4$.
Now, we denote by $P^{(1,3)}$ the set of the indices $k\in P^{(1)}$ such that $-\ell,\ell$ are two coordinates of $k$ for some $\ell \geq N^3$.

Consequently, we set 
$$
\mathrm{R}^{(I_{> N^3})}(u) := \sum_{\substack{k\in P^{(3)}\cup P^{(1,3)} \\ 5 \leq \# k \leq r}} c_k u^k.
$$
Note that in Theorem \ref{thm-BNF}, we assume that the indices of the coefficients of the polynomials $\mathrm{Res}_{\leq N^3}, \mathrm{R}^{(\mu_3>N)}, \mathrm{R}^{(I_{> N^3})}$ are ordered (i.e. they belong to $\mathcal{D}$). Here we do not pay attention to this property because due to the symmetry of $k\mapsto u^k$ by permutation, up to multiplication of the coefficients by a factor like $\# k !$, the indices can always be easily ordered.

We denote by $P^{(1,2)}$ the set of the indices $k\in P^{(1)}\setminus P^{(1,3)}$ such that $\mu_1(k) > N^3$. Note that, since $P^{(1,3)}\cap P^{(1,2)} = \emptyset$, if $k\in P^{(1,2)}$ then $|(\Irr \, k)_1| > N^3$ (by construction an irreducible part is ordered). Consequently, applying Lemma \ref{lem:origin} to $\Irr \, k$, we deduce that we have $\mu_3(k) \geq (\frac{(N^3)^{\alpha}}{2(n-2)^{\alpha}})^{\frac1{1+\alpha}}$.

Finally, observing that if $N$ is large enough with respect to $r$ we have $(\frac{(N^3)^{\alpha}}{2(n-2)^{\alpha}})^{\frac1{1+\alpha}}\geq N$ for $n\in \llbracket 4,r\rrbracket$, we set
$$
\mathrm{R}^{(\mu_3>N)}(u) := \sum_{\substack{k\in P^{(2)}\cup P^{(1,2)} \\ 4 \leq \# k \leq r}} c_k u^k
$$
and
$$
\mathrm{Res}_{(\leq N^3)}(u)  := \sum_{\substack{k\in P^{(1)}\setminus (P^{(1,2)}\cup P^{(1,3)}) \\ 5 \leq \# k \leq r \\ \mathrm{Irr}(k)\neq \emptyset \ \mathrm{if} \  \#k  = 6  }} c_k u^k.
$$

Consequently, since by Lemma \ref{int-order4} the fourth order resonant terms are integrable, we have proven that
$$
H \circ \tau^{(1)} = Z_2+ Z_4 + Z_{6,\leq N^3} + \mathrm{Res}_{\leq N^3} + \mathrm{R}^{(\mu_3>N)} + \mathrm{R}^{(I_{> N^3})}+ \mathrm{R}^{(or)}
$$
where $Z_4$ and $Z_{6,\leq N^3}$ are two integrable Hamiltonians such that $Z_4$ contains all the fourth order integrable terms of $H \circ \tau^{(1)}$ and $Z_{6,\leq N^3}$ contains all the sixth order integrable terms of $H\circ \tau^{(1)}$ associated with monomials of indices smaller than or equal to $N^3$. The rest of the proof is devoted to the explicit computation of $Z_4$ and (a part of) $Z_{6,\leq N^3}$.

\medskip

\noindent \emph{$\bullet$ Step 2 : Setting of the formal computations.} We recall that, in the proof the Proposition \ref{ellefaitlegrosduboulot}, the change of coordinate generated by $\chi_r$ (to kill the $r^{\mathrm{th}}$ order term associated with indices in $\J_{r,3N^3}$) preserves the lower order terms. Consequently, $Z_4$ contains all the fourth order integrable terms of $H \circ  \Phi_{\chi_3}^1$ and $Z_{6,\leq N^3}$ contains all the sixth order integrable terms of $H\circ \Phi_{\chi_3}^1 \circ \Phi_{\chi_4}^1\circ \Phi_{\chi_5}^1$  associated with monomials of indices smaller than or equal to $N^3$. Actually, by an elementary argument of degree, we observe that the sixth order terms of $H\circ \Phi_{\chi_3}^1 \circ \Phi_{\chi_4}^1\circ \Phi_{\chi_5}^1$ and $H\circ \Phi_{\chi_3}^1 \circ \Phi_{\chi_4}^1$ are the same. Consequently, $Z_{6,\leq N^3}$ contains all the sixth order integrable terms of $H\circ \Phi_{\chi_3}^1 \circ \Phi_{\chi_4}^1$  associated with monomials of indices smaller than or equal to $N^3$.
 
 First, we have to determine $\chi_3$ and $\chi_4$ explicitly. To get convenient notations, we denote 
 $$
 \mathcal{L}_m(u) := \int_{\mathbb{T}} u^m \ \mathrm{d}x = \sum_{k\in \M_m} u^k
 $$
 in such a way that, on a neighborhood of the origin, we have
 $$
 H = Z_2(I)  + \sum_{m \geq 3} a_m \, \mathcal{L}_m.
 $$
 We recall that formally, we have
$$
H\circ \Phi_\chi^1(u)  \ \mathop{=}_{u\to 0} \ \sum_{k = 0}^{\infty} \frac1{k !} \ \mathrm{ad}_{\chi}^k H(u).
$$
Consequently, we have
\begin{equation*}
\begin{split}
H \circ \Phi_{\chi_3}^1 \mathop{=}_{u\to 0}& Z_2 + a_3 \, \mathcal{L}_3 + \{\chi_3 , Z_2\} 
									+ a_4 \, \mathcal{L}_4 +\{\chi_3 , a_3 \, \mathcal{L}_3\} + \frac12 \{\chi_3,\{\chi_3,Z_2\}\} \\
									&+ P_5(u) +a_6 \, \mathcal{L}_6 + \{\chi_3 ,a_5 \, \mathcal{L}_5 \}+ \frac12 \{\chi_3,\{\chi_3,a_4 \, \mathcal{L}_4 \}\}\\ &+ \frac16 \{\chi_3,\{\chi_3,\{\chi_3,a_3 \, \mathcal{L}_3\}\}\} + \frac1{24} \{\chi_3,\{\chi_3,\{\chi_3,\{\chi_3,Z_2\}\}\}\} + \mathcal{O}(u^7)
\end{split}
\end{equation*}
where $P_5$ is a homogeneous polynomial of degree $5$. Since, by Lemma \ref{lemma3}, $\J_{3,3N^3}=\M_3$,  by construction $\chi_3$ is the solution of the homological equation
\begin{equation}
\label{eq_chi3}
\tag{$\mathcal{H}_3$}
a_3\, \mathcal{L}_3 + \{ \chi_3 , Z_2 \} = 0,
\end{equation}
i.e.
$$
\chi_3 = a_3 \sum_{k_1 + k_2 + k_3 =0} \frac{u^k}{i\, \Omega(k)}.
$$
Consequently, we have
\begin{equation*}
\begin{split}
H\circ \Phi_{\chi_3}^1 \mathop{=}_{u\to 0}& Z_2 + a_4 \, \mathcal{L}_4 +\frac12  \{\chi_3 , a_3 \, \mathcal{L}_3\} 
									+ P_5(u) +a_6 \, \mathcal{L}_6 + \{\chi_3 ,a_5 \, \mathcal{L}_5 \}\\&+ \frac12 \{\chi_3,\{\chi_3,a_4 \, \mathcal{L}_4 \}\}+ \frac18 \{\chi_3,\{\chi_3,\{\chi_3,a_3 \, \mathcal{L}_3\}\}\}+\mathcal{O}(u^7).
\end{split}
\end{equation*}
Therefore, $Z_4$ is the integrable part (i.e. depending only on the actions) of $a_4\, \mathcal{L}_4 + \frac12 \{ \chi_3 , a_3\, \mathcal{L}_3  \}$.

Then, $\chi_4$ is constructed to solve a homological equation restricted to indices in $\J_{4,3N^3}$ as explained in the proof of Proposition  \ref{ellefaitlegrosduboulot} :
\begin{equation}
\label{eq_chi4}
\tag{$\mathcal{H}_4$}
\Pi_{\J_{4,3N^3}}[a_4\, \mathcal{L}_4 + \frac12 \{ \chi_3 , a_3\, \mathcal{L}_3  \}] +  \{ \chi_4 , Z_2 \}  =0.
\end{equation}
Moreover, by a straightforward calculation, we have
\begin{equation}
\label{chi3L3andc}
 \{ \chi_3 , a_3\, \mathcal{L}_3  \} =   9\, a_3^2 \! \! \! \! \! \! \sum_{\substack{ k_1+k_2+k_3+k_4=0 \\ k_1+k_2 \neq 0 }} \! \! \! \! \!c_{k_1,k_2} \, u^k \ \ \mathrm{where} \ \ c_{k_1,k_2} = \frac{2\pi (k_1+k_2)}{\Omega(k_1,k_2,-k_1-k_2)}.
\end{equation}
Consequently, following the construction of the Proposition \ref{ellefaitlegrosduboulot}, we have
$$
\chi_4 =   \sum_{\substack{ k\in\J_{4,3N^3}  }}  \frac{2\, a_4+ 9\, a_3^2 \, c_{k_1,k_2}}{2\, i\, \Omega(k)}u^k\ .
$$
Therefore the sixth order term of $H \circ \Phi_{\chi_3}^1\circ \Phi_{\chi_4}^1$, denoted $P_6$, is
\begin{multline}
\label{givemeZ6}
P_6 = a_6 \, \mathcal{L}_6 + \{\chi_3 ,a_5 \, \mathcal{L}_5 \}+ \frac12 \{\chi_3,\{\chi_3,a_4 \, \mathcal{L}_4 \}\} \\ + \frac18 \{\chi_3,\{\chi_3,\{\chi_3,a_3 \, \mathcal{L}_3\}\}\}+ \frac12\{\chi_4,  a_4\, \mathcal{L}_4 + \frac12 \{ \chi_3 , a_3\, \mathcal{L}_3  \} + Z_4 \}.
\end{multline}
Note that there are three kinds of terms in $P_6$ : the  original terms coming from  $a_6 \, \mathcal{L}_6 $, those that come from the composition by the Lie transformation $ \Phi_{\chi_3}^1$ and those that come from the composition by the Lie transformation $ \Phi_{\chi_4}^1$.

We recall that $Z_{6,\leq N^3}$ is just the integrable part of $P_6$ projected on actions with index smaller than $N^3$ and we can write
\begin{equation}
\label{Z66}
Z_{6,\leq N^3}(I)=\sum_{0<p\leq q\leq\ell\leq N^3}c_{p,q}^\E(\ell)I_pI_qI_\ell.
\end{equation}
 Finally, we also notice that in view of Lemma \ref{lemma3}, the three terms in \eqref{givemeZ6} involving Poisson brackets with $\chi_3$ cannot be responsable of the growth of $c_{p,q}(\ell)$ with respect to $\ell$. So the only contributing term to this growth in \eqref{Z66} is the last one which involves a Poisson bracket with $\chi_4$. 

\medskip

\noindent \emph{$\bullet$ Step 3 : Computation of $Z_4$.} We recall that, by construction, it is the integrable part (i.e. depending only on the actions) of $a_4\, \mathcal{L}_4 + \frac12 \{ \chi_3 , a_3\, \mathcal{L}_3  \}$. To determine it from the explicit expression of $\mathcal{L}_4$ and $\{ \chi_3 , a_3\, \mathcal{L}_3  \}$ (computed just above), we use the Poincar\'e's formula: $\sum_{A \cup B \cup C} = \sum_A +  \sum_B +  \sum_C -  \sum_{B\cap C} -  \sum_{C\cap A} -  \sum_{A\cap B} + \sum_{A\cap B\cap C}$.  For example, it is clear that the integrable terms of $\mathcal{L}_4 = \sum_{k\in \mathcal{M}_4} u^k$ are obtained when $k_1 = -k_2$ or $k_1 = -k_3$ or $k_1 = -k_4$. All these cases being symmetric, by the Poincar\'e's formula, we know that the integrable terms of $\mathcal{L}_4$ contains three times the terms such that $k_1=-k_2$ minus three times those such that $k_1=-k_2=-k_3$ plus those such that $k_1=-k_2=-k_3=-k_4$. Observing that since $k_1+k_2+k_3+k_4=0$ there does not exist any term of this last kind, we deduce that the integrable part of $\mathcal{L}_4$ is
$$
3 \, a_4\sum_{k_1,k_2\in\Z^*}I_{k_1}I_{k_2} - 3 \, a_4 \sum_{k\in \mathbb{Z}^{*}}I_k^2.
$$
Proceeding similarly to determine the integrable part of $\{ \chi_3 , a_3\, \mathcal{L}_3  \}$, we deduce that 
\begin{equation*}
\begin{split}
Z_4(I) &= 3 \, a_4\sum_{k_1,k_2\in\Z^*}I_{k_1}I_{k_2} - 3 \, a_4 \sum_{k\in \mathbb{Z}^{*}}I_k^2 + \frac{18\, a_3^2}2\, \sum_{k_1+ k_2 \neq 0} c_{k_1,k_2} \,I_{k_1} I_{k_2} -  \frac92\,  a_3^2\, \sum_{k \in \mathbb{Z}^{*}} c_{k,k} \, I_{k}^2 \\
&= 3 \, a_4\, \| u\|_{L^2}^4  + 3\, \sum_{k=1}^\infty    (3\, a_3^2 \, c_{k,k}  - 2a_4)\, I_{k}^2 +  9\, a_3^2 \sum_{ |k_1| \neq |k_2|} c_{k_1,k_2} \,I_{k_1} I_{k_2}
\end{split}
\end{equation*}
Taking into account the symmetries of $c$, i.e. $c_{k_1,k_2} = c_{k_2,k_1} = c_{-k_1,-k_2}$, we deduce that
$$
Z_4(I) = 3 \, a_4\, \| u\|_{L^2}^4  + 3\, \sum_{k=1}^\infty    (3\, a_3^2 \, c_{k,k}  - 2a_4)\, I_{k}^2 +  36\, a_3^2 \sum_{ 0< k_1 < k_2} (c_{k_1,k_2} + c_{-k_1,k_2}) \,I_{k_1} I_{k_2}.
$$
Consequently, to get the formula \eqref{Z4-KdV} (resp. \eqref{Z4-BO}) for $Z_4^{\mathrm{\ref{gKdV}}}$ (resp. $Z_4^{\mathrm{\ref{gBO}}}$), we just have to compute $c_{k_1,k_2} + c_{k_1,-k_2}$ when $0<k_1<k_2$.
\begin{itemize}
\item[$*$] \emph{Case $\E = \mathrm{\ref{gKdV}}$.} We have
$$
-2\pi^2 (c_{k_1,k_2} + c_{-k_1,k_2}) = \frac{(k_1+k_2)}{  (k_1+k_2)^3 -k_1^3-k_2^3 } + \frac{(k_1+k_2)}{  (k_1-k_2)^3-k_1^3+k_2^3 } = \frac1{3 k_1 k_2} - \frac1{3 k_1 k_2}=0.
$$ 
\item[$*$] \emph{Case $\E = \mathrm{\ref{gBO}}$.} We have
$$
2\pi  (c_{k_1,k_2} + c_{-k_1,k_2}) = \frac{2(k_1+k_2)}{ k_1^2 + k_2^2 - (k_1+k_2)^2  } + \frac{2(k_1-k_2)}{  k_1^2 - k_2^2 + (k_1-k_2)^2 } = -\frac1{k_1}-\frac1{k_2}+ \frac1{k_1} = -\frac1{k_2}.
$$
\end{itemize}

%where $\displaystyle  \langle c \rangle_{k} = \sum_{g\in B_2} \nu_{gk}$
%and $B_2$ is  the Coxeter group  (i.e. the group of the signed permutations) on $(\mathbb{Z}^*)^2$. This achieves the proof of assertion (i).

\medskip

\noindent \emph{$\bullet$ Step 4 : Computation of the brackets in \eqref{givemeZ6}.}  

\noindent \emph{$*$ Step 4.1 : $\{\chi_4,  a_4\, \mathcal{L}_4 + \frac12 \{ \chi_3 , a_3\, \mathcal{L}_3  \} + Z_4 \}$.} 

First, we notice that, since a Poisson bracket between an irreducible monomial and a polynomial in the actions cannot be a polynomial in the actions, the polynomials
$$
 \left\{\chi_4,  a_4\, \mathcal{L}_4 + \frac12 \{ \chi_3 , a_3\, \mathcal{L}_3  \} + Z_4 \right\} \ \mathrm{and} \ \left\{ \chi_4,     \! \! \!\sum_{k_1+k_2+k_3+k_4=0 } h_{k_1,k_2} u^k \right\}
$$
have the same integrable part where, by formula \eqref{chi3L3andc}, we have set
$$
h_{k_1,k_2} = \left\{ \begin{array}{lll} a_4+(9/2) \, a_3^2 \, c_{k_1,k_2} & \mathrm{if} \ k_1\neq k_2\\ 0 & \mathrm{else} \end{array}\right.
$$

Consequently, since we are only interested in the computation of the integrable terms of $ \{\chi_4,  a_4\, \mathcal{L}_4 + \frac12 \{ \chi_3 , a_3\, \mathcal{L}_3  \} + Z_4 \}$, we only compute $\displaystyle \big\{ \chi_4,     \! \! \!\sum_{k_1+k_2+k_3+k_4=0 } h_{k_1,k_2} u^k \big\}$.
\begin{remark}\label{3N^3}
At this stage we can justify our choice to restrict the resolution of the homological equation \eqref{eq_chi4} to $\J_{4,3N^3}$, i.e. why we took  $3N^3$. We have to remember that we want to compute $Z_{6,\leq N^3}$ so we have to be sure to consider all the integrable terms of order six with indices smaller than $N^3$  in 
\begin{align*} \big\{ \chi_4, &    \! \! \!\sum_{k_1+k_2+k_3+k_4=0 } h_{k_1,k_2} u^k \big\}= \big\{ \sum_{\substack{ k\in\J_{4,3N^3}  }}  \frac{2a_4+ 9\, a_3^2 \, c_{k_1,k_2}}{2i\Omega(k)}u^k , \! \! \!\sum_{k_1+k_2+k_3+k_4=0 } h_{k_1,k_2} u^k \big\}\\
&=\sum_{j\in\Z^*}2i\pi j \sum_{(k_1,k_2,k_3,j)\in\J_{4,3N^3}}\frac{2a_4+ 9\, a_3^2 \, c_{k_1,k_2}}{2i\Omega(k_1,k_2,k_3,j)}\sum_{k_4+k_5+k_6+j=0} h_{k_4,k_5} \ u^k+\text{other terms}.
\end{align*}
Now the point is that the restriction $(k_1,k_2,k_3,j)\in\J_{4,3N^3}$ has to allow $\max (|k_1|,|k_2|,|k_3|)= N^3$. In the worst case $|j|\geq \max (|k_1|,|k_2|,|k_3|)$ but in that case $3\max (|k_1|,|k_2|,|k_3|)\geq |j|$ by the zero momentum condition. On the other hand $(k_1,k_2,k_3,j)\in\J_{4,3N^3}$ means $|j|\leq 3 N^3 \Omega (k_1,k_2,k_3,j)$ and since $\Omega (k_1,k_2,k_3,j)\geq1$ we are sure to consider all the $|j|$ up to $3 N^3$ and thus all the $k$ with $\max (|k_1|,|k_2|,|k_3|)\leq N^3$.
\end{remark}
Now we are sure that we are not missing terms for $Z_{6,\leq N^3}$ and thus, instead of computing $\displaystyle \big\{  \chi_4,     \! \! \!\sum_{k_1+k_2+k_3+k_4=0 } h_{k_1,k_2} u^k \big\}$, we can just compute $\displaystyle \big\{ \tilde \chi_4,     \! \! \!\sum_{k_1+k_2+k_3+k_4=0 } h_{k_1,k_2} u^k \big\}$ where 
$$
\tilde\chi_4 =   \sum_{\substack{ k\in\M_4\setminus\mathcal R_{4}  }}  \frac{2a_4+ 9\, a_3^2 \, c_{k_1,k_2}}{2i\Omega(k)}u^k=\sum_{\substack{ k \in \M_4 \cap \Irr  }}\frac{h_{k_1,k_2}}{i\Omega(k)} u^k
$$
and then to restrict the integrable part to indices smaller than $N^3$. Note that we have used that the quartic resonant terms are integrable (see Lemma \ref{int-order4}). Consequently, this sum only holds on indices $k\in \M_4$ such that $k_{j_1}+ k_{j_2} \neq 0$ for all $j_1\neq j_2$.\\
By a straightforward calculation, we get
$$
\left\{ \tilde\chi_4,     \! \! \!\sum_{k_1+k_2+k_3+k_4=0 } h_{k_1,k_2} u^k \right\} =  \sum_{\substack{k_1+k_2+k_3+k_4+k_5+k_6=0\\ k_1+k_2+k_3 \neq 0 \\ k_1 + k_2 \neq 0 \\ k_1 + k_3\neq 0\\ k_2 + k_3 \neq 0}} b_k \, u^k
$$
where 
$$b_k =  4 \, d_{k_1,k_2,k_3} \, w_k, \ \ \
d_{k_1,k_2,k_3} = \frac{2\pi (k_1+k_2+k_3)}{\Omega(k_1,k_2,k_3,-k_1-k_2-k_3)},
$$
$$
w_k = h_{k_1,k_2} h_{k_4,k_5}  + h_{k_1,-k_1-k_2-k_3} h_{k_4,k_5} + h_{k_1,k_2} h_{k_4,-k_4-k_5-k_6} + h_{k_1,-k_1-k_2-k_3} h_{k_4,-k_4-k_5-k_6}.
$$

\noindent \emph{Step 4.2: Computation of $ \{\chi_3 ,a_5 \, \mathcal{L}_5 \}$.} 
By a straightforward calculation, we have
$$
 \{\chi_3 ,a_5 \, \mathcal{L}_5 \} = 15\, a_3 \, a_5\, \sum_{\substack{k_1+\dots+k_6=0\\k_1+k_2\neq 0}} c_{k_1,k_2}\, u^k.
$$

\noindent \emph{Step 4.3 : Computation of $\{\chi_3,\{\chi_3,\{\chi_3,a_3 \, \mathcal{L}_3\}\}\}$.} This computation is elementary but quite heavy, especially to take into account the symmetries and to count the multiplicities. To help the reader, we provide the diagrams we have realized to follow and check it. We could make these diagram computations become rigorous but, since they are quite natural and not fundamental, we believe that it would be uselessly heavy.

First, let us present informally what are our diagram and how we compute their Poisson brackets. 

We represent $\mathcal{L}_n$ by a regular simplex. Their vertices, represented by crosses, refer to the indices of the modes whereas its simple edges refer to the zero momentum condition. To denote that, furthermore, we have solved an homological equation with $Z_2$ and excluded the resonant terms, we draw some double edges. For example, we denote
$$
\mathcal{L}_3 = \begin{array}{lll}  \includegraphics[scale = 0.75]{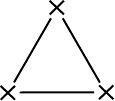} \end{array} \quad \mathrm{and} \quad \chi_3 = a_3 \begin{array}{lll} \includegraphics[scale = 0.75]{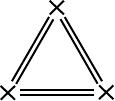} \end{array}.
$$
To compute the Poisson bracket of two diagrams $A$ and $B$, we just add the diagrams we get by connecting $A$ and $B$ by replacing a cross of $A$ and a cross of $B$ by a circle with a dark face on the $A$ side. Somehow, the circles refer to the old indices and the dark face are just a way to remember which diagram was on which side of the Poisson bracket. For example, we have
\begin{equation}
\label{monpremierdiagramcomplique}
a_3^{-1} \{\chi_3,\mathcal{L}_3\} = \left\{\begin{array}{lll} \includegraphics[scale = 0.75]{chi3.pdf} \end{array},\begin{array}{lll} \includegraphics[scale = 0.75]{L3.pdf} \end{array}\right\}   = 3\cdot 3\begin{array}{lll} \includegraphics[scale = 0.75]{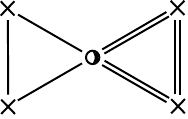} \end{array}.
\end{equation}
The factors $3$ come from the fact that for each diagram, we have $3$ choices of crosses and that all of them are equivalent.

 Now let us explain informally how we get the expression of a polynomial Hamiltonian from its diagram and to highlight this process on the elementary example of $\{\chi_3,\mathcal{L}_3\}$ whose diagram is given by \eqref{monpremierdiagramcomplique}.

\noindent $\blacktriangleright$ First, we index the crosses of the diagram (for example, from the top to the bottom and from the left to the right). Consequently, we get a polynomial of the form
 \begin{equation}
 \label{pedago}
 \sum_{k\in (\mathbb{Z}^*)^n} \beta_k u^k
 \end{equation}
 where $n$ is the number of vertices and $\beta_k$ are some coefficients we have to determined. For example, for $\{\chi_3,\mathcal{L}_3\}$, we have $n=4$.
 
\noindent $\blacktriangleright$ We index the circle and denote by $m$ the number of circles. If $j\in \llbracket 1,m\rrbracket$ is the index of a circles, $n+2j$ is the index of its dark face while $n+2j-1$ is the index of its white face.
 \item We extend $k$ into a vector of length $n+2m$, denoted $\ell$ (i.e. $k_j=\ell_j$) and we write the zero momentum conditions we read on the simplexes : if $(j_1,\dots,j_p)$ are the indices of the vertices of a simplexe we write
 $$
 \ell_{j_1}+\dots+ \ell_{j_p} = 0.
 $$
 Furthermore, we write the expression coming from the connections :
 $$
\forall j\in \llbracket 1,m\rrbracket,\ \ell_{n+2j-1} = - \ell_{n+2j}.
 $$
 From all these equation, we deduce that $\ell$ is a linear function of $k$ denoted $\ell(k)$ and that the sum \eqref{pedago} can be restricted to $k\in \M_4$.
 For example, for $\{\chi_3,\mathcal{L}_3\}$, we have the system
$$
\ell_5=-\ell_6, \quad \ell_1+\ell_2+\ell_6=0, \quad \ell_3+\ell_4+\ell_5=0
$$
which is equivalent to
$$
\ell_6 = -(k_1+k_2),   \quad \ell_5 = -( k_3+k_4),  \quad k_1+k_2+k_3+k_4=0.
$$

\noindent $\blacktriangleright$ Then we restrict the sum \eqref{pedago} to ensure that the coefficients of $\ell$ do no vanish (because they are old indices of modes). Consequently the sum \eqref{pedago} becomes 
\begin{equation}
\label{pedago2}
\sum_{\substack{k\in \M_n\\ \forall j, \ \ell_j(k) \neq 0} } \beta_k u^k.
\end{equation}
 For example, for $\{\chi_3,\mathcal{L}_3\}$, we just add the restriction $k_1+k_2\neq 0$.
 
\noindent $\blacktriangleright$ Let denote by $\mathbb{S}$ the set of the double simplexes. More precisely, $\{j_1,\dots,j_p\} \in \mathbb{S}$ if $j_1,\dots,j_p$ are the indices of the vertices of a simplexe represented by double edges. To ensure the non-resonance conditions, we restrict the sum \eqref{pedago2} to the indices $k$ such that if $\mathbb{s} = \{j_1,\dots,j_p\} \in \mathbb{S}$ then $\Omega_\mathbb{s}(k) := \Omega(\ell_{j_1}(k),\dots,\ell_{j_p}(k)) \neq 0$. Consequently \eqref{pedago2} becomes
 \begin{equation}
\label{pedago3}
\sum_{\substack{k\in \M_n\\ \forall j, \ \ell_j(k) \neq 0 \\ \forall \mathbb{s},\ \Omega_\mathbb{s}(k)\neq 0} } \beta_k u^k.
\end{equation}
For $\{\chi_3,\mathcal{L}_3\}$, we have $\mathbb{S} = \{ \{3,4,5\}\}$. However, since there are no cubic resonances (see Lemma \ref{int-order4}) the condition $\Omega_\mathbb{s}(k)\neq 0$ is trivial.
 
 \noindent $\blacktriangleright$ Then, we determine the coefficient $\beta_k$. We have to take into account the coefficients $2i\pi \ell$ coming from the Poisson brackets and the denominators coming from the resolution of the homological equation. Consequently, naturally, we set
 $$
 \beta_k = \left(\prod_{j=1}^m (-2 i \pi \ell_{n+2j}) \right) \left(\prod_{\mathbb{s}\in \mathbb{S}} i\Omega_{\mathbb{s}}(k) \right)^{-1}.
 $$
 For $\{\chi_3,\mathcal{L}_3\}$, we have 
 $$\beta_k = \frac{-2i\pi \ell_6(k)}{i \Omega(\ell_{3}(k),\dots,\ell_{5}(k)) } = \frac{2\pi (k_1+k_2)}{ \Omega(k_3,k_4,-k_3-k_4)}.$$
  
  \noindent $\blacktriangleright$ Finally, we recombine the denominators coming from the homological equations and the coefficients coming from Poisson brackets (to get coefficients like $c_{k_1,k_2}$). For example, for $\{\chi_3,\mathcal{L}_3\}$, we have $\beta_k = c_{k_3,k_4}$. Consequently, we get the same result as in \eqref{chi3L3andc}. Notice that, since, in practice, we only compute Poisson brackets with $\chi_3$, this last step is straightforward (actually, we skip the previous step).
%Then comes the natural question of the determination of the expression of a composed diagram as the last one above. Roughly speaking, we add the monomials whose indices are represented by crosses. However, we restrict this sum to ensure that the zero momentum and the non-resonance conditions are satisfied. Furthermore, we exclude indices for which old indices (in circles) would vanish. Then, we determine its coefficients taking into account the denominators associated with the resolution of homological equations (the double edges) and the coefficients $"2i\pi k"$ due to the Poisson bracket. Of course, to determine these coefficients, we need to know the old indices. Hopefully, using the zero-momentum conditions, they always can be expressed as linear combinations of the indices represented by crosses. Finally, we recombine the denominators coming from the homological equations and the coefficients coming from Poisson brackets (to get coefficients like $c_{k_1,k_2}$). In practice, this last step can be done quite directly because we only compute Poisson bracket with $\chi_3$. For example, applying this strategy to the diagram we got in \eqref{monpremierdiagramcomplique} to compute $a_3^{-2} \{\chi_3,\mathcal{L}_3\} $, we easily find the expression \eqref{chi3L3andc}.

\medskip

Now, we are going to apply this technic to deal with more intricate terms. For example a direct computation leads to
$$
\{\chi_3,\{\chi_3,\mathcal{L}_3\}\} = 54\, a_3^2 \sum_{\substack{k_1+\dots+k_5=0 \\ k_1+k_2\neq 0 \\ k_4+k_5\neq 0}} c_{k_1,k_2} c_{k_4,k_5} u^k + 54\, a_3^2  \sum_{\substack{k_1+\dots+k_5=0 \\ k_1+k_2\neq 0 \\ k_1+k_2+k_3\neq 0}} c_{k_1,k_2} c_{k_1+k_2,k_3} u^k
$$
which is highlighted through the following graphical computation
$$
\frac{\{\chi_3,\{\chi_3,\mathcal{L}_3\}\}}{9 \, a_3^2} = \left\{\begin{array}{lll} \includegraphics[scale = 0.75]{chi3.pdf} \end{array},\begin{array}{lll} \includegraphics[scale = 0.75]{chi3L3.pdf} \end{array}\right\}   = 6 \begin{array}{lll}\vspace{-0.3cm}  \includegraphics[scale = 0.75]{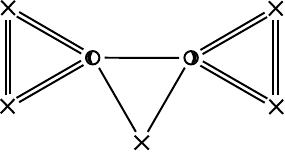} \end{array} + 6 \begin{array}{lll}\vspace{-0.3cm}  \includegraphics[scale = 0.75]{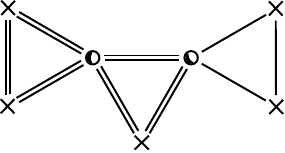} \end{array}.
$$

Finally, an elementary computation leads to
\begin{multline}
\label{chi33L3}
  \{\chi_3,\{\chi_3, \{ \chi_3 ,  \mathcal{L}_3  \} \}\}  = 162\,a_3^3 \! \! \! \sum_{\substack{ k_1+k_2+k_3+k_4+k_5+k_6=0 \\ k_1+k_2\neq 0 \\ k_3+k_4 \neq 0 \\ k_5+k_6\neq 0}} \! \! \!  c_{k_1,k_2}c_{k_3,k_4} \left( c_{k_5,k_6} + c_{k_1+k_2,k_3+k_4}\right) u^k
 \\+  324\,a_3^3 \! \! \! \! \sum_{\substack{ k_1+k_2+k_3+k_4+k_5+k_6=0 \\ k_1+k_2+k_3\neq 0\\ k_1+k_2\neq 0 \\ k_5+k_6 \neq 0 }} c_{k_1,k_2} c_{k_1+k_2,k_3}(c_{k_1+k_2+k_3,k_4}+3 \, c_{k_5,k_6})\, u^k.
\end{multline}
which is highlighted through the following graphical computations
$$
\frac{\{\chi_3,\{\chi_3,\{\chi_3,\mathcal{L}_3\}\}\}}{54 \, a_3^3} =  \left\{\begin{array}{lll} \includegraphics[scale = 0.75]{chi3.pdf} \end{array},\begin{array}{lll}\vspace{-0.3cm}  \includegraphics[scale = 0.75]{chi3chi3L3_1.pdf} \end{array} \right\} +\left\{\begin{array}{lll} \includegraphics[scale = 0.75]{chi3.pdf} \end{array},\begin{array}{lll}\vspace{-0.3cm}  \includegraphics[scale = 0.75]{chi3chi3L3_2.pdf} \end{array} \right\}   
$$
with
$$
\frac13 \left\{\begin{array}{lll} \includegraphics[scale = 0.75]{chi3.pdf} \end{array},\begin{array}{lll}\vspace{-0.3cm}  \includegraphics[scale = 0.75]{chi3chi3L3_2.pdf} \end{array} \right\}   =  \! \! \! \! \begin{array}{lll}  \includegraphics[scale = 0.75]{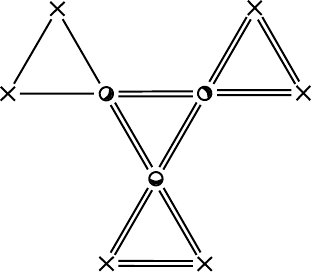} \end{array}  \! \! \! +2 \begin{array}{lll}\vspace{-0.3cm}  \includegraphics[scale = 0.75]{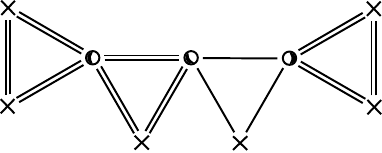} \end{array} + 2 \begin{array}{lll}\vspace{-0.3cm}  \includegraphics[scale = 0.75]{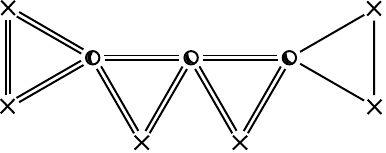} \end{array}
$$
and
$$
\frac13\left\{\begin{array}{lll} \includegraphics[scale = 0.75]{chi3.pdf} \end{array},\begin{array}{lll}\vspace{-0.3cm}  \includegraphics[scale = 0.75]{chi3chi3L3_1.pdf} \end{array} \right\} = \begin{array}{lll}  \includegraphics[scale = 0.75]{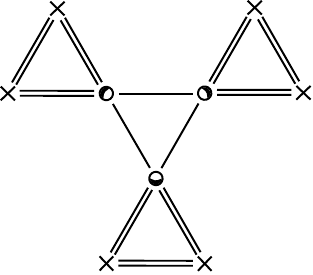} \end{array} + 4 \begin{array}{lll}\vspace{-0.3cm}  \includegraphics[scale = 0.75]{chi3chi3chi3L3_1.pdf} \end{array}.
$$

\noindent \emph{Step 4.4: Computation of $\{\chi_3,\{\chi_3,a_4 \, \mathcal{L}_4 \}\}$.} To follow and check more easily the formal computation of $\{\chi_3,\{\chi_3,a_4 \, \mathcal{L}_4 \}\}$, we use the same technique as before. Here, to highlight its symmetries, $\mathcal{L}_4$ is represented by a tetrahedron.

First, we have
$$
\{\chi_3,a_4 \, \mathcal{L}_4 \}\} = 12\, a_3\, a_4 \sum_{\substack{k_1+\dots+k_5=0\\ k_1+k_2\neq 0 }} c_{k_1,k_2} u^k 
$$
which is highlighted through the following graphical computation
$$
 \frac{\{\chi_3,\mathcal{L}_4\}}{a_3} = \left\{\begin{array}{lll} \includegraphics[scale = 0.75]{chi3.pdf} \end{array},\begin{array}{lll} \includegraphics[scale = 0.75]{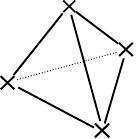} \end{array}\right\}   = 12\begin{array}{lll} \includegraphics[scale = 0.75]{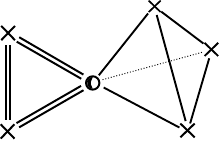} \end{array}.
$$
Then, we get
\begin{equation*}
\begin{split}
\{\chi_3,\{\chi_3,a_4 \, \mathcal{L}_4 \}\} &=108 \, a_3^2 \, a_4 \! \! \! \! \sum_{\substack{k_1+\dots+k_6=0\\k_1+k_2\neq 0\\k_5+k_6\neq 0}} \! \! \! \!  c_{k_1,k_2}c_{k_5,k_6}\,  u^k+72 \, a_3^2 \, a_4 \! \! \! \!  \sum_{\substack{k_1+\dots+k_6=0\\k_1+k_2+k_3\neq 0\\k_1+k_2\neq 0}} \! \! \! \! c_{k_1,k_2} c_{k_1+k_2,k_3} \,  u^k 
\end{split}
\end{equation*}
which is highlighted through the following graphical computation
$$
 \frac{\{\chi_3,\mathcal{L}_4\} }{12\, a_3^2} =  \left\{\begin{array}{lll} \includegraphics[scale = 0.75]{chi3.pdf} \end{array},\begin{array}{lll} \includegraphics[scale = 0.75]{chi3L4.pdf} \end{array}\right\}  = 9 \begin{array}{lll} \includegraphics[scale = 0.75]{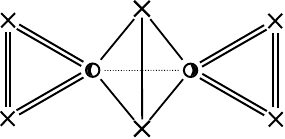} \end{array} + 6 \begin{array}{lll} \includegraphics[scale = 0.75]{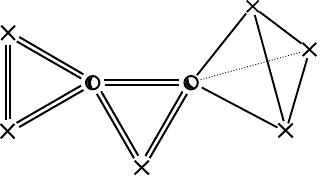} \end{array}.
$$

\begin{algorithm}
\caption{Projection of $\{\chi_3,\{\chi_3,\{\chi_3,a_3 \, \mathcal{L}_3\}\}\}$ for \ref{gBO} with Maple 2019}
\label{algo1}
\begin{lstlisting}[language=Maple,mathescape=true]  
with(combinat):
$\mathrm{E}:=$permute([p, p, -p, -p, q, -q]):
assume(0 < p, 0 < q, 0 < q - 2$\cdot$p):
$\alpha$:=1:
$c:=$($\ell_1$,$\ell_2$) $\rightarrow$ 2$\cdot$$(2\cdot\pi)^{-\alpha}\displaystyle \frac{\ell_1+\ell_2}{\ell_1|\ell_1|^{\alpha}+\ell_2|\ell_2|^{\alpha}-(\ell_1+\ell_2)|\ell_1+\ell_2|^{\alpha} }$:
$b:=0:$
for K in $\mathrm{E}$ do 
  $k_1$:=K[1]; $k_2$:=K[2];  $k_3$:=K[3];  $k_4$:=K[4];  $k_5$:=K[5];  $k_6$:=K[6]; 
  if $k_1+k_2\neq0$ and $k_5+k_6\neq0$ then
    $b:=b+324\cdot a_3^4\cdot c(k_1, k_2)\cdot c(k_1 + k_2, k_3)\cdot (c(k_1 + k_2 + k_3, k_4) + 3\cdot c(k_5, k_6))$;
    if $k_3+k_4\neq0$  then
      $b:=b+162\cdot a_3^4\cdot c(k_1,k_2)\cdot c(k_3,k_4)\cdot(c(k_5,k_6)+c(k_1+k_2,k_3+k_4))$;             
    end if;    
  end if;
end do:
$b:=$simplify($b$):
\end{lstlisting}
\end{algorithm} 

\noindent \emph{Step 5: Specialization.} At the previous steps, we have computed explicitly the terms of the expansion \eqref{givemeZ6} of $P_6$. Now, in order to determine theirs terms associated with the monomials $I_p^2 I_q$, $0<2p<q$ (i.e. $c_{p,p}(q)$, see \eqref{Z66}), we use a formal computation software (here Maple 2019). Just below, in Algorithm \ref{algo1}, we exhibit the Maple source code we have implemented to compute the projection\footnote{i.e. the coefficients $c_{p,p}(q)$ associated with the monomials $I_p^2 I_q$, $0<2p<q$ (see \eqref{Z66}).} of $\{\chi_3,\{\chi_3,\{\chi_3,a_3 \, \mathcal{L}_3\}\}\}$ (whose explicit formula is given in \eqref{chi33L3}). It is straightforward to modify this source code to compute the projections of the other terms, consequently we do not detail the other scripts we have implemented.

\end{proof}

\section{Control of the small divisors}
\label{sec:sd}

To deal with the small divisors of the rational normal form process, we have to introduce some relevant quantities.

In order to take into account the multiplicity of the multi-indices, in this section we do not consider multi-indices in $\Irr \cap \M$ but in $\mathcal{MI}_{\mathrm{mult}}$ (defined in \eqref{def:MImult}). However, using the correspondance \eqref{rel:l->(m,k)} all the objects we define also make sense if they are indexed by elements of $\Irr \cap \M$. See Remark \ref{Il faut faire attention} for details.

Using the explicit formulas given by the Theorem \ref{thm-BNF}, we define the small divisors associated with $Z_4$.
\begin{definition}[Small divisors associated with $Z_4$]
\label{def:Delta4} If $(m,k) \in \mathcal{MI}_{\mathrm{mult}}$ we set
$$
\Delta^{(4),\mathcal{E}}_{m,k}(I) := \sum_{j=1}^{\#k} m_j \, k_j \, \partial_{I_{k_j}} Z_4^{\mathcal{E}}(I) = \sum_{p = 1}^{\infty} (\delta^{\mathcal{E}}_{m,k})_{p} I_{p}
$$
where
\begin{equation}\label{deltaKdV}
(\delta^{\mathrm{\ref{gKdV}}}_{m,k})_{p} = - \sum_{j=1}^{\#k} m_j \, k_j \,  \left(12\, a_4+ \frac{3\, a_3^2}{ \pi^2 \, k_j^2}\right) \mathbb{1}_{k_j=p},
\end{equation}
\begin{equation}\label{deltaBO}
(\delta^{\mathrm{\ref{gBO}}}_{m,k})_{p} = -12\, a_4 \, \sum_{j=1}^{\#k} m_j\, k_j\,  \mathbb{1}_{k_j=p} - \frac{18\, a_3^2}{\pi} \left( \sum_{k_j\geq p} m_j + \frac1{p}\sum_{k_j< p} m_j\, k_j \right).
\end{equation}
\end{definition}

\begin{remark}
\label{rem:useful} Note that, since $(k,m)$ satisfies the zero momentum condition (i.e. $k\cdot m = 0$), the expansion associated with $\Delta^{(4),\mathcal{E}}_{m,k}$ is finite :
\begin{equation}
\label{ca servira dans les fractions rationnelles}
\forall p> k_1, \ (\delta^{\mathcal{E}}_{m,k})_{p} = 0.
\end{equation}

\end{remark}

\begin{definition}[Smallest effective index]
\label{def:kappa}
 We denote $\kappa_{m,k}^{\mathcal{E}}$ the smallest index $p$ such that $\Delta^{(4),\mathcal{E}}_{m,k}$ really depends on $I_{p}$ :
$$
\kappa_{m,k}^{\mathcal{E}} := \inf \{ p\in \mathbb{N}^{*} \ | \ (\delta^{\mathcal{E}}_{m,k})_{p} \neq 0 \} \in \mathbb{N}^{*}.
$$
\end{definition}

\begin{remark} 
\label{rem:pasbete}
It is clear that this infimum is a minimum. In other word, we have $\kappa_{m,k}^{\mathcal{E}}< +\infty$. Indeed, this is clear for \ref{gKdV} and, since $(k,m)$ satisfies the zero momentum condition (i.e. $k\cdot m = 0$) and since for \ref{gBO}, $a_4\neq 0$, we have
\begin{equation}
\label{eq:bienvu}%@superU
(\delta^{\mathrm{\ref{gBO}}}_{m,k})_{k_1} = -12 \, a_4 \, m_1\, k_1 \neq 0.
\end{equation}
\end{remark}

In the following lemma, proven in Appendix \ref{app:proof:diot}, we establish a better upper bound on $\kappa_{k,m}^{\mathcal{E}}$.
\begin{lemma} \label{lem:diot}%and not idiot
We have $\kappa_{m,k}^{\mathrm{\ref{gKdV}}}=k_{\last}$
and if $a_3= 0 \ \mathrm{or} \ m_1+\dots+m_{\last} = 0$ then we have $\kappa_{m,k}^{\mathrm{\ref{gBO}}} = k_{\last}$ else we have $\kappa_{m,k}^{\mathrm{\ref{gBO}}}\leq 2 \#k-1$.
\end{lemma}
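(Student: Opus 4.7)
The gKdV case is immediate from \eqref{deltaKdV}: distinctness of $k_1 > \dots > k_n$ makes the indicators $\mathbb{1}_{k_j=p}$ disjoint, so $(\delta^{\mathrm{\ref{gKdV}}}_{m,k})_p = 0$ whenever $p \notin \{k_1,\dots,k_n\}$, and at $p = k_j$ the coefficient equals $-3 m_j k_j (4\pi^2 k_j^2 a_4 + a_3^2)/(\pi^2 k_j^2)$, which is nonzero by \eqref{assump:gKdV} together with $m_j k_j \in \mathbb{Z}^*$; the minimum of the $k_j$ is $k_{\last}$. For the gBO easy case, rewrite \eqref{deltaBO} using zero momentum as
$$
(\delta^{\mathrm{\ref{gBO}}}_{m,k})_p = -12 a_4 \sum_j m_j k_j \mathbb{1}_{k_j=p} - \frac{18 a_3^2}{\pi p}\sum_{k_j \geq p} m_j(p - k_j).
$$
For $p < k_{\last}$ the first sum is empty and the second equals $p S_n$, where $S_n := m_1 + \dots + m_{\last}$, so $(\delta^{\mathrm{\ref{gBO}}})_p = -18 a_3^2 S_n/\pi$, which vanishes under either hypothesis, while at $p = k_{\last}$ the indicator contributes $-12 a_4 m_{\last} k_{\last} \neq 0$.

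Assume now the hard case $a_3 \neq 0$, $S_n \neq 0$. If $k_{\last} \geq 2$ then $(\delta^{\mathrm{\ref{gBO}}})_1 = -18 a_3^2 S_n/\pi \neq 0$; if $k_{\last} = 1$ and $k_1 \leq 2n - 1$, Remark \ref{rem:pasbete} gives $(\delta^{\mathrm{\ref{gBO}}})_{k_1} \neq 0$ directly. The remaining sub-case $k_{\last} = 1$, $k_1 > 2n - 1$ I will treat by strong induction on $n := \#k \geq 2$, the base $n = 2$ being covered either by the two sub-cases above or by the non-degenerate argument below. Introduce the continuous piecewise-linear function
$$
\phi(p) := \sum_{k_j \geq p} m_j(p - k_j) = S_{i(p)} p - T_{i(p)},
$$
where $i(p) := \#\{j : k_j \geq p\}$, $S_i := m_1 + \dots + m_i$ and $T_i := m_1 k_1 + \dots + m_i k_i$; on non-$k_j$ integers the BO small divisor simplifies to $(\delta^{\mathrm{\ref{gBO}}}_{m,k})_p = -\frac{18 a_3^2}{\pi p}\phi(p)$, reducing the search to finding $p \in \{1,\dots,2n-1\}$ with $\phi(p) \neq 0$. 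If the data is \emph{non-degenerate}, meaning $(S_i, T_i) \neq (0,0)$ for every $i \in \{1,\dots,n-1\}$, each intermediate slab $(k_{i+1}, k_i]$ with $i \in \{2,\dots,n-1\}$ carries at most one zero of $\phi$; the slab $i=1$ has $\phi(p) = m_1(p - k_1)$ whose unique zero $p = k_1$ sits outside $\{1,\dots,2n-1\}$, and the slab $i=n$ reduces to $\phi(p) = S_n p$, which never vanishes on $(0, k_{\last}]$. So $\phi$ has at most $n-2$ zeros in $\{1,\dots,2n-1\}$. Because $k_1 \notin \{1,\dots,2n-1\}$, at most $n-1$ of the $k_j$'s belong to that window, hence at least $n$ non-$k_j$ integers do, and at least one of them survives all the $\phi$-zeros.

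In the \emph{degenerate} sub-case, pick the smallest $i^*$ with $S_{i^*} = T_{i^*} = 0$; necessarily $i^* \geq 2$ since $S_1 = m_1 \neq 0$, and in fact $i^* \geq 3$ because $S_2 = T_2 = 0$ would force $m_1(k_1 - k_2) = 0$, contradicting $m_1 \neq 0$. Set $(\tilde m, \tilde k) := ((m_{i^*+1}, \dots, m_n), (k_{i^*+1}, \dots, k_n))$; the identities $\tilde S_\ell = S_{i^*+\ell}$ and $\tilde T_\ell = T_{i^*+\ell}$ give $(\delta^{\mathrm{\ref{gBO}}}_{m,k})_p = (\delta^{\mathrm{\ref{gBO}}}_{\tilde m, \tilde k})_p$ for every $p < k_{i^*}$ (the indicator correction matches because no $k_j$ with $j \leq i^*$ equals $p$ in that range). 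The length $\tilde n := n - i^* \geq 2$ — the case $\tilde n = 1$ would force $\tilde m_1 \tilde k_1 = 0$, impossible — and $(\tilde m, \tilde k)$ stays in the hard sub-case since $a_3$ is unchanged and $\tilde S_{\tilde n} = S_n - S_{i^*} = S_n \neq 0$. The inductive hypothesis combined with Remark \ref{rem:pasbete} applied to $(\tilde m, \tilde k)$ produces an integer $p \leq \min(2\tilde n - 1, \tilde k_1) \leq \min(2n - 7, k_{i^*} - 1) < k_{i^*}$ with $(\delta^{\mathrm{\ref{gBO}}}_{\tilde m, \tilde k})_p \neq 0$, whence $(\delta^{\mathrm{\ref{gBO}}}_{m,k})_p \neq 0$ with $p \leq 2n - 1$.

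The main obstacle will be the careful bookkeeping of this last degenerate sub-case: verifying scrupulously that the reduction $(m,k) \mapsto (\tilde m, \tilde k)$ preserves the full BO small divisor (indicator term included) strictly below $k_{i^*}$, and applying the inductive hypothesis to $(\tilde m, \tilde k)$ even though the cosmetic constraint $|m|_1 \geq 5$ from $\mathcal{MI}_{\mathrm{mult}}$ may be lost in the reduction; this second point is harmless since the argument uses only zero momentum and the strict ordering $k_1 > \dots > k_n$, so the lemma can equivalently be proved in the broader class $(m,k) \in (\mathbb{Z}^*)^n \times \mathbb{N}^n$ with those combinatorial properties.
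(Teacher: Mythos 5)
Your proof is correct, but for the hard gBO case it takes a genuinely different route from the paper. The common parts (gKdV, gBO easy case, the initial reductions $(\delta)_1\neq 0$ when $k_{\last}\geq 2$ and $\kappa\leq k_1$ via Remark \ref{rem:pasbete}) agree. Where you diverge: the paper proceeds by contradiction, assuming $(\delta)_p=0$ on the window, deriving $k_n=1$ from $p=1$, and then proving a \emph{local spacing rigidity} — if $(\delta)$ vanishes at $k_j$, $k_j+1$, $k_j+2$ then $k_{j-1}\in\{k_j+1,k_j+2\}$; the key trick is that vanishing at the two non-$k$ integers $k_j+1,k_j+2$ gives a $2\times 2$ system in $(1,\,1/\ell)$ that forces both a partial $m$-sum and a partial $m k$-sum to be zero, after which $(\delta)_{k_j}$ collapses to $-12a_4 m_{j_\star}k_{j_\star}\neq 0$. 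Telescoping the rigidity from $k_n=1$ yields $k_1\leq 2n-1$, and $\kappa\leq k_1$ closes the argument — no induction, no separate degenerate case. You instead reduce to the piecewise-linear $\phi(p)=S_{i(p)}p-T_{i(p)}$ and directly \emph{construct} a good $p$ by counting: at most $n-2$ zeros of $\phi$ versus at least $n$ non-breakpoint integers in the window. This requires you to handle the degenerate sub-case $(S_{i^*},T_{i^*})=(0,0)$ separately, via truncation of $(m,k)$ and strong induction — a maneuver that the paper's two-consecutive-zeros trick sidesteps entirely, since it turns the potential degeneracy $(S_{j_\star-1},T_{j_\star-1})=(0,0)$ into the conclusion rather than an obstacle. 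Both approaches are valid; yours is longer and needs the observation that the lemma holds in the broader class without $|m|_1\geq 5$ (which you rightly note is harmless, since neither proof uses that constraint), whereas the paper's is more compact and self-contained.
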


Now we are focusing on the small divisors associated with $Z_{6,\leq N}^{\mathcal{E}}$. Following the notations of Theorem \ref{thm-BNF} we write for all $N>0$
\begin{equation}
\label{def:coeff_Z_6}
 Z_{6,\leq N}^{\mathcal{E}}(I) = \sum_{0<p\leq q \leq \ell\leq N} c_{p,q}^{\mathcal{E}}(\ell) I_p I_q I_\ell.
\end{equation}

For $(m,k) \in \mathcal{MI}_{\mathrm{mult}}$, we introduce the small divisors associated with $Z_{6,\leq N}^{\mathcal{E}}$ 
\begin{equation}
\label{def_Delta6_N}
 \Delta_{m,k,N}^{(6),\mathcal{E}}(I) = \sum_{j=1}^{\# k} m_j k_j \partial_{I_{k_j}} Z_{6,\leq N}^{\mathcal{E}}(I).
\end{equation}
which are  homogeneous polynomials of degree 2.
We are also introducing a second type of small divisors of degree 2 that are not homogeneous 
\begin{equation}
\label{def_Delta46}
\Delta_{m,k,N}^{(4,6),\mathcal{E}}(I) = \Delta_{m,k}^{(4),\mathcal{E}} (I) + \Delta_{m,k,N}^{(6),\mathcal{E}}(I) .
\end{equation}

In this section, we aim at studying the following open subsets of $\dot{H}^{s}$.
\begin{definition}[Open subsets]\label{U} 
$ \mathcal{U}_{\gamma,N,r}^{\mathcal{E},s} = \mathcal{U}_{\gamma,N,r}^{(4),\mathcal{E},s} \cap \mathcal{U}_{\gamma,N,r}^{(4,6),\mathcal{E},s} $
where
$$
\mathcal{U}_{\gamma,N,r}^{(4),\mathcal{E},s}=\! \! \! \bigcap_{\substack{(m,k)\in \mathcal{MI}_{\mathrm{mult}}  \\ 5\leq|m|_1\leq r \\ k_1\leq N}} \! \! \!  \left\{ u \in \dot{H}^s \ | \ |\Delta^{(4), \mathcal{E}}_{m,k}(I) | \ {>}\ \gamma N^{-5  |m|_1} \| u \|_{\dot{H}^s}^2 (\kappa_{m,k}^{\mathcal{E}})^{-2s} \right\},
$$
$$
\mathcal{U}_{\gamma,N,r}^{(4,6),\mathcal{E},s}=\! \! \! \bigcap_{\substack{(m,k)\in \mathcal{MI}_{\mathrm{mult}}  \\ 5\leq|m|_1\leq r \\ k_1\leq N}} \! \! \!  \left\{ u \in \dot{H}^s \ | \ |\Delta^{(4,6), \mathcal{E}}_{m,k,N}(I) | \ {>}\ \gamma N^{-21  |m|_1} \| u \|_{\dot{H}^s}^2 \max((\kappa_{m,k}^{\mathcal{E}})^{-2s},\gamma  \| u \|_{\dot{H}^s}^2) \right\}.
$$
\end{definition}

In the first subsection, we prove that $\mathcal{U}_{\gamma,N,r}^{\mathcal{E},s} $ are stable by a small relative perturbation of the actions in the $\dot{H}^{s-1}$ topology  while in the second subsection we estimate the probability to draw a function in $\mathcal{U}_{\gamma,N,r}^{\mathcal{E},s}$.

\subsection{Stability by perturbations}\label{sec:stab} In this subsection, we aim at proving the following proposition (its proof is done in the subsection \ref{proof:stab_U}).
\begin{proposition}
\label{prop:stab_U}
 Let $\mathcal{E} \in \{\mathrm{\ref{gBO},\ref{gKdV}}\}$, $\lambda\in(0,1)$, $s\geq 1$, ${ 1\geq \gamma >0}$, $r> 1$, $N\gtrsim_{r,(1-\lambda)^{-1}} 1$. For all $u,u'\in \dot{H}^s$, if 
\begin{equation}
\label{assump:prop:stab_U}
\|u' \|_{\dot{H}^s} \leq 2 \| u \|_{\dot{H}^s}  \ \  \mathrm{and}\ \ \forall \ell \in \mathbb{N}^*, \ |I_\ell - I_\ell'| \ell^{2s-2} \leq \gamma^2 (1-\lambda) N^{-22  r} \| u \|_{\dot{H}^s}^2
\end{equation}
then
$$
u \in \mathcal{U}_{\gamma,N,r}^{\mathcal{E},s} \ \Rightarrow \ u' \in \mathcal{U}_{\lambda \gamma,N,r}^{\mathcal{E},s}.
$$
\end{proposition}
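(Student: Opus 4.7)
The proof will proceed by verifying, for every admissible multi-index $(m,k)\in\mathcal{MI}_{\mathrm{mult}}$ with $5\le|m|_1\le r$ and $k_1\le N$, the two small-divisor inequalities defining $\mathcal{U}^{\mathcal{E},s}_{\lambda\gamma,N,r}$ at $u'$. In each case, I will combine the lower bound on the small divisor at $I$ (inherited from $u\in\mathcal{U}^{\mathcal{E},s}_{\gamma,N,r}$) with an upper bound on $|\Delta(I)-\Delta(I')|$ via the triangle inequality, and then convert $\|u\|_{\dot H^s}^2$ factors into $\|u'\|_{\dot H^s}^2$ factors using $\|u'\|_{\dot H^s}\le 2\|u\|_{\dot H^s}$. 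The slack is provided by the factor $(1-\lambda)$ in the perturbation hypothesis and the loss from $\gamma$ to $\lambda\gamma$ in the target, which are calibrated exactly to absorb the resulting constants.

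For $\Delta^{(4)}$, Remark~\ref{rem:useful} restricts the expansion $\Delta^{(4),\mathcal{E}}_{m,k}(I)=\sum_p(\delta^{\mathcal{E}}_{m,k})_p I_p$ to indices $p\le k_1\le N$, so
$$\Delta^{(4),\mathcal{E}}_{m,k}(I)-\Delta^{(4),\mathcal{E}}_{m,k}(I')=\sum_{1\le p\le k_1}(\delta^{\mathcal{E}}_{m,k})_p\bigl(I_p-I'_p\bigr).$$
A direct inspection of the explicit formulas \eqref{deltaKdV}--\eqref{deltaBO} yields $|(\delta^{\mathcal{E}}_{m,k})_p|\lesssim_r p$ when $p\in\{k_j\}$ and $|(\delta^{\mathcal{E}}_{m,k})_p|\lesssim_r 1$ otherwise. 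Multiplying by the pointwise bound $|I_p-I'_p|\le p^{2-2s}\gamma^2(1-\lambda)N^{-22r}\|u\|_{\dot H^s}^2$ (valid since $s\ge 1$ forces $p^{2-2s}\le 1$ at the worst), splitting the two contributions, and using Lemma~\ref{lem:diot} to compare $k_{\last}$ with $\kappa_{m,k}^{\mathcal{E}}$, one obtains an estimate of the form
$$|\Delta^{(4),\mathcal{E}}_{m,k}(I)-\Delta^{(4),\mathcal{E}}_{m,k}(I')|\lesssim_r \gamma^2(1-\lambda)N^{-22r+c}\|u\|_{\dot H^s}^2(\kappa_{m,k}^{\mathcal{E}})^{-2s},$$
for an absolute constant $c$. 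Since $|m|_1\le r$ and $\kappa_{m,k}^{\mathcal{E}}\le N$, for $N\gtrsim_{r,(1-\lambda)^{-1}}1$ this is a very small fraction of the lower bound $\gamma N^{-5|m|_1}\|u\|_{\dot H^s}^2(\kappa_{m,k}^{\mathcal{E}})^{-2s}$, and the triangle inequality together with $\|u'\|_{\dot H^s}^2\le 4\|u\|_{\dot H^s}^2$ delivers the defining inequality of $\mathcal{U}_{\lambda\gamma,N,r}^{(4),\mathcal{E},s}$ at $u'$.

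The estimate for $\Delta^{(4,6)}$ follows the same template, but is more intricate because $\Delta^{(6),\mathcal{E}}_{m,k,N}$ is quadratic in $I$. I will expand the bilinear difference
$$\Delta^{(6)}(I)-\Delta^{(6)}(I')=\sum_j m_j k_j\bigl(\partial_{I_{k_j}}Z^{\mathcal{E}}_{6,\le N}(I)-\partial_{I_{k_j}}Z^{\mathcal{E}}_{6,\le N}(I')\bigr),$$
write each piece as a sum involving products of the shape $(I_q+I'_q)(I_p-I'_p)$ with $p,q\le N$ and coefficients controlled by $|c^{\mathcal{E}}_{p,q}(\ell)|\lesssim \ell\le N$ from Theorem~\ref{thm-BNF}, and bound $I_p$ and $I'_p$ trivially via $\ell^{2s}I_\ell\le\|u\|_{\dot H^s}^2$ together with $\|u'\|\le 2\|u\|$. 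The resulting estimate carries one extra factor of $\|u\|_{\dot H^s}^2$ compared with the $\Delta^{(4)}$ bound, which is precisely why the definition of $\mathcal{U}^{(4,6)}$ contains the factor $\max((\kappa_{m,k}^{\mathcal{E}})^{-2s},\gamma\|u\|_{\dot H^s}^2)$ and replaces the exponent $-5|m|_1$ by $-21|m|_1$. Once this bookkeeping is done, the same triangle-inequality argument concludes.

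The main obstacle is the precise tracking of exponents of $N$: one must verify that the absolute constants $c$ appearing in the $N^{-22r+c}$ upper bounds on the differences (produced by summing up to $p\le N$ with $|(\delta^{\mathcal{E}}_{m,k})_p|\lesssim_r N$ and $|c^{\mathcal{E}}_{p,q}(\ell)|\lesssim N$) are strictly smaller than the gap $22r-5|m|_1$, respectively $22r-21|m|_1$, so that the potentially large factor $(\kappa_{m,k}^{\mathcal{E}})^{-2s}$ is tolerated for all admissible $s\ge 1$. Lemma~\ref{lem:diot}, which bounds $\kappa_{m,k}^{\mathcal{E}}$ either by $k_{\last}$ or by $2\#k-1$, is the structural input that makes this comparison quantitatively tight, and the explicit sparsity of $(\delta^{\mathcal{E}}_{m,k})_p$ in $p$ is what ensures the constant $c$ is small enough.
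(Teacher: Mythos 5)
Your proposal follows essentially the same route as the paper: reduce to a triangle-inequality argument, bound $|\Delta(I)-\Delta(I')|$ via the perturbation hypothesis, and absorb the error into the $(1-\lambda)$ slack — the paper packages the intermediate estimates as Lemma~\ref{lem:bound:Delta4} and Lemma~\ref{lem:var:Delta6N}, while you re-derive them inline, but the content is the same.

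Two small caveats worth noting. First, the invocation of Lemma~\ref{lem:diot} is not actually needed for the $\Delta^{(4)}$ estimate: the only facts used are the support restriction $(\delta^{\E}_{m,k})_p = 0$ for $p<\kappa^{\E}_{m,k}$ (definition of $\kappa$) and for $p>k_1$ (Remark~\ref{rem:useful}), together with $|(\delta^{\E}_{m,k})_p|\lesssim_m p$; the paper's cleaner bound $\sum_{\kappa\leq p\leq k_1}p^{3-2s}\leq k_1^4\kappa^{-2s}$ makes the constant $c=4$ absolute without any case distinction. Second, your final appeal to $\|u'\|_{\dot H^s}^2\leq 4\|u\|_{\dot H^s}^2$ points in the wrong direction: the target inequality defining $u'\in\mathcal{U}^{(4),\E,s}_{\lambda\gamma,N,r}$ carries $\|u'\|^2$ on its right-hand side, and the bound you establish carries $\|u\|^2$, so $\|u'\|^2\leq 4\|u\|^2$ only yields membership in $\mathcal{U}^{(4),\E,s}_{\lambda\gamma/4,N,r}$. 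The paper's own proof passes silently over the same point (it asserts ``i.e.\ $u'\in\mathcal{U}^{(4),\E,s}_{\lambda\gamma,N,r}$'' immediately after an estimate with $\|u\|^2$), so this is more a loose end in the statement than a defect in your reasoning, but if you want a watertight argument you should either note that in the intended applications $\|u'\|\lesssim\|u\|$ with constant arbitrarily close to $1$, or absorb the resulting constant into the implicit constants that $N\gtrsim_{r,(1-\lambda)^{-1}}1$ is allowed to dominate.
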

We recall that the coefficients $c_{p,q}^{\mathcal{E}}(\ell)$ of $Z_{6,\leq N}^{\mathcal{E}}(I)$ (see \eqref{def:coeff_Z_6}) satisfy (see Theorem \ref{thm-BNF})
\begin{equation}
\label{est:coeff_Z_6}
|c_{p,q}^{\mathcal{E}}(\ell)| \lesssim \ell.
\end{equation}

For $(m,k) \in \mathcal{MI}_{\mathrm{mult}}$, we introduce the polynomials
\begin{equation}
\label{def_Delta6}
\Delta_{m,k}^{(6),\mathcal{E}}(I) = \sum_{j=1}^{\# k} m_j k_j \sum_{0<p\leq q < k_\last}  c_{p,q}^{\mathcal{E}}(k_j) I_p I_q.
\end{equation}
Note that, roughly speaking, if $k_1\leq N$ then $\Delta_{m,k}^{(6),\mathcal{E}}$ is just the main part of the natural expansion of $\Delta_{m,k,N}^{(6),\mathcal{E}}$ (defined by \eqref{def_Delta6_N}).

\subsubsection{Some preliminary Lemma} First, we introduce some elementary preliminary lemma about the size and the variations of the small denominators.
\begin{lemma}
\label{lem:bound:Delta4}
 For all $u\in \dot{H}^{s}$, all $(m,k)\in \mathcal{MI}_{\mathrm{mult}}$, we have
$$
|\Delta^{(4),\mathcal{E}}_{m,k}(I) | \lesssim_{m} k_1^4 (\kappa_{m,k}^{\mathcal{E}})^{-2s}   \left( \max_{p>0} p^{2s-2} I_p \right).
$$
\end{lemma}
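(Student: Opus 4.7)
My plan is to estimate the finite sum defining $\Delta^{(4),\mathcal{E}}_{m,k}(I) = \sum_{p\geq 1}(\delta^{\mathcal{E}}_{m,k})_p I_p$ by combining three ingredients: a uniform coefficient bound $|(\delta^{\mathcal{E}}_{m,k})_p|\lesssim_m k_1$, the restriction of the support of $(\delta^{\mathcal{E}}_{m,k})_p$ in $p$ to the window $[\kappa_{m,k}^{\mathcal{E}},k_1]$, and the elementary estimate $I_p\le p^{2-2s}\bigl(\max_{q>0}q^{2s-2}I_q\bigr)$ coming from the $\dot H^s$ norm.

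First I would localize the sum. By the definition of $\kappa_{m,k}^{\mathcal{E}}$ (Definition \ref{def:kappa}) the coefficients vanish for $p<\kappa_{m,k}^{\mathcal{E}}$, while Remark \ref{rem:useful} (relation \eqref{ca servira dans les fractions rationnelles}) shows they also vanish for $p>k_1$; in both cases $\kappa_{m,k}^{\mathcal{E}}\le k_1$ (this follows from Lemma \ref{lem:diot} for \ref{gKdV} and from \eqref{eq:bienvu} for \ref{gBO}), so the window is non-empty and of length at most $k_1$.

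Next I would check the pointwise bound $|(\delta^{\mathcal{E}}_{m,k})_p|\lesssim_m k_1$. For \ref{gKdV} this is immediate from \eqref{deltaKdV} since each nonzero contribution contains a single factor $k_j\le k_1$. For \ref{gBO}, reading \eqref{deltaBO}, the first piece is $\lesssim |m|_1 k_1$, the piece $\sum_{k_j\ge p}m_j$ is trivially $\le|m|_1$, and the piece $\frac{1}{p}\sum_{k_j<p}m_jk_j$ is $\le |m|_1$ since $|k_j|/p<1$ in the range of summation; this is the only mildly delicate point.

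Combining these, I get
\[
|\Delta^{(4),\mathcal{E}}_{m,k}(I)|\;\lesssim_m\; k_1\sum_{p=\kappa_{m,k}^{\mathcal{E}}}^{k_1}p^{2-2s}\Bigl(\max_{q>0}q^{2s-2}I_q\Bigr).
\]
Since $s\ge 1$ the function $p\mapsto p^{2-2s}$ is non-increasing, so the sum is bounded by $(k_1-\kappa_{m,k}^{\mathcal{E}}+1)(\kappa_{m,k}^{\mathcal{E}})^{2-2s}\le k_1(\kappa_{m,k}^{\mathcal{E}})^{2-2s}$. Finally, using $\kappa_{m,k}^{\mathcal{E}}\le k_1$ I rewrite $(\kappa_{m,k}^{\mathcal{E}})^{2-2s}=(\kappa_{m,k}^{\mathcal{E}})^2(\kappa_{m,k}^{\mathcal{E}})^{-2s}\le k_1^2(\kappa_{m,k}^{\mathcal{E}})^{-2s}$ to obtain the stated bound $k_1^4(\kappa_{m,k}^{\mathcal{E}})^{-2s}\max_{q>0}q^{2s-2}I_q$. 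The only real subtlety is the uniform coefficient estimate in the \ref{gBO} case, but it is harmless once one notes the cancellation $k_j/p<1$ in the second summand of \eqref{deltaBO}.
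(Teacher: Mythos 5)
Your proof is correct and follows essentially the same route as the paper: localize the sum to $p\in[\kappa_{m,k}^{\mathcal{E}},k_1]$, bound the coefficients $(\delta^{\mathcal{E}}_{m,k})_p$ uniformly in that window, and absorb $I_p$ into $\max_q q^{2s-2}I_q$. The only cosmetic difference is that you use the cruder pointwise bound $|(\delta^{\mathcal{E}}_{m,k})_p|\lesssim_m k_1$ where the paper uses the slightly sharper $|(\delta^{\mathcal{E}}_{m,k})_p|\lesssim_m p$; both lead to the stated $k_1^4(\kappa_{m,k}^{\mathcal{E}})^{-2s}$ after summing.
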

\begin{proof}
In view of the formula giving explicitly $\delta^{\mathcal{E}}_{m,k}$ in Definition \ref{def:Delta4}, it is clear that
$(\delta^{\mathcal{E}}_{m,k})_{p} \lesssim_m p.$
Furthermore, we know that if $p>k_1$ then $(\delta^{\mathcal{E}}_{m,k})_{p}=0$ (see Remark \ref{rem:useful}). Finally, by definition of $\kappa_{m,k}^{\mathcal{E}}$ and $\Delta^{(4),\mathcal{E}}_{m,k}$, we have
$$
|\Delta^{(4),\mathcal{E}}_{m,k}(I)| =| \sum_{p = \kappa_{m,k}^{\mathcal{E}}}^{k_1} (\delta^{\mathcal{E}}_{m,k})_{p} I_{p} |\lesssim_{m} \sum_{p = \kappa_{m,k}^{\mathcal{E}}}^{k_1} p  I_{p} \lesssim_{m} k_1^4 (\kappa_{m,k}^{\mathcal{E}})^{-2s}   \left( \max_{p>0} p^{2s-2} I_p \right).
$$
\end{proof}

\begin{lemma}
\label{lem:Delta-DeltaN}
 For all $u\in \dot{H}^{s}$, all $(m,k)\in \mathcal{MI}_{\mathrm{mult}}$, all $N\gtrsim_{|m|_1,s} 2$ such that 
$k_1 \leq N$ we have
\begin{equation}
\label{res:lem:Delta-DeltaN}
|\Delta^{(6),\mathcal{E}}_{m,k}(I) - \Delta^{(6),\mathcal{E}}_{m,k,N}(I) | \leq N^5 (\kappa_{m,k}^{\mathcal{E}})^{-2s}  \| u \|_{\dot{H}^s}^{4}.
\end{equation}
\end{lemma}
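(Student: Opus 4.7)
The plan is to decompose the difference $\Delta^{(6),\mathcal{E}}_{m,k,N}(I) - \Delta^{(6),\mathcal{E}}_{m,k}(I)$ explicitly and to bound each piece by elementary Sobolev-type estimates.

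First, I would compute $\partial_{I_r}Z^{\mathcal{E}}_{6,\leq N}$ by distinguishing the role that the differentiated index $r$ plays among the three summation indices $p\leq q\leq \ell$ of the monomial $I_pI_qI_\ell$ in \eqref{def:coeff_Z_6}:
\begin{equation*}
\partial_{I_r}Z^{\mathcal{E}}_{6,\leq N}(I)=\sum_{r\leq q\leq \ell\leq N}c^{\mathcal{E}}_{r,q}(\ell)\,I_qI_\ell+\sum_{p\leq r\leq \ell\leq N}c^{\mathcal{E}}_{p,r}(\ell)\,I_pI_\ell+\sum_{p\leq q\leq r}c^{\mathcal{E}}_{p,q}(r)\,I_pI_q,
\end{equation*}
the last sum being legitimate since $r=k_j\leq k_1\leq N$. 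Multiplying by $m_jk_j$ and summing over $j$, comparison with \eqref{def_Delta6} shows that the third contribution already contains $\Delta^{(6),\mathcal{E}}_{m,k}$ and differs from it only by the subrange $k_{\last}\leq q\leq k_j$. Consequently the difference splits as $(\mathrm{I})+(\mathrm{II})+(\mathrm{III})$, corresponding respectively to the first term above, the second term, and this leftover piece of the third term.

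Next, I would bound each piece using the coefficient estimate $|c^{\mathcal{E}}_{p,q}(\ell)|\lesssim \ell\leq N$ from Theorem \ref{thm-BNF} together with $k_j\leq N$. In each piece at least one of the two remaining summation indices is forced to be $\geq k_{\last}$: the index $q$ in $(\mathrm{I})$ (since $q\geq k_j\geq k_{\last}$) and in $(\mathrm{III})$ (by construction), and the index $\ell$ in $(\mathrm{II})$ (since $\ell\geq k_j\geq k_{\last}$). Using $\sum_{a\geq k_{\last}}I_a\leq k_{\last}^{-2s}\|u\|_{\dot{H}^s}^2$ for that restricted sum and $\sum_b I_b\leq \|u\|_{L^2}^2\leq \|u\|_{\dot{H}^s}^2$ for the other, each piece is bounded by a constant depending on $|m|_1$ times $N^2\,k_{\last}^{-2s}\,\|u\|_{\dot{H}^s}^4$.

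Finally, to promote $k_{\last}^{-2s}$ to $(\kappa^{\mathcal{E}}_{m,k})^{-2s}$, I would verify that $\kappa^{\mathcal{E}}_{m,k}\leq k_{\last}$ in all cases: Lemma \ref{lem:diot} gives equality in the generic situations, while in the remaining \ref{gBO} case (where $a_3\neq 0$ and $m_1+\dots+m_{\#k}\neq 0$) formula \eqref{deltaBO} yields $(\delta^{\mathrm{\ref{gBO}}}_{m,k})_p=-18\pi^{-1}a_3^2(m_1+\dots+m_{\#k})\neq 0$ for every $p<k_{\last}$, so $\kappa^{\mathcal{E}}_{m,k}=1\leq k_{\last}$. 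The $|m|_1$-dependent constant is then absorbed into an extra factor $N^3$ by the hypothesis $N\gtrsim_{|m|_1,s}2$, yielding \eqref{res:lem:Delta-DeltaN}. The only real work is in the initial three-way decomposition and the careful identification of which summation index carries the $k_{\last}$-localisation; the remaining estimates are routine.
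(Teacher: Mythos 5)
Your proof mirrors the paper's argument very closely: the same explicit three-way decomposition of $\Delta^{(6),\mathcal{E}}_{m,k,N}-\Delta^{(6),\mathcal{E}}_{m,k}$ (this is precisely \eqref{expli:dec} in the paper, modulo an index typo there), the same observation that in each of the three pieces one of the two remaining summation indices is $\geq k_{\last}$, and the same final appeal to Lemma \ref{lem:diot} to convert $k_{\last}^{-2s}$ into $(\kappa^{\mathcal{E}}_{m,k})^{-2s}$ at the cost of an $(|m|_1,s)$-dependent constant absorbed by $N\gtrsim_{|m|_1,s}2$. So the approach is essentially identical.

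One small slip in your last paragraph: the assertion $\kappa^{\mathcal{E}}_{m,k}\leq k_{\last}$ in all cases is not quite correct. Your formula $(\delta^{\mathrm{\ref{gBO}}}_{m,k})_p=-\frac{18a_3^2}{\pi}(m_1+\dots+m_{\#k})$ indeed holds for $p<k_{\last}$, and therefore yields $\kappa^{\mathrm{\ref{gBO}}}_{m,k}=1$ \emph{when $k_{\last}\geq 2$}; but if $k_{\last}=1$ the range $p<k_{\last}$ is empty, and at $p=1=k_{\last}$ the $a_4$-term also enters, so $(\delta^{\mathrm{\ref{gBO}}}_{m,k})_1=-12a_4m_{\last}-\frac{18a_3^2}{\pi}\sum_j m_j$ may vanish, making $\kappa^{\mathrm{\ref{gBO}}}_{m,k}>k_{\last}$. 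What Lemma \ref{lem:diot} actually gives in that remaining case is only $\kappa^{\mathrm{\ref{gBO}}}_{m,k}\leq 2\#k-1\lesssim_{m}k_{\last}$, and it is the $(2\#k-1)^{2s}$ factor arising from this bound that accounts for the $s$-dependence in the threshold $N\gtrsim_{|m|_1,s}2$. With $\kappa^{\mathcal{E}}_{m,k}\lesssim_{m}k_{\last}$ in place of $\kappa^{\mathcal{E}}_{m,k}\leq k_{\last}$, your argument is correct and coincides with the paper's.
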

\begin{proof} First note that $ \Delta^{(6),\mathcal{E}}_{m,k,N}(I) - \Delta^{(6),\mathcal{E}}_{m,k}(I) $ can be decomposed as
\begin{equation}
\label{expli:dec}
\sum_{j=1}^{\# k} m_j k_j \left( \sum_{0<k_j\leq q \leq \ell\leq N}  c_{k_j,q}^{\mathcal{E}}(\ell) I_q I_{\ell}  + \sum_{0<p\leq k_j \leq \ell\leq N}  c_{p,k_j}^{\mathcal{E}}(\ell) I_p I_{\ell} + \sum_{\substack{0<p\leq q \leq k_j\\ q\geq k_\last}}  c_{p,k_j}^{\mathcal{E}}(k_j) I_p I_{q} \right).
\end{equation}
As a consequence, since $k_1\leq N$ and $|c_{p,q}^{\mathcal{E}}(\ell)|\lesssim \ell$ (see \eqref{est:coeff_Z_6}), we have
$$
| \Delta^{(6),\mathcal{E}}_{m,k,N}(I) - \Delta^{(6),\mathcal{E}}_{m,k}(I)  | \lesssim_{m} N^4 k_{\last}^{-2s} \| u \|_{\dot{H}^s}^{4}.
$$
Furthermore, by using Lemma \ref{lem:diot}, we know that $\kappa_{m,k}^{\mathcal{E}} \lesssim_{m} k_{\last}$. Consequently, if $N$ is large enough with respect to $|m|_1$ and $s$ we get \eqref{res:lem:Delta-DeltaN}. 
\end{proof}

The following lemma is a straightforward corollary of the proof of Lemma \ref{lem:Delta-DeltaN}.
\begin{lemma}
\label{lem:bound:Delta6}
 For all $u\in \dot{H}^{s}$, all $(m,k)\in \mathcal{MI}_{\mathrm{mult}}$, all $N\geq 2$ such that 
$k_1 \leq N$, we have
$$
| \Delta^{(6),\mathcal{E}}_{m,k,N}(I) | \lesssim_{m} N^4  \| u \|_{\dot{H}^s}^{4}.
$$
\end{lemma}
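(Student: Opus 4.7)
The plan is to bound $\Delta^{(6),\mathcal{E}}_{m,k,N}(I)$ directly from its definition \eqref{def_Delta6_N}, without passing through $\Delta^{(6),\mathcal{E}}_{m,k}$; this is even simpler than the proof of Lemma \ref{lem:Delta-DeltaN} because we do not need to track which specific indices appear in the sums and we can afford a rougher bound on the summation ranges.

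First I would bound the partial derivative $\partial_{I_{k_j}} Z_{6,\leq N}^{\mathcal{E}}(I)$ for each fixed $j\in \llbracket 1,\#k\rrbracket$. Differentiating the homogeneous cubic polynomial
\[
Z_{6,\leq N}^{\mathcal{E}}(I) = \sum_{0<p\leq q \leq \ell\leq N} c_{p,q}^{\mathcal{E}}(\ell) I_p I_q I_\ell
\]
with respect to $I_{k_j}$ produces at most three types of terms, obtained by setting in turn $k_j=p$, $k_j=q$ or $k_j=\ell$ in the sum. In each case the resulting quadratic expression is a sum over two free indices lying in $\llbracket 1, N\rrbracket$, with coefficient of the form $c_{a,b}^{\mathcal{E}}(c)$ where $a,b,c\leq N$. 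Invoking \eqref{est:coeff_Z_6}, which gives $|c_{p,q}^{\mathcal{E}}(\ell)|\lesssim \ell\leq N$, and using Cauchy--Schwarz through $\sum_{p>0} I_p = \|u\|_{L^2}^2 \leq \|u\|_{\dot{H}^s}^2$, I get
\[
\bigl|\partial_{I_{k_j}} Z_{6,\leq N}^{\mathcal{E}}(I)\bigr| \lesssim N \Bigl(\sum_{p>0} I_p\Bigr)^{\! 2} \leq N \, \|u\|_{\dot{H}^s}^4.
\]

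Then I would sum over $j$ using $|k_j|\leq k_1 \leq N$ and $\sum_{j=1}^{\# k} |m_j|\leq |m|_1$, to obtain
\[
|\Delta^{(6),\mathcal{E}}_{m,k,N}(I)| \leq \sum_{j=1}^{\# k} |m_j|\, |k_j|\, \bigl|\partial_{I_{k_j}} Z_{6,\leq N}^{\mathcal{E}}(I)\bigr| \lesssim_{m} N^2 \|u\|_{\dot{H}^s}^4,
\]
which is in fact stronger than the announced bound (and since $N\geq 2$, a fortiori implies $|\Delta^{(6),\mathcal{E}}_{m,k,N}(I)| \lesssim_m N^4 \|u\|_{\dot{H}^s}^4$).

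There is essentially no obstacle here: the proof is a bookkeeping of the three contributions hidden in \eqref{expli:dec} combined with the universal coefficient bound \eqref{est:coeff_Z_6}. The reason the author is content with the loose exponent $4$ rather than the sharp $2$ is presumably that this bound is only used further on as an absolute safety net, and keeping the same exponent $N^4$ as in Lemma \ref{lem:Delta-DeltaN} makes subsequent estimates more uniform; I would simply state both versions if the sharper one turns out to be needed later.
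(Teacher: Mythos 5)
Your proof is correct and in fact gives a sharper exponent ($N^{2}$ rather than $N^{4}$), which of course implies the stated bound since $N\geq 2$. The paper takes a slightly more roundabout route: it records Lemma~\ref{lem:bound:Delta6} as a "straightforward corollary of the proof of Lemma~\ref{lem:Delta-DeltaN}," i.e.\ it reuses the decomposition~\eqref{expli:dec} of $\Delta^{(6),\mathcal{E}}_{m,k,N}-\Delta^{(6),\mathcal{E}}_{m,k}$ and inherits the $N^{4}$ there (which was natural there because the authors were simultaneously tracking the $k_{\last}^{-2s}$ decay). Your version bounds $\Delta^{(6),\mathcal{E}}_{m,k,N}$ directly from~\eqref{def_Delta6_N} without any subtraction, which is the cleaner argument for this particular statement. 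Two cosmetic remarks: the step $\sum_{p>0} I_p \leq \|u\|_{\dot{H}^s}^2$ is not Cauchy--Schwarz but simply $|p|^{2s}\geq 1$ (and actually $\sum_{p>0}I_p=\tfrac12\|u\|_{L^2}^2$, a harmless factor); and you should note that all $k_j\leq k_1\leq N$, so each $\partial_{I_{k_j}}Z^{\mathcal{E}}_{6,\leq N}$ is indeed a nontrivial sum over indices $\leq N$ (for $k_j>N$ it would simply vanish, which is consistent with your bound).
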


As a corollary of the explicit decomposition \eqref{expli:dec}, we also control the variations of $\Delta^{(6),\mathcal{E}}_{m,k,N}$.
\begin{lemma}
\label{lem:var:Delta6N}
 For all $u,u'\in \dot{H}^{s}$, all $N\geq 2$, all $(m,k)\in \mathcal{MI}_{\mathrm{mult}}$, such that $k_1 \leq N$ we have
$$
|\Delta^{(6),\mathcal{E}}_{m,k,N}(I) -\Delta^{(6),\mathcal{E}}_{m,k,N}(I')| \lesssim_{m} N^4 \,  \|I-I'\|_{L^\infty} (\|I\|_{L^\infty}+\|I'\|_{L^\infty}).
$$
\end{lemma}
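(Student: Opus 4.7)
The strategy is to exploit the fact that $\Delta^{(6),\mathcal{E}}_{m,k,N}$ is, by construction, a homogeneous polynomial of degree $2$ in the actions $I$. Writing out $\partial_{I_{k_j}} Z^{\mathcal{E}}_{6,\leq N}(I)$ explicitly from the definition \eqref{def:coeff_Z_6} of $Z^{\mathcal{E}}_{6,\leq N}$ yields the decomposition
\begin{equation*}
\Delta^{(6),\mathcal{E}}_{m,k,N}(I) = \sum_{j=1}^{\# k} m_j k_j \Bigl( \sum_{k_j \leq q \leq \ell \leq N} c^{\mathcal{E}}_{k_j,q}(\ell) I_q I_\ell + \sum_{0<p\leq k_j \leq \ell \leq N} c^{\mathcal{E}}_{p,k_j}(\ell) I_p I_\ell + \sum_{0<p\leq q \leq k_j} c^{\mathcal{E}}_{p,q}(k_j) I_p I_q \Bigr),
\end{equation*}
entirely analogous to the expansion \eqref{expli:dec} used in the proof of Lemma \ref{lem:Delta-DeltaN} (possibly with harmless factors of $2$ or $3$ when two of the indices coincide).

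The difference $\Delta^{(6),\mathcal{E}}_{m,k,N}(I) - \Delta^{(6),\mathcal{E}}_{m,k,N}(I')$ is then handled term by term using the elementary bilinear identity
\begin{equation*}
I_p I_q - I'_p I'_q = (I_p - I'_p) I_q + I'_p (I_q - I'_q),
\end{equation*}
which immediately gives $|I_p I_q - I'_p I'_q| \leq \|I-I'\|_{L^\infty}(\|I\|_{L^\infty} + \|I'\|_{L^\infty})$. It remains only to count monomials and to bound coefficients: by \eqref{est:coeff_Z_6} each $|c^{\mathcal{E}}_{*,*}(\ell)| \lesssim \ell \leq N$; each of the three inner double sums runs over at most $N^2$ pairs of indices (all capped by $N$); the prefactor satisfies $|m_j k_j| \lesssim_m N$ since $k_j \leq k_1 \leq N$; and the outer sum over $j$ contains $\#k \leq |m|_1$ terms. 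Multiplying these four factors together produces exactly the claimed bound $\lesssim_m N^4 \|I-I'\|_{L^\infty}(\|I\|_{L^\infty} + \|I'\|_{L^\infty})$.

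There is no real obstacle here; the proof is a routine bookkeeping exercise once one recognizes the bilinear structure of $\Delta^{(6),\mathcal{E}}_{m,k,N}$. The only point requiring minor care is correctly enumerating the three contributions to $\partial_{I_{k_j}} Z^{\mathcal{E}}_{6,\leq N}$ coming from the three positions ($p$, $q$, $\ell$) in which the index $k_j$ may occur in the defining sum \eqref{def:coeff_Z_6}, but as all these cases contribute comparable bounds this does not affect the final estimate.
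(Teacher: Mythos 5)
Your proof is correct and follows essentially the same route the paper takes: the paper states the lemma as a corollary of the proof of Lemma~\ref{lem:Delta-DeltaN}, whose centerpiece is precisely the explicit quadratic expansion \eqref{expli:dec} that you reproduce, and the final bound is obtained by the same factorization of $I_pI_q - I'_pI'_q$ and the same counting of monomials and coefficients. No gap.
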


\subsubsection{Proof of Proposition \ref{prop:stab_U}}
\label{proof:stab_U}

Let $\lambda\in(0,1)$ and $u \in \mathcal{U}_{\gamma,N,r}^{\mathcal{E},s}$, $(m,k)\in \mathcal{MI}_{\mathrm{mult}}$ be such that $|m|_1\leq r$ and $k_1\leq N$. We consider $u'\in \dot{H}^s$ such that $\|u'\|_{\dot{H}^s} \leq 2\, \|u\|_{\dot{H}^s}$ and we aim at establishing an uniform upper bound on $p^{2s-2}|I_p - I_p'|$ to have $u'\in \mathcal{U}_{\lambda \gamma,N,r}^{\mathcal{E},s}$.

First, we focus on an upper bound to ensure that $u'\in \mathcal{U}_{\lambda \gamma,N,r}^{(4),\mathcal{E},s}$. Since $\Delta^{(4),\mathcal{E}}_{m,k}$ is a linear function of the actions, applying Lemma \ref{lem:bound:Delta4}, we get
\begin{equation*}
\begin{split}
|\Delta^{(4),\mathcal{E}}_{m,k}(I')| &\geq |\Delta^{(4),\mathcal{E}}_{m,k}(I)| -  |\Delta^{(4),\mathcal{E}}_{m,k}(I-I')|\\
 &\geq \gamma N^{-5  |m|_1} \| u \|_{\dot{H}^s}^2 (\kappa_{m,k}^{\mathcal{E}})^{-2s} - C_r N^4 (\kappa_{m,k}^{\mathcal{E}})^{-2s}   \left( \max_{p>0} p^{2s-2} |I_p - I_p'|\right)
 \end{split}
\end{equation*}
where $C_r>0$ is a constant depending only on $r$. Consequently, if 
\begin{equation}
\label{cfl0}
\forall p \in \mathbb{N}^*, \ p^{2s-2} |I_p - I_p'| \leq \gamma (1-\lambda) \, C_r^{-1} N^{-5  r -4} \| u \|_{\dot{H}^s}^2
\end{equation}
then we have $|\Delta^{(4),\mathcal{E}}_{m,k}(I')| \geq \gamma \lambda N^{-5  |m|_1} \| u \|_{\dot{H}^s}^2 (\kappa_{m,k}^{\mathcal{E}})^{-2s}$, i.e. $u'\in \mathcal{U}_{\lambda \gamma,N,r}^{(4),\mathcal{E},s}$.

Now, we aim at establishing some an upper bounds to ensure that $u'\in \mathcal{U}_{\lambda \gamma,N,r}^{(4,6),\mathcal{E},s}$. Recalling that $\|u'\|_{\dot{H}^s} \leq 2\, \|u\|_{\dot{H}^s}$ and applying the triangle inequality, Lemma \ref{lem:bound:Delta4}, Lemma \ref{lem:var:Delta6N}, we get a constant $K_r$ depending only on $r$ such that 
\begin{equation*}
\begin{split}
|\Delta^{(4,6),\mathcal{E}}_{m,k,N}(I')| \geq& \ |\Delta^{(4,6),\mathcal{E}}_{m,k,N}(I)| - |\Delta^{(4),\mathcal{E}}_{m,k}(I)-\Delta^{(4),\mathcal{E}}_{m,k}(I')| 
-|\Delta^{(6),\mathcal{E}}_{m,k,N}(I)-\Delta^{(6),\mathcal{E}}_{m,k,N}(I')| \\
\geq& \  |\Delta^{(4,6),\mathcal{E}}_{m,k,N}(I)| - K_r N^4 (\kappa_{m,k}^{\mathcal{E}})^{-2s}   \left( \max_{p>0} p^{2s-2} |I_p - I_p'|\right) \\&- K_r N^4 \|u\|_{\dot{H}^s}^2 \|I-I'\|_{L^\infty} 
\end{split}
\end{equation*}
Consequently, if each one of the two last terms of the last estimate are controlled by $(1-\lambda)|\Delta^{(4,6),\mathcal{E}}_{m,k,N}(I)|/2 $, we have $|\Delta^{(4,6),\mathcal{E}}_{m,k,N}(I')|\geq \lambda |\Delta^{(4,6),\mathcal{E}}_{m,k,N}(I')|$ and thus, since $u\in \mathcal{U}_{ \gamma,N,r}^{(4),\mathcal{E},s}$, we have $u'\in \mathcal{U}_{\lambda \gamma,N,r}^{(4,6),\mathcal{E},s}$. 

To ensure these two controls it is clearly enough to have
\begin{equation}
\label{cfl1}
\forall p\in \mathbb{N}^*,\   p^{2s-2} |I_p - I_p'| \leq K_r^{-1}   \frac{(1-\lambda)}2 \gamma N^{-21  r-4} \| u \|_{\dot{H}^s}^2,
\end{equation}
\begin{equation}
\label{cfl2}
\forall p\in \mathbb{N}^*,\    |I_p-I_p'| \leq  K_r^{-1}  \frac{(1-\lambda)}2  \gamma^2 N^{-21r-4}\| u \|_{\dot{H}^s}^2,
\end{equation}

Finally, we notice that the conditions \eqref{cfl0},\eqref{cfl1},\eqref{cfl2} are clearly satisfied if $N$ is large enough with respect to $r$ and $(1-\lambda)^{-1}$ and if \eqref{assump:prop:stab_U} is satisfied, i.e.
$$
\forall p\in \mathbb{N}^*,\   p^{2s-2} |I_p - I_p'| \leq \gamma^2 (1-\lambda) N^{-22  r} \| u \|_{\dot{H}^s}^2.
$$

\subsection{Probability estimates} In this subsection $(I_k)_{k\in \mathbb{N}^{*}}$ denotes a sequence of random variables called \emph{actions}. We assume that
\begin{itemize}
\item the actions are independent
\item $I_k$ is uniformly distributed in $J_k + \sigma(0,k^{-2s-\nu})$
\end{itemize}
where $\nu\in(1,9]$ is a given constant, $J_k\geq 0$ and $\sigma>0$.
In this section we take care to get uniform estimates with respect to $\nu$, $J$ and $\sigma$. Note that the assumption $\nu>1$ only ensures that almost surely we have $u\in \dot{H}^s$ where the random function $u$ is naturally defined by
\begin{equation}\label{uI}
u = \sum_{k = 1}^{\infty} 2\sqrt{I_k} \cos(2\pi k x).
\end{equation}

In this subsection, we aim at establishing the following proposition (that is proven in the subsection \ref{proof:prop:sum:proba}).

\begin{proposition}
\label{prop:sum:proba}
 For all $\gamma\in (0,1)$, $\mathcal{E} \in \{\mathrm{\ref{gKdV},\ref{gBO}}\}$, $r\geq 2$, $\lambda\in (0,1)$, $\sigma \lesssim_{r,s,\sigma,\lambda}1$, if\footnote{Here $\zeta$ denotes the Riemann zeta function.}
 \begin{equation}
 \label{assump:proba}
\|u\|_{{L^\infty\dot{H}^s}}^2 = 2\sum_{k=1}^{\infty} J_k |k|^{2s} + 2\sigma\zeta(\nu) \leq 4 \sigma \zeta(\nu)
 \end{equation}
then
$$
\mathbb{P}\left( 1\lesssim_{r,s,\nu} N \leq (\gamma \| u\|_{\dot{H}^s}^{-2})^{1/(21r+5)} \ \Rightarrow \ u\in \mathcal{U}_{\gamma,N,r}^{\mathcal{E},s}\right) \geq 1 - \lambda \gamma
$$
where $\mathcal{U}_{\gamma,N,r}^{\mathcal{E},s}$ is defined in Definition \ref{U}.
\end{proposition}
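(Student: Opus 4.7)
My plan is to bound the probability of the complementary (bad) event by a union bound combined with a one-dimensional probability estimate along the action $I_{\kappa}$, where $\kappa = \kappa_{m,k}^{\mathcal{E}}$. First, since the assumption \eqref{assump:proba} gives $\|u\|_{\dot{H}^s}^2 \leq 4\sigma\zeta(\nu)$ \emph{deterministically}, I replace the random factor $\|u\|_{\dot H^s}^2$ in the definition of $\mathcal{U}_{\gamma,N,r}^{\mathcal{E},s}$ by this upper bound; this can only enlarge the bad set. The bad event then decomposes as a union over integer values of $N$ in $\llbracket N_0, N_{\max}\rrbracket$, with $N_{\max} \lesssim (\gamma \|u\|_{L^\infty\dot H^s}^{-2})^{1/(21r+5)}$ and $N_0 = N_0(r,s,\nu,\lambda)$ to be chosen large, and over multi-indices $(m,k) \in \mathcal{MI}_{\mathrm{mult}}$ with $5 \leq |m|_1 \leq r$ and $k_1 \leq N$.

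For the linear small divisor $\Delta^{(4),\mathcal{E}}_{m,k}(I) = \sum_p (\delta^{\mathcal{E}}_{m,k})_p I_p$, I use that $I_\kappa$ is uniform on an interval of length $\sigma \kappa^{-2s-\nu}$ independently of the other actions. Integrating first in $I_\kappa$ yields the elementary bound
\begin{equation*}
\mathbb{P}\!\left(|\Delta^{(4),\mathcal{E}}_{m,k}(I)| \leq t\right) \,\leq\, \frac{2\,t\,\kappa^{2s+\nu}}{\,|(\delta^{\mathcal{E}}_{m,k})_\kappa|\,\sigma\,}.
\end{equation*}
By Lemma~\ref{lem:diot}, $\kappa = k_{\last}$ in the gKdV case and $\kappa \leq 2\#k -1$ in the gBO case, and the explicit formulas \eqref{deltaKdV}--\eqref{deltaBO} furnish a lower bound on $|(\delta^{\mathcal{E}}_{m,k})_\kappa|$ depending only on $r$ and on the nonlinearity (up to an extra $k_1^{-2}$ factor for gKdV). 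Applying this with $t = \gamma N^{-5|m|_1}\|u\|_{L^\infty\dot H^s}^2 \kappa^{-2s}$ and using $\|u\|_{L^\infty\dot H^s}^2 \lesssim \sigma$, $\kappa\leq k_1 \leq N$, and $\nu\leq 9$ gives a per-term bound of the form $\lesssim_{r} \gamma\, N^{-5|m|_1 + c_0}$ for some $c_0 = c_0(r)$.

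For the quadratic divisor $\Delta^{(4,6),\mathcal{E}}_{m,k,N}$, I view it as an affine function of $I_\kappa$: $\Delta^{(4,6)}_{m,k,N}(I) = A(I') I_\kappa + B(I')$, where by \eqref{est:coeff_Z_6} one has $A(I') = (\delta^{\mathcal{E}}_{m,k})_\kappa + E(I')$ with $E$ linear in the other actions and $|E(I')| \lesssim N \|u\|_{\dot H^s}^2$. Since the range restriction forces $N^{21r+5}\|u\|_{L^\infty\dot H^s}^2 \leq \gamma \leq 1$, the perturbation $E$ is much smaller than $|(\delta)_\kappa|/2$, so $A(I')$ is bounded below \emph{deterministically} and the same one-dimensional estimate applies. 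Distinguishing the two cases in the $\max$ defining $\mathcal{U}^{(4,6)}$ produces a per-term bound $\lesssim_r \gamma\, N^{-21|m|_1 + c_1}$; the extra factor $\gamma\|u\|^2 \leq N^{-21r-5}$ arising in the case $\gamma\|u\|^2 > \kappa^{-2s}$ is absorbed into $c_1$.

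Summing: the number of multi-indices with $|m|_1\leq r$ and $k_1\leq N$ is $\lesssim_r N^r$, and since $|m|_1 \geq 5$, the products above are of order at most $\gamma\,N^{-20+c_0}$ (resp.\ $\gamma\,N^{-84+c_1}$), so the sum over $N\geq N_0$ telescopes to $\lesssim_{r,s,\nu} \gamma\,N_0^{-1}$. Choosing $N_0$ large in terms of $\lambda^{-1},c_0,c_1$ delivers the bound $\leq \lambda\gamma$, and forcing $\sigma \lesssim_{r,s,\nu,\lambda} 1$ is used to keep $\|u\|_{L^\infty \dot H^s}^2 \lesssim \sigma$ compatible with the range of $N$. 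The main obstacle is the lower bound on $|(\delta^{\mathrm{\ref{gBO}}}_{m,k})_\kappa|$ when $a_3^2/(\pi a_4)$ is irrational but Liouville: the naive estimate from \eqref{deltaBO} degenerates along rational approximations and one must invoke the continued-fraction estimate of Lemma~\ref{lem:continuedfractions} to restore polynomial control in $|m|_1$.
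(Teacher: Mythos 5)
Your overall architecture (union bound over multi-indices combined with one-dimensional probability estimates along a suitable action) is in the same spirit as the paper, but you add a union over the truncation parameter $N$, whereas the paper establishes $N$-independent non-resonance bounds once (Propositions \ref{prop:proba:Z4} and \ref{prop:proba:Z6}) and then verifies \emph{deterministically}, via Lemmas \ref{lem:Delta-DeltaN} and \ref{lem:bound:Delta6}, that they imply $u\in\mathcal{U}_{\gamma,N,r}^{\mathcal{E},s}$ for every admissible $N$. That structural difference is not by itself a defect, and your Case-1 bound for the linear divisor $\Delta^{(4),\mathcal{E}}_{m,k}$ is essentially correct.

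The genuine gap is in your treatment of $\Delta^{(4,6),\mathcal{E}}_{m,k,N}$. Viewing it as affine in $I_\kappa$ with slope $A(I')$ and using Lemmas \ref{lem:scale_density}--\ref{whaou} yields
\begin{equation*}
\mathbb{P}\bigl(|\Delta^{(4,6),\mathcal{E}}_{m,k,N}|<t\bigr)\ \lesssim\ \frac{t\,\kappa^{2s+\nu}}{|A(I')|\,\sigma},
\end{equation*}
the factor $\kappa^{2s+\nu}$ coming from the sup norm of the density of $I_\kappa$. In the branch $\kappa^{-2s}\geq\gamma\|u\|^2$ the threshold $t$ carries a compensating $\kappa^{-2s}$ and your estimate closes. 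But in the branch $\gamma\|u\|^2>\kappa^{-2s}$ the threshold is $t=\gamma^2 N^{-21|m|_1}\|u\|^4$ with \emph{no} $\kappa^{-2s}$ factor, so $\kappa^{2s+\nu}$ survives. Since $\kappa=k_\last$ can be as large as $N$ for \ref{gKdV} (Lemma \ref{lem:diot}) and $s$ is taken of order $r^2$ or larger, you are left with a factor as large as $N^{2s+\nu}$ against powers of $N$ of order only $r$ in the threshold: the bound blows up. Your sentence ``the extra factor $\gamma\|u\|^2\leq N^{-21r-5}$ is absorbed into $c_1$'' forgets the accompanying factor $\kappa^{2s}$, which in this regime satisfies $\kappa^{2s}>1/(\gamma\|u\|^2)>1$ and is dominant. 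The lower bound on $A(I')$ that you extract from the range restriction does not help here: the obstruction is not that $|A|$ is small, but that the density factor $\kappa^{2s+\nu}$ is uncompensated. To close this case one must exploit the genuinely \emph{quadratic} dependence of $\Delta^{(4,6),\mathcal{E}}_{m,k,N}$ on a well-chosen small action $I_{p_\star}$ with $p_\star\lesssim_m\log k_1$, via the quadratic-polynomial probability estimate of Lemma \ref{lem:proba:poly}, which yields a bound of the form $\sqrt{\gamma}\,k_1^{6n}$ free of any power of $\kappa^{2s}$. This is exactly what Lemma \ref{lem:proba:Z6} does. A related misattribution: the continued-fraction Lemma \ref{lem:continuedfractions} is \emph{not} used to lower-bound $|(\delta^{\mathrm{\ref{gBO}}}_{m,k})_\kappa|$ --- that follows from the finite case analysis in Lemma \ref{lem:ppcptpt} since $\kappa\lesssim_m 1$ in the relevant \ref{gBO} subcase --- but to lower-bound the \emph{quadratic} coefficient $(d^{\mathrm{\ref{gBO}}}_{k,m})_{p_\star}$ when $a_3\neq 0$ and $m_1+\dots+m_\last = 0$, which is where $\beta=a_3^2/(\pi a_4)$ enters and may be Liouville.
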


This subsection is divided in $4$ parts. First, we introduce some stochastic and diophantine preparatory lemmas. Then, we estimate the probability that $u \in \mathcal{U}_{\gamma,N,r}^{(4),\mathcal{E},s}$. The two last parts are devoted to estimate the probability that $u \in \mathcal{U}_{\gamma,N,r}^{(4,6),\mathcal{E},s}$ and to realize the proof of Proposition \ref{prop:sum:proba}.

From now on, in this section, to avoid any possible confusion, $\dotDelta^{(4),\mathcal{E}}_{m,k},\dotDelta^{(6),\mathcal{E}}_{m,k},\dotDelta^{(4,6),\mathcal{E}}_{m,k}$ denote the random variables defined by 
$$
\dotDelta^{(4),\mathcal{E}}_{m,k} = \Delta^{(4),\mathcal{E}}_{m,k}(I), \ \ \dotDelta^{(6),\mathcal{E}}_{m,k} = \Delta^{(6),\mathcal{E}}_{m,k}(I) \ \ \mathrm{and} \ \ \dotDelta^{(4,6),\mathcal{E}}_{m,k} = \dotDelta^{(4),\mathcal{E}}_{m,k} + \dotDelta^{(6),\mathcal{E}}_{m,k}.
$$

\subsubsection{Some preparatory lemmas} First, we recall some elementary lemmas we also introduced in \cite{KillBill}.
\begin{definition}
If a random variable $X$ has a density with respect to the Lebesgue measure, we denote $f_X$ its density, i.e.
\[  \forall g\in C^0_b(\mathbb{R}), \quad \mathbb{E}\left[ g(X) \right] = \int_{\mathbb{R}} g(x)\, f_X(x)\, \mathrm{d}x. \]
\end{definition}

\begin{lemma}
\label{lem:scale_density}
If a random variable $X$ has a density and $\varepsilon >0$ then $\varepsilon X$ has a density given by $f_{\varepsilon X} = \varepsilon^{-1}f_X(\cdot/\varepsilon)$.
\end{lemma}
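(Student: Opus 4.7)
The plan is to verify the identity directly from the definition of a density. I will start from the characterization given just above the lemma, namely that $f_X$ is the density of $X$ iff $\mathbb{E}[g(X)] = \int_{\mathbb{R}} g(x) f_X(x)\, \mathrm{d}x$ for every $g \in C^0_b(\mathbb{R})$, and I will show that $\varepsilon^{-1} f_X(\cdot/\varepsilon)$ plays this role for $\varepsilon X$.

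Concretely, for any $g \in C^0_b(\mathbb{R})$, I would compute
\[
\mathbb{E}[g(\varepsilon X)] = \int_{\mathbb{R}} g(\varepsilon x)\, f_X(x)\, \mathrm{d}x
\]
and perform the change of variable $y = \varepsilon x$, which is a diffeomorphism of $\mathbb{R}$ since $\varepsilon > 0$ with $\mathrm{d}y = \varepsilon\, \mathrm{d}x$. This yields
\[
\mathbb{E}[g(\varepsilon X)] = \int_{\mathbb{R}} g(y)\, f_X(y/\varepsilon)\, \varepsilon^{-1}\, \mathrm{d}y = \int_{\mathbb{R}} g(y)\, \bigl(\varepsilon^{-1} f_X(\cdot/\varepsilon)\bigr)(y)\, \mathrm{d}y.
\]
By uniqueness of the density (which follows from the fact that two $L^1$ functions inducing the same continuous linear functional on $C^0_b(\mathbb{R})$ coincide almost everywhere), one concludes that $f_{\varepsilon X} = \varepsilon^{-1} f_X(\cdot/\varepsilon)$.

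There is essentially no obstacle here: the only points worth a line of justification are the positivity $\varepsilon > 0$ (needed to avoid an absolute value and a reversal of the integration bounds) and the uniqueness of the density, both of which are completely standard. The whole argument is one change of variables.
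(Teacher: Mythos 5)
Your proof is correct; the paper states this lemma without proof, treating it as a standard fact, and your one-line change-of-variables argument is exactly the expected justification.
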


\begin{lemma} 
\label{whaou}
Let $X,Y$ be some real independent random variables. If $X$ has a density, then for all $\gamma>0$
\[ \mathbb{P}(|X+Y|<\gamma)\leq 2\, \gamma\, \|f_X\|_{L^{\infty}}. \]
\end{lemma}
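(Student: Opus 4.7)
The plan is to condition on $Y$ (or equivalently invoke Fubini) and reduce the estimate to a uniform bound on the probability that $X$ lies in an interval of length $2\gamma$. Since $X$ has a density $f_X$ and $X,Y$ are independent, the joint law of $(X,Y)$ on $\mathbb{R}^2$ is the product measure $f_X(x)\,\mathrm{d}x \otimes \mathrm{d}\mu_Y(y)$, where $\mu_Y$ denotes the law of $Y$. Therefore
\[
\mathbb{P}(|X+Y|<\gamma) \;=\; \iint \mathbb{1}_{|x+y|<\gamma}\, f_X(x)\,\mathrm{d}x\, \mathrm{d}\mu_Y(y),
\]
and by Tonelli (the integrand is nonnegative) we may integrate in $x$ first.

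For each fixed $y\in \mathbb{R}$, the inner integral is
\[
\int_{-\gamma - y}^{\gamma - y} f_X(x)\,\mathrm{d}x \;\leq\; 2\gamma \,\|f_X\|_{L^\infty},
\]
using only that the interval of integration has length $2\gamma$ and that $f_X \leq \|f_X\|_{L^\infty}$ a.e. This bound is uniform in $y$, so integrating against $\mu_Y$ gives $\mathbb{P}(|X+Y|<\gamma) \leq 2\gamma\|f_X\|_{L^\infty} \cdot \mu_Y(\mathbb{R}) = 2\gamma\|f_X\|_{L^\infty}$, which is the desired estimate.

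I expect no obstacle here: the statement is exactly the standard anti-concentration estimate for the sum of a random variable with a bounded density and an arbitrary independent random variable, and the only ingredients are independence (to factor the joint law), Tonelli (to isolate the conditional probability), and the trivial bound $\int_I f_X \leq |I|\,\|f_X\|_{L^\infty}$. One small subtlety worth flagging, should the authors want maximal generality, is whether $\|f_X\|_{L^\infty}$ is understood as the essential supremum (so the bound $f_X \leq \|f_X\|_{L^\infty}$ holds only almost everywhere); but since a set of Lebesgue measure zero contributes nothing to $\int_{-\gamma-y}^{\gamma-y} f_X$, the argument is unaffected.
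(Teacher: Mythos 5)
Your proof is correct and follows essentially the same route as the paper: apply Tonelli to condition on $Y$, reduce to the uniform bound $\int_I f_X \leq |I|\,\|f_X\|_{L^\infty}$ on the inner integral over an interval of length $2\gamma$, and integrate out $Y$. The remark about $\|f_X\|_{L^\infty}$ being an essential supremum is correct but, as you note, harmless.
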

\begin{proof} By Tonelli theorem, we have
$$  \mathbb{P}(|X+Y|<\gamma) = \mathbb{E}\left[ \mathbb{1}_{|X+Y|<\gamma} \right]=\mathbb{E}\left[ \int_{Y-\gamma}^{Y+\gamma} f_X(x) \,\mathrm{d}x \right] \leq 2\, \gamma\, \|f_X\|_{L^{\infty}}.  
$$
\end{proof}
\begin{lemma}
\label{lem:proba:poly} Let $X$ be a random variable uniformly distributed in $(0,1)$. If $a,b,c \in \mathbb{R}$ are some real coefficients such that $a\neq 0$, then we have
$$
\forall \gamma>0,\, \mathbb{P}(|a X^2+bX+c|<\gamma)\leq 4 \sqrt{\frac{\gamma}{|a|}}
$$
\end{lemma}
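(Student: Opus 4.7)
The plan is to reduce the probability bound to an estimate on the Lebesgue measure of a sublevel set. Since $X$ is uniform on $(0,1)$, its density is bounded by $1$, so for any Borel set $A$ we have $\mathbb{P}(X\in A)\leq \mathrm{Leb}(A)$. Hence it suffices to bound $\mathrm{Leb}(\{x\in\mathbb{R}\mid |aX^2+bX+c|<\gamma\})$, and there is no loss in dropping the restriction to $(0,1)$.

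To exploit the quadratic structure, I would complete the square: writing $aX^2+bX+c = a(X-x_0)^2 + k$ with $x_0 = -b/(2a)$ and $k = c - b^2/(4a)$, the condition $|aX^2+bX+c|<\gamma$ becomes $|(X-x_0)^2 - d|<\varepsilon$, where $\varepsilon := \gamma/|a|>0$ and $d := -k/a$ (up to an overall sign absorbed into changing $(X-x_0)^2 - d$ to $d - (X-x_0)^2$; the measure of the set is unaffected). After the translation $y = X-x_0$, the problem reduces to bounding the Lebesgue measure of $E := \{y\in\mathbb{R}\mid |y^2 - d|<\varepsilon\}$.

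I would then handle $E$ by a short case analysis based on the position of $d$ relative to $\varepsilon$. If $d+\varepsilon<0$, then $E=\emptyset$. If $d\leq \varepsilon$, then $E\subset\{|y|<\sqrt{d+\varepsilon}\}\subset\{|y|<\sqrt{2\varepsilon}\}$, so $\mathrm{Leb}(E)\leq 2\sqrt{2\varepsilon}$. If $d>\varepsilon$, then $E$ is a union of two intervals $\{\sqrt{d-\varepsilon}<|y|<\sqrt{d+\varepsilon}\}$, whose total measure equals $2(\sqrt{d+\varepsilon}-\sqrt{d-\varepsilon}) = \tfrac{4\varepsilon}{\sqrt{d+\varepsilon}+\sqrt{d-\varepsilon}}\leq \tfrac{4\varepsilon}{\sqrt{2\varepsilon}} = 2\sqrt{2\varepsilon}$. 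In all cases, $\mathrm{Leb}(E)\leq 2\sqrt{2\varepsilon} \leq 4\sqrt{\varepsilon} = 4\sqrt{\gamma/|a|}$, yielding the claim.

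There is essentially no obstacle here; the only point requiring any care is the case $d>\varepsilon$, where a naive bound such as $\sqrt{d+\varepsilon}$ is useless and one must use the cancellation via $\sqrt{d+\varepsilon}-\sqrt{d-\varepsilon} = 2\varepsilon/(\sqrt{d+\varepsilon}+\sqrt{d-\varepsilon})$ to extract the desired $\sqrt{\varepsilon}$ decay. The factor $4$ in the statement is comfortably slack (one actually obtains $2\sqrt{2}$), which is convenient should one later wish to absorb mild perturbations without tracking constants.
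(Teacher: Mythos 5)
Your proof is correct and follows essentially the same route as the paper: bound the probability by the Lebesgue measure of the sublevel set, reduce via translation and dilation (completion of the square) to $\{|y^2-d|<\varepsilon\}$, and run a short case analysis on $d$. The only cosmetic difference is that the paper splits the cases at $|\widetilde c|<3\gamma$ and uses the mean value inequality where you split at $d=\varepsilon$ and use the factoring $\sqrt{d+\varepsilon}-\sqrt{d-\varepsilon}=2\varepsilon/(\sqrt{d+\varepsilon}+\sqrt{d-\varepsilon})$, which incidentally yields the slightly sharper constant $2\sqrt 2$ instead of $4$.
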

\begin{remark} As a direct corollary, note that this result also holds if $b,c$ are some random variables independent of $X$.
\end{remark}
\begin{proof}[Proof of Lemma \ref{lem:proba:poly}] Without loss of generality, we assume that $a>0$. Acting by translation and dilatation, we have
$$
\mathbb{P}(|a X^2+bX+c|<\gamma) = \int_0^1 \mathbb{1}_{|a x^2+bx+c|<\gamma} \mathrm{d}x \leq \int_{-\infty}^{\infty} \mathbb{1}_{|a x^2+bx+c|<\gamma} \mathrm{d}x = \sqrt{a}^{-1}  \int_{-\infty}^{\infty} \mathbb{1}_{| x^2- \widetilde{c}|<\gamma} \mathrm{d}x
$$
where $\widetilde{c}$ could be computed explicitly as a function of $(a,b,c)$. Then we consider $2$ cases.

\noindent \emph{$\bullet$ Case $| \widetilde{c}|<3\gamma$.} Here we observe that if $| x^2- \widetilde{c}|<\gamma$ then $|x| \leq 2 \sqrt{\gamma}$. Consequently, we have $\mathbb{P}(|a X^2+bX+c|<\gamma) \leq \sqrt{a}^{-1} 4 \sqrt{\gamma}$.

\noindent \emph{$\bullet$ Case $ \widetilde{c}\geq 3\gamma$.} Here we observe that if $| x^2- \widetilde{c}|<\gamma$ then $\sqrt{ \widetilde{c} - \gamma } < |x| < \sqrt{ \widetilde{c} + \gamma }$. Consequently, by the mean value inequality, we have
$$
\mathbb{P}(|a X^2+bX+c|<\gamma) \leq 2\,\sqrt{a}^{-1} (\sqrt{ \widetilde{c} + \gamma } - \sqrt{ \widetilde{c} - \gamma }) \leq \frac{2\, \gamma}{\sqrt{\widetilde{c} - \gamma}}\sqrt{a}^{-1} \leq \sqrt{2 \gamma} \sqrt{a}^{-1} .
$$
\end{proof}

The following lemma is no more about probability but diophantine approximation.
\begin{lemma}
\label{lem:continuedfractions} Let $n\geq 1$ and $P,Q\in \mathbb{Z}_n[X]$ be two polynomials, with integer coefficients, of degrees $n$ or less. If $P$ and $Q$ are not collinear and if there exists $J\in \mathbb{N}^*$  such that $J\geq 2n$ and
$$
\forall j\in \llbracket 1,J\rrbracket, \ |Q(j)| <  2^{\frac{J}{2n}-1}
$$
then, for all $\beta\in \mathbb{R}$ there exists $j_{\star}\in \llbracket 1,J\rrbracket$ such that
$$
|P(j_{\star}) - \beta\, Q(j_{\star}) | \geq \frac1{2|Q(j_{\star})|}.
$$
\end{lemma}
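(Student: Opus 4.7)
The plan is a proof by contradiction based on Legendre's theorem on continued fractions: any rational $p/q$ in lowest terms satisfying $|q\beta - p| < 1/(2q)$ must be a convergent of the continued fraction expansion of $\beta$. Suppose, for contradiction, that $|P(j) - \beta Q(j)| < 1/(2|Q(j)|)$ for every $j \in \llbracket 1, J \rrbracket$; this forces $Q(j) \neq 0$ throughout (otherwise the right-hand side is infinite), and since $Q$ has at most $n$ integer zeros, at least $J - n \geq n$ indices survive. For each such $j$, set $d_j := \gcd(P(j), Q(j))$ and write $P(j)/Q(j) = p_j/q_j$ in lowest terms; dividing the hypothesis by $d_j$ yields $|q_j \beta - p_j| < 1/(2 q_j)$, so by Legendre's theorem each $p_j/q_j$ is a convergent of $\beta$.

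Next I count convergents. The classical recurrence $q^{(k+1)} = a_{k+1} q^{(k)} + q^{(k-1)}$ with partial quotients $a_{k+1} \geq 1$ and initial values $q^{(0)} = 1$, $q^{(-1)} = 0$ implies $q^{(k+2)} \geq 2 q^{(k)}$, and hence the exponential lower bound $q^{(k)} \geq 2^{\lfloor k/2 \rfloor}$. Combined with $q_j \leq |Q(j)| < 2^{J/(2n)-1}$, this forces the convergent index $k$ associated to each $p_j/q_j$ to satisfy $\lfloor k/2 \rfloor < J/(2n) - 1$, and careful bookkeeping (using that $\lfloor k/2 \rfloor$ is an integer and the inequality is strict) bounds the number of distinct rational values taken by $j \mapsto P(j)/Q(j)$ by at most $J/n - 2$.

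On the other hand, for each rational value $v = p/q$ in lowest terms, the set of $j$'s with $P(j)/Q(j) = v$ is the set of integer zeros in $\llbracket 1, J\rrbracket$ of the polynomial $q\, P(X) - p\, Q(X)$, which has integer coefficients, degree $\leq n$, and is not identically zero since $P$ and $Q$ are not collinear (else $P = (p/q)Q$); hence it has at most $n$ integer roots. Putting the two counts together, the number of surviving indices is at most $n(J/n - 2) = J - 2n$, contradicting the lower bound $J - n$ since $n \geq 1$. The delicate point is the convergent count: the hypothesis $|Q(j)| < 2^{J/(2n)-1}$ is precisely calibrated against the exponential growth of $q^{(k)}$ so that these two pigeonhole bounds become incompatible and the contradiction closes.
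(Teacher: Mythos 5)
Your strategy is exactly the paper's: Legendre's theorem to send each $p_j/q_j$ with $|q_j\beta-p_j|<1/(2q_j)$ to a convergent, the exponential growth of convergent denominators, and the non-collinearity of $P,Q$ to bound the fibers of $j\mapsto P(j)/Q(j)$ by $n$; you phrase it as a proof by contradiction where the paper phrases it as a direct cardinality comparison, but this is cosmetic.

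There is however a genuine gap in the counting step. From $q^{(k)}\geq 2^{\lfloor k/2\rfloor}$ and $q_j< 2^{J/(2n)-1}$ you deduce $\lfloor k/2\rfloor < J/(2n)-1$ and claim that the number of admissible $k\geq 0$ is at most $J/n-2$. Since $\lfloor k/2\rfloor$ hits each nonnegative integer exactly twice, the number of such $k$ is $2\lceil J/(2n)-1\rceil$; this equals $J/n-2$ only when $J/(2n)\in\mathbb{N}$, and otherwise it is strictly larger. In the first non-integer case, $n=1$, $J=3$, your bound would be $J/n-2=1$ while the count of admissible indices $k\in\{0,1\}$ is $2$, and then $n\cdot 2 = 2 = J-n$ gives no contradiction — so the argument as written fails. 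The missing ingredient is the elementary fact that for each denominator $q$ there is \emph{at most one} $p$ with $|q\beta-p|<1/(2q)$ (two such $p$'s would differ by less than $1/q\leq 1$), so you should count distinct \emph{denominators}, not convergent \emph{indices}. The distinct convergent denominators $Q_1<Q_2<\dots$ satisfy $Q_\ell\geq 2^{(\ell-1)/2}$ (this is the bound Khinchine's Theorem 12 actually gives, and the one the paper uses for its enumeration of $\mathcal{B}$), so $Q_\ell<2^{J/(2n)-1}$ forces $\ell<J/n-1$, giving at most $\lceil J/n\rceil-2$ distinct values. Then $n\bigl(\lceil J/n\rceil-2\bigr)<J-n$ follows from $\lceil J/n\rceil < J/n+1$, and your contradiction closes. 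So the approach is right but the \emph{careful bookkeeping} you invoke must include this one-per-denominator observation; without it, the claimed bound $J/n-2$ is simply false whenever $J/(2n)\notin\mathbb{N}$.
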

\begin{proof} 
In this proof, $M$ denote the upper bound on $|Q|$, i.e. $M=2^{\frac{J}{2n}-1}$. Let $\mathcal{B}$ be the set of the best rational approximations of $\beta$ by rational numbers :
$$
\mathcal{B} = \{ x\in \mathbb{Q} \ | \ \exists (p,q)\in \mathbb{Z}\times \mathbb{N}^*, \ x=\frac{p}q \ \mathrm{and} \  \ \left| \beta - x\right|< \frac1{2q^2} \}.
$$
Let $\mathcal{H}_M$ be the set of the rational numbers with denominators no larger than $M$
$$
\mathcal{H}_M = \{x\in \mathbb{Q} \ | \   \exists (p,q)\in \mathbb{Z}\times \mathbb{N}^*, \ x=\frac{p}q \ \mathrm{and} \ |q|\leq M \}.
$$
Let $\Psi$ be defined by
$$
\Psi : \left\{ \begin{array}{ccc} \llbracket 1,J \rrbracket &\to& \mathcal{H}_M \cup \{ \infty\} \\
						j &\mapsto & P(j)/Q(j)
\end{array} \right. 
$$
where, by convention if $Q(j)=0$ then $\Psi(j)=\infty$ (even if $P(j)=0$).

With these notations, to prove the lemma, we just have to prove that the image of $\Psi$ is not included in $\mathcal{B}\cap \mathcal{H}_M \cup \{ \infty\} $. We are going to proceed by a cardinality argument proving that 
\begin{equation}
\label{cf:what_we_want}
\# \mathrm{Im} \ \Psi > \# (\mathcal{B}\cap \mathcal{H}_M \cup \{ \infty\}).
\end{equation} 
On the one hand, we prove an upper bound on the cardinal of $\mathcal{B}\cap \mathcal{H}_M$. Indeed, it is known (by applying, for example, the Theorem 19 of \cite{Khinchine}) that $\mathcal{B}$ is only composed of \emph{convergents} of the number $\beta$, i.e. the rational numbers obtained truncating the \emph{continued fraction expansion} of $\beta$. As a consequence, by applying the Theorem 12 of \cite{Khinchine}, we know there exists two sequences $(p_\ell,q_\ell)\in (\mathbb{Z}\times \mathbb{N}^*)^{\mathbb{N}^*}$ such that 
$$
\forall \ell \geq 1, \ p_{\ell} \wedge q_\ell = 1, \ \ q_{\ell} \geq 2^{\frac{\ell-1}2} \ \mathrm{and} \  \mathcal{B}\subset \{ p_{\ell}/q_{\ell} \ | \ \ell\geq 1\} .
$$
As a consequence, observing that by construction $M\geq1$, we have
$$
\# (B\cap \mathcal{H}_M) < 1+2\, \log_2 M.
$$

On the other hand, since $P$ and $Q$ are not collinear and their degrees are no larger than $n$, the cardinal of each fiber of $\Psi$ is not larger than $n$. As a consequence, we have
$$
\# \ \mathrm{Im} \ \Psi \geq \frac{J}n,
$$

Consequently, since $M$ has been chosen such that
$$
2+2\, \log_2 M = \frac{J}n,
$$
the cardinality estimate \eqref{cf:what_we_want} is satisfied, which concludes the proof.
\end{proof}

\subsubsection{Genericity of the first non resonance condition} In this subsection, we estimate the probability that $\varepsilon u \in \mathcal{U}_{\gamma,N,r}^{(4),\mathcal{E},s}$. We refer the reader to the definition \ref{def:kappa} for the definition of $\kappa^{\mathcal{E}}_{m,k}$ and to the definition \ref{def:Delta4} for the definition of $\delta^{\mathcal{E}}_{m,k}$.

The following lemma provides a lower bound for the non-vanishing coefficient of $\Delta^{(4),\mathcal{E}}_{m,k}$ associated with the smallest index.
\begin{lemma} \label{lem:ppcptpt} The following lower bound holds
$$
|(\delta^{\mathcal{E}}_{m,k})_{\kappa^{\mathcal{E}}_{m,k}}| \gtrsim_{m} (\kappa^{\mathcal{E}}_{m,k})^{3-2\, \alpha_{\mathcal{E}}}.
$$
\end{lemma}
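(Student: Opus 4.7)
The approach is a direct case analysis based on the explicit formulas \eqref{deltaKdV}, \eqref{deltaBO} combined with the sharp description of $\kappa^{\mathcal{E}}_{m,k}$ given by Lemma \ref{lem:diot}. Throughout I write $\kappa := \kappa^{\mathcal{E}}_{m,k}$ and use that, by definition of $\mathcal{MI}_{\mathrm{mult}}$, the components of $k$ are pairwise distinct positive integers with $k_1>\cdots>k_\last>0$.

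For $\mathcal{E}=\mathrm{\ref{gKdV}}$, Lemma \ref{lem:diot} gives $\kappa=k_\last$, and since the $k_j$ are pairwise distinct only the index $j=\#k$ contributes to the sum in \eqref{deltaKdV}, producing
\[
(\delta^{\mathrm{\ref{gKdV}}}_{m,k})_\kappa \;=\; -\frac{3\,m_\last}{\pi^2\kappa}\bigl(4\pi^2\kappa^2 a_4+a_3^2\bigr).
\]
It then remains to bound $|4\pi^2 n^2 a_4 + a_3^2|$ from below uniformly in $n\in\mathbb{N}^*$, which I would do by splitting on $a_4=0$ (the expression reduces to the positive constant $a_3^2$, with $a_3\neq 0$ forced by \eqref{assump:gKdV}) or $a_4\neq 0$ (the expression never vanishes thanks to \eqref{assump:gKdV} and tends to $+\infty$, hence is bounded below by a positive constant depending only on the nonlinearity). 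Combined with $|m_\last|\geq 1$, this yields $|(\delta^{\mathrm{\ref{gKdV}}}_{m,k})_\kappa|\gtrsim \kappa^{-1}=\kappa^{3-2\alpha_{\mathrm{\ref{gKdV}}}}$.

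For $\mathcal{E}=\mathrm{\ref{gBO}}$, I split according to Lemma \ref{lem:diot}. In the favourable subcase $a_3=0$ or $\sum_j m_j = 0$, that lemma yields $\kappa=k_\last$, and plugging $p=\kappa$ into \eqref{deltaBO} collapses every sum: the last sum is empty, the middle one reduces to $\sum_j m_j$, and only $j=\#k$ contributes to the first, leaving
\[
(\delta^{\mathrm{\ref{gBO}}}_{m,k})_\kappa \;=\; -12\,a_4\,m_\last\,\kappa,
\]
which is $\gtrsim_m \kappa$ by \eqref{assump:gBO}, i.e. $\gtrsim_m \kappa^{3-2\alpha_{\mathrm{\ref{gBO}}}}$.

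The main obstacle lies in the remaining subcase $a_3\neq 0$ and $\sum_j m_j \neq 0$, where Lemma \ref{lem:diot} only provides $\kappa \leq 2\#k - 1$. Since $3-2\alpha_{\mathrm{\ref{gBO}}}=1$ and $\kappa \lesssim_{|m|_1} 1$, it suffices to prove $|(\delta^{\mathrm{\ref{gBO}}}_{m,k})_\kappa|\gtrsim_m 1$; but a priori the formula \eqref{deltaBO} at $p=\kappa$ depends on the whole tuple $k$, whose large components are unbounded. My plan is to observe that only finite data actually feed into this expression at $p=\kappa$: the vector $m$, the integer $\kappa\in\llbracket 1, 2\#k-1\rrbracket$, the partition of indices into $\{j:k_j\geq \kappa\}$ and $\{j:k_j<\kappa\}$, and the values $k_j$ for $j$ in the second class (all strictly less than $\kappa$, hence bounded in terms of $|m|_1$). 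As $k$ varies, $(\delta^{\mathrm{\ref{gBO}}}_{m,k})_\kappa$ therefore ranges over a finite subset of $\mathbb{R}$ depending only on $|m|_1$ and the nonlinearity; since every such value is non-zero by definition of $\kappa$, the minimum of their moduli provides the required uniform lower bound.
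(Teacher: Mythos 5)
Your proof is correct and follows essentially the same route as the paper: case analysis on $\mathcal{E}$, then for \ref{gBO} a split according to whether $\kappa^{\mathrm{\ref{gBO}}}_{m,k}=k_\last$ (via Lemma \ref{lem:diot}), with an explicit formula giving the bound directly in the first two situations and a finiteness argument in the degenerate \ref{gBO} subcase. Your unpacking of the finiteness claim (identifying the bounded data — $\kappa$, the partition by $k_j\gtrless\kappa$, and the values $k_j<\kappa$ — that determine $(\delta^{\mathrm{\ref{gBO}}}_{m,k})_\kappa$) is a welcome clarification of the paper's terse ``can only take a finite number of values,'' but it is the same idea rather than a different method.
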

\begin{proof} We denote $n=\# m$.\\
 \emph{$\bullet$ Case $\mathcal{E} = \mathrm{\ref{gKdV}}$.} We observe that there exists $\eta>0$ such that
$$
\forall \ell \in \mathbb{N}^{*}, \ 12\, a_4 \, \ell^2 + \frac{3 \, a_3^2}{\pi^2}\neq 0 \ \Rightarrow \ \left| 12\, a_4 \, \ell^2 + \frac{3 \, a_3^2}{\pi^2} \right| \geq\eta.
$$
Consequently, in view of \eqref{deltaKdV}, we have $|(\delta^{\mathrm{\ref{gKdV}}}_{m,k})_{\kappa^{\mathrm{\ref{gKdV}}}_{m,k}}| \geq \eta \,(\kappa^{\mathrm{\ref{gKdV}}}_{m,k})^{-1}.$

\noindent \emph{$\bullet$ Case $\mathcal{E} = \mathrm{\ref{gBO}}$.} If $a_3= 0$ or $m_1+\dots+m_n = 0$ then $\kappa^{\mathrm{\ref{gBO}}}_{m,k} =k_n$ and $(\delta^{\mathrm{\ref{gBO}}}_{m,k})_{k_n} =  -12\, a_4 \, m_n\, k_n.$ Consequently, the lower bound is clear. On the other hand, if $a_3\neq 0$ and $m_1+\dots+m_n \neq 0$, we know by Lemma \ref{lem:diot} that $\kappa^{\mathrm{\ref{gBO}}}_{m,k}\leq 2\, n +2$. So we conclude this proof observing that $n$ and $m$ being fixed, $k\mapsto (\delta^{\mathrm{\ref{gBO}}}_{m,k})_{1\leq \ell \leq 2 n +2}$ can only take a finite number of values.
\end{proof}

As a corollary, we get the following probability estimate.
\begin{lemma} 
\label{lem:proba:Z4}
If $\mathcal{E}\in \{\mathrm{\ref{gBO}},\mathrm{\ref{gKdV}} \}$ and $(k,m)\in \mathcal{MI}_{\mathrm{mult}} $  we have
$$
\forall \gamma>0, \ \mathbb{P}\big(|\dotDelta^{(4),\mathcal{E}}_{m,k}  |<\gamma\big)\lesssim_{m} \gamma \sigma^{-1} \,  \big(\kappa_{k,m}^{\mathcal{E}}\big)^{2s+\nu -3 + 2 \, \alpha_{\mathcal{E}}}.
$$
\end{lemma}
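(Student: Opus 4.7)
The plan is to isolate in $\dotDelta^{(4),\mathcal{E}}_{m,k}=\sum_{p\geq 1}(\delta^{\mathcal{E}}_{m,k})_p I_p$ the random variable $I_{\kappa}$ corresponding to the smallest effective index $\kappa:=\kappa^{\mathcal{E}}_{m,k}$, and then to apply Lemma~\ref{whaou} conditionally on the remaining (independent) actions.

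First I would write
\[
\dotDelta^{(4),\mathcal{E}}_{m,k} = (\delta^{\mathcal{E}}_{m,k})_{\kappa}\, I_{\kappa} + Y,\qquad Y:=\sum_{p\neq \kappa}(\delta^{\mathcal{E}}_{m,k})_p\, I_p,
\]
where, by the independence assumption on the actions, the random variables $I_{\kappa}$ and $Y$ are independent. Since $I_{\kappa}$ is uniformly distributed on an interval of length $\sigma\kappa^{-2s-\nu}$, its density is bounded by $\sigma^{-1}\kappa^{2s+\nu}$. Applying Lemma~\ref{lem:scale_density} with the scaling factor $(\delta^{\mathcal{E}}_{m,k})_{\kappa}\neq 0$ (which is non-zero by the very definition of $\kappa$, see Definition~\ref{def:kappa} and Remark~\ref{rem:pasbete}), the random variable $(\delta^{\mathcal{E}}_{m,k})_{\kappa}\, I_{\kappa}$ admits a density whose $L^{\infty}$ norm is bounded by
\[
\frac{\sigma^{-1}\kappa^{2s+\nu}}{\bigl|(\delta^{\mathcal{E}}_{m,k})_{\kappa}\bigr|}.
\]

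Then I would invoke Lemma~\ref{lem:ppcptpt}, which is precisely tailored to this step, to bound from below $\bigl|(\delta^{\mathcal{E}}_{m,k})_{\kappa}\bigr|\gtrsim_{m} \kappa^{3-2\alpha_{\mathcal{E}}}$. Combining this with Lemma~\ref{whaou} applied to the independent decomposition above gives
\[
\mathbb{P}\bigl(|\dotDelta^{(4),\mathcal{E}}_{m,k}|<\gamma\bigr)\leq 2\gamma\,\frac{\sigma^{-1}\kappa^{2s+\nu}}{\bigl|(\delta^{\mathcal{E}}_{m,k})_{\kappa}\bigr|}\lesssim_{m} \gamma\,\sigma^{-1}\,\kappa^{2s+\nu-3+2\alpha_{\mathcal{E}}},
\]
which is the claimed bound.

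The only non-trivial point is that $Y$ is well defined as an almost surely finite random variable. This is ensured by Remark~\ref{rem:useful}: the sum defining $\dotDelta^{(4),\mathcal{E}}_{m,k}$ (and hence $Y$) is in fact finite, since $(\delta^{\mathcal{E}}_{m,k})_p=0$ for $p>k_1$. There is therefore no convergence issue to address and the argument is essentially a one-variable conditional density estimate, which is the shortest ingredient of the section; the real work lies upstream in Lemma~\ref{lem:ppcptpt}, which has already been established.
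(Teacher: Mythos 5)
Your proof is correct and follows essentially the same route as the paper: decompose $\dotDelta^{(4),\mathcal{E}}_{m,k}$ to isolate the action $I_{\kappa^{\mathcal{E}}_{m,k}}$, bound its density via Lemma~\ref{lem:scale_density}, lower-bound the coefficient by Lemma~\ref{lem:ppcptpt}, and conclude with Lemma~\ref{whaou}. The paper phrases the intermediate bound as an infimum over all $\ell$ before specializing to $\ell=\kappa^{\mathcal{E}}_{m,k}$, but the content is identical.
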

\begin{proof} Applying Lemma \ref{lem:scale_density} and Lemma \ref{whaou}, we have
$$
\mathbb{P}\big(|\dotDelta^{(4),\mathcal{E}}_{m,k}  |<\gamma\big) \leq 2 \inf_{\ell \in \mathbb{N}^*} | (\delta^{\mathcal{E}}_{m,k})_{\ell}^{-1} \| f_{I_{\ell}}\|_{L^{\infty}} | = 2 \inf_{\ell \in \mathbb{N}^*} | \sigma^{-1} (\delta^{\mathcal{E}}_{m,k})_\ell^{-1} \ell^{2s + \nu}| 
$$
A fortiori, for $\ell = \kappa^{\mathcal{E}}_{m,k}$, applying Lemma \ref{lem:ppcptpt}, we get the expected result.
\end{proof}

\begin{proposition}
\label{prop:proba:Z4}
For all $\mathcal{E}\in \{\mathrm{\ref{gBO}},\mathrm{\ref{gKdV}} \}$, we have
$$
\forall \gamma>0, \ \mathbb{P}\big( \forall (m,k)\in \mathcal{MI}_{\mathrm{mult}} , \ |\dotDelta^{(4),\mathcal{E}}_{m,k}  |\gtrsim_{m} \gamma \sigma \, k_1^{-4|m|_1} \, \big(\kappa_{k,m}^{\mathcal{E}}\big)^{-2s} \big) \geq 1- \gamma.
$$
\end{proposition}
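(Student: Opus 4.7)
\medskip

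\noindent \textbf{Proof proposal.} The strategy is a straightforward union bound on top of Lemma~\ref{lem:proba:Z4}. For each $(m,k)\in \mathcal{MI}_{\mathrm{mult}}$ I pick a threshold of the form
$$
\gamma_{m,k} := c_m\,\gamma\,\sigma\,k_1^{-4|m|_1}\,\bigl(\kappa^{\mathcal{E}}_{m,k}\bigr)^{-2s},
$$
where the constant $c_m>0$ (depending only on $|m|_1$, and on $\nu$ and $s$) will be chosen at the end so that the union bound closes. By Lemma~\ref{lem:proba:Z4},
$$
\mathbb{P}\bigl(|\dotDelta^{(4),\mathcal{E}}_{m,k}|<\gamma_{m,k}\bigr)\ \lesssim_{m}\ \gamma_{m,k}\,\sigma^{-1}\,\bigl(\kappa^{\mathcal{E}}_{m,k}\bigr)^{2s+\nu-3+2\alpha_{\mathcal{E}}}
=\ A_m\,c_m\,\gamma\,k_1^{-4|m|_1}\,\bigl(\kappa^{\mathcal{E}}_{m,k}\bigr)^{\nu-3+2\alpha_{\mathcal{E}}},
$$
so the task reduces to proving that $c_m$ can be chosen (independent of $\gamma$ and $\sigma$) such that
\begin{equation}\label{eq:plan:tosum}
\sum_{(m,k)\in \mathcal{MI}_{\mathrm{mult}}} A_m\,c_m\,k_1^{-4|m|_1}\,\bigl(\kappa^{\mathcal{E}}_{m,k}\bigr)^{\nu-3+2\alpha_{\mathcal{E}}}\ \leq\ 1.
\end{equation}

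\smallskip

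The combinatorial estimate is the main content. By Lemma~\ref{lem:diot} one has $\kappa^{\mathcal{E}}_{m,k}\leq k_1$ in every case, and since $1<\nu\leq 9$ the exponent $\nu-3+2\alpha_{\mathcal{E}}$ is non-negative (it equals $\nu+1$ for \ref{gKdV} and $\nu-1$ for \ref{gBO}), so $(\kappa^{\mathcal{E}}_{m,k})^{\nu-3+2\alpha_{\mathcal{E}}}\leq k_1^{\nu-3+2\alpha_{\mathcal{E}}}$. Fixing $m$ of length $n=\#m$ and $k_1=K$, the zero-momentum condition plus the strict ordering $K=k_1>k_2>\dots>k_n>0$ leaves at most $K^{n-2}$ admissible $k$'s (the constraint $m\cdot k=0$ removes one degree of freedom). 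Hence
$$
\sum_{k:\,(m,k)\in \mathcal{MI}_{\mathrm{mult}}} k_1^{-4|m|_1}\bigl(\kappa^{\mathcal{E}}_{m,k}\bigr)^{\nu-3+2\alpha_{\mathcal{E}}}\ \lesssim\ \sum_{K\geq n} K^{\,n-4|m|_1+\nu-5+2\alpha_{\mathcal{E}}},
$$
and since $n\leq |m|_1$ and $|m|_1\geq 5$ the exponent is bounded by $-3|m|_1+\nu-5+2\alpha_{\mathcal{E}}\leq -7$, so the series converges to some $D_{|m|_1,s,\nu}<\infty$.

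\smallskip

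It then suffices to pick $c_m$ summable over $m$: for instance $c_m=(A_m D_{|m|_1,s,\nu})^{-1}\,2^{-|m|_1}\,|m|_1^{-|m|_1}$ works, because the number of $m\in(\mathbb{Z}^*)^n$ with $|m|_1=r$ is bounded by $2^n\binom{r-1}{n-1}\leq 4^r$, so the residual combinatorial sum $\sum_{r\geq 5}(4/r)^r\cdot 2^{-r}$ is finite and can be renormalised to be $\leq 1$. With this choice, \eqref{eq:plan:tosum} holds, and the union bound yields
$$
\mathbb{P}\bigl(\exists (m,k):\ |\dotDelta^{(4),\mathcal{E}}_{m,k}|<\gamma_{m,k}\bigr)\ \leq\ \gamma,
$$
which is exactly the proposition. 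The only mildly delicate point is verifying the count $K^{n-2}$ of admissible $k$'s with $k_1=K$ under the momentum constraint, and keeping the constants organised so that the final dependence on $m$ in $\gtrsim_m$ is genuinely continuous; everything else is bookkeeping.
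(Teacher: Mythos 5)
Your proposal is correct and follows the same overall strategy as the paper: a union bound over $(m,k)\in\mathcal{MI}_{\mathrm{mult}}$, an application of Lemma~\ref{lem:proba:Z4} with a threshold $c_m\,\gamma\,\sigma\,k_1^{-4|m|_1}(\kappa^{\mathcal{E}}_{m,k})^{-2s}$, and a choice of the $m$-dependent constant $c_m$ making the resulting series converge to something $\leq\gamma$. The only structural difference is the treatment of the sum over $k$: you count at most $K^{n-2}$ admissible ordered tuples with $k_1=K$ (one degree of freedom consumed by the momentum constraint $m\cdot k=0$) and then sum a single power of $K$, whereas the paper first discards the ordering and the constraint, bounds $k_1^{-2n}\leq(k_1\cdots k_n)^{-2}$, and factorizes the sum into $\zeta(2)^n$, which folds cleanly into an exponential. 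Both routes close; yours is marginally sharper in the $k$-count but needs slightly more care, the paper's is coarser but mechanically simpler. One citation should be corrected: $\kappa^{\mathcal{E}}_{m,k}\leq k_1$ does not follow from Lemma~\ref{lem:diot} alone (in the \ref{gBO} case with $m_1+\dots+m_n\neq0$ that lemma only gives $\kappa\leq 2\#k-1$, which can exceed $k_1$ when $k$ is tightly packed); the correct source is Remark~\ref{rem:pasbete}, i.e.\ $(\delta^{\mathrm{\ref{gBO}}}_{m,k})_{k_1}=-12\,a_4\,m_1\,k_1\neq0$ as in \eqref{eq:bienvu}.
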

\begin{proof} We aim at bounding the probability of the complementary event by $\gamma>0$. By sub-additivity of $\mathbb{P}$, we have
\begin{multline}
\mathbb{P}\big( \exists (m,k)\in \mathcal{MI}_{\mathrm{mult}}, \ |\dotDelta^{(4),\mathcal{E}}_{m,k}  |< C_{m}\sigma \gamma  \, k_1^{-4|m|_1} \, \big(\kappa_{k,m}^{\mathcal{E}}\big)^{-2s} \big) \\ \leq \sum_{(m,k)\in \mathcal{MI}_{\mathrm{mult}}} \mathbb{P}\big( \ |\dotDelta^{(4),\mathcal{E}}_{m,k}  |< C_{m}\sigma \gamma  \, k_1^{-4|m|_1} \, \big(\kappa_{k,m}^{\mathcal{E}}\big)^{-2s} \big)
\end{multline}
where $C_{m}>0$ is a positive constant depending only on $m$ and that will be determined later.

Here, we denote by $K_{m}$ the constant in Lemma \ref{lem:proba:Z4}. Then, we apply Lemma \ref{lem:proba:Z4} with $\gamma$ replaced by $\gamma \ C_{m} \sigma \ k_1^{-4|m|_1} \big(\kappa_{k,m}^{\mathcal{E}}\big)^{-2s}$. As a consequence, since $\nu\leq 9$ and $|m_1|\geq 5$ we get
$$\mathbb{P}\big( \ |\dotDelta^{(4),\mathcal{E}}_{m,k}  |< C_{m} \sigma \gamma  \, k_1^{-4|m|_1} \, \big(\kappa_{k,m}^{\mathcal{E}}\big)^{-2s} \big) \leq C_m K_m k_1^{-4|m|_1} \big(\kappa_{k,m}^{\mathcal{E}}\big)^{10} \leq C_{m} K_{m} k_1^{-2\#m}$$
where we have used that $\kappa_{k,m}^{\mathcal{E}}\leq k_1$ (see \eqref{eq:bienvu}).
Finally, observing that
$$
\sum_{(m,k)\in \mathcal{MI}_{\mathrm{mult}}}  \frac{2^{-|m|_1} k_1^{-2\# m}}{\# m!} \leq \sum_{n\geq 2} \sum_{k\in (\mathbb{N}^*)^n} \sum_{m\in (\mathbb{Z}^*)^n} {\frac{2^{-|m|_1}}{n!}} (k_1\dots k_n)^{-2} \leq \sum_{n\geq 0} \frac1{n!} \left(\frac{\pi^2}{6}\right)^n 2^n = e^{\pi^2/3},
$$
and denoting $C_{m} =  \frac{2^{-|m|_1} }{\# m!} K_{m}^{-1} e^{-\pi^2/3}$ we get the expected result.

\end{proof}

\subsubsection{Genericity of the second non resonance condition} In this subsection, we aim at estimating the probability that $\dotDelta^{(4,6),\mathcal{E}}_{m,k,}$ is not too small.

We recall here that by definition $\dotDelta^{(4,6),\mathcal{E}}_{m,k}=\dotDelta^{(4),\mathcal{E}}_{m,k}+\dotDelta^{(6),\mathcal{E}}_{m,k} $ and 
$$
\dotDelta^{(6),\mathcal{E}}_{m,k} =  \sum_{j=1}^{\# m} m_j\, k_j\, \sum_{0<p\leq q<k_\last} c_{p,q}^{\mathcal{E}}(k_j) I_q I_p 
$$
where the real numbers $c_{p,q}^{\mathcal{E}}(k_j)$ are the coefficient of $Z_{6,\leq N^3}$ (see \eqref{premieredefZ6}). If $ 2p<\, k_\last$, an explicit formula for the coefficients $c_{p,p}^{\mathcal{E}}(k_j)$ is given in Theorem \ref{thm-BNF}.

\begin{lemma}
\label{lem:proba:Z6}
Let $\mathcal{E}\in \{\mathrm{\ref{gBO}},\mathrm{\ref{gKdV}} \}$, $ (m,k)\in \mathcal{MI}_{\mathrm{mult}}$ and  $n=\#m$. In the case $\mathcal{E}=\mathrm{\ref{gBO}}$ we further assume that $a_3 = 0$ or $m_1+\dots+m_n = 0$. Then we have
$$
\forall \gamma \leq 1, \ \mathbb{P}( |\dotDelta^{(4,6),\mathcal{E}}_{m,k}|< \gamma) \lesssim_{m,s} \sigma^{-1} \min\left( \gamma \, \, k_n^{2s+\nu -3 + 2 \, \alpha_{\mathcal{E}}}, \sqrt{\gamma}  \, k_1^{6n}  \right).
$$
\end{lemma}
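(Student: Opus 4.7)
The argument proves each of the two members of the minimum separately.

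\textbf{First bound: $\gamma\, k_n^{2s+\nu-3+2\alpha_{\mathcal{E}}}$.} The key structural observation is that, in the definition of $\dotDelta^{(6),\mathcal{E}}_{m,k}$, the inner sum ranges over $0<p\leq q<k_{\last}=k_n$, so $\dotDelta^{(6),\mathcal{E}}_{m,k}$ does not involve $I_{k_n}$. Consequently
$$
\dotDelta^{(4,6),\mathcal{E}}_{m,k}=(\delta^{\mathcal{E}}_{m,k})_{k_n} I_{k_n}+Y
$$
where the random variable $Y$ is independent of $I_{k_n}$. Under the hypothesis on $(m,k)$, Lemma \ref{lem:diot} yields $\kappa^{\mathcal{E}}_{m,k}=k_n$, and then Lemma \ref{lem:ppcptpt} gives the lower bound $|(\delta^{\mathcal{E}}_{m,k})_{k_n}|\gtrsim_m k_n^{3-2\alpha_{\mathcal{E}}}$. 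Since $I_{k_n}$ is uniformly distributed on an interval of length $\sigma k_n^{-2s-\nu}$, Lemma \ref{lem:scale_density} and Lemma \ref{whaou} (conditioning on $Y$) give the first bound.

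\textbf{Second bound: $\sqrt{\gamma}\, k_1^{6n}$.} Here we exploit the quadratic dependence of $\dotDelta^{(6),\mathcal{E}}_{m,k}$ on a single action. The plan is to find some index $p^{\star}\in\{1,\dots,2n\}$ with $p^{\star}<k_n/2$ such that the coefficient of $I_{p^{\star}}^{2}$,
$$
\alpha(p^{\star}):=\sum_{j=1}^{n} m_j\, k_j\, c^{\mathcal{E}}_{p^{\star},p^{\star}}(k_j),
$$
is nonzero, with an effective polynomial lower bound $|\alpha(p^{\star})|\gtrsim_m k_1^{-C n}$. Because $2p^{\star}<k_n\le k_j$ for every $j$, the explicit formulas of Theorem \ref{thm-BNF} apply: the ``constant in $q$'' parts of $c^{\mathcal{E}}_{p^{\star},p^{\star}}(q)$ contribute only a multiple of $\sum_j m_j k_j=0$ and therefore cancel, leaving a rational function of $p^{\star}$. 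For \ref{gKdV} one obtains
$$
\alpha^{\mathrm{\ref{gKdV}}}(p^{\star}) \;=\; -\frac{3\,(4\pi^{2}p^{\star 2}a_4+a_3^{2})^{2}}{\pi^{6}\,p^{\star 2}}\,\sum_{j=1}^{n}\frac{m_j k_j}{(p^{\star 2}-k_j^{2})^{2}},
$$
whose prefactor is nonzero by \eqref{assump:gKdV}, and whose remaining sum is a rational function in $x=p^{\star 2}$ of the form $P(x)/\prod_j(x-k_j^{2})^{2}$ with $P\in\mathbb{Z}[x]$ of degree $\leq 2(n-1)$; evaluating at $x=k_i^{2}$ shows $P\not\equiv 0$, so $P$ has at most $2(n-1)$ roots. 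An analogous computation using the hypothesis $a_3=0$ or $\sum_j m_j=0$ treats \ref{gBO}: after cancellation, one is left with a rational function of $p^{\star}$ whose numerator is a non-vanishing polynomial of degree $\leq 2n-1$ with integer coefficients. In both cases among $2n$ candidates $p^{\star}\in\{1,\dots,2n\}$ at least one avoids the zero set, and since the numerator is an integer, $|P(p^{\star 2})|\geq 1$; combined with the trivial upper bound $\prod_j(p^{\star 2}-k_j^{2})^{2}\lesssim_n k_1^{4n}$, this produces $|\alpha(p^{\star})|\gtrsim_m k_1^{-4n}$. Conditioning on all actions other than $I_{p^{\star}}$, writing $I_{p^{\star}}=J_{p^{\star}}+\sigma p^{\star-2s-\nu}X$ with $X\sim\mathcal U(0,1)$, the random quantity $\dotDelta^{(4,6),\mathcal{E}}_{m,k}$ becomes a quadratic polynomial in $X$ with leading coefficient $\alpha(p^{\star})\sigma^{2}p^{\star-4s-2\nu}$; Lemma \ref{lem:proba:poly} then gives, after taking expectation with respect to the remaining variables,
$$
\mathbb{P}(|\dotDelta^{(4,6),\mathcal{E}}_{m,k}|<\gamma)\;\lesssim_{m,s}\;\sigma^{-1}p^{\star 2s+\nu}k_1^{2n}\sqrt{\gamma}\;\lesssim_{m,s}\;\sigma^{-1}\sqrt{\gamma}\,k_1^{6n}.
$$

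\textbf{Small $k_n$ regime and main obstacle.} The previous argument requires $k_n>4n$ in order to place $p^{\star}$ in $\{1,\dots,2n\}\cap\{1,\dots,\lfloor k_n/2\rfloor\}$. When $k_n\leq 4n$, the quantity $k_n^{2s+\nu-3+2\alpha_{\mathcal{E}}}$ is bounded by a constant depending only on $m$ and $s$, so using $\gamma\leq\sqrt{\gamma}$ and $k_1\geq 1$, the first bound already implies the second up to a $(m,s)$-constant. The genuinely delicate step is the proof of non-vanishing and lower-bounding of $\alpha(p^{\star})$ in the \ref{gBO} case: the explicit formula for $c^{\mathrm{\ref{gBO}}}_{p,p}(q)$ is considerably less symmetric than its \ref{gKdV} counterpart and does not factor as cleanly, so the cancellation of constants via $\sum_j m_j k_j=0$ must be carried out carefully and supplemented by the additional hypothesis $a_3=0$ or $\sum_j m_j=0$ to guarantee the rational-function degree bound; without this hypothesis one would be forced to invoke the continued-fraction machinery of Lemma \ref{lem:continuedfractions}, which is deferred to the treatment of the general \ref{gBO} case.
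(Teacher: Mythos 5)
Your treatment of the first bound (conditioning on $I_{k_n}$, using Lemma \ref{lem:diot} to get $\kappa^{\E}_{m,k}=k_n$ and Lemma \ref{lem:ppcptpt} for the lower bound on $(\delta^{\E}_{m,k})_{k_n}$) matches the paper. For the second bound, your argument for \ref{gKdV} is correct and even slightly cleaner than the paper's: you notice the prefactor factorizes as $K(p)=3\,(4\pi^2 p^2 a_4+a_3^2)^2/\pi^6$, so by \eqref{assump:gKdV} it never vanishes on positive integers, whereas the paper only argues that $K$ vanishes at most twice on $p^2$. Your \ref{gBO} case with $a_3=0$ is also fine and parallels the paper.

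The gap is in the \ref{gBO} case with $a_3\neq 0$ and $m_1+\dots+m_n=0$. You claim that ``after cancellation, one is left with a rational function of $p^\star$ whose numerator is a non-vanishing polynomial of degree $\leq 2n-1$ with integer coefficients,'' and you invoke $|P(p^{\star 2})|\geq 1$. This is false in that subcase. Using $\sum m_j=0$ and $\sum m_j k_j=0$, the $a_6$ and $a_3 a_5$ contributions cancel, but both the $a_4^2$ part and the $a_3^2 a_4$ part survive:
$$
(d^{\mathrm{\ref{gBO}}}_{k,m})_p \ = \ \eta\ \frac{P_{k,m}(p)-\beta\,Q_{k,m}(p)}{D_{k,m}(p)}, \qquad \beta=\frac{108\,a_3^2}{288\,\pi\,a_4},\ \eta=-\frac{288\,a_4^2}{\pi},
$$
with $P_{k,m},Q_{k,m}\in\mathbb{Z}[X]$ (and, crucially, not collinear). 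The number $\beta$ is an arbitrary real, possibly irrational or even a Liouville number, so $P_{k,m}-\beta Q_{k,m}$ has no reason to take values bounded below by $1$ on integers, and the ``numerator is an integer'' step collapses. This is precisely where the paper brings in the continued-fraction Lemma \ref{lem:continuedfractions}: it is invoked \emph{inside the proof of this lemma} (not ``deferred to the general \ref{gBO} case''), with a window of candidate indices $p^\star\in\llbracket 1, J_{k_1,m}\rrbracket$ of size $\asymp n|m|_1\log k_1$ rather than $\{1,\dots,2n\}$, and produces the weaker but sufficient lower bound $|(d^{\mathrm{\ref{gBO}}}_{k,m})_{p_\star}|\gtrsim_m k_1^{-10n}$. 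Indeed, the general \ref{gBO} case with $a_3\neq 0$ and $\sum m_j\neq 0$ is handled elsewhere (Proposition \ref{prop:sum:proba}) by a completely different argument that avoids $\Delta^{(6)}$ altogether; the diophantine machinery lives here.

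A secondary consequence of the above: your threshold $k_n>4n$ for entering the second-bound regime must be enlarged to $k_n>2J_{k_1,m}\gtrsim_m \log k_1$ for the \ref{gBO}, $a_3\neq 0$, $\sum m_j=0$ subcase, so the ``small $k_n$'' fallback regime is correspondingly larger; the estimate $\gamma\,k_n^{2s+\nu-3+2\alpha_{\E}}\lesssim_{m,s}\sqrt{\gamma}\,k_1^{6n}$ still holds in that regime because $k_n\lesssim_m\log k_1$ is controlled by a power of $k_1$.
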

\begin{proof} 
 Note that, by Lemma \ref{lem:diot}, we have $\kappa^{\mathcal{E}}_{m,k} = k_n$ and that, by  definition, $\dotDelta^{(6),\mathcal{E}}_{m,k}$ is independent of $I_{k_n}$. Consequently, the bound
$$
\mathbb{P}( |\dotDelta^{(4,6),\mathcal{E}}_{m,k}|< \gamma) \lesssim_m \gamma \, \sigma^{-1} \, k_n^{2s+\nu -3 + 2 \, \alpha_{\mathcal{E}}}
$$
can be obtained as in the proof of the Lemma \ref{lem:proba:Z4}. \\
 Furthermore if $k_n \leq 2J_{k_1,m}$, where $J_{k_1,m}$ is defined by
$$
J_{k_1,m} =\lfloor 42\, n \, |m|_1 (1+\log_2 k_1) \rfloor,
$$
 then $ \gamma  \, k_n^{2s+\nu -3 + 2 \, \alpha_{\mathcal{E}}}\lesssim_{m,s} \sqrt{\gamma}  \, k_1^{6n}$ and Lemma's proof is over. Thus from now on we assume $k_n > 2J_{k_1,m}$ and we want to prove that $\mathbb{P}( |\dotDelta^{(4,6),\mathcal{E}}_{m,k}|< \gamma) \lesssim_{m,s}  \sigma^{-1} \,\sqrt{\gamma}  \, k_1^{6n}$.

Now, if $p$ is an integer such that $0<2p<k_n$, $\dotDelta^{(4,6),\mathcal{E}}_{m,k}$ writes
$$
\dotDelta^{(4,6),\mathcal{E}}_{m,k} =   \, (d^{\mathcal{E}}_{k,m})_p I_p^2 +  L^{\mathcal{E}}_{k,m,c}((I_{\ell})_{\ell \neq p}) I_p+ Q^{\mathcal{E}}_{k,m,c}((I_{\ell})_{\ell \neq p})
$$
where $L^{\mathcal{E}}_{k,m,c}((I_{\ell})_{\ell \neq p})$ (resp. $Q^{\mathcal{E}}_{k,m,c,\varepsilon}((I_{\ell})_{\ell \neq p})$) is a linear form (resp. quadratic form) in the actions independent of $I_{p}$ and
$$
(d^{\mathcal{E}}_{k,m})_p = \sum_{j=1}^n m_j\, k_j\, b^{\mathcal{E}}_{p,{k_j}}.
$$
Consequently, applying Lemma \ref{lem:proba:poly}, we get
\begin{equation}
\label{temp:proba:Z6}
 \mathbb{P}(\dotDelta^{(4,6),\mathcal{E}}_{m,k} < \gamma) \lesssim \sigma^{-1} \sqrt{\frac{\gamma}{ p^{-2s - \nu} |(d^{\mathcal{E}}_{k,m})_p|}}
\end{equation}
To conclude this proof, we are going to prove that there exists $p_{\star}\in \mathbb{N}^*$ satisfying $2\,p_{\star}<k_n$ and $p_{\star}\lesssim_m J_{k_1,m}$ such that
$|(d^{\mathcal{E}}_{k,m})_{p_{\star}}|\gtrsim_m k_1^{-10n}$.

As a consequence, by \eqref{temp:proba:Z6}, we will have
$$
 \mathbb{P}(\dotDelta^{(4,6),\mathcal{E}}_{m,k,\varepsilon} < \gamma) \lesssim_{m} \sigma^{-1}  \sqrt{\gamma} k_1^{5n} J_{k_1,m}^{s+\nu/2}  \lesssim_{m,s} \sigma^{-1} \sqrt{\gamma} k_1^{6n}.
$$

To prove the existence of such a $p_{\star}$, we have to distinguish $3$ cases.

\noindent \emph{$*$ Case $\mathcal{E}=\mathrm{\ref{gBO}}$ and $a_3=0$.} Using the zero momentum condition (i.e. $k\cdot m =0$) and the exact formula of $c_{p,p}^{\mathrm{\ref{gBO}}}(q)$ given by Theorem \ref{thm-BNF}, we have

$$
(d^{\mathrm{\ref{gBO}}}_{k,m})_p  =-\frac{288\,a_4^2}{\pi} \sum_{j=1}^n m_j \, k_j\, \frac {(p-k_j)\, }{\left( p+2\,k_j \right)  \left( 3\,p-2\,k_j \right) } = -\frac{288\,a_4^2}{\pi} \frac{P_{k,m}(p)}{Q_{k,m}(p)} 
$$
where $P_{k,m},Q_{k,m}\in \mathbb{Z}[X]$ are the polynomial defined by
\begin{equation*}
\begin{split}
P_{k,m}(X) &= \sum_{j=1}^n m_j\, k_j\, (X-k_j) \prod_{\ell\neq j} (X+2\,k_\ell)(3\,X - 2\,k_\ell)\\
Q_{k,m}(X) &= \prod_{j=1}^n (X+2\,k_j)(3\,X - 2\,k_j)
\end{split}
\end{equation*}

We note that $P_{k,m}$ is of degree $2n-1$ or less and is not identically equal to zero because $P_{k,m}(-2k_n) \neq 0 $. As a consequence, there exists $p_{\star} \in \llbracket 1,2n-1 \rrbracket$ such that $P_{k,m}(p_{\star})\neq 0$ and , since $P_{k,m}\in\mathbb{Z}[X]$, we have $|P_{k,m}(p_{\star})|\geq 1$. Furthermore, since $k_n > 2J_{k_1,m}$, we deduce $2(2n-1) <k_n$ and then by a straightforward estimate we get $|Q_{k,m}(p_{\star})| \leq 6^n k_1^{2n}$. Consequently, we have $\big|(d^{\mathrm{\ref{gBO}}}_{k,m})_{p_{\star}}\big| \gtrsim_{m} k_1^{-2n}.$

\noindent \emph{$*$ Case $\mathcal{E}=\mathrm{\ref{gKdV}}$.} Using the zero momentum condition (i.e. $k\cdot m =0$) and the exact formula of $c_{p,p}^{\mathrm{\ref{gKdV}}}(q)$ given by Theorem \ref{thm-BNF}, we have
\begin{equation}
\begin{split}
(d^{\mathrm{\ref{gKdV}}}_{k,m})_p  &=-\sum_{j=1}^n m_j \, k_j \left( \,{\frac {3\, a_{3}^{4}}{{\pi}^{6} {p}^{2} (p^2-k_j^2)^2}}
-\,{\frac {24\, a_{3}^2\,a_{4}}{{\pi}^{4} (p^2-k_j^2)^2}}-\,{\frac {48\, {p}^{2}\,a_4^{2}}{{\pi}^{2} (p^2-k_j^2)^2}} \right) \\
&=-K(p)\sum_{j=1}^n \frac{m_j \, k_j}{p^2 (p^2-k_j^2)^2}  \ \mathrm{where} \ K(p) = \frac{3 \ a_3^4}{\pi^6} + \frac{24 \ a_3^2 a_4}{\pi^4} p^2 + \frac{48 \, a_4^2}{\pi^2}  p^4\\
&= -\frac{K(p)}{p^2} \frac{P_{k,m}(p)}{Q_{k,m}}
\end{split}
\end{equation}
where $P_{k,m},Q_{k,m}\in \mathbb{Z}[X]$ are the polynomials defined by
$$
Q_{k,m}(X) = \prod_{j=1}^n (X^2 - k_j^2)^2 \ \mathrm{and} \ P_{k,m}(X) = \sum_{\ell=1}^n m_\ell \, k_{\ell} \ \prod_{j\neq \ell} (X^2 - k_j^2)^2.
$$
We note that $K$ is a polynomial of degree $2$ with respect to $p^2$ and since by assumption $a_3\neq 0$ or $a_4\neq 0$, it vanishes at most twice. Furthermore, the polynomial $P_{k,m}$ is of degree $2n-2$ or less and is not identically equal to zero because $P_{k,m}(k_n)\neq 0$. As a consequence, there exists $p_{\star}\in \llbracket 1,2n+1 \rrbracket$ such that $K(p_{\star}) \neq 0$ and $P_{k,m}(p_{\star})\neq 0$. A fortiori, since $P_{k,m}\in \mathbb{Z}[X]$ we have $|P_{k,m}(p_{\star})| \geq 1$. Finally, since by assumption $2(2n+1)<k_n$, we have $|Q_{k,m}(p_{\star})| \leq k_1^{4n}$ and thus $(d^{\mathrm{\ref{gKdV}}}_{k,m})_{p_{\star}} \geq k_1^{-4n}$.

\noindent \emph{$*$ Case $\mathcal{E}=\mathrm{\ref{gBO}}$ and $m_1+\dots+m_n = 0$.} Using the zero momentum condition (i.e. $k\cdot m =0$) and the exact formula of $c_{p,p}^{\mathrm{\ref{gBO}}}(q)$ given by Theorem \ref{thm-BNF}, we have

$$
(d^{\mathrm{\ref{gBO}}}_{k,m})_p  =\frac{a_4}{\pi} \sum_{j=1}^n m_j \, k_j\, \left( {\frac {  p }{ 
 \left( p-k_j \right) k_j^{2}}}\frac{108\, a_{3}^{2}}{\pi} -\frac {(p-k_j)\, }{\left( p+2\,k_j \right)  \left( 3\,p-2\,k_j \right) } 288\, a_{4} \right) .
$$
We denote $\beta = \frac{108 \, a_3^2}{288\, \pi  \, a_4}$ and $\eta =- \frac{288 \, a_4^2}{\pi}$. Consequently, we have
$$
(d^{\mathrm{\ref{gBO}}}_{k,m})_p = \eta \frac{  P_{k,m}(p)- \beta \, Q_{k,m}(p) }{D_{k,m}(p)}
$$
where $P_{k,m},Q_{k,m},D_{k,m}\in \mathbb{Z}[X]$ are the polynomials defined by
\begin{equation*}
\begin{split}
P_{k,m}(X) &= \left(\prod_{j=1}^n k_j^2 (X-k_j)\right)  \sum_{\ell=1}^n m_{\ell}\, k_{\ell} (X-k_{\ell}) \prod_{j\neq \ell}  (X+2\, k_j)(3\,X-2\, k_j)\\
Q_{k,m}(X) &= \left(\prod_{j=1}^n (X+2\, k_j)(3\,X-2\, k_j)\right) X \sum_{\ell=1}^n m_{\ell}\, k_{\ell} \prod_{j\neq \ell} k_j^2 (X-k_j)\\
D_{k,m}(X) &= \prod_{j=1}^n k_j^2 (X-k_j) (X+2\,k_j)(3\,X-2\,k_j)
\end{split}
\end{equation*}
Note that $P_{k,m}$ and $Q_{k,m}$ are of degree $3n$ or less and are not collinear because
$$
Q_{k,m}(k_1)\neq 0 = P_{k,m}(k_1) \ \mathrm{and} \ P_{k,m}(-2\, k_1)\neq 0 = Q_{k,m}(-2\,k_1).
$$
Furthermore, by a straightforward estimate, if $p< k_n/2$, we have
$$
|Q_{k,m}(p)|<  |m|_1 \, 6^n \, k_1^{5n}.
$$

Then we observe that, by assumption $6n\leq J_{k_1,m} <k_n/2$ and $|m|_1 \, 6^n \, k_1^{5n} \leq 2^{\frac{J_{k_1,m}}{6n}-1}$. Consequently, recalling that $P_{k,m},Q_{k,m}$ are of degree $3n$ or less and are not collinear, by applying Lemma \ref{lem:continuedfractions}, we get $p_{\star}\in \llbracket 1, J_{k_1,m} \rrbracket$ such that
$$
 |P_{k,m}(p_{\star})- \beta \, Q_{k,m}(p_{\star})|\geq \frac1{2 |Q_{k,m}(p_{\star})|}.
$$
Consequently, we have $\displaystyle |(d^{\mathrm{\ref{gBO}}}_{k,m})_{p_{\star}} | \geq \frac{|\eta|}{2 |D_{k,m}(p_{\star})| |Q_{k,m}|} \gtrsim_m k_1^{-10n}.$
\end{proof}

In the following proposition, we make the estimates of Lemma \ref{lem:proba:Z6} uniform with respect to $k$ and $m$.
\begin{proposition}
\label{prop:proba:Z6}
For all $\mathcal{E}\in \{\mathrm{\ref{gBO}},\mathrm{\ref{gKdV}} \}$, if $\sigma\leq 1$ we have
$$
\forall \gamma\in(0,1), \ \mathbb{P}\big( \forall (m,k)\in \widetilde{\mathcal{MI}_{\mathrm{mult}}}, \ |\dotDelta^{(4,6),\mathcal{E}}_{m,k}  |\gtrsim_{m,s} \gamma \, \sigma\, k_1^{-20\#m} \, \max(k_\last^{-2s},\gamma \sigma)\big) \geq 1- \gamma ,
$$
where $\widetilde{\mathcal{MI}_{\mathrm{mult}}} = \mathcal{MI}_{\mathrm{mult}}$ excepted if $\mathcal{E} = \mathrm{\ref{gBO}}$ and $a_3 \neq 0$ {in which case,} to get $\widetilde{\mathcal{MI}_{\mathrm{mult}}}$, all the indices such that $m_1+\dots+m_{\last}\neq 0$ have to be removed from $\mathcal{MI}_{\mathrm{mult}}$. 
\end{proposition}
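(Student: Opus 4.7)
The plan is to bound the probability of the complementary event by $\gamma$ via sub-additivity, after applying Lemma \ref{lem:proba:Z6} to each $(m,k)\in\widetilde{\mathcal{MI}_{\mathrm{mult}}}$ with a carefully chosen local threshold. Note that the restriction $\widetilde{\mathcal{MI}_{\mathrm{mult}}}$ is precisely what is needed so that Lemma \ref{lem:proba:Z6} applies to every index in the union bound.

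For fixed $(m,k)$, write $n=\#m$ and set
\[
\gamma_{m,k} := c_m\, \gamma\, \sigma\, k_1^{-20 n}\, \max(k_\last^{-2s},\gamma\sigma),
\]
where $c_m>0$ is a small constant to be chosen. I would then split into two cases according to which term realizes the maximum, and apply the appropriate branch of the $\min$ in Lemma \ref{lem:proba:Z6}. When $k_\last^{-2s}\geq \gamma\sigma$, apply the first bound, obtaining (using $\nu\leq 9$, $\alpha_\E\leq 2$, $k_\last\leq k_1$, and $|m|_1\geq 5$)
\[
\mathbb{P}(|\dotDelta^{(4,6),\E}_{m,k}|<\gamma_{m,k})\lesssim_{m,s} c_m\, \gamma\, k_1^{-20n+(\nu-3+2\alpha_\E)}\lesssim_{m,s} c_m\, \gamma\, k_1^{-20n+10}.
\]
When $k_\last^{-2s}<\gamma\sigma$ (so $\gamma_{m,k}=c_m\gamma^2\sigma^2 k_1^{-20n}$), apply the second bound to get
\[
\mathbb{P}(|\dotDelta^{(4,6),\E}_{m,k}|<\gamma_{m,k})\lesssim_{m,s} \sqrt{c_m}\, \gamma\, k_1^{-4n}.
\]
Hence in both cases the probability is $\lesssim_{m,s}\sqrt{c_m}\,\gamma\, k_1^{-4n}$ (using $n\geq 2$, so $-20n+10\leq -4n$).

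The final step is to sum over $(m,k)\in \widetilde{\mathcal{MI}_{\mathrm{mult}}}\subset \mathcal{MI}_{\mathrm{mult}}$. For fixed $n$, the strict ordering $k_1>\dots>k_n>0$ forces $k_1\geq n$ and leaves at most $k_1^{n-1}$ choices for $(k_2,\dots,k_n)$, so
\[
\sum_{k_1>\dots>k_n>0} k_1^{-4n}\leq \sum_{k_1\geq n} k_1^{n-1-4n}\leq \sum_{k_1\geq 1} k_1^{-3n-1}\leq \pi^2/6.
\]
Choosing $c_m = C\, 2^{-|m|_1}/(n!)^2$ for a small enough universal $C$ and absorbing the $m$-dependent constant into $c_m$ (the constant in the Lemma depends only on $(m,s)$ through $n$ after one notices its uniformity), one gets
\[
\sum_{(m,k)\in \mathcal{MI}_{\mathrm{mult}}}\mathbb{P}(|\dotDelta^{(4,6),\E}_{m,k}|<\gamma_{m,k})\lesssim \gamma\sum_{n\geq 2}\sum_{m\in(\mathbb{Z}^*)^n} \frac{2^{-|m|_1}}{(n!)^2}\cdot \frac{\pi^2}{6}\leq \gamma,
\]
the double sum converging since $\sum_{m\in(\mathbb{Z}^*)^n}2^{-|m|_1}\leq 4^n$ and $\sum_n 4^n/(n!)^2<\infty$.

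The main obstacle is the book-keeping: ensuring that the constants in Lemma \ref{lem:proba:Z6} can be made to depend on $m$ only through $n$ (this is where $|m|_1\geq 5$ and the $k_1^{-20n}$ factor with exponent $20$ rather than a smaller number matter, since one must beat the $k_\last^{2s+\nu-3+2\alpha_\E}$ blow-up via the choice of exponent $20n$ while simultaneously retaining a negative power of $k_1$ large enough that the $k$-sum converges uniformly in $n$), and verifying that, in the $\mathrm{\ref{gBO}}$ case where $a_3\neq 0$, the removal of indices with $m_1+\dots+m_\last\neq 0$ is exactly the hypothesis needed by Lemma \ref{lem:proba:Z6} to guarantee $\kappa_{m,k}^\E=k_\last$ and the independence of $\dotDelta^{(6)}$ from $I_{k_\last}$, on which the whole argument rests.
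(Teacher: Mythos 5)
Your proof is correct and follows essentially the same route as the paper: sub-additivity, application of Lemma \ref{lem:proba:Z6} at a locally rescaled threshold (splitting on which term achieves the $\max$, as the paper does by taking the minimum of the two resulting bounds), arriving at a bound $\lesssim_{m,s}\sqrt{c_m}\,\gamma\, k_1^{-4\#m}$ in both cases, then summing over $(m,k)$ by absorbing the $m$-dependent constant from the lemma into $c_m$. The only cosmetic differences are in the final counting step (you bound the number of admissible $k$ directly via $k_1\geq n$ and $k_1^{n-1}$ choices, where the paper drops the ordering and uses $k_1^{-2n}\leq (k_1\cdots k_n)^{-2}$) and in the choice of normalizing factor ($(n!)^2$ vs.\ $n!$).
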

\begin{proof}
We aim at bounding the probability of the complementary event by $\gamma>0$. Denoting by $C_{m,s}\in(0,1)$ the constant in the estimate we aim at proving, by sub-additivity of $\mathbb{P}$, the probability of this complementary event is bounded by
\begin{equation}
\label{eq:tmp:Z6:glob}
\sum_{ (m,k)\in \mathcal{M}_{\mathcal{E}}} \mathbb{P}\big(  |\dotDelta^{(4,6),\mathcal{E}}_{m,k}  |< C_{m,s} \sigma \gamma \, k_1^{-20\#m} \, \max(k_\last^{-2s},\gamma\, \sigma )\big).
\end{equation}

 In order to estimate the probability in the previous sum, we want to apply Lemma \ref{lem:proba:Z6}. It can be done since $C_{m,s},\gamma,\sigma\in (0,1)$ we have $C_{m,s} \sigma \gamma \, k_1^{-20\#m} \, \max(k_{\last}^{-2s},\gamma\ \sigma) \leq 1$. As a consequence, each term of  the sum \eqref{eq:tmp:Z6:glob} is smaller than
$$
 K_{m,s} \sigma^{-1} C_{m,s} \gamma \sigma \, k_1^{-20\#m} \, \max(k_\last^{-2s},\gamma\ \sigma) k_\last^{2s+\nu -3 + 2 \, \alpha_{\mathcal{E}}}
$$
and
$$
 K_{m,s} \sigma^{-1}\sqrt{C_{m,s} \gamma \sigma \, k_1^{-20\#m} \, \max(k_\last^{-2s},\gamma\ \sigma )}  \, k_1^{6\#m} .
$$
where $K_{m,s}$ denotes the constant in Lemma \ref{lem:proba:Z6}.

As a consequence, since $\nu\leq 9$ and $\#m\geq 2$, each probability in the sum \eqref{eq:tmp:Z6:glob} is smaller than 
$$
 \gamma K_{m,s} \sqrt{C_{m,s}}  k_1^{-4\#m} \leq \gamma K_{m,s} \sqrt{C_{m,s}}  k_1^{-2\#m}.
$$
Consequently, proceeding as in the proof Proposition \ref{prop:proba:Z4}, and choosing 
$$
\sqrt{C_{m,s}} \leq \min\left(K_{m,s}^{-1}  \frac{2^{-|m|_1} }{\#m!} e^{-\pi^2/3},1\right),
$$
we get
$$
\mathbb{P}\big( \exists (m,k)\in \widetilde{\mathcal{MI}_{\mathrm{mult}}}, \ |\dotDelta^{(4,6),\mathcal{E}}_{m,k}  |< C_{m,s} \sigma\, \gamma \, k_1^{-20\#m} \, \max(k_\last^{-2s},\gamma \sigma)\big) \leq \gamma.
$$
\end{proof}

\subsubsection{Proof of Proposition \ref{prop:sum:proba}}
\label{proof:prop:sum:proba}

Applying Proposition \ref{prop:proba:Z4} and Proposition \ref{prop:proba:Z6}, with a probability larger than $1 -\lambda \gamma$, { $u$ (see \eqref{uI}) satisfies}
$$
\forall (m,k)\in \mathcal{MI}_{\mathrm{mult}}, \ |\Delta^{(4),\mathcal{E}}_{m,k}(I)  |\gtrsim_m \lambda \, \gamma\, \sigma  \, k_1^{-4|m|_1} \, \big(\kappa_{k,m}^{\mathcal{E}}\big)^{-2s}
$$
and, recalling that $\widetilde{\mathcal{MI}_{\mathrm{mult}}}$ is defined in Proposition \ref{prop:proba:Z6},
$$
\forall (m,k)\in \widetilde{\mathcal{MI}_{\mathrm{mult}}}, \ |\Delta^{(4),\mathcal{E}}_{m,k}(I) + \Delta^{(6),\mathcal{E}}_{m,k}(I)  |\gtrsim_{m,s} \lambda \, \gamma \, \sigma\, k_1^{-20\#m} \, \max(k_\last^{-2s},\gamma\ \sigma).
$$

From now on we assume that $u$ satisfies these $2$ last estimates.

If we consider only the case $k_1 \leq N$ and $|m|_1\leq r$ and if $N$ is large enough with respect to $r,s,(1-\nu)^{-1},\lambda^{-1}$ then we have the estimates
\begin{equation}
\label{proof:pro4:eq1}
|\Delta^{(4),\mathcal{E}}_{m,k}(I)  | \geq 8\zeta(\nu) \gamma \,\sigma  \, N^{-5|m|_1} \, \big(\kappa_{k,m}^{\mathcal{E}}\big)^{-2s}
\end{equation}
and
\begin{equation}
\label{proof:pro4:eq2}
 |\Delta^{(4),\mathcal{E}}_{m,k}(I) + \Delta^{(6),\mathcal{E}}_{m,k}(I)  |\geq 8\zeta(\nu) \gamma \, \sigma\, N^{-21|m|_1} \, \max(k_\last^{-2s},8\zeta(\nu) \gamma\ \sigma).
\end{equation}
From \eqref{proof:pro4:eq1}, since by \eqref{assump:proba} we have $\|u\|_{\dot{H}^s}^2\leq 4\zeta(\nu)\sigma$, we deduce directly that $u\in \mathcal{U}_{\gamma,N,r}^{(4),\mathcal{E},s}$.

Now, we aim at proving that, if $\sigma,\nu,N,\gamma$ satisfy some estimates then $u \in \mathcal{U}_{\gamma,N,r}^{(4,6),\mathcal{E},s}$. 

\noindent \emph{$*$ Case $(m,k)\in \widetilde{\mathcal{MI}_{\mathrm{mult}}}$.} We deduce from \eqref{proof:pro4:eq2} that
$$
 |\Delta^{(4,6),\mathcal{E}}_{m,k,N}( I)  |\geq 8\zeta(\nu) \gamma \, \sigma N^{-21|m|_1} \, \max(k_\last^{-2s}, 8\zeta(\nu) \gamma \, \sigma) -  |\Delta^{(6),\mathcal{E}}_{m,k}(I) - \Delta^{(6),\mathcal{E}}_{m,k,N}(I) |.
$$
As a consequence, recalling that by \eqref{assump:proba} we have $\|u\|_{\dot{H}^s}^2\leq 4\zeta(\nu)\sigma$ and estimating this last term by Lemma \ref{lem:Delta-DeltaN} (here $\kappa_{m,k}^{\mathcal{E}}=k_{\last}$, see Lemma \ref{lem:diot}), if $N$ is large enough with respect to $r$ and $s$, we have
$$
 |\Delta^{(4,6),\mathcal{E}}_{m,k,N}( I)  |\geq 2 \gamma \, \|u\|_{\dot{H}^s}^2 N^{-21|m|_1} \, \max(k_\last^{-2s},2\,\gamma\  \|u\|_{\dot{H}^s}^2) -  N^5 (\kappa_{m,k}^{\mathcal{E}})^{-2s} \| u\|_{\dot{H}^s}^4.
$$
Consequently, if $N^5 \| u\|_{\dot{H}^s}^2 \leq \gamma N^{-21r}$ then 
$$|\Delta^{(4,6),\mathcal{E}}_{m,k,N}(I)  |\geq   \gamma \, \|u\|_{\dot{H}^s}^2 N^{-21|m|_1} \, \max((\kappa_{m,k}^{\mathcal{E}})^{-2s},\gamma\  \|u\|_{\dot{H}^s}^2).$$

\noindent \emph{ $*$ Case $(m,k)\in \mathcal{MI}_{\mathrm{mult}}\setminus \widetilde{\mathcal{MI}_{\mathrm{mult}}}$.} Here by construction of $\widetilde{\mathcal{MI}_{\mathrm{mult}}}$, we have $\mathcal{E}=\mathrm{\ref{gBO}}$, $a_3\neq0$ and $m_1+\dots+m_{\last}\neq 0$. Consequently, by applying Lemma \ref{lem:bound:Delta6}  and using \eqref{proof:pro4:eq1}, we have
\begin{equation*}
\begin{split}
 |\Delta^{(4,6),\mathcal{E}}_{m,k,N}( I)  | \geq   |\Delta^{(4),\mathcal{E}}_{m,k}( I)  | -  |\Delta^{(6),\mathcal{E}}_{m,k,N}( I)  | &\geq 8\zeta(\nu) \gamma \sigma  \, N^{-5|m|_1} \, \big(\kappa_{k,m}^{\mathcal{E}}\big)^{-2s} - C_r \|u\|_{\dot{H}^s}^4 N^4\\
 &\mathop{\geq}^{\eqref{assump:proba}} 2 \|u\|_{\dot{H}^s}^2 \gamma   \, N^{-5|m|_1} \, \big(\kappa_{k,m}^{\mathcal{E}}\big)^{-2s} - C_r \|u\|_{\dot{H}^s}^4 N^4
\end{split}
\end{equation*}
where $C_r$ is a constant depending only on $r$. By applying Lemma \ref{lem:diot} to control $\kappa_{k,m}^{\mathcal{E}}$ by $2\#k-1\leq 2r-1$, if $N$ is large enough with respect to $r$ and $s$ and if $\gamma N^{-6r} \geq \|u\|_{\dot{H}^s}^2 N^5$, we have
$$
 |\Delta^{(4,6),\mathcal{E}}_{m,k,N}(I)  |  \geq  \gamma  \, \|u\|_{\dot{H}^s}^2 N^{-5|m|_1} \, \big(\kappa_{k,m}^{\mathcal{E}}\big)^{-2s}.
$$
Observing that if $\zeta(\nu)\sigma$ (and so $\|u\|_{\dot{H}^s}$) is small enough with respect to a constant depending only on $r$ and $s$ then
$$
\big(\kappa_{k,m}^{\mathcal{E}}\big)^{-2s} \geq \big(2r-1\big)^{-2s} \geq \gamma \|u\|_{\dot{H}^s}^2,
$$
we also have $|\Delta^{(4,6),\mathcal{E}}_{m,k,N}(\varepsilon^2 I)  |\geq  \gamma \, \|u\|_{\dot{H}^s}^2 N^{-21|m|_1} \, \max(k_\last^{-2s},\gamma \|u\|_{\dot{H}^s}^2)$.

\section{The rational Hamiltonians and their properties}\label{ratHam}
In this section we construct and we give the principal properties of the classes of rational Hamiltonians that we will use in section \ref{section6}. As explained in the introduction, these classes are strongly based on those defined in \cite{KillBill}. In fact the general principle remains the same: we build a class which contains all Hamiltonians generated by the iterative resolutions of the homological equations 
$$\{\chi,Z_4^{\E}(I)\}=R,\quad \text{ and }\quad \{\chi,Z_4^{\E}(I)+Z_{6,N}^{\E}(I)\}=R$$
and which allows a good control of the associated vector fields. 

We warn the reader that we index some objects defined in Section \ref{sec:sd} by elements of $\Irr \cap \M$ instead of elements of $\mathcal{MI}_{\mathrm{mult}}$. Nevertheless, as explained in Remark \ref{Il faut faire attention}, it make sense using the correspondance \eqref{rel:l->(m,k)}. 

\subsection{The rational Hamiltonians}
The class of rational Hamiltonian is defined as a sum, over a set of admissible indices (see Definition \ref{def:rat:fract}), of monomials $u^\ell$ divided by a product of small divisors (see Definition \ref{def:ev:frac}). In addition, we provide this somewhat complex structure with a number of control functions, defined in  Definition \ref{control}, which will allow us to estimate these Hamiltonians in different context.

\begin{definition}[Structure of the rational fractions] 
\label{def:rat:fract}
For $\mathcal{E}\in \{\mathrm{\ref{gBO}},\mathrm{\ref{gKdV}}\}$ and $r\geq 2$, $\Gamma \in \mathscr{H}_{r}^{\mathcal{E}}$ if 
 $$\Gamma \subset  (\mathcal{D}\cap \mathcal{R}^{\mathcal{E}})   \times \bigcup_{p\geq 0} (\Irr \cap \mathcal{R}^{\mathcal{E}})^p \times \bigcup_{p\geq 0} (\Irr \cap \mathcal{R}^{\mathcal{E}})^p  \times \mathbb{N} \times \mathbb{C} $$
  satisfies the following conditions :

\begin{enumerate}[i)]

\item \underline{\emph{Finite complexity.}} $\Gamma$ is a finite set, i.e. $\# \Gamma < \infty$.
\item \underline{\emph{Reality condition.}} $\Gamma$ enjoys the following symmetry
$$
(\ell,\mathbf{h},\mathbf{k},n,c) \in \Gamma \ \Rightarrow \ (-\ell,-\mathbf{h},-\mathbf{k},n,\bar{c}) \in \Gamma
$$

\item \underline{\emph{Order $r$.}} For all $(\ell,\mathbf{h},\mathbf{k},n,c) \in \Gamma$ we have $r= \#\ell  -  2\#\mathbf{h}  -4 \# \mathbf{k}$.

\item \underline{\emph{Consistency.}} For all $(\ell,\mathbf{h},\mathbf{k},n,c) \in \Gamma$ we have $0\leq n\leq \# \mathbf{h}$.

\item \underline{\emph{Finite expansion of the denominators.}} For all $(\ell,\mathbf{h},\mathbf{k},n,c) \in \Gamma$, we have
$$
\mathbf{h},\mathbf{k} \in \bigcup_{q \in \mathbb{N}} \big( \bigcup_{2\leq n \leq  \# \ell} \Irr_n \big)^q.
$$
\end{enumerate}

\end{definition}

\begin{definition}[Controls of the rational fractions]\label{control}

Being given $\Gamma \in \mathscr{H}_{r}^{\mathcal{E}}$ we introduce the following controls
\begin{itemize}

\item \underline{\emph{Control of multiplicity.}} 
$$
C^{(m)}_\Gamma:=\max_{k \in \mathcal{R}^{\mathcal{E}}} \# \{ (\ell,\mathbf{h},\mathbf{k},n,c) \in \Gamma \ | \ \ell = k \}.
$$

\item \underline{\emph{Control of the degrees of the numerators.}} 
$$
C^{(de)}_\Gamma:=\max_{(\ell,\mathbf{h},\mathbf{k},n,c) \in \Gamma} \# \ell.
$$

\item \underline{\emph{Control of the distribution of the derivatives.}} 
$$
C^{(di)}_\Gamma:=\max_{(\ell,\mathbf{h},\mathbf{k},n,c) \in \Gamma} \frac{(\kappa_{\mathbf{h}_{1}}^{\mathcal{E}}\dots \kappa_{\mathbf{h}_{\last}}^{\mathcal{E}})^2 }{|\ell_{3}\dots \ell_{\last}|}.
$$

\item \underline{\emph{Control of the old zero momenta.}} 
$$
C^{(om)}_\Gamma:=\max_{(\ell,\mathbf{h},\mathbf{k},n,c) \in \Gamma} \max\left(\max_{j=1,\dots,\# \mathbf{k}} \frac{|\mathbf{k}_{j,1}|}{|\ell_{2}|},\max_{j=1,\dots,\# \mathbf{h}} \frac{|\mathbf{h}_{j,1}|}{|\ell_{2}|} \right).
$$

\item \underline{\emph{Global control of the structure.}} 
$$
C^{(str)}_\Gamma:=\max_{\sigma \in \{m,de,di,om \}} C^{(\sigma)}(\Gamma).
$$

\item \underline{\emph{Control of the existing modes.}} 
$$C^{(em)}_\Gamma = \max_{(\ell,\mathbf{h},\mathbf{k},n,c) \in \Gamma} \max_{\substack{1\leq i\leq \# \mathbf{h}\\1\leq j\leq \# \mathbf{k}}} \max( |\ell_1|, |\mathbf{h}_{i,1}|,|\mathbf{k}_{j,1}| ).$$

\item \underline{\emph{Control of the amplitude.}}
$$
C^{(\infty)}_\Gamma = \max_{(\ell,\mathbf{h},\mathbf{k},n,c) \in \Gamma} |c|.
$$ 
\end{itemize}

\end{definition}

\begin{definition}[Evaluations] \label{def:ev:frac} Being given $\Gamma\in \mathscr{H}_r^{\mathcal{E}}$ and $N\geq C^{(em)}_\Gamma$, $\Gamma_N$ denotes the formal rational fraction defined by
$$
\Gamma_N(u) = \sum_{(\ell,\mathbf{h},\mathbf{k},n,c) \in \Gamma} c \, u^{\ell} \left( \prod_{j=1}^n \Delta^{(4),\mathcal{E}}_{\mathbf{h}_j}(I) \right)^{-1} \left( \prod_{j=n+1}^{\# \mathbf{h}} \Delta^{(4,6),\mathcal{E}}_{\mathbf{h}_j,N}(I) \right)^{-1} \left( \prod_{j=1}^{\# \mathbf{k}} \Delta^{(4,6),\mathcal{E}}_{\mathbf{k}_j,N}(I) \right)^{-1}.
$$
Naturally, we also identify this formal rational fraction with the smooth function defined on the subset of $L^2$ where the denominators do not vanish.
\end{definition}

\begin{remark}
\label{rem:commuteZ2} Note that, since the numerators of the rational Hamiltonian are only resonant monomials, they commute with $Z_2^{\mathcal{E}}$.
\end{remark}
The following proposition establishes the stability of the class $\mathscr{H}_{r}^{\mathcal{E}}$ by Poisson bracket.
\begin{proposition} 
\label{prop:stab_frac}
Being given $r_1,r_2\geq 2$, $\Gamma \in \mathscr{H}_{r_1}^{\mathcal{E}}$, $\Upsilon \in \mathscr{H}_{r_2}^{\mathcal{E}}$ and $N\geq \max(C^{(em)}_\Gamma,C^{(em)}_\Upsilon)$, there exists $\Xi \in \mathscr{H}_{r_1+r_2-2}^{\mathcal{E}}$ verifying the identity
$$
\{ \Gamma_N,\Upsilon_N \} = \Xi_N
$$
and satisfying the controls $C^{(em)}_\Xi \leq N$
\begin{equation}\label{Cinfty}
C^{(\infty)}_\Xi \lesssim_{C^{(str)}_\Gamma\! \!,C^{(str)}_\Upsilon} N^3 \, C^{(\infty)}_\Upsilon \, C^{(\infty)}_\Gamma
\end{equation}
and %note that \# \ell \geq r...
\begin{equation}\label{Cstr}
C^{(str)}_\Xi \lesssim_{C^{(str)}_\Gamma \! \!,C^{(str)}_\Upsilon} 1.
\end{equation}
\end{proposition}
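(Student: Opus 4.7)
The plan is to compute $\{\Gamma_N,\Upsilon_N\}$ termwise. Every term of $\Gamma_N$ has the form $c\,u^{\ell}/D(I)$ with $D(I)$ a product of small divisors depending only on actions, and similarly for $\Upsilon_N$. Since $\{F(I),G(I)\}=0$ whenever $F$ and $G$ depend only on the actions, applying the Leibniz rule twice and discarding the vanishing pure-action bracket reduces $\{c\,u^\ell/D,\ c'\,u^{\ell'}/D'\}$ to three pieces,
\begin{equation*}
\frac{cc'}{DD'}\{u^{\ell},u^{\ell'}\}\;+\;\frac{cc'}{D}\{u^{\ell},1/D'\}\,u^{\ell'}\;+\;\frac{cc'}{D'}\{1/D,u^{\ell'}\}\,u^{\ell}.
\end{equation*}
The first piece, expanded via \eqref{poisson2}, produces at most $\#\ell\cdot\#\ell'$ monomials of length $\#\ell+\#\ell'-2$ obtained by pairing a coordinate $\ell_i=-\ell'_j$, each carrying a factor $2i\pi k$ with $|k|\le N$; the resulting multi-indices remain in $\mathcal{M}\cap\mathcal{R}^{\mathcal{E}}$ because both the zero-momentum and zero-frequency conditions are additive. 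The other two pieces are symmetric, so I would focus on the third: writing $D=\prod_j\Delta_j$ and using Leibniz, $\{1/D,u^{\ell'}\}=-\sum_jD^{-1}\Delta_j^{-1}\{\Delta_j,u^{\ell'}\}$, so each summand reintroduces a copy of $\Delta_j$ into the denominator.

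Next I would compute $\{\Delta_j,u^{\ell'}\}$ by cases. If $\Delta_j=\Delta^{(4),\mathcal{E}}_{\mathbf{h}_j}$ is linear in the actions, the bracket yields monomials of length $\#\ell'$ and the new copy joins the first $n$ positions of $\mathbf{h}$, so that $n$ and $\#\mathbf{h}$ both increase by one. If $\Delta_j=\Delta^{(4,6),\mathcal{E}}_{\cdot,N}=\Delta^{(4),\mathcal{E}}+\Delta^{(6),\mathcal{E}}_N$, I would split it: the linear piece gives length $\#\ell'$ and adds an entry to $\mathbf{h}$ at a position $>n$, while the quadratic piece $\Delta^{(6),\mathcal{E}}_N$, via Leibniz on $\{I_pI_q,u^{\ell'}\}$, yields length $\#\ell'+2$ and adds an entry to $\mathbf{k}$. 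Using $\#\ell=r_1+2\#\mathbf{h}+4\#\mathbf{k}$, a direct count then confirms that every resulting term has order exactly $r_1+r_2-2$. Collecting all contributions defines $\Xi$, and I would verify the five properties of Definition~\ref{def:rat:fract} in turn: finiteness and reality are immediate, since the involution $(\ell,\mathbf{h},\mathbf{k},n,c)\mapsto(-\ell,-\mathbf{h},-\mathbf{k},n,\bar{c})$ commutes with the construction; the order identity and consistency $0\le n\le\#\mathbf{h}$ follow from the case analysis above; and the finite-expansion condition $\#\mathbf{h}_j,\#\mathbf{k}_j\le\#\ell$ is preserved because every re-injected multi-index is inherited from $\Gamma$ or $\Upsilon$ while the new numerator length is at least as large as the old one.

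The amplitude bound then falls out: each piece carries $\lesssim N$ from the $2i\pi k$ in the Poisson bracket, plus, in the quadratic case, a further $\lesssim N$ from $|c_{p,q}^{\mathcal{E}}(\ell)|\lesssim\ell\le N$ (Theorem~\ref{thm-BNF}), together with combinatorial factors controlled by $C^{(de)}_\Gamma,C^{(de)}_\Upsilon$, yielding the announced $N^3\,C^{(\infty)}_\Gamma\,C^{(\infty)}_\Upsilon$ with slack. The main obstacle I expect is the uniform bound $C^{(str)}_\Xi\lesssim 1$, especially for $C^{(di)}$ and $C^{(om)}$, since these involve ratios between the smallest effective indices $\kappa^{\mathcal{E}}$ and the small coordinates $|\ell_3|,\ldots,|\ell_{\last}|$. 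The key point is that the new coordinates appearing in the numerator either come from pairings of existing coordinates of $\ell,\ell'$ (so up to a combinatorial shift their modulus is controlled by $|\ell_2|$ or $|\ell'_2|$ via the zero-momentum condition), or are inherited from the bracketed $\Delta_j$ (whose indices are already bounded by $|\ell_2|$ via $C^{(om)}_\Gamma$); using Lemma~\ref{lem:diot}, each newly created $\kappa^{\mathcal{E}}$ is bounded either by the smallest coordinate of its multi-index or by $2\#k-1$ in the exceptional \ref{gBO} case, and a careful bookkeeping then produces the needed bounds on $C^{(di)}$ and $C^{(om)}$. Finally, $C^{(m)}_\Xi$ is controlled by the number of ways a given new $\ell$ can arise, namely at most $O(\#\ell\cdot\#\ell'+\#\mathbf{h}+\#\mathbf{k})$, and thus bounded in terms of $C^{(str)}_\Gamma$ and $C^{(str)}_\Upsilon$.
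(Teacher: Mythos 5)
Your high-level plan is the same as the paper's: expand $\{\Gamma_N,\Upsilon_N\}$ summand by summand via Leibniz, classify terms by whether the bracket hits the numerators, a linear denominator $\Delta^{(4)}_{\mathbf{h}_j}$, or the linear/quadratic pieces of a $\Delta^{(4,6)}_{\cdot,N}$, verify the order identity $\#\ell-2\#\mathbf{h}-4\#\mathbf{k}=r_1+r_2-2$ and the other items of Definition~\ref{def:rat:fract}, then bound $C^{(\infty)}_\Xi$ and $C^{(str)}_\Xi$. The case bookkeeping you describe (new $\mathbf{h}$ in positions $\le n$ for the $\Delta^{(4)}$ piece, new $\mathbf{h}$ in positions $>n$ for the linear part of $\Delta^{(4,6)}$, new $\mathbf{k}$ for the quadratic part) matches the paper's Types I--IV. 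Two small factual slips: the paper also brackets $\mathbf{k}$-denominators, and then must reassign one $\Delta^{(4,6)}_{\mathbf{k}_1,N}$ to an $\mathbf{h}$-slot, which you elide; and the quadratic contribution to $C^{(\infty)}_\Xi$ costs $N^2$, not $N$ (you are missing the $|m_jk_j|\lesssim N$ factor in $\partial_{I_\ell}\Delta^{(6)}$), so the final exponent $N^3$ is attained rather than having ``slack.'' These are cosmetic.

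The genuine gap is in the $C^{(di)}$ bound, which you yourself flag as the hard part and then dispose of with ``a careful bookkeeping then produces the needed bounds.'' The difficulty is concrete: in Type II, a new copy of $\mathbf{h}_n$ is injected into the denominator, so $C^{(di)}_\Xi$ acquires an extra factor $(\kappa^{\mathcal{E}}_{\mathbf{h}_n})^2$ in its numerator, while the monomial is $u^\ell u^{\ell'}$ with no new small coordinate in the denominator $|\ell''_3\cdots\ell''_{\last}|$. The only small coordinates available to absorb $(\kappa^{\mathcal{E}}_{\mathbf{h}_n})^2$ are the two smallest among $\ell_1,\ell_2,\ell'_1,\ell'_2$, whose product is $\ge\min(|\ell_2|,|\ell'_2|)^2$. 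A single bound $\kappa^{\mathcal{E}}_{\mathbf{h}_n}\le|\mathbf{h}_{n,1}|\le C^{(om)}_\Gamma|\ell_2|$ (the one you invoke via $C^{(om)}_\Gamma$) gives $(\kappa^{\mathcal{E}}_{\mathbf{h}_n})^2\lesssim|\ell_2|^2$, which is \emph{not} controlled by $\min(|\ell_2|,|\ell'_2|)^2$ when $|\ell'_2|<|\ell_2|$. What is needed, and what you do not mention, is the observation that $\{1/\Delta^{(4)}_{\mathbf{h}_n},u^{\ell'}\}$ vanishes unless some coordinate $\ell'_{i_0}$ lies in the support of $\Delta^{(4)}_{\mathbf{h}_n}$, i.e.\ $\kappa^{\mathcal{E}}_{\mathbf{h}_n}\le|\ell'_{i_0}|\le|\mathbf{h}_{n,1}|$. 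Combined with the zero momentum of $\ell'$ ($|\ell'_{i_0}|\le|\ell'_1|\le r_2|\ell'_2|$), this gives the second bound $\kappa^{\mathcal{E}}_{\mathbf{h}_n}\le r_2|\ell'_2|$, and then $\kappa^{\mathcal{E}}_{\mathbf{h}_n}\le(r_2+C^{(om)}_\Gamma)\min(|\ell_2|,|\ell'_2|)$ closes the estimate. The same device is reused in Types III (first sum) and IV, so without it your sketch of $C^{(str)}_\Xi\lesssim 1$ does not go through.
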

\begin{proof}
The proof is similar to the proof of Lemma 6.6 in \cite{KillBill}, for the reader's convenience we outline it again in this new framework. To compute the poisson bracket between $\Gamma_N$ and $\Upsilon_N$, we only need to calculate the poisson brackets of the summands. Applying the Leibniz's rule we see that, up to combinatorial factors and finite linear combinations depending only on $C^{(str)}_\Gamma$ and $C^{(str)}_\Upsilon$  four kind of terms appear depending on which part of the Hamiltonians the Poisson bracket applies to:

\medskip 
\noindent {\bf \emph{Type I.} }
The first type of terms we consider are those where the derivatives apply only on the  numerators. They are of the form (to simplify the presentation we omit the index $\E$ in all the proof)

\begin{align*}
\frac{cc'}{ \prod_{j=1}^n \Delta^{(4)}_{\mathbf{h}_j}  \prod_{j=n+1}^{\# \mathbf{h}} \Delta^{(4,6)}_{\mathbf{h}_j,N}   \prod_{j=1}^{\# \mathbf{k}} \Delta^{(4,6)}_{\mathbf{k}_j,N}
\prod_{j=1}^{n'} \Delta^{(4)}_{\mathbf{h}'_j}  \prod_{j=n'+1}^{\#\mathbf{h'}} \Delta^{(4,6)}_{\mathbf{h}'_j,N}   \prod_{j=1}^{\#\mathbf{k'}} \Delta^{(4,6)}_{\mathbf{k}'_j,N}}   \{u^\ell,u^{\ell'}\}
\end{align*}
with $(\ell,\mathbf{h},\mathbf{k},n,c) \in \Gamma$ and $(\ell',\mathbf{h'},\mathbf{k'},n',c') \in \Upsilon$. The product $\{u^\ell,u^{\ell'}\}$ is a finite linear combination of terms of the 
form $2i\pi ju^{\ell"}$ where $j$ is an element of the multi-indices $\ell$, $-j$ is an element of the multi-indices $\ell'$ and $\ell''$ is the ordered concatenation of $\ell$ and  $\ell'$ minus the indices $j,-j$. We focus on the worst\footnote{That term will turn out to be the worst when we want to 
control of the distribution of the derivatives, see below. All the other cases are treated in the proof of Lemma 6.6 in \cite{KillBill} } term of this linear combination: when $j=\ell_1=\ell_1'$. The corresponding term reads
\begin{align}\label{type1}
\frac{c'' u^{\ell''}}{
\prod_{j=1}^{n''} \Delta^{(4)}_{\mathbf{h}''_j}  \prod_{j=n''+1}^{\#\mathbf{h''}} \Delta^{(4,6)}_{\mathbf{h}''_j,N}   \prod_{j=1}^{\#\mathbf{k''}} \Delta^{(4,6)}_{\mathbf{k}''_j,N}} 
\end{align}
where $n''=n+n'$, $\hb''$ is the  concatenation of $\hb$ and $\hb'$, $\kb''$ is the concatenation of $\kb$ and $\kb'$, $c''=2i\pi j c c'$. It remains to prove that $(\ell'',\hb'', \kb'',n'',c'')$ satisfies conditions (ii)-(v) of Definition \ref{def:rat:fract}. Conditions (ii) and (iv) are clearly satisfied. Condition (iii) holds true since the new order is $r_1+r_2-2$ and $\#\ell''=\# \ell+\#\ell'-2$, $ \#\hb''_0=\#\hb+\#\hb'$, $\#\kb''=\#\kb+\#\kb'$. Finally all the indices of $\hb''$ and $\kb''$ have a length between $2$ and $\max(\# \ell,\#\ell')\leq  \#\ell''$ so (v) is also satisfied. So the term \eqref{type1} is associated with an element of $\mathscr{H}_{r_1+r_2-2}$ (through the Definition \ref{def:ev:frac}) and satisfies the control of existing modes (they are all of index smaller than $N$ since they have all been created from index mode smaller than $N$). Further since $c''=2i\pi j c c'$ and $|j|\leq N$ the control of the amplitude announced in Proposition \ref{prop:stab_frac}, i.e. \eqref{Cinfty}, is verified (actually, here, the factor $N^3$ could be replaced by $N$). It remains the difficult part : to verify \eqref{Cstr}. The control of multiplicity and the control of the degrees of the numerator is clear for \eqref{type1}. Concerning the control of the old zero momenta we have by construction for all $j=1,\cdots,\#\kb''$
\begin{align*}|\kb''_{j,1}|\leq (C^{(om)}_\Gamma+C^{(om)}_\Upsilon)(|\ell_2|+|\ell'_2|)
&\leq 2(C^{(om)}_\Gamma+C^{(om)}_\Upsilon)|\ell''_1|\\&\leq 2(C^{(om)}_\Gamma+C^{(om)}_\Upsilon) (\# \ell+\#\ell'-3)|\ell''_2|\\
&\leq 2(C^{(om)}_\Gamma+C^{(om)}_\Upsilon)(C^{(de)}_\Gamma+C^{(de)}_\Upsilon)|\ell''_2|\end{align*}
and thus, by doing the same thing with $\hb$ instead of $\kb$, the new "old zero momenta" is $\lesssim_{C^{(str)}_\Gamma \! \!,C^{(str)}_\Upsilon} 1$.\\
 We finish in beauty  with the control of the distribution of the derivatives. We have
\begin{align*}|\ell''_3\cdots\ell''_{\rm last}| = | \ell_3\cdots\ell_{\rm last} \ell'_3\cdots\ell'_{\rm last}|\text{ or }| \ell_4\cdots\ell_{\rm last} \ell'_2\cdots\ell'_{\rm last}|\text{ or }| \ell_2\cdots\ell_{\rm last} \ell'_4\cdots\ell'_{\last}|
\end{align*}
depending of the value of $\ell''_1$ and $\ell''_2$: the first term correspond to the case $\min(|\ell_2|,|\ell'_2|)\geq \max (|\ell_3|,|\ell'_3|)$ (and thus $\{\ell''_1,\ell''_2\}=\{\ell_2 ,\ell'_2\}$) the second one corresponds to $|\ell_2|\geq|\ell_3|\geq|\ell'_2|$ (and thus $\{\ell''_1,\ell''_2\}=\{\ell_2 ,\ell_3\}$) and the third one is symmetrical to the previous one. But in the second case, using the zero momentum of $\ell'$, we have $|\ell'_2|\geq \frac1{r_2-1} |\ell'_1|=\frac1{r_2-1} |\ell_1|\geq \frac1{r_2}|\ell_3|$ thus we get
\begin{align*}|\ell''_3\cdots\ell''_{\rm last}|&\geq \min\big(\frac1{r_1},\frac1{r_2}\big) | \ell_3\cdots\ell_{\rm last}||\ell'_3\cdots\ell'_{\rm last}|\\
&\geq \min\big(\frac1{r_1},\frac1{r_2}\big) \big[C^{(di)}_\Gamma C^{(di)}_\Upsilon \big]^{-1}(\kappa_{\mathbf{h}_{1}}\dots \kappa_{\mathbf{h}_{\last}})^2 (\kappa_{\mathbf{h}_{1}'}\dots \kappa_{\mathbf{h}_{\last}'})^2\\
&= \min\big(\frac1{r_1},\frac1{r_2}\big)  \big[C^{(di)}_\Gamma C^{(di)}_\Upsilon \big]^{-1}(\kappa_{\mathbf{h}_{1}''}\dots \kappa_{\mathbf{h}_{\last}''})^2
\end{align*}
and the new coefficient of distribution of derivatives is controlled by $\max(r_1,r_2) C^{(di)}_\Gamma C^{(di)}_\Upsilon$.

\medskip
\noindent {\bf \emph{Type II.} } The second type of terms we consider are those where one $ \Delta^{(4)}_{\mathbf{h}_j} $ appears in the Poisson bracket. (The case where $ \Delta^{(4)}_{\mathbf{h}'_j} $ appears in the Poisson bracket is treated similarly.) They are of the form
\begin{align*}
\frac{cc' u^\ell}{ \prod_{j=1}^{n-1} \Delta^{(4)}_{\mathbf{h}_j}  \prod_{j=n+1}^{\# \mathbf{h}} \Delta^{(4,6)}_{\mathbf{h}_j,N}   \prod_{j=1}^{\# \mathbf{k}} \Delta^{(4,6)}_{\mathbf{k}_j,N}
\prod_{j=1}^{n'} \Delta^{(4)}_{\mathbf{h}'_j}  \prod_{j=n'+1}^{\#\mathbf{h'}} \Delta^{(4,6)}_{\mathbf{h}'_j,N}   \prod_{j=1}^{\#\mathbf{k'}} \Delta^{(4,6)}_{\mathbf{k}'_j,N}}   \{\frac1{ \Delta^{(4)}_{\mathbf{h}_n}} ,u^{\ell'}\}
\end{align*}
In view of the Definition \ref{def:Delta4}, Remark \ref{rem:useful} and Definition \ref{def:kappa} the Poisson bracket $\{\frac1{ \Delta^{(4)}_{\mathbf{h}_n}} ,u^{\ell'}\}$ vanishes except if  there exits $i\in\{1,\cdots,\#\ell'\}$ such that  $\kappa_{\hb_n}\leq |\ell_i|\leq|\hb_{n,1}|$, so we get finitely many terms. Let us analyse one of this terms: let us assume $\kappa_{\hb_n}\leq \ell'_{i_0}\leq |\hb_{n,1}|$, which leads to the term
\begin{align}\label{type2}
\frac{c'' u^{\ell''}}{
\prod_{j=1}^{n''} \Delta^{(4)}_{\mathbf{h}''_j}  \prod_{j=n''+1}^{\#\mathbf{h''}} \Delta^{(4,6)}_{\mathbf{h}''_j,N}   \prod_{j=1}^{\#\mathbf{k''}} \Delta^{(4,6)}_{\mathbf{k}''_j,N}} 
\end{align}
 where $\ell''$ is the ordered concatenation of $\ell$ and $\ell'$, $n''=n+n'+1$, $\hb''$ is the  concatenation of $\hb$, $\hb_{n}$  and $\hb'$ (with $\hb''_{n''}=\hb_n$), $\kb''$ is the concatenation of $\kb$ and $\kb'$ and
  $c''=2i\pi \ell'_{i_0} cc'\partial_{I_{\ell'_{i_0}}} \Delta^{(4)}_{\mathbf{h}_n}$. \\
 We easily verify  that $(\ell'',\hb'', \kb'',n'',c'')$ satisfies conditions (ii)-(v) of Definition \ref{def:rat:fract}. So the term \eqref{type2} is in $\mathscr{H}_{r_1+r_2-2}$ and it remains to prove that it satisfies the controls announced in \eqref{Cinfty} and \eqref{Cstr}. \\
   Going back to the Definition \ref{def:Delta4} we see that $|\partial_{I_{\ell'_1}} \Delta^{(4)}_{\mathbf{h}_n}|\lesssim N$ so we conclude that  $|c''|\lesssim N^2|cc'|$. Now we focus on the control of distribution of derivatives.  We have
   $$\ell''_3\cdots\ell''_{\rm last}=\ell_3\cdots\ell_{\rm last}\ell'_3\cdots\ell'_{\rm last}ji$$
   where $j=\min (\ell_1,\ell_2,\ell'_1,\ell'_2)$ and $i=\min ((\ell_1,\ell_2,\ell'_1,\ell'_2)\setminus j)$. So we get 
\begin{align*}\frac{(\kappa_{\mathbf{h}''_{1}}\dots \kappa_{\mathbf{h}''_{\last}})^2}{|\ell''_3\cdots\ell''_{\rm last}|}\leq C^{(di)}_\Gamma C^{(di)}_\Upsilon \frac{\kappa_{\mathbf{h}_{n}}^2}{|ij|}.
\end{align*}
By construction, $\kappa_{\mathbf{h}_{n}}\leq |\ell'_{i_0}|\leq |\ell'_1|\leq r_2|\ell'_2|$ and using the control of the old zero momenta we know $\kappa_{\mathbf{h}_{n}}\leq|\hb_{n,1}|\leq C^{(om)}_\Gamma |\ell_2| $ so we have $$\kappa_{\mathbf{h}_{n}}\leq (r_2+C^{(om)}_\Gamma) \min(|\ell_2|, |\ell'_2|).$$
But it is clear that $ \min(|\ell_2|^2, |\ell'_2|^2)\leq ij$ 
thus we get
\begin{align*}\frac{(\kappa_{\mathbf{h}''_{1}}\dots \kappa_{\mathbf{h}''_{\last}})^2}{|\ell''_3\cdots\ell''_{\rm last}|}\leq C^{(di)}_\Gamma C^{(di)}_\Upsilon \big[r_2+C^{(om)}_\Gamma\big]^2 .
\end{align*}

\medskip
\noindent {\bf \emph{Type III.} } The third type of terms we consider are those where one  $\Delta^{(4,6)}_{\mathbf{h}_j,N}$  appears in the Poisson bracket (the case where $ \Delta^{(4,6)}_{\mathbf{h}'_j} $ appears in the Poisson bracket is treated similarly). They are of the form
\begin{align*}
\frac{cc' u^\ell}{ \prod_{j=1}^{n} \Delta^{(4)}_{\mathbf{h}_j}  \prod_{j=n+2}^{\# \mathbf{h}} \Delta^{(4,6)}_{\mathbf{h}_j,N}   \prod_{j=1}^{\# \mathbf{k}} \Delta^{(4,6)}_{\mathbf{k}_j,N}
\prod_{j=1}^{n'} \Delta^{(4)}_{\mathbf{h}'_j}  \prod_{j=n'+1}^{\#\mathbf{h'}} \Delta^{(4,6)}_{\mathbf{h}'_j,N}   \prod_{j=1}^{\#\mathbf{k'}} \Delta^{(4,6)}_{\mathbf{k}'_j,N}}   \{\frac1{ \Delta^{(4,6)}_{\mathbf{h}_{n+1},N}} ,u^{\ell'}\}
\end{align*}
We recall that $\Delta^{(4,6)}_{\mathbf{h}_{n+1},N}= \Delta^{(4)}_{\mathbf{h}_{n+1}}+\Delta^{(6)}_{\mathbf{h}_{n+1},N}$ so that
\begin{equation}\label{ouaiche}\{\frac1{ \Delta^{(4,6)}_{\mathbf{h}_{n+1},N}} ,u^{\ell'}\}=\sum_{i=1}^{\#\ell'}2i\pi\ell'_i\frac{\partial_{I_{\ell'_i}}\Delta^{(4)}_{\mathbf{h}_{n+1}}}{\big[ \Delta^{(4,6)}_{\mathbf{h}_{n+1},N}\big]^2}u^{\ell'}+ \sum_{i=1}^{\#\ell'}2i\pi\ell'_i\frac{\partial_{I_{\ell'_i}}\Delta^{(6)}_{\mathbf{h}_{n+1},N}}{\big[ \Delta^{(4,6)}_{\mathbf{h}_{n+1},N}\big]^2}u^{\ell'}\end{equation}
As explained in the previous paragraph, the terms in the first sum vanish except if  there exits $i\in\{1,\cdots,\#\ell'\}$ such that  $\kappa_{\hb_{n+1}}\leq |\ell_i|\leq|\hb_{{n+1},1}|$. Let us analyse one of this terms: let us assume $\kappa_{\hb_{n+1}}\leq \ell'_{i_0}\leq |\hb_{{n+1},1}|$, which leads to the term
\begin{align}\label{type31}
\frac{c'' u^{\ell''}}{
\prod_{j=1}^{n''} \Delta^{(4)}_{\mathbf{h}''_j}  \prod_{j=n''+1}^{\#\mathbf{h''}} \Delta^{(4,6)}_{\mathbf{h}''_j,N}   \prod_{j=1}^{\#\mathbf{k''}} \Delta^{(4,6)}_{\mathbf{k}''_j,N}} 
\end{align}
 where $\ell''$ is the ordered concatenation of $\ell$ and $\ell'$, $n''=n+n'$, $\hb''$ is the  concatenation of $\hb$, $\hb_{n+1}$ and $\hb'$ (with $\hb''_{n''+1}=\hb_{n+1}$), $\kb''$ is the concatenation of $\kb$ and $\kb'$ and
  $c''=2i\pi \ell'_{i_0} cc'\partial_{I_{\ell'_{i_0}}} \Delta^{(4)}_{\mathbf{h}_{n+1}}$. Clearly this term can be treated in the same way as terms of Type 2 dealt with in the previous paragraph. \\
Now we analyse the terms arising from the second sum in \eqref{ouaiche}. In view of \eqref{def_Delta6_N}, we know that $\partial_{I_{\ell'_i}}\Delta^{(6)}_{\mathbf{h}_{n+1},N}(I)=\sum_{j=1}^N a_j I_j$ is a linear form in the actions whose coefficients are reals and bounded by $\lesssim N^2$. This leads to a sum of  terms of the form
\begin{align}\label{type32}
\frac{c'' u^{\ell''}}{
\prod_{j=1}^{n''} \Delta^{(4)}_{\mathbf{h}''_j}  \prod_{j=n''+1}^{\#\mathbf{h''}} \Delta^{(4,6)}_{\mathbf{h}''_j,N}   \prod_{j=1}^{\#\mathbf{k''}} \Delta^{(4,6)}_{\mathbf{k}''_j,N}} 
\end{align}
 where $\ell''$ is the ordered concatenation of $\ell$, $\ell'$ and $(j,-j)$, $n''=n+n'$, $\hb''$ is the  concatenation of $\hb$ and $\hb'$,  $\kb''$ is the concatenation of $\kb$, $\kb'$ and $\hb_{n+1}$  and
  $c''=2i\pi \ell'_{i_0} cc'a_j$. In particular we see that $|c''|\lesssim N^3|cc'|$.\\
  We easily verify  that $(\ell'',\hb'', \kb'',n'',c'')$ satisfies conditions (ii)-(v) of Definition \ref{def:rat:fract}. So the term \eqref{type2} is in $\mathscr{H}_{r_1+r_2-2}$ and it remains to prove that it satisfies the controls announced in \eqref{Cstr}.  We notice that since we conserved $\ell$ and $\ell'$ and since we did not add new $\hb$ the control of distribution of derivatives of this new term is automatic.  So \eqref{Cstr} is satisfied.

\medskip
\noindent {\bf \emph{Type IV.} } The second type of terms we consider are those where one  $\Delta^{(4,6)}_{\mathbf{k}_j}$  appears in the Poisson bracket and are treated essentially as 
terms of  \emph{Type III} except that to deal with the terms arising from the first sum in \eqref{ouaiche} we distribute the new denominator $ \Delta^{(4,6)}_{\mathbf{k}_{1},N}$ evenly: $\Delta^{(4,6)}_{\mathbf{k}_{1},N} = \Delta^{(4,6)}_{\mathbf{h''_{\#\hb+\#\hb'+1}},N}$.

\end{proof}

In order to optimize the estimates of the different terms that we will encounter by applying a Birkhoff procedure in the next section, we will need subclasses that follow the evolution of the different indices of $\Gamma$ as closely as possible (they have been designed applying the ideas presented in Remark \ref{about3k-9}).

\begin{definition}[Sharp subclasses] \label{sharpclass}
Let $\mathscr{H}_r^{(4),\mathcal{E}},\mathscr{H}_r^{(6),\mathcal{E}},\mathscr{H}_r^{(4),*,\mathcal{E}},\mathscr{H}_r^{(6),*,\mathcal{E}}$ be the subsets of $\mathscr{H}_r^{\mathcal{E}}$ such that
\begin{itemize}
\renewcommand{\labelitemi}{$\bullet$} 
\item if $(\ell,\mathbf{h},\mathbf{k},n,c) \in \Gamma \in \mathscr{H}_r^{(4),\mathcal{E}}$ then
$$
\# \mathbf{k}=0 \hspace{1cm} \mathrm{and} \hspace{1cm} \# \mathbf{h}=n\leq 2r-10.
$$
\item if $(\ell,\mathbf{h},\mathbf{k},n,c) \in \Gamma \in \mathscr{H}_{r-2}^{(4),*,\mathcal{E}}$ then
$$
\# \mathbf{k}=0 \hspace{1cm} \mathrm{and} \hspace{1cm} \# \mathbf{h}=n\leq 2r-10+1.
$$
\item if $(\ell,\mathbf{h},\mathbf{k},n,c) \in \Gamma \in  \mathscr{H}_r^{(6),\mathcal{E}}$ then there exists $\beta \in \mathbb{N}^3$ such that $\beta_1+\beta_2+\beta_3\leq r-7$ and 
$$
n\leq 13 r-87 +\beta_1 \hspace{1cm}  \# \mathbf{h}-n \leq \beta_2 \hspace{1cm}  \# \mathbf{k} \leq 4r-28 + \beta_3.
$$
\item if $(\ell,\mathbf{h},\mathbf{k},n,c) \in \Gamma \in  \mathscr{H}_{r-4}^{(6),*,\mathcal{E}}$ then there exists $\beta \in \mathbb{N}^3$ such that $\beta_1+\beta_2+\beta_3\leq r-7$ and 
$$
n\leq 13 r-87 +  \beta_1 + 3\hspace{1cm}  \# \mathbf{h}-n \leq \beta_2 \hspace{1cm}  \# \mathbf{k} \leq 4r-28 + \beta_3 + 2.
$$
\end{itemize}
\end{definition}

\begin{remark} By definition, if $r\geq 7$, it is clear that $\mathscr{H}_r^{(4),\mathcal{E}} \subset \mathscr{H}_r^{(6),\mathcal{E}}$.
\end{remark}

\begin{remark} 
\label{rem:bound:deg}
If $\Gamma\in \mathscr{H}_r^{(6),\mathcal{E}}$ then the condition $iii)$ gives the following upper bound of $C^{(de)}_\Gamma$ by an affine function of $r$ :
$$
C^{(de)}_\Gamma \leq 47r -314.
$$
\end{remark}

\begin{definition}[Integrable rational fraction]\label{def:Aint} {$\mathscr{A}_r^{\mathcal{E}}$ denotes the set of the integrable rational fractions of order $r$ : $\Gamma \in \mathscr{A}_r^{\mathcal{E}}$ if  $ \Gamma \in \mathscr{H}_r^{\mathcal{E}}$ and for all $(\ell,\mathbf{h},\mathbf{k},n,c) \in \Gamma$ we have $\Irr(\ell)=\emptyset$. Furthermore, $\mathscr{R}_r^{\mathcal{E}} = \mathscr{H}_r^{\mathcal{E}} \setminus \mathscr{A}_r^{\mathcal{E}}$ denotes the complementary of  $\mathscr{A}_r^{\mathcal{E}}$ in $\mathscr{H}_r^{\mathcal{E}}$. \\
Similarly, for $n\in \{4,6\}$, we define $\mathscr{A}_r^{(n),\mathcal{E}}$ as the set of the integrable rational fractions of order $r$ in $ \mathscr{H}_r^{(n),\mathcal{E}}$ and  $\mathscr{R}_r^{(n),\mathcal{E}} $
its  complementary in $\mathscr{H}_r^{(n),\mathcal{E}}$. }
\end{definition}

\begin{remark} 
If $r$ is odd then $\mathscr{A}_r^{\mathcal{E}}=\emptyset$ and $\mathscr{H}_r^{\mathcal{E}} = \mathscr{R}_r^{(n),\mathcal{E}} $.
\end{remark}

\begin{proposition} 
\label{prop:stab:subclasses}
In Proposition \ref{prop:stab_frac}, if $\Gamma \in \mathscr{H}_{r_1}^{(m),*,\mathcal{E}}$, $\Upsilon \in \mathscr{H}_{r_2}^{(m),\mathcal{E}}$ with $m=4$ or $m=6$ then $\Xi \in \mathscr{H}_{r_1+r_2-2}^{(m),\mathcal{E}}$.
\end{proposition}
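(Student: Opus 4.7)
This is a refinement of the proof of Proposition \ref{prop:stab_frac}: that proposition already yields $\Xi \in \mathscr{H}_{r_1+r_2-2}^{\mathcal{E}}$, so it suffices to verify that the additional structural constraints of Definition \ref{sharpclass} are preserved. The starred slack is tuned precisely to absorb the increments produced by a single Poisson bracket.

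My plan is to revisit the four types I--IV of summands generated by $\{\Gamma_N,\Upsilon_N\}$ in the proof of Proposition \ref{prop:stab_frac} and to track the evolution of the counters $(n, \#\mathbf{h}-n, \#\mathbf{k})$. Type I (no denominator differentiated) is additive in these three counters. Type II (a $\Delta^{(4)}_{\mathbf{h}_j}$ differentiated) shifts $n$ and $\#\mathbf{h}$ by $+1$ and leaves $\#\mathbf{k}$ unchanged. Types III and IV (a $\Delta^{(4,6)}$-denominator differentiated) split according to the decomposition \eqref{ouaiche}: the first sum acts like Type II, while the second sum leaves $(n,\#\mathbf{h})$ essentially unchanged but shifts $\#\mathbf{k}$ by $+1$ via the migration of the differentiated $\mathbf{h}_{n+1}$ into the $\mathbf{k}''$-list (for Type IV, the explicit even redistribution of the new squared $\Delta^{(4,6)}_{\mathbf{k}_{1},N}$ prescribed in the proof of Proposition \ref{prop:stab_frac} splits the new denominator between the $\mathbf{h}''$- and $\mathbf{k}''$-lists).

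For $m=4$, the assumption $\#\mathbf{k}=0$ and $\#\mathbf{h}=n$ on both $\Gamma$ and $\Upsilon$ rules out Types III and IV, leaving only Types I and II. The worst case is Type II, which yields
$$ n''\leq (2r_1-5)+(2r_2-10)+1=2(r_1+r_2-2)-14, $$
matching exactly the bound $\leq 2(r_1+r_2-2)-10$ required for $\mathscr{H}_{r_1+r_2-2}^{(4),\mathcal{E}}$. Hence $\Xi \in \mathscr{H}_{r_1+r_2-2}^{(4),\mathcal{E}}$, and the $+1$ slack of the starred class is consumed precisely by Type II.

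For $m=6$, I set $\beta''_i := \beta_i+\beta'_i$ in Definition \ref{sharpclass}. Adding the bounds of $\Gamma \in \mathscr{H}_{r_1}^{(6),*,\mathcal{E}}$ (which carries the starred offsets $+3$, $0$, $+2$) and of $\Upsilon \in \mathscr{H}_{r_2}^{(6),\mathcal{E}}$ gives
$$ n+n' \leq 13(r_1+r_2)-119 + \beta_1+\beta'_1, \quad \#\mathbf{k}+\#\mathbf{k}' \leq 4(r_1+r_2)-38+\beta_3+\beta'_3, $$
whereas the target bounds for $\mathscr{H}_{r_1+r_2-2}^{(6),\mathcal{E}}$ are $13(r_1+r_2)-113+\beta''_1$ and $4(r_1+r_2)-36+\beta''_3$. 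With $\beta''=\beta+\beta'$ this yields surpluses of $(6,0,2)$ in the three bounds and the sum constraint
$$ \beta''_1+\beta''_2+\beta''_3 \leq (r_1-3)+(r_2-7) = (r_1+r_2-2)-8 \leq (r_1+r_2-2)-7 $$
has a surplus of $1$; this is enough to absorb the single $+1$ increment in the appropriate $\beta''_i$ produced by any of Types II--IV.

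The main obstacle is the bookkeeping in Types III and IV, where the differentiation of a $\Delta^{(4,6)}$-denominator may produce via \eqref{ouaiche} both a new pair of indices $(j,-j)$ in $\ell''$ and a migration of the differentiated denominator between the $\mathbf{h}''$- and $\mathbf{k}''$-lists. One must carefully identify in each sub-case which component of $\beta''$ absorbs the increment; the asymmetric starred offsets $(+3,0,+2)$ combined with the $+4$ slack on $\beta_1+\beta_2+\beta_3$ are designed precisely so that every scenario fits within the target bounds, yielding $\Xi \in \mathscr{H}_{r_1+r_2-2}^{(6),\mathcal{E}}$.
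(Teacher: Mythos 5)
Your proof is correct and follows essentially the same route as the paper: track the evolution of the counters $(n,\#\mathbf{h}-n,\#\mathbf{k})$ through the four Poisson-bracket types, observe that each summand increments the triple by at most one unit in total, and absorb that increment into the starred-class slack ($m=4$) or into a new $\beta''=\beta+\beta'+\widetilde{\beta}$ using the surplus of~$1$ on the sum constraint ($m=6$). Two small slips that do not affect the conclusion: the first sum of Type~III does \emph{not} act like Type~II --- after \eqref{ouaiche} the differentiated $\Delta^{(4,6)}_{\hb_{n+1}}$ appears \emph{squared}, so the duplicated index joins $\hb''$ beyond position $n''=n+n'$, i.e.\ $\widetilde{\beta}=(0,1,0)$ (incrementing $\#\mathbf{h}-n$) rather than $\widetilde{\beta}=(1,0,0)$ (incrementing $n$); and in the $m=4$ case the displayed total $(2r_1-5)+(2r_2-10)+1$ equals $2(r_1+r_2)-14=2(r_1+r_2-2)-10$, not $2(r_1+r_2-2)-14$ as written, which is exactly the required bound.
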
 
\begin{proof} Here, we only have to count the denominators. We recall that, by construction (see the proof of Proposition \ref{prop:stab_frac}), the terms of  $\Xi$ are constructed by distributing the derivatives of the Poisson brackets of the summand of $\Gamma_N$ and $\Upsilon_N$. Consequently, if $(\ell'',\mathbf{h}'',\mathbf{k}'',n'',c'') \in \Xi$ then there exist $(\ell,\mathbf{h},\mathbf{k},n,c) \in \Gamma$, $(\ell',\mathbf{h}',\mathbf{k}',n',c') \in \Upsilon$ such that following the types of the proof of Proposition \ref{prop:stab_frac} we have:

\noindent {\bf \emph{Type I.} } $n''=n+n'$, $\mathbf{h}''=(\mathbf{h}_1,\dots,\mathbf{h}_{n},\mathbf{h}_1',\dots,\mathbf{h}_{n'}',\mathbf{h}_{n+1}, \dots,\mathbf{h}_{\last},\mathbf{h}_{n'+1}', \dots,\mathbf{h}_{\last}' )$ and $\mathbf{k}''=(\mathbf{k}_1,\dots,\mathbf{k}_{\last},\mathbf{k}_1',\dots,\mathbf{h}_\last')$. In that case we have
$$
n''=n+n' \quad \mathrm{and} \quad  \# \mathbf{h}'' = \# \mathbf{h} +\# \mathbf{h}' \quad \mathrm{and} \quad  \# \mathbf{k}'' = \# \mathbf{k} +\# \mathbf{k}'.
$$

\noindent {\bf \emph{Type II.} } $n''=n+n'+1$, $\mathbf{h}''=(\mathbf{h}_1,\dots,\mathbf{h}_{n},\mathbf{h}_1',\dots,\mathbf{h}_{n'}',h,\mathbf{h}_{n+1}, \dots,\mathbf{h}_{\last},\mathbf{h}_{n'+1}', \dots,\mathbf{h}_{\last}' )$ and $\mathbf{k}''=(\mathbf{k}_1,\dots,\mathbf{k}_{\last},\mathbf{k}_1',\dots,\mathbf{h}_\last')$ where $h=\mathbf{h}_{i_0}$ (or $h=\mathbf{h}_{i_0}'$) for some some $i_0\leq n$ (or $i_0\leq n'$).
In that case we have
$$
n''=n+n'+1 \quad \mathrm{and} \quad \# \mathbf{h}'' = \# \mathbf{h} +\# \mathbf{h}'+1 \quad \mathrm{and} \quad  \# \mathbf{k}'' = \# \mathbf{k} +\# \mathbf{k}'.
$$

\noindent {\bf \emph{Type III, first sum in \eqref{ouaiche}} }. $n''=n+n'$, $\mathbf{h}''$ and $\mathbf{k}''$ are given by the same formula as in Type II but $h=\mathbf{h}_{i_0}$ (or $h=\mathbf{h}_{i_0}'$) for some some $i_0> n$ (or $i_0> n'$).
In that case we have
$$
n''=n+n' \quad \mathrm{and} \quad \# \mathbf{h}'' = \# \mathbf{h} +\# \mathbf{h}'+1 \quad \mathrm{and} \quad  \# \mathbf{k}'' = \# \mathbf{k} +\# \mathbf{k}'.
$$

\noindent {\bf \emph{Type III, second sum in \eqref{ouaiche}} }. $n''=n+n'$, $\mathbf{h}''$is given by the same formula as in Type I and $\mathbf{k}''=(\mathbf{k}_1,\dots,\mathbf{k}_{\last},\mathbf{k}_1',\dots,\mathbf{h}_\last',h)$ where $h=\mathbf{h}_{i_0}$ (or $h=\mathbf{h}_{i_0}'$) for some some $i_0> n$ (or $i_0> n'$).
In that case we have
$$
n''=n+n' \quad \mathrm{and} \quad \# \mathbf{h}'' = \# \mathbf{h} +\# \mathbf{h}' \quad \mathrm{and} \quad  \# \mathbf{k}'' = \# \mathbf{k} +\# \mathbf{k}'+1.
$$

\noindent {\bf \emph{Type IV}.} It produce the same kind of denominators as the type III.

Therefore in any case, there exists $\widetilde{\beta}\in \mathbb{N}^3$ such that $\widetilde{\beta}_1+\widetilde{\beta}_2+\widetilde{\beta}_3\leq 1$ and 
\begin{equation}
\label{engeneral}
n''=n+n'+\widetilde{\beta}_1,  \quad \# \mathbf{h}''-n'' = \# \mathbf{h}-n +\# \mathbf{h}'-n'+\widetilde{\beta}_2,  \quad  \# \mathbf{k}'' = \# \mathbf{k} +\# \mathbf{k}'+ \widetilde{\beta}_3.
\end{equation}

Here we have to distinguish the case $m=4$ and $m=6$.

\noindent \emph{$*$ Case $m=6$.} Since $\Gamma \in  \mathscr{H}_{r_1}^{(6),*,\mathcal{E}}$ and $\Upsilon \in  \mathscr{H}_{r_2}^{(6),\mathcal{E}}$, we deduce of \eqref{engeneral} that
\begin{equation}
\label{pilepoil2}
\begin{split}
n'' &\leq [13 (r_1+4) -87 +3 +  \beta_1] + [13 r_2 -87 +  \beta_1'] + \widetilde{\beta}_1\\
&= 13( r_1 + r_2 -2 ) - 87  + \beta_1+ \beta_1' + \widetilde{\beta}_1 - 6,\\
\# \mathbf{h}''-n'' &\leq \beta_1+ \beta_2' + \widetilde{\beta}_2, \\
\# \mathbf{k}'' &\leq (4(r_1+4)-28 + \beta_3 + 2) + (4r_2-28 + \beta_3' ) + \widetilde{\beta}_3 \\
&= 4( r_1 + r_2 -2 ) - 28  + \beta_3+ \beta_3' + \widetilde{\beta}_3 - 2,
\end{split} 
\end{equation}
where $\beta_1+ \beta_2+\beta_3\leq (r_1+4)-7$ and $\beta_1'+ \beta_2'+\beta_3'\leq r_2-7$. Setting $\beta'' = \beta+ \beta' +\widetilde{\beta}$ and observing that 
$$
\beta_1'' + \beta_2'' + \beta_3'' \leq [(r_1+4)-7] + [r_2-7] + 1 = (r_1+r_2-2)-7
$$
we deduce of \eqref{pilepoil2} that $\Xi \in \mathscr{H}_{r_1+r_2-2}^{(6),\mathcal{E}}$.

\noindent \emph{$*$ Case $m=4$.} Since $\Gamma \in  \mathscr{H}_{r_1}^{(4),*,\mathcal{E}}$ and $\Upsilon \in  \mathscr{H}_{r_2}^{(4),\mathcal{E}}$, we know that $\# \mathbf{k}=\# \mathbf{k}' = \# \mathbf{h}-n = \# \mathbf{h}'-n' =0$ and $\widetilde{\beta}_1=1$. Consequently, we deduce of \eqref{engeneral} that $\# \mathbf{k}''= \# \mathbf{h}''-n''=0$ and
$$
n''\leq [(2r_1+2)-10+1] + [2r_2-10] +1 = 2(r_1+r_2-2) - 10
$$
and thus we have $\Xi \in \mathscr{H}_{r_1+r_2-2}^{(4),\mathcal{E}}$.

\end{proof}

\begin{proposition} 
\label{prop:stab:chichi}
If $\chi \in \mathscr{H}_{r}^{(6),*,\mathcal{E}}$, $Z \in \mathscr{A}_{6}^{(4),\mathcal{E}}$ and $\Xi$ is the element of $ \mathscr{H}_{2r+2}^{\mathcal{E}}$ associated with\footnote{for some $N$ whose the value is irrelevant here.} $\{\chi_N,\{\chi_N,Z\} \}$ through Proposition \ref{prop:stab_frac} then $\Xi \in \mathscr{H}_{2r+2}^{(6),\mathcal{E}}$.
\end{proposition}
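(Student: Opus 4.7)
The natural strategy is to apply Proposition \ref{prop:stab:subclasses} (with $m=6$) twice: once to the pair $(\chi,Z)$ and once to the pair $(\chi,\{\chi_N,Z\})$. The obstruction is that $Z \in \mathscr{A}_6^{(4),\mathcal{E}}$ has order $6$, and the defining condition of $\mathscr{H}_6^{(6),\mathcal{E}}$ requires a triple $\beta \in \mathbb{N}^3$ with $\beta_1+\beta_2+\beta_3 \leq 6 - 7 = -1$, which is impossible; so $Z$ does not sit inside $\mathscr{H}_6^{(6),\mathcal{E}}$ and the first bracket is not covered by Proposition \ref{prop:stab:subclasses}. The plan is therefore to analyse $\{\chi_N,Z\}$ by hand to place it in $\mathscr{H}_{r+4}^{(6),\mathcal{E}}$, after which the second bracket becomes a direct application of Proposition \ref{prop:stab:subclasses}.

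\emph{Step 1: show that $\{\chi_N,Z\} = \Upsilon_N$ with $\Upsilon \in \mathscr{H}_{r+4}^{(6),\mathcal{E}}$.} Fix a summand $(\ell_1, \mathbf{h}_1, \mathbf{k}_1, n_1, c_1) \in \Upsilon$ arising, in the classification of the proof of Proposition \ref{prop:stab_frac}, from a summand $(\ell, \mathbf{h}, \mathbf{k}, n, c)$ of $\chi$ and $(\ell', \mathbf{h}', \mathbf{k}', n', c')$ of $Z$. Since $Z \in \mathscr{A}_6^{(4),\mathcal{E}}$ one has $\#\mathbf{k}' = 0$ and $\#\mathbf{h}' = n' \leq 2$, while $\chi \in \mathscr{H}_r^{(6),*,\mathcal{E}}$ furnishes $\beta \in \mathbb{N}^3$ with $\beta_1+\beta_2+\beta_3 \leq r-7$ and
\begin{equation*}
n \leq 13r - 84 + \beta_1, \qquad \#\mathbf{h}-n \leq \beta_2, \qquad \#\mathbf{k} \leq 4r - 26 + \beta_3.
\end{equation*}
Plugging these into formula \eqref{engeneral} from the proof of Proposition \ref{prop:stab:subclasses} yields
\begin{equation*}
n_1 \leq 13r - 82 + \beta_1 + \widetilde{\beta}_1, \quad \#\mathbf{h}_1 - n_1 \leq \beta_2 + \widetilde{\beta}_2, \quad \#\mathbf{k}_1 \leq 4r - 26 + \beta_3 + \widetilde{\beta}_3,
\end{equation*}
with $\widetilde{\beta} \in \mathbb{N}^3$, $\widetilde{\beta}_1+\widetilde{\beta}_2+\widetilde{\beta}_3 \leq 1$. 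Setting $\beta^{(1)} := \beta + \widetilde{\beta}$, one has $\beta^{(1)}_1 + \beta^{(1)}_2 + \beta^{(1)}_3 \leq r - 6 \leq (r+4) - 7$, and the inequalities above are compatible with the defining inequalities of $\mathscr{H}_{r+4}^{(6),\mathcal{E}}$, namely $n_1 \leq 13(r+4) - 87 + \beta^{(1)}_1 = 13r - 35 + \beta^{(1)}_1$ and $\#\mathbf{k}_1 \leq 4(r+4) - 28 + \beta^{(1)}_3 = 4r - 12 + \beta^{(1)}_3$, with comfortable slacks of $47$ and $14$, respectively. Hence $\Upsilon \in \mathscr{H}_{r+4}^{(6),\mathcal{E}}$.

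\emph{Step 2: conclusion.} Once Step 1 is in hand, Proposition \ref{prop:stab:subclasses} applies directly with $m=6$, $r_1 = r$, $r_2 = r+4$, $\Gamma = \chi \in \mathscr{H}_r^{(6),*,\mathcal{E}}$ and $\Upsilon \in \mathscr{H}_{r+4}^{(6),\mathcal{E}}$, giving the element $\Xi$ of $\mathscr{H}_{2r+2}^{\mathcal{E}}$ associated with $\{\chi_N, \Upsilon_N\} = \{\chi_N, \{\chi_N, Z\}\}$ satisfying $\Xi \in \mathscr{H}_{r + (r+4) - 2}^{(6),\mathcal{E}} = \mathscr{H}_{2r+2}^{(6),\mathcal{E}}$, as claimed. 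The essential difficulty is genuinely Step 1: since the classes $\mathscr{H}_{r'}^{(6),\mathcal{E}}$ are engineered to be non-empty only for $r' \geq 7$, the order-$6$ integrable factor $Z$ must be treated separately, and what makes the verification go through is precisely the smallness dictated by the $\mathscr{A}_6^{(4),\mathcal{E}}$ structure ($\#\mathbf{k}'=0$ and $n' \leq 2$), which absorbs the one-step mismatch between the thresholds built into the definitions of $\mathscr{H}_r^{(6),*,\mathcal{E}}$ and $\mathscr{H}_{r+4}^{(6),\mathcal{E}}$.
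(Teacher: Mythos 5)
Your Step~1 contains a genuine gap, traceable to a misreading of Definition~\ref{sharpclass}. The characterization of the starred class is stated for $\Gamma \in \mathscr{H}_{r-4}^{(6),*,\mathcal{E}}$, so for $\chi \in \mathscr{H}_r^{(6),*,\mathcal{E}}$ one must substitute $r \mapsto r+4$ in the displayed formulas. The correct bounds are therefore $\beta_1+\beta_2+\beta_3 \leq (r+4)-7 = r-3$, $n \leq 13(r+4)-87+\beta_1+3 = 13r-32+\beta_1$, and $\#\mathbf{k} \leq 4(r+4)-28+\beta_3+2 = 4r-10+\beta_3$. You instead took the formulas verbatim, arriving at $\beta_1+\beta_2+\beta_3 \leq r-7$, $n \leq 13r-84+\beta_1$, $\#\mathbf{k} \leq 4r-26+\beta_3$. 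These are far too tight, and they are what produced the ``comfortable slacks of $47$ and $14$''. With the correct bounds, the claim $\{\chi_N,Z\} \in \mathscr{H}_{r+4}^{(6),\mathcal{E}}$ is actually \emph{false}: $n_1 \leq 13r-29+\beta_1$ overshoots the target $13r-35+\beta_1'$ by $6$, and $\#\mathbf{k}_1 \leq 4r-10+\beta_3$ overshoots $4r-12+\beta_3'$ by $2$; absorbing both excesses into $\beta'$ would push $\beta_1'+\beta_2'+\beta_3'$ to $r+5$, whereas membership requires it to be $\leq r-3$. The starred classes are calibrated so that the two brackets together land \emph{exactly} in $\mathscr{H}_{2r+2}^{(6),\mathcal{E}}$, with the $\beta$-sum saturating $(2r+2)-7 = 2(r-3)+1$; there is no room to hit a standard class after only one bracket.

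The paper handles this by \emph{not} seeking an intermediate class membership. It records the explicit denominator counts for $\widetilde\Xi = \{\chi_N,Z\}$ (namely $n \leq 13r-29+\beta_1^{(1)}$, $\#\mathbf{h}-n \leq \beta_2^{(1)}$, $\#\mathbf{k} \leq 4r-10+\beta_3^{(1)}$ with $\beta^{(1)}$-sum $\leq r-3$) and carries those numbers directly through the second bracket, verifying the defining inequalities of $\mathscr{H}_{2r+2}^{(6),\mathcal{E}}$ at the end. Crucially, the paper also exploits a refinement of \eqref{engeneral} which your argument omits: because $Z$ is integrable and the denominators are functions of the actions alone, which pairwise Poisson-commute, the derivative in $\{\chi_N,Z\}$ cannot hit a denominator of $\chi_N$. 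Together with $Z \in \mathscr{A}_6^{(4),\mathcal{E}}$ (so $Z$'s denominators are all $\Delta^{(4)}$'s), this restricts $\widetilde\beta$ to $\{(0,0,0),(1,0,0)\}$ for the first bracket: the increments $\widetilde\beta_2 = \widetilde\beta_3 = 0$. Without this observation the $\beta$-sum would already exceed its budget after the second bracket, so even the direct bookkeeping would not close.
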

\begin{proof} Since $[(6+r)-2]+r-2 = 2r+2$, it is clear that, by Proposition \ref{prop:stab_frac}, $\Xi \in \mathscr{H}_{2r+2}^{\mathcal{E}}$.
The only thing we really have to do is to count the number of denominators of $\{\chi_N,\{\chi_N,Z\} \}$. 

First, we recall that by definition of $\mathscr{A}_{6}^{(4),\mathcal{E}}$, each term of $Z$ has at most two denominators of the form $\Delta^{(4)}$. Then it follows of the proof of Proposition \ref{prop:stab_frac} that the denominators of $\{\chi_N,Z\}$ are\footnote{\label{majollienote}proceeding as in the proof of \ref{prop:stab:subclasses} we could make this sentence become more rigorous.} some products of denominators of $\chi_N$ times some of $Z$ plus at most one denominator of the form $\Delta^{(4)}$ (indeed $Z$ is integrable, so the derivative of the Poisson bracket cannot be distributed on a denominator of $\chi_N$).

Consequently, if $\widetilde{\Xi}_N = \{\chi_N,Z\}$ through the  Proposition \ref{prop:stab_frac}  and $(\ell,\mathbf{h},\mathbf{k},n,c) \in \widetilde{\Xi}$ then 
$$
n \leq [13 (r+4) - 87 + \beta_1^{(1)} + 3] + 2 + 1  = 13 r -29 +  \beta_1^{(1)}
$$
and
$$
\# \mathbf{h}-n \leq \beta_2^{(1)} \hspace{1cm}  \# \mathbf{k} \leq 4(r+4)-28 + \beta_3^{(1)} + 2
$$
where $\beta_1^{(1)} + \beta_2^{(1)}+\beta_3^{(1)}\leq r+4-7$.

Similarly, the denominator of $\Xi_N$ are some product of denominators $\chi_N$ times some of $\widetilde{\Xi}_N$ plus at most one denominator of the form $\Delta^{(4)},\Delta^{(4,6)}_\hb$ or $\Delta^{(4,6)}_\kb$. Consequently, if $(\ell,\mathbf{h},\mathbf{k},n,c) \in \widetilde{\Xi}$ then
\begin{equation}
\label{pilepoil}
\begin{split}
n &\leq [13 r -29 +  \beta_1^{(1)}] +  [13 (r+4) - 87 + \beta_1^{(2)} + 3] + \beta_1^{(3)}\\
&= 13(2 r +2) - 87 + \beta_1^{(1)} + \beta_1^{(2)} + \beta_1^{(3)},\\
\# \mathbf{h}-n &\leq \beta_2^{(1)} + \beta_2^{(2)} + \beta_2^{(3)}, \\
\# \mathbf{k} &\leq (4(r+4)-28 + \beta_3^{(1)} + 2) + (4(r+4)-28 + \beta_3^{(2)} + 2) +  \beta_3^{(3)} \\
&= 4(2r+2)  - 28 + \beta_3^{(1)} + \beta_3^{(2)} + \beta_3^{(3)},
\end{split} 
\end{equation}
where $\beta_1^{(2)} + \beta_2^{(2)}+\beta_3^{(2)}\leq r+4-7$ and $\beta_1^{(3)} + \beta_2^{(3)}+\beta_3^{(3)}\leq 1$. Setting $\beta^{(4)} = \beta^{(1)} + \beta^{(2)} + \beta^{(3)}$ and observing that 
$$
\beta_1^{(4)} + \beta_2^{(4)} + \beta_3^{(4)} \leq 2(r+4-7) + 1 = (2r+2) - 7
$$
we deduce of \eqref{pilepoil} that $\Xi \in \mathscr{H}_{2r+2}^{(6),\mathcal{E}}$.
\end{proof}
\subsection{Control of the vector fields and Lie transforms}

First, in the following proposition, we control the $L^2$-gradient of the rational fractions.
\begin{proposition}
\label{prop:VF}
Let $u\in \mathcal{U}_{\gamma,N,\rho}^{\mathcal{E},s}$ and $\Gamma \in \mathscr{H}_{r}^{\mathcal{E}}$ be such that $\|u\|_{\dot{H}^s}\lesssim 1$, $N\geq C^{(em)}_\Gamma$ and $\rho \geq C^{(de)}_\Gamma $ then we have
\begin{equation}
\label{est:rat:VF}
\| \nabla \Gamma_N(u) \|_{\dot{H}^s} \lesssim_{s,C^{(str)}_\Gamma}  C^{(\infty)}_\Gamma  \sqrt{\gamma}^{-\rho+r-2} N^{12 \,\rho^2}  \| u \|_{\dot{H}^s}^{r-1}
\end{equation}
and
\begin{equation}
\label{est:rat:dVF}
\| \mathrm{d}\nabla \Gamma_N(u) \|_{\mathscr{L}(\dot{H}^s)} \lesssim_{s,C^{(str)}_\Gamma}  C^{(\infty)}_\Gamma  \sqrt{\gamma}^{-\rho+r-2} N^{14 \,\rho^2}  \| u \|_{\dot{H}^s}^{r-2}.
\end{equation}
\end{proposition}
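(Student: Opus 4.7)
The strategy is to prove both bounds summand by summand on $\Gamma_N = \sum_{(\ell,\mathbf h,\mathbf k,n,c)\in\Gamma} F_{\ell,\mathbf h,\mathbf k}$ with $F_{\ell,\mathbf h,\mathbf k}(u) = c\, u^\ell\prod_j D_j(I)^{-1}$, and then sum the at most $C^{(m)}_\Gamma$ contributions. For a single summand the plan is: bound each $|D_j|^{-1}$ using the non-resonance estimates defining $\mathcal U^{\mathcal E,s}_{\gamma,N,\rho}$; use the structural control $C^{(di)}_\Gamma$ to trade the $\kappa$-factors arising from these bounds for the small-index factors $|\ell_3\cdots\ell_\last|$ coming out of the numerator; and apply Leibniz to decompose $\nabla F_{\ell,\mathbf h,\mathbf k}$ (and later $\mathrm d\nabla F_{\ell,\mathbf h,\mathbf k}$) into pieces where the derivative either hits the monomial $u^\ell$ or one of the denominators.

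More concretely, Definition \ref{U} yields, for $\Delta^{(4)}_h$ and $\Delta^{(4,6)}_{h,N}$, bounds of the form $|D|^{-1}\leq \gamma^{-1} N^{O(\rho)} \|u\|_{\dot H^s}^{-2}\kappa_h^{2s}$, while for the $\mathbf k$-indexed denominators $\Delta^{(4,6)}_{k,N}$ one uses the \emph{other} branch of the max to obtain $|D|^{-1}\leq \gamma^{-2} N^{O(\rho)} \|u\|_{\dot H^s}^{-4}$. This asymmetric choice is forced by the asymmetry in $C^{(di)}_\Gamma$, which controls only the $\kappa_{\mathbf h_j}$'s; the coarser $\gamma\|u\|_{\dot H^s}^2$ branch allows one to dispose of the $\mathbf k$-denominators without needing a $\kappa_{\mathbf k_j}$-analogue. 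Multiplying gives $\prod_j|D_j|^{-1}\leq \gamma^{-(\#\mathbf h+2\#\mathbf k)} N^{O(\rho^2)} \|u\|_{\dot H^s}^{-2(\#\mathbf h+2\#\mathbf k)}\prod_{h\in\mathbf h}\kappa_h^{2s}$, and the bound $\prod_j \kappa_{\mathbf h_j}^{2s}\leq (C^{(di)}_\Gamma)^s |\ell_3\cdots\ell_\last|^s$ then produces the factor that will later combine with the numerator estimate.

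When $\nabla$ hits $u^\ell$ the coordinate $-p$ of the gradient is $\sum_{j:\ell_j=p} u^{\ell\setminus\ell_j}$, and $\|\nabla u^\ell\|_{\dot H^s}^2$ sums to an expression dominated by $\sum_j |\ell_j|^{2s}\|u\|_{\dot H^s}^{2(\#\ell-1)}\prod_{k\neq j}|\ell_k|^{-2s}$. Combined with the previous denominator estimate, the $|\ell_3\cdots\ell_\last|^s$ factor cancels the $\prod_{k\geq 3}|\ell_k|^{-s}$ appearing in the worst term ($j=1$), leaving $|\ell_1|^s|\ell_2|^{-s}$ which is bounded by $(C^{(om)}_\Gamma)^s$ via the zero-momentum condition. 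Since $\#\ell - 2\#\mathbf h - 4\#\mathbf k = r$, the exponent of $\|u\|_{\dot H^s}$ collapses to $r-1$, and since $\#\mathbf h + 2\#\mathbf k\leq (\rho-r)/2$ the exponent of $\gamma^{-1}$ is at most $(\rho-r)/2$. Terms where $\nabla$ hits a denominator are treated similarly: Leibniz produces an extra factor $D_j^{-1}$ (hence one additional $\gamma^{-1}$, which accounts precisely for the $-2$ in the final $\sqrt\gamma^{-\rho+r-2}$) multiplied by a derivative of $\Delta^{(4)}$ (a linear form in $u$ with coefficients $O(N)$) or of $\Delta^{(6)}$ (a cubic form with coefficients $O(N^2)$); both fit inside the $N^{12\rho^2}$ budget. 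The second estimate $\|\mathrm d\nabla\Gamma_N\|_{\mathscr L(\dot H^s)}$ follows from one further round of Leibniz with the same analysis, decreasing the $\|u\|_{\dot H^s}$-power by one and allowing at most $N^{2\rho^2}$ more in the $N$-exponent.

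The main obstacle is the bookkeeping rather than any deep new idea: every time a gradient lands on a denominator one has to absorb a new $\gamma^{-1}$, extra $N$-powers, and (for $\Delta^{(6)}$) a polynomial of different $u$-degree, while at the same time keeping all the computations uniform in $\Gamma$ through $C^{(str)}_\Gamma$ (and in particular through $C^{(de)}_\Gamma\leq\rho$). The key conceptual point is that the asymmetric roles assigned to $\mathbf h$- and $\mathbf k$-indices in the rational class $\mathscr H^{\mathcal E}_r$ and in $C^{(di)}_\Gamma$ are exactly what is needed to make the two different branches of the lower bound in Definition \ref{U} cooperate: the $\kappa$-sensitive branch is used on the $\mathbf h$-denominators, where $C^{(di)}_\Gamma$ controls the $\kappa$'s, and the coarser branch is used on the more numerous $\mathbf k$-denominators.
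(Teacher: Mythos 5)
Your high-level strategy matches the paper's: Leibniz decomposition (derivative hits numerator or a denominator), lower-bound the denominators via the non-resonance conditions in Definition \ref{U}, use the $\kappa$-sensitive branch for $\mathbf h$-denominators and the coarser $\gamma\|u\|_{\dot H^s}^2$ branch for $\mathbf k$-denominators, trade the $\kappa_{\mathbf h_j}$-powers for $|\ell_3\cdots\ell_{\last}|$ via $C^{(di)}_\Gamma$, and collect one extra $\gamma^{-1}$ when the gradient hits a denominator; your $\gamma$- and $\|u\|_{\dot H^s}$-exponent arithmetic is correct.

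However, there is a genuine gap in the opening sentence of your plan: you propose to ``prove both bounds summand by summand'' and then ``sum the at most $C^{(m)}_\Gamma$ contributions.'' This misreads the control of multiplicity. By definition, $C^{(m)}_\Gamma$ is only the maximal number of elements of $\Gamma$ sharing a \emph{given} numerator index $\ell$; the family $\Gamma$ may contain a large (a priori unbounded, and in practice $\sim N^{C^{(de)}_\Gamma}$ many) collection of \emph{distinct} $\ell$'s, and the final estimate must be uniform in $\#\Gamma$ since $\#\Gamma$ does not enter $C^{(str)}_\Gamma$. The sum over distinct $\ell$'s is therefore the real analytic content of the proof, and your plan says nothing about it. Worse, the intermediate step in which you degrade $|u_{\ell_i}|$ to $|\ell_i|^{-s}\|u\|_{\dot H^s}$ before summing destroys the cancellation structure you would need to control that sum cleanly. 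The paper's proof instead keeps all the $|u_{\ell_i}|$ factors, computes the $\dot H^s$-norm of the full vector field at once (after splitting by degree $m=\#\ell$), reduces to quantities of the form $\sum_{\ell\in\mathcal M_m}\prod_i|u_{\ell_i}||\ell_i|^{s-1}$, and closes with the Young convolution inequality together with a careful pointwise bound of the type $|\ell_3\cdots\ell_m|\,|k|\leq(m-1)\,|\ell_1\cdots\ell_m|/|\ell_j|$ for $k=-\ell_j$. (A crude ``count the $\ell$'s'' version could be made to fit inside the $N^{12\rho^2}$ budget, but the paper's $\ell^2$-convolution argument is the robust route, and your proposal does not contain either one.) A cosmetic slip: the residual factor $|\ell_1|^s|\ell_2|^{-s}$ in your $j=1$ case is bounded by $(\#\ell-1)^s\lesssim (C^{(de)}_\Gamma)^s$ via the zero-momentum condition, not by $(C^{(om)}_\Gamma)^s$ — $C^{(om)}_\Gamma$ enters earlier, in bounding $\kappa_{\mathbf h_j}^{\mathcal E}\leq C^{(om)}_\Gamma|\ell_2|$ inside the denominator lower bound.
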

\begin{proof} We recall that $\Gamma_N(u)$ is given by the definition \ref{def:ev:frac}. Naturally, $\Gamma_N(u)$ is of the form
$$
\Gamma_N(u) = \sum_{(\ell,\mathbf{h},\mathbf{k},n,c) \in \Gamma} c \, u^{\ell} f_{\ell,\mathbf{h},\mathbf{k},n,c}(I),
$$
where $f_{\ell,\mathbf{h},\mathbf{k},n,c}(I)$ denotes the denominator. Note that since $u\in \mathcal{U}_{\gamma,N,\rho}^{\mathcal{E},s}$ with $\rho \geq C^{(de)}_\Gamma$ and that by $\mathrm{(v)}$ of definition \ref{def:rat:fract}, $\#\mathbf{h}_j,\#\mathbf{k}_j\leq \# \ell \leq C^{(de)}_\Gamma$, we have lower bounds on the denominators.

First, we aim at controlling $\| \nabla \Gamma_N(u) \|_{\dot{H}^s} $. Naturally, for $k\in \mathbb{N}^*$, we have
\begin{multline*}
 (\nabla \Gamma_N(u))_k =   \sum_{(\ell,\mathbf{h},\mathbf{k},n,c) \in \Gamma} c \, \partial_{u_{-k}} ( u^{\ell} f_{\ell,\mathbf{h},\mathbf{k},n,c}(I)) \\
 =  \sum_{(\ell,\mathbf{h},\mathbf{k},n,c) \in \Gamma} c \, (\partial_{u_{-k}}  u^{\ell} )f_{\ell,\mathbf{h},\mathbf{k},n,c}(I) + u_k \sum_{(\ell,\mathbf{h},\mathbf{k},n,c) \in \Gamma} c \,   u^{\ell}  \partial_{I_k}  f_{\ell,\mathbf{h},\mathbf{k},n,c}(I)
 =: y_k^{(1)} + y_k^{(2)}.
\end{multline*}
We are going to control $\| y^{(j)}\|_{\dot{H}^s}$ for $j\in \{1,2\}$.

\noindent $*$ \emph{Control of $\| y^{(2)}\|_{\dot{H}^s}$.} Clearly, we have $\| y^{(2)}\|_{\dot{H}^s} \leq S \| u \|_{\dot{H}^s}$ where
\begin{equation}
\label{def:S}
S:=\sup_{k\in \mathbb{N}^*} \left| \sum_{(\ell,\mathbf{h},\mathbf{k},n,c) \in \Gamma} c \,   u^{\ell} \partial_{I_k}f_{\ell,\mathbf{h},\mathbf{k},n,c}(I) \right|.
\end{equation}
Thus, we just have to establish an upper bound on $S$. By definition of $f_{\ell,\mathbf{h},\mathbf{k},n,c}$, we have
$$
-\frac{\partial_{I_k}f_{\ell,\mathbf{h},\mathbf{k},n,c}(I)}{f_{\ell,\mathbf{h},\mathbf{k},n,c}(I)} =  \sum_{j=1}^n \frac{\partial_{I_k}\Delta^{(4),\mathcal{E}}_{\mathbf{h}_j}(I)}{\Delta^{(4),\mathcal{E}}_{\mathbf{h}_j}(I)} + \sum_{j=n+1}^{\# \mathbf{h}} \frac{\partial_{I_k}\Delta^{(4,6),\mathcal{E}}_{\mathbf{h}_j,N}(I)}{\Delta^{(4,6),\mathcal{E}}_{\mathbf{h}_j,N}(I)} +  \sum_{j=1}^{\# \mathbf{k}} \frac{\partial_{I_k}\Delta^{(4,6),\mathcal{E}}_{\mathbf{k}_j,N}(I)}{\Delta^{(4,6),\mathcal{E}}_{\mathbf{k}_j,N}(I)}.
$$
So, we have to control each one of the terms in these sums (the terms involving $\Delta^{(4,6),\mathcal{E}}_{\mathbf{k}_j,N}$ and $\Delta^{(4,6),\mathcal{E}}_{\mathbf{h}_j,N}$ are treated in the same way).
\begin{itemize}
\item \emph{Upper bound for $\partial_{I_k}\Delta^{(4),\mathcal{E}}_{\mathbf{h}_j}(I)$.} We have
$$
|\partial_{I_k}\Delta^{(4),\mathcal{E}}_{\mathbf{h}_j}(I)| = |(\delta^{\mathcal{E}}_{\mathbf{h}_j})_k| \lesssim_{\#\mathbf{h}_j}  |\mathbf{h}_{j,1}| \lesssim_{\# \ell} C^{(em)}_\Gamma \lesssim_{C^{(de)}_\Gamma} N
$$
where $\delta^{\mathcal{E}}$ is defined in Definition \ref{def:Delta4}.
\item \emph{Upper bound for $\partial_{I_k} \Delta^{(4,6),\mathcal{E}}_{\mathbf{k}_j,N}(I)$.} Since $\Delta^{(4,6),\mathcal{E}}_{\mathbf{k}_j,N}(I) = \Delta^{(4),\mathcal{E}}_{\mathbf{k}_j}(I) + \Delta^{(6),\mathcal{E}}_{\mathbf{k}_j,N}(I)$
,{  it remains to control} $\partial_{I_k}  \Delta^{(6),\mathcal{E}}_{\mathbf{k}_j,N}(I)$. Using the explicit decomposition of $\Delta^{(6),\mathcal{E}}_{\mathbf{k}_j,N}(I)$ given by \eqref{expli:dec}, we clearly have
$$
|\partial_{I_k}  \Delta^{(6),\mathcal{E}}_{\mathbf{k}_j,N}(I)| \lesssim_{\#\mathbf{k}_j} N^4 \|u\|_{\dot{H}^s}^2 \lesssim_{C^{(de)}_\Gamma} N^4.
$$
\item \emph{Lower bound for $\Delta^{(4),\mathcal{E}}_{\mathbf{h}_j}(I)$.} As noticed at the beginning of the proof, since $u\in \mathcal{U}_{\gamma,N,\rho}^{\mathcal{E},s}$ {(see definition \ref{U})} we have lower bounds on the denominators :
$$
|\Delta^{(4),\mathcal{E}}_{\mathbf{h}_j}(I)| \geq  \gamma N^{-5  \#\mathbf{h}_j} \| u \|_{\dot{H}^s}^2 (\kappa_{\mathbf{h}_j}^{\mathcal{E}})^{-2s}.
$$
However, as explained in Remark \ref{rem:pasbete}, we have $\kappa_{\mathbf{h}_j}^{\mathcal{E}} \leq |\mathbf{h}_{j,1}|$. Consequently, we have  $\kappa_{\mathbf{h}_j}^{\mathcal{E}}\leq C^{(om)}_\Gamma |\ell_2|$. So, since $\ell \in \mathcal{D}$, we have  
$$
|\Delta^{(4),\mathcal{E}}_{\mathbf{h}_j}(I)| \gtrsim_{s,C^{(om)}_\Gamma}   \gamma N^{-5  C^{(de)}_\Gamma} \| u \|_{\dot{H}^s}^2 |\ell_1|^{-s} |\ell_2|^{-s}.
$$
\item \emph{Lower bound for $\Delta^{(4,6),\mathcal{E}}_{\mathbf{k}_j,N}(I)$.} The same analysis as for the previous term leads to
$$
|\Delta^{(4,6),\mathcal{E}}_{\mathbf{k}_j,N}(I)| \gtrsim_{s,C^{(om)}_\Gamma}   \gamma \, N^{-21  C^{(de)}_\Gamma} \| u \|_{\dot{H}^s}^2 |\ell_1|^{-s} |\ell_2|^{-s}.
$$
\end{itemize}
Combining the previous estimates gives 
$$
\left|\frac{\partial_{I_k}f_{\ell,\mathbf{h},\mathbf{k},n,c}(I)}{f_{\ell,\mathbf{h},\mathbf{k},n,c}(I)} \right| \lesssim_{s,C^{(str)}_\Gamma} \gamma^{-1} N^{21  C^{(de)}_\Gamma + 4}  |\ell_1|^{s} |\ell_2|^{s} \| u \|_{\dot{H}^s}^{-2}.
$$
Then, we have to establish an upper bound on $f_{\ell,\mathbf{h},\mathbf{k},n,c}(I)$.  {Since $u\in \mathcal{U}_{\gamma,N,\rho}^{\mathcal{E},s}$ (see definition \ref{U})}
 we control each factor of the form $|\Delta^{(4,6),\mathcal{E}}_{\mathbf{h}_j,N}(I)|$ by $\gamma N^{-21  C^{(de)}_\Gamma} \| u \|_{\dot{H}^s}^2 (\kappa_{\mathbf{h}_j}^{\mathcal{E}})^{-2s}$, each factor of the form $|\Delta^{(4),\mathcal{E}}_{\mathbf{h}_j}(I)|$ by $\gamma N^{-5  C^{(de)}_\Gamma} \| u \|_{\dot{H}^s}^2 (\kappa_{\mathbf{h}_j}^{\mathcal{E}})^{-2s}$ and each factor of the form 
$|\Delta^{(4,6),\mathcal{E}}_{\mathbf{k}_j,N}(I)|$ by $\gamma^2 N^{-21  C^{(de)}_\Gamma} \| u \|_{\dot{H}^s}^4$. This leads naturally to the estimate
$$
|f_{\ell,\mathbf{h},\mathbf{k},n,c}(I)| \lesssim_{s,C^{(str)}_\Gamma}   (N^{-21  C^{(de)}_\Gamma  } \gamma \| u \|_{\dot{H}^s}^2)^{-2\# \mathbf{k} -\# \mathbf{h}} (\kappa_{\mathbf{h}_{1}}^{\mathcal{E}}\dots \kappa_{\mathbf{h}_{\last}}^{\mathcal{E}})^{2s}.
$$
Using the condition $\mathrm{(iii)}$ of the definition \ref{def:rat:fract} and the estimate associated with $C^{(di)}_\Gamma$  this leads to
\begin{equation}
\label{eq:prodden}
|f_{\ell,\mathbf{h},\mathbf{k},n,c}(I)| \lesssim_{s,C^{(str)}_\Gamma}   (N^{-21  C^{(de)}_\Gamma  } \gamma \| u \|_{\dot{H}^s}^2)^{-\frac{\# \ell-r}2} |\ell_{3}\dots \ell_{\last}|^{s}.
\end{equation}
Consequently, recalling that $ C^{(em)}_\Gamma \leq N$ and applying the Young inequality, we have
\begin{equation*}
\begin{split}
S &\lesssim_{s,C^{(str)}_\Gamma}  C^{(\infty)}_\Gamma \sum_{(\ell,\mathbf{h},\mathbf{k},n,c) \in \Gamma}    (|u_{\ell_1} |\ell_{1}|^s \dots u_{\ell_\last} |\ell_{\last}|^s |)  (N^{-21  C^{(de)}_\Gamma  } \gamma \| u \|_{\dot{H}^s}^2)^{-\frac{\# \ell-r}2-1} N^4  \\
&\lesssim_{s,C^{(str)}_\Gamma} N^{C^{(de)}_\Gamma+4} C^{(\infty)}_\Gamma \sum_{(\ell,\mathbf{h},\mathbf{k},n,c) \in \Gamma}    (|u_{\ell_1} |\ell_{1}|^{s-1} \dots u_{\ell_\last} |\ell_{\last}|^{s-1} |) (N^{-21  C^{(de)}_\Gamma  } \gamma \| u \|_{\dot{H}^s}^2)^{-\frac{\# \ell-r}2-1}  \\
 &\lesssim_{s,C^{(str)}_\Gamma} N^{C^{(de)}_\Gamma+4} C^{(m)}_\Gamma C^{(\infty)}_\Gamma \sum_{m= r+2}^{C^{(de)}_\Gamma}  (N^{-21  C^{(de)}_\Gamma  } \gamma \| u \|_{\dot{H}^s}^2)^{-\frac{m-r}2-1}  \sum_{\ell_1+\dots+\ell_m =0}  \prod_{j=1}^m  |u_{\ell_j}| |\ell_{j}|^{s-1}\\
 &\lesssim_{s,C^{(str)}_\Gamma}  N^{2C^{(de)}_\Gamma}  C^{(\infty)}_\Gamma \sum_{m= r+2}^{C^{(de)}_\Gamma}  (N^{-21  C^{(de)}_\Gamma  } \gamma \| u \|_{\dot{H}^s}^2)^{-\frac{m-r}2-1}    \| u \|_{\dot{H}^s}^{m}\\
 &\lesssim_{s,C^{(str)}_\Gamma} C^{(\infty)}_\Gamma  \sqrt{\gamma}^{-C^{(de)}_\Gamma+r-2} N^{12 \,(C^{(de)}_\Gamma)^2}  \| u \|_{\dot{H}^s}^{r-2}.
  \end{split}
\end{equation*}
Finally, we deduce that $\| y^{(2)}\|_{\dot{H}^s} \leq S \| u \|_{\dot{H}^s} \lesssim_{s,C^{(str)}_\Gamma} C^{(\infty)}_\Gamma  \sqrt{\gamma}^{-C^{(de)}_\Gamma+r-2} N^{12\, (C^{(de)}_\Gamma)^2}  \| u \|_{\dot{H}^s}^{r-1}.$

\noindent $*$ \emph{Control of $\| y^{(1)}\|_{\dot{H}^s}$.} 
Using the estimate \eqref{eq:prodden} to control $|f_{\ell,\mathbf{h},\mathbf{k},n,c}(I)|$, we have
\begin{multline*}
|(y^{(1)})_k| \lesssim_{s,C^{(str)}_\Gamma}  C^{(\infty)}_\Gamma   \sum_{(\ell,\mathbf{h},\mathbf{k},n,c) \in \Gamma} |\partial_{u_{-k}}  u^{\ell} |  (N^{-21  C^{(de)}_\Gamma  } \gamma \| u \|_{\dot{H}^s}^2)^{-\frac{\# \ell-r}2} |\ell_{3}\dots \ell_{\last}|^{s} \\
  \lesssim_{s,C^{(str)}_\Gamma} C^{(\infty)}_\Gamma C^{(m)}_\Gamma \sum_{m=r}^{C^{(de)}_\Gamma} (N^{-21  C^{(de)}_\Gamma  } \gamma \| u \|_{\dot{H}^s}^2)^{-\frac{m-r}2} \sum_{\substack{\ell \in \mathcal{M}_m \cap \mathcal{D}\\ | \ell_1| \leq N }}  |\partial_{u_{-k}}  u^{\ell} |   |\ell_{3}\dots \ell_{\last}|^{s}.
\end{multline*}
Consequently, applying a triangle inequality for $\|\cdot\|_{\dot{H}^s}$, we have
\begin{equation}
\label{temp:eqy1}
\| y^{(1)}\|_{\dot{H}^s} \lesssim_{s,C^{(str)}_\Gamma}  C^{(\infty)}_\Gamma  \sum_{m=r}^{C^{(de)}_\Gamma} (N^{-21  C^{(de)}_\Gamma  } \gamma \| u \|_{\dot{H}^s}^2)^{-\frac{m-r}2} \|z^{(m)}\|_{L^2}
\end{equation}
 where
$$
z^{(m)}_k = \sum_{\substack{\ell \in \mathcal{M}_m \cap \mathcal{D}\\ | \ell_1| \leq N }}  |\partial_{u_{-k}}  u^{\ell} |   |\ell_{3}\dots \ell_{m}|^{s} |k|^s.
$$
Naturally, we aim at controlling $\|z^{(m)}\|_{L^2}$. We observe that if $-k$ is not one of the coordinates of $\ell$ then $\partial_{u_{-k}}  u^{\ell} =0$. Consequently, we have
$$
|z^{(m)}_k| \leq \sum_{j=1}^m  \sum_{\substack{\ell \in \mathcal{M}_m \cap \mathcal{D}\\ k = -\ell_j}}  |\ell_{3}\dots \ell_{m}|^{s} |k|^{s}  \prod_{i\neq j} |u_{\ell_i}|.
$$
Then, we observe that $ |\ell_{3}\dots \ell_{m}| |k| \leq (m-1) |\ell_1\dots \ell_m|/|\ell_j|$.
Indeed, if $j\neq 1$, it is clear since $\ell\in \mathcal{D}$ whereas if $j=1$ we use that $\ell \in \mathcal{M}_m$. As a consequence, we have
$$
|z^{(m)}_k|  \lesssim_{m, s}  \sum_{\substack{\ell_1+\dots+\ell_{m-1} = k \\ \forall j, |\ell_j| \leq N}}  \prod_{i\neq j} |u^{\ell_i}| |\ell_i|^s  \lesssim_{m,s} N^{m-1}  \sum_{\substack{\ell_1+\dots+\ell_{m-1} = k}}  \prod_{i\neq j} |u_{\ell_i}| |\ell_i|^{s-1}.
$$
Thus, using the Young inequality  leads to
$$
\|z^{(m)}\|_{L^2} \lesssim_m N^{m-1} \| u\|_{\dot{H}^s}^{m-1}.
$$
It follows of \eqref{temp:eqy1} that
\begin{equation*}
\begin{split}
\| y^{(1)}\|_{\dot{H}^s} &\lesssim_{s,C^{(str)}_\Gamma}  C^{(\infty)}_\Gamma  \sum_{m=r}^{C^{(de)}_\Gamma} N^{m-1} (N^{-21  C^{(de)}_\Gamma  } \gamma \| u \|_{\dot{H}^s}^2)^{-\frac{m-r}2} \| u\|_{\dot{H}^s}^{m-1}\\
&\lesssim_{s,C^{(str)}_\Gamma} C^{(\infty)}_\Gamma  \sqrt{\gamma}^{-C^{(de)}_\Gamma+r-2} N^{12 \,(C^{(de)}_\Gamma)^2}  \| u \|_{\dot{H}^s}^{r-1}.
\end{split}
\end{equation*}

These estimates on $\| y^{(1)}\|_{\dot{H}^s}$ and $\| y^{(2)}\|_{\dot{H}^s} $ give the estimate \eqref{est:rat:VF} on $\| \nabla \Gamma_N(u) \|_{\dot{H}^s} $. We don't detail the proof of the estimate \eqref{est:rat:dVF}. Indeed, the number of terms appearing naturally in the expression of $\mathrm{d}\nabla \Gamma_N(u)$ is huge, however, it is clear that all of them can be controlled as we have estimated the terms of $\| \nabla \Gamma_N(u) \|_{\dot{H}^s}$. 
\end{proof}

%Now we focus on the action of the integrable vector fields on the actions.\\
Let us consider the particular case of integrable vector fields.
\begin{proposition}
\label{prop:VFtheta}
If $u\in \mathcal{U}_{\gamma,N,\rho}^{\mathcal{E},s}$ and $Z \in \mathscr{A}_{r}^{\mathcal{E}}$ are such that $\|u\|_{\dot{H}^s}\lesssim 1$, $N\geq C^{(em)}_Z$ and $\rho \geq C^{(de)}_Z $ then we have
\begin{equation}
\label{est:rat:VFthetha}
\sup_{k\in \mathbb{N}^*} k |\partial_{I_k} Z_N(I)| \lesssim_{s,C^{(str)}_Z}  C^{(\infty)}_Z  \sqrt{\gamma}^{-\rho+r-2} N^{1+12 \,\rho^2}  \| u \|_{\dot{H}^s}^{r-2}.
\end{equation}
\end{proposition}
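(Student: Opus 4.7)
The strategy closely mirrors the proof of Proposition~\ref{prop:VF} while exploiting the fact that $Z \in \mathscr{A}_r^{\mathcal{E}}$: every numerator $u^\ell$ appearing in $Z_N$ satisfies $\Irr(\ell)=\emptyset$ and can therefore be rewritten as a monomial $I^{\lambda(\ell)}$ in the actions. I decompose
$$
Z_N(I) = \sum_{(\ell,\mathbf{h},\mathbf{k},n,c) \in Z} c\, I^{\lambda(\ell)}\, f_{\ell,\mathbf{h},\mathbf{k},n,c}(I),
$$
where $f_{\ell,\mathbf{h},\mathbf{k},n,c}$ denotes the product of reciprocals of the small divisors, and apply Leibniz's rule: $\partial_{I_k} Z_N = T_1 + T_2$, with $T_1$ collecting the derivatives acting on the numerators $I^{\lambda(\ell)}$ and $T_2$ those acting on $f$. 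The bounds on $|f|$ (eq.~\eqref{eq:prodden}) and on $|\partial_{I_k} f/f|$ established in the proof of Proposition~\ref{prop:VF} can be reused verbatim.

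For $T_2$, the crucial observation is that by Remark~\ref{rem:useful} and the expansion~\eqref{expli:dec}, the partial derivative $\partial_{I_k}\Delta^{(4,6),\mathcal{E}}_{\cdot, N}$ (and similarly for $\Delta^{(4)}$) vanishes unless $k \lesssim N$. Combining $|u^\ell| \leq |\ell_1\cdots\ell_{\last}|^{-s}\|u\|_{\dot H^s}^{\#\ell}$ with the bounds on $|f|$ and $|\partial_{I_k} f/f|$ from Proposition~\ref{prop:VF}, a per-term computation yields
$$
k|T_2| \lesssim_{s,C^{(str)}_Z} |c| \cdot N \cdot \sqrt{\gamma}^{-(\#\ell-r+2)}\, N^{O((C^{(de)}_Z)^2)}\, \|u\|_{\dot H^s}^{r-2};
$$
summing over the finite family gives the announced bound, the extra factor $N$ compared to Proposition~\ref{prop:VF} coming precisely from $k \leq N$.

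For $T_1$, note that $\partial_{I_k} I^{\lambda(\ell)}$ vanishes unless $\lambda_k(\ell)\geq 1$, which forces $k$ to be one of the absolute values of the components of $\ell$, hence $k \leq |\ell_1| \leq C^{(em)}_Z \leq N$. In that case $\partial_{I_k} I^{\lambda(\ell)} = \lambda_k\, I^{\lambda(\ell')}$, where $\ell'$ is obtained from $\ell$ by removing one pair $(k,-k)$; it still satisfies the zero momentum condition, $\#\ell' = \#\ell - 2$, and
$$
|u^{\ell'}| \leq \|u\|_{\dot H^s}^{\#\ell - 2} \prod_j |\ell'_j|^{-s} = k^{2s}\,|\ell_1\cdots\ell_{\last}|^{-s}\,\|u\|_{\dot H^s}^{\#\ell - 2}.
$$
Using the elementary inequality $|\ell_2| \geq |\ell_1|/(\#\ell -1)$ (a direct consequence of zero momentum and $\ell \in \mathcal{D}$), I get $k^{2s+1}|\ell_1\ell_2|^{-s} \lesssim_{s,C^{(de)}_Z} k \leq N$. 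Combined with \eqref{eq:prodden} and summation over the family, $k|T_1|$ satisfies the same bound as $k|T_2|$.

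The main obstacle is the careful bookkeeping needed to justify the exponent $1+12\rho^2$ of $N$: as in the proof of Proposition~\ref{prop:VF}, the $N^{12\rho^2}$ comes from the denominator factor $(N^{-21 C^{(de)}_Z}\gamma\|u\|_{\dot H^s}^2)^{-(\#\ell-r)/2}$ bounded by $N^{O((C^{(de)}_Z)^2)} \leq N^{12\rho^2}$ (using $C^{(de)}_Z \leq \rho$ and $21\rho^2/2 = 10.5\rho^2$), while the additional factor $N$ arises from the uniform bound $k \leq N$ established in both the $T_1$ and $T_2$ analyses.
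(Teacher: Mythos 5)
Your proposal takes essentially the same route as the paper: the Leibniz decomposition $\partial_{I_k}Z_N = T_1+T_2$ matches the paper's split $\Theta_k+S_k$, the bound on $T_2$ is literally the quantity $S$ already estimated in the proof of Proposition~\ref{prop:VF}, and the decisive observation that the supremum effectively runs over $k\leq N$ (hence the extra factor $N$ compared to the $\nabla$-estimate) is exactly the paper's opening remark that $Z_N$ depends only on $I_1,\dots,I_N$. The one place where your write-up is looser than the paper's is the summation in $T_1$: you bound each factor $|u_{\ell_j}|$ by $|\ell_j|^{-s}\|u\|_{\dot H^s}$ and then invoke \emph{summing over the finite family}, but the admissible constant must depend only on $C^{(str)}_Z$ (hence on $C^{(m)}_Z$, $C^{(de)}_Z$, etc.), not on $\#Z$ nor on the number of indices $\ell$ with $|\ell_1|\leq N$, which is of size $N^{O(C^{(de)}_Z)}$. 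The paper avoids this by first trading the sum over $(\ell,\mathbf{h},\mathbf{k},n,c)\in Z$ for a sum over $\ell$ (paying the multiplicity $C^{(m)}_Z$) and then executing the sum over $\ell$ \emph{before} bounding each action, via the Young/Cauchy--Schwarz step $\sum_{k\in\mathcal D_p}\prod_j I_{k_j}|k_j|^{2s}\lesssim\|u\|_{\dot H^s}^{2p}$, which is what produces $\|u\|_{\dot H^s}^{r-2}$ with no stray power of $N$. Your count does in fact fit inside the exponent budget $N^{1+12\rho^2}$, so the argument can be repaired along your lines, but as written the uniform-in-$\ell$ per-term bound and the appeal to finiteness do not by themselves justify a constant of the required form.
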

\begin{proof} Since $N\geq C^{(em)}_Z$, $Z_{ N}$ only depends on the variables $I_1,\dots,I_N$. Thus, if $k>N$ then $\partial_{I_k} Z_{ N}(I)=0$. Consequently, the supremum in \eqref{est:rat:VFthetha} only holds on $k\in \llbracket 1,N\rrbracket$ and it is enough to establish upper bounds on $|\partial_{I_k} Z_{ N}(I)|$  uniformly with respect to $k$.

We recall that $Z_N(u)$ is given by the definition \ref{def:ev:frac}. Naturally, $Z_N(u)$ is of the form
$$
Z_N(u) = \sum_{\substack{(\ell,\mathbf{h},\mathbf{k},n,c) \in Z}} c \, u^{\ell} f_{\ell,\mathbf{h},\mathbf{k},n,c}(I),
$$
where $f_{\ell,\mathbf{h},\mathbf{k},n,c}(I)$ denotes the denominator. Note that the sum holds on indices $\ell$ such that $\Irr\, \ell = \emptyset$ because $Z$ is integrable (i.e. $Z \in \mathscr{A}_{r}^{\mathcal{E}}$ { see Definition \ref{def:Aint}}) and thus $u^{\ell}$ is a polynomial in the actions.

Naturally, for $k\in \mathbb{N}^*$, we have
$$
 (\partial_{I_k} Z_N(u))_k  =  \sum_{(\ell,\mathbf{h},\mathbf{k},n,c) \in \Gamma} c \, (\partial_{I_k}  u^{\ell} )f_{\ell,\mathbf{h},\mathbf{k},n,c}(I) + \sum_{(\ell,\mathbf{h},\mathbf{k},n,c) \in \Gamma} c \,   u^{\ell}  \partial_{I_k}  f_{\ell,\mathbf{h},\mathbf{k},n,c}(I) =: \Theta_k + S_k.
$$
We note that we have already estimated $|S_k|$ uniformly with respect to $k$ in the proof of Proposition \ref{prop:VF} (see the definition of $S$ in \eqref{def:S}). Consequently, we know that
$$
\sup_{k\in \mathbb{N}^*} |S_k| \lesssim_{s,C^{(str)}_Z}  C^{(\infty)}_Z  \sqrt{\gamma}^{-\rho+r-2} N^{12 \,\rho^2}  \| u \|_{\dot{H}^s}^{r-2}
$$
Therefore, we just have to control $\Theta_k$. Using the upper bound \eqref{eq:prodden} on $f_{\ell,\mathbf{h},\mathbf{k},n,c}(I)$ and realizing estimates very similar to the ones of Proposition \ref{prop:VF}, we have
\begin{equation*}
\begin{split}
|\Theta_k| &\lesssim_{s,C^{(str)}_Z} C^{(\infty)}_{Z} \sum_{\substack{r\leq m\leq C_Z^{(de)}\\ m \ \mathrm{even}}} (N^{-21  C^{(de)}_Z  } \gamma \| u \|_{\dot{H}^s}^2)^{-\frac{m-r}2} \sum_{\substack{\ell \in \mathcal{M}_m \cap \mathcal{D}\\ \Irr \, \ell = \emptyset \\ |\ell_1|\leq N}} |\partial_{I_k}  u^{\ell} | |\ell_{3}\dots \ell_{\last}|^{s} \\
&\lesssim_{s,C^{(str)}_Z} C^{(\infty)}_{Z} \sum_{\substack{r\leq m \leq C_Z^{(de)}\\ m \ \mathrm{even}}} (N^{-21  C^{(de)}_Z  } \gamma \| u \|_{\dot{H}^s}^2)^{-\frac{m-r}2} \sum_{k\in \mathcal{D}_{(m-1)/2}} \prod_{j=1}^{(m-1)/2} I_{k_j} |k_j|^{2s} \\
&\lesssim_{s,C^{(str)}_Z} C^{(\infty)}_{Z} \sum_{\substack{r\leq m\leq C_Z^{(de)}\\ m \ \mathrm{even}}} (N^{-21  C^{(de)}_Z  } \gamma \| u \|_{\dot{H}^s}^2)^{-\frac{m-r}2} \|u\|_{\dot{H}^s}^{m-1}\\
&\lesssim_{s,C^{(str)}_Z} C^{(\infty)}_Z  \sqrt{\gamma}^{-\rho+r-2} N^{12 \,\rho^2}  \| u \|_{\dot{H}^s}^{r-2}.
\end{split}
\end{equation*}
\end{proof}

Now, we focus on the control of the Lie transforms associated with rational Hamiltonians. 
\begin{proposition} 
\label{prop:Lie}
Let $r\geq 3$, $s>1$, $\Gamma \in \mathscr{H}_{r}^{\mathcal{E}}$, $\rho \geq C^{(de)}_\Gamma $, $N\gtrsim_{\rho,s} 1$ satisfying $N\geq C^{(em)}_\Gamma$. If $\varepsilon_0>0$ satisfies
\begin{equation}
\label{cfl_Lie}
\varepsilon_0^{r-11/4} \lesssim_{s,C^{(str)}_\Gamma}  (C^{(\infty)}_\Gamma)^{-1}  \sqrt{\gamma}^{\rho-r+2} N^{-1-14 \,\rho^2}  \ \ \mathrm{and} \ \ 4\, \varepsilon_0^{1/4} \leq 2 \gamma^2  N^{-22  \rho}
\end{equation}
the flow of the Hamiltonian system
\begin{equation}
\label{sys_Lie}
\partial_t u = \partial_x \nabla \Gamma_N(u),
\end{equation}
denoted by $\Phi_ {\Gamma_N}^t$, defines, for $0\leq t\leq 1$, a family of symplectic maps from $\mathcal{U}_{\gamma,N,\rho}^{\mathcal{E},s}\cap B_s(0,\varepsilon_0)$ to $\dot{H}^s$. Furthermore, for $u\in \mathcal{U}_{\gamma,N,\rho}^{\mathcal{E},s} \cap B_s(0,\varepsilon_0)$ and $t\in (0,1)$, we have the estimates
$$
\|\Phi_ {\Gamma_N}^t(u) - u\|_{\dot{H}^s} \leq \| u \|_{\dot{H}^s}^{7/4} \ \ \mathrm{and} \ \
\|(\mathrm{d}\Phi_ {\Gamma_N}^t (u))^{-1}\|_{\mathscr{L}(\dot{H}^s)} \leq 2.
$$
\end{proposition}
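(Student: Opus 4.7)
The plan is to set up a Cauchy--Lipschitz argument in $\dot H^s$ for the vector field $X_{\Gamma_N}(u) := \partial_x \nabla \Gamma_N(u)$ and then close a bootstrap that the flow remains in $\mathcal{U}_{\gamma,N,\rho}^{\mathcal{E},s}$. The key preliminary observation is that $\nabla \Gamma_N(u)$ is Fourier-supported in $\{|k|\leq N\}$. Indeed, every term of $\Gamma_N$ has the form $c\, u^{\ell} f(I)$ where the monomial $u^{\ell}$ involves modes of absolute value at most $|\ell_1|\leq C^{(em)}_\Gamma\leq N$, and the rational factor $f$ depends only on actions $I_j$ with $j\leq N$ (from the indices appearing in $\mathbf{h}_j,\mathbf{k}_j$ and, via $\Delta^{(6),\mathcal{E}}_{\cdot,N}$, from the truncation of $Z^{\mathcal{E}}_{6,\leq N}$, cf. Remark \ref{rem:useful}). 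Hence $\partial_{u_{-k}}\Gamma_N=0$ for $|k|>N$. Combining this with Proposition \ref{prop:VF} I obtain the $\dot H^s$ bounds
\begin{equation*}
\|X_{\Gamma_N}(v)\|_{\dot H^s}\leq 2\pi N \|\nabla \Gamma_N(v)\|_{\dot H^s}, \qquad \|\mathrm d X_{\Gamma_N}(v)\|_{\mathscr L(\dot H^s)} \leq 2\pi N\|\mathrm d\nabla \Gamma_N(v)\|_{\mathscr L(\dot H^s)},
\end{equation*}
valid on $\mathcal{U}_{\gamma,N,\rho}^{\mathcal{E},s}$; so the formal derivative loss of $\partial_x$ is absorbed into the cutoff factor $N$.

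The standard Cauchy--Lipschitz theorem then produces a local flow in $\dot H^s$, and I can run a continuity/bootstrap argument assuming
\begin{equation*}
\Phi^{t}_{\Gamma_N}(u)\in \mathcal{U}_{\gamma/2,N,\rho}^{\mathcal{E},s}\cap B_s(0,2\|u\|_{\dot H^s}) \quad \text{for all } 0\leq t\leq T,
\end{equation*}
then improving these conditions with strict inequality. Integrating the vector-field bound and using $\|u\|_{\dot H^s}\leq \varepsilon_0$ together with the first condition of \eqref{cfl_Lie}, one gets
\begin{equation*}
\|\Phi^{t}_{\Gamma_N}(u)-u\|_{\dot H^s}\leq \int_0^t \|X_{\Gamma_N}(\Phi^{\tau}_{\Gamma_N}(u))\|_{\dot H^s}\,\mathrm d\tau \lesssim_{s,C^{(str)}_\Gamma} C^{(\infty)}_\Gamma \sqrt\gamma^{-\rho+r-2}N^{1+12\rho^2}\varepsilon_0^{r-1} \leq \|u\|_{\dot H^s}^{7/4},
\end{equation*}
which in particular gives $\|\Phi^t_{\Gamma_N}(u)\|_{\dot H^s}\leq 2\|u\|_{\dot H^s}$. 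The symplecticity is automatic since $\Gamma_N$ is a smooth real Hamiltonian on $\mathcal U$.

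To close the bootstrap on the $\mathcal U$-membership, I use the elementary factorization $I_\ell-I'_\ell = \bar u_\ell(u_\ell-u'_\ell)+u'_\ell\overline{(u_\ell-u'_\ell)}$ together with the entrywise inequality $|\ell|^s|u_\ell|\leq \|u\|_{\dot H^s}$ to get
\begin{equation*}
\sup_\ell \ell^{2s-2} |I_\ell - I'_\ell| \leq 2(\|u\|_{\dot H^s}+\|u'\|_{\dot H^s})\|u-u'\|_{\dot H^s} \leq 6\,\|u\|_{\dot H^s}^{11/4}.
\end{equation*}
The second condition in \eqref{cfl_Lie}, $4\varepsilon_0^{1/4}\leq 2\gamma^2 N^{-22\rho}$, then precisely guarantees that $6\varepsilon_0^{11/4}\leq \tfrac{1}{2}\gamma^2 N^{-22\rho}\|u\|_{\dot H^s}^2$, so Proposition \ref{prop:stab_U} (with $\lambda=1/2$) yields $\Phi^t_{\Gamma_N}(u)\in \mathcal U^{\mathcal E,s}_{\gamma/2,N,\rho}$, strictly improving the bootstrap assumption.

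Finally, the bound on $(\mathrm d\Phi_{\Gamma_N}^t)^{-1}$ follows from the variational equation $\partial_t \mathrm d\Phi^{-t} = -\mathrm d X_{\Gamma_N}(\Phi^t)\,\mathrm d\Phi^{-t}$: Gronwall gives
\begin{equation*}
\|(\mathrm d\Phi_{\Gamma_N}^t(u))^{-1}\|_{\mathscr L(\dot H^s)}\leq \exp\Bigl(\int_0^t \|\mathrm d X_{\Gamma_N}(\Phi^{\tau}_{\Gamma_N}(u))\|_{\mathscr L(\dot H^s)}\mathrm d\tau\Bigr)
\end{equation*}
and the integral is $\lesssim_{s,C^{(str)}_\Gamma}C^{(\infty)}_\Gamma \sqrt\gamma^{-\rho+r-2}N^{1+14\rho^2}\varepsilon_0^{r-2}\leq \log 2$ by the first condition in \eqref{cfl_Lie}. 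The only subtlety of the argument is the Fourier-support observation; everything else is routine, and the two CFL conditions in \eqref{cfl_Lie} are tailored precisely to the size bound (displacement/tangent flow) and to the $\mathcal U$-stability estimate respectively.
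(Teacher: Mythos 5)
Your proof is correct and follows essentially the same route as the paper: local existence by Cauchy--Lipschitz after noting that the $N$-Fourier cutoff makes $\partial_x\nabla\Gamma_N$ a smooth finite-dimensional vector field, the displacement bound from Proposition \ref{prop:VF}, persistence of $\mathcal{U}$-membership via Proposition \ref{prop:stab_U}, and a Gronwall estimate on the variational equation for $(\mathrm{d}\Phi^t_{\Gamma_N})^{-1}$. One minor imprecision: applying Proposition \ref{prop:stab_U} with $\lambda=1/2$ starting from $u\in\mathcal{U}_\gamma$ lands you back in $\mathcal{U}_{\gamma/2}$, which coincides with your bootstrap hypothesis rather than strictly improving it; the argument still closes because $\mathcal{U}_{\gamma/2}$ is open and the vector field is Lipschitz up to the limiting time, but the paper sidesteps the subtlety cleanly by assuming $\Phi^t(u)\in\mathcal{U}_{\gamma/3}$ in the bootstrap hypothesis and concluding $\Phi^t(u)\in\mathcal{U}_{\gamma/2}$.
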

\begin{proof}
A priori, the system \eqref{sys_Lie} looks like a partial differential equation. However, the Hamiltonian $\Gamma_N$ only involves modes associated with indices smaller than $N$. Thus, \eqref{sys_Lie} is just an ordinary differential equation associated with a smooth vector field (since it is a rational fraction). Consequently, by the Cauchy-Lipschitz theorem, the flow of \eqref{sys_Lie} is obviously locally well defined and is a smooth function. Furthermore, since \eqref{sys_Lie} is a Hamiltonian system, its flow is naturally symplectic. The non obvious fact is that if $u\in \mathcal{U}_{\gamma,N,\rho}^{\mathcal{E},s} \cap B_s(0,\varepsilon_0)$ then $\Phi_ {\Gamma_N}^t(u)$ is well defined until $t=1$. In other words, we have to prove that the solution of \eqref{sys_Lie} cannot explose if $t\in (0,1)$.

We are going to prove that if $u\in \mathcal{U}_{\gamma,N,\rho}^{\mathcal{E},s} \cap B_s(0,\varepsilon_0)$ and $t_0\in (0,1)$ are such that for $t\in (0,t_0)$, $\Phi_ {\Gamma_N}^t(u) \in \mathcal{U}_{\gamma/3,N,\rho}^{\mathcal{E},s}$ and $\|\Phi_ {\Gamma_N}^t(u)\|_{\dot{H}^s}\leq3\|u\|_{\dot{H}^s}$ then for $t\in (0,t_0)$, $\Phi_ {\Gamma_N}^t(u) \in \mathcal{U}_{\gamma/2,N,\rho}^{\mathcal{E},s}$ and $\|\Phi_ {\Gamma_N}^t(u)\|_{\dot{H}^s}\leq2\|u\|_{\dot{H}^s}$. By this bootstrap argument, we will have naturally that $\Phi_ {\Gamma_N}^t(u)$ is well defined for $t\in (0,1)$, $\Phi_ {\Gamma_N}^t(u) \in \mathcal{U}_{\gamma/2,N,\rho}^{\mathcal{E},s}$ and $\|\Phi_ {\Gamma_N}^t(u)\|_{\dot{H}^s}\leq 2\|u\|_{\dot{H}^s}$.

We assume that $t_0\in (0,1)$ is such that for  $t\in (0,t_0)$,  $\|\Phi_ {\Gamma_N}^t(u)\|_{\dot{H}^s}\leq3\|u\|_{\dot{H}^s}$ and $\Phi_ {\Gamma_N}^t(u) \in \mathcal{U}_{\gamma/3,N,\rho}^{\mathcal{E},s}$. Applying the Proposition \ref{prop:VF}, we deduce that for $t\in (0,t_0)$
$$
\|\partial_t \Phi_ {\Gamma_N}^t(u)\|_{\dot{H}^s} \lesssim_{\rho,s,C^{(str)}_\Gamma}  C^{(\infty)}_\Gamma  \sqrt{\gamma}^{-\rho+r-2} N^{1+12 \,\rho^2}  \| u \|_{\dot{H}^s}^{r-1}.
$$
Consequently, we have
\begin{equation}
\label{unepartieduresultat}
\| \Phi_ {\Gamma_N}^t(u) - u \|_{\dot{H}^s}  \lesssim_{\rho,s,C^{(str)}_\Gamma}  C^{(\infty)}_\Gamma  \sqrt{\gamma}^{-\rho+r-2} N^{1+12 \,\rho^2}  \| u \|_{\dot{H}^s}^{r-1}\mathop{\leq}^{\eqref{cfl_Lie}} \| u \|_{\dot{H}^s}^{7/4}.
\end{equation}
By applying the triangle inequality, we deduce that if $\varepsilon_0\leq 1$ then $\|\Phi_ {\Gamma_N}^t(u)\|_{\dot{H}^s}\leq2\|u\|_{\dot{H}^s}$. Furthermore, if $\ell \in \mathbb{N}^*$ we have
\begin{multline*}
\big| |(\Phi_ {\Gamma_N}^t(u))_{\ell}|^2 - |u_\ell|^2 \big| \ell^{2s-2} \leq \| \Phi_ {\Gamma_N}^t(u) - u\|_{\dot{H}^s}(\| \Phi_ {\Gamma_N}^t(u) \|_{\dot{H}^s} +\| u\|_{\dot{H}^s} )\\ \leq 4 \| u \|_{\dot{H}^s}^{11/4} \mathop{\leq}^{\eqref{cfl_Lie}} 2 \gamma^2  N^{-22  \rho} \| u \|_{\dot{H}^s}^2.
 \end{multline*}
Consequently, by applying the Proposition \ref{prop:stab_U} and using that $u\in\mathcal{U}_{\gamma,N,\rho}^{\mathcal{E},s}$, we deduce that $\Phi_ {\Gamma_N}^t(u) \in \mathcal{U}_{\gamma/2,N,\rho}^{\mathcal{E},s}$.

Finally, we have to prove that $\mathrm{d}\Phi_ {\Gamma_N}^t (u)$ is invertible and to estimate its norm. Differentiating \eqref{sys_Lie}, we have
\begin{equation}
\label{eq:temp:Lie}
\partial_t \mathrm{d}\Phi_ {\Gamma_N}^t (u)  = \partial_x \mathrm{d}\nabla \Gamma_N(\Phi_ {\Gamma_N}^t(u))\mathrm{d}\Phi_ {\Gamma_N}^t (u).
\end{equation}
However, since $N\geq C^{(em)}_\Gamma$, $\Gamma_N$ only depends on modes with indices smaller than $N$. Consequently, in \eqref{eq:temp:Lie}, $\partial_x$ can be replaced by $\mathbb{1}_{|\partial_x|\leq N}\partial_x$. Consequently, by applying the estimate on $\mathrm{d}\nabla \Gamma_N(\Phi_ {\Gamma_N}^t(u))$ given by Proposition \ref{prop:VF}, we deduce that
\begin{multline}
\label{est:acc:diff}
\|\partial_t \mathrm{d}\Phi_ {\Gamma_N}^t (u)\|_{\mathscr{L}(\dot{H}^s)}  \lesssim_{\rho,s,C^{(str)}_\Gamma} \| \mathrm{d}\Phi_ {\Gamma_N}^t (u)\|_{\mathscr{L}(\dot{H}^s)}  C^{(\infty)}_\Gamma  \sqrt{\gamma}^{-\rho+r-2} N^{1+14 \,\rho^2}  \| u \|_{\dot{H}^s}^{r-2} \\
\mathop{\leq}^{\eqref{cfl_Lie}} \log(4/3)\| \mathrm{d}\Phi_ {\Gamma_N}^t (u)\|_{\mathscr{L}(\dot{H}^s)}.
\end{multline}
Thus, the Gr\"onwall Lemma proves that
$$
\forall t\in(0,1), \| \mathrm{d}\Phi_ {\Gamma_N}^t (u)\|_{\mathscr{L}(\dot{H}^s)} \leq \frac43. 
$$
As a consequence, we deduce of \eqref{est:acc:diff} that
$$
\| \mathrm{d}\Phi_ {\Gamma_N}^t (u) - \mathrm{Id}_{\dot{H}^s} \|_{\mathscr{L}(\dot{H}^s)} = \| \mathrm{d}\Phi_ {\Gamma_N}^t (u) - \mathrm{d}\Phi_ {\Gamma_N}^0 (u) \|_{\mathscr{L}(\dot{H}^s)} \leq  \frac43\log(\frac43)  \leq \frac49\leq \frac12.
$$
Consequently, since $\mathscr{L}(\dot{H}^s)$ is a Banach space, $\mathrm{d}\Phi_ {\Gamma_N}^t (u)$ is invertible and the norm of its invert is smaller than or equal to $2$. 
%Ancienne partie pour controler l'adjoint
%Finally, we just have to estimate the norm of $(\mathrm{d}\Phi_ {\Gamma_N}^t (u))^*$. Differentiating \eqref{sys_Lie}, we have
%$$
%\partial_t \mathrm{d}\Phi_ {\Gamma_N}^t (u)  = \partial_x \mathrm{d}\nabla \Gamma_N(\Phi_ {\Gamma_N}^t(u))\mathrm{d}\Phi_ {\Gamma_N}^t (u).
%$$
%Recalling that, by the Schwarz Theorem, $\mathrm{d}\nabla \Gamma_N(\Phi_ {\Gamma_N}^t(u))$ is a self-adjoint on $L^2$, we deduce that  
%\begin{equation}
%\label{eq:temp:Lie}
%\partial_t (\mathrm{d}\Phi_ {\Gamma_N}^t (u))^*  =  -(\mathrm{d}\Phi_ {\Gamma_N}^t (u))^*\mathrm{d}\nabla \Gamma_N(\Phi_ {\Gamma_N}^t(u))\partial_x.
%\end{equation}
%However, since $N\geq C^{(em)}_\Gamma$, $\Gamma_N$ only depends on modes with indices smaller than $N$. Consequently, in \eqref{eq:temp:Lie}, $\partial_x$ can be replaced by $\mathbb{1}_{|\partial_x|\leq N}\partial_x$. Consequently, by applying the estimate on $\mathrm{d}\nabla \Gamma_N(\Phi_ {\Gamma_N}^t(u))$ given by the Proposition \ref{prop:VF}, we deduce that
%\begin{multline*}
%\|\partial_t (\mathrm{d}\Phi_ {\Gamma_N}^t (u))^*\|_{\mathscr{L}(\dot{H}^s)}  \lesssim_{\rho,s,C^{(str)}_\Gamma} \| (\mathrm{d}\Phi_ {\Gamma_N}^t (u))^*\|_{\mathscr{L}(\dot{H}^s)}  C^{(\infty)}_\Gamma  \sqrt{\gamma}^{-\rho+r-2} N^{1+14 \,\rho^2}  \| u \|_{\dot{H}^s}^{r-2} \\
%\mathop{\leq}^{\eqref{cfl_Lie}} (\log 2)\| (\mathrm{d}\Phi_ {\Gamma_N}^t (u))^*\|_{\mathscr{L}(\dot{H}^s)}.
%\end{multline*}
%Finally, the Gr\"onwall Lemma provides the expected estimate on  $(\mathrm{d}\Phi_ {\Gamma_N}^t (u))^*$.
\end{proof}

\section{The rational normal form}\label{section6}
This section is devoted to the proof of the following theorem which is our main normal form result. In this section, we set
\begin{equation}
\label{def:rhor}
\rho_r = 47r -314
\end{equation}
the constant which ensures that if $r\geq 7$ and $\Gamma \in \mathscr{H}_r^{(6),\mathcal{E}} \supset \mathscr{H}_r^{(4),\mathcal{E}}$ then $C^{(de)}_{\Gamma} \leq \rho_r$ (see Remark \ref{rem:bound:deg}).

\begin{theorem}[Rational normal form] \label{thm:KtA}
Being given $\mathcal{E}\in \{ \mathrm{\ref{gKdV}}, \mathrm{\ref{gBO}}\}$, $r\gg 7 $, $s\geq 1$, $N\gtrsim_{r,s} 1$, $\gamma \lesssim_{r,s} 1$ and $\varepsilon_0 \lesssim_{r,s} 1$ satisfying 
\begin{equation}
\label{ultimate:CFL} 
\varepsilon_0 \leq 4^{36} \gamma^{35} \ \mathrm{and} \ \varepsilon_0 \leq N^{-10^5 r}
\end{equation}
there exist four symplectic maps $\tau^{(0)},\dots,\tau^{(3)}$ preserving the $L^2$ norm and making the following diagram  commutative
\begin{equation*}
 \xymatrix{ &  & V_{2}  \ar[rd]^{\tau^{(2)}} & & \\
				& V_{1}  \ar[ru]^{\tau^{(1)}} \ar@{^{(}->}[rr]^{\mathrm{id}_{\dot{H}^s}} & &  V_3  \ar[rd]^{\tau^{(3)}} & \\
						V_{0}  \ar[ru]^{\tau^{(0)}} \ar@{^{(}->}[rrrr]^{\mathrm{id}_{\dot{H}^s}}&  & &   & \dot{H}^s}
\end{equation*}
where $V_{\sigma} = B_s(0,2^\sigma \varepsilon_0)\cap \mathcal{U}_{2^{-\sigma} \gamma,N^3,\rho_{2r}}^{\mathcal{E},s}$
and close to the identity 
\begin{equation}
\label{est:close:138}
\forall \sigma\in \{0,\dots,3\}, \forall u\in V_{\sigma}, \  \|\tau^{(\sigma)}(u)-u\|_{\dot{H}^s} \leq \|u\|_{\dot{H}^s}^{13/8}
\end{equation}
such that $H_{\mathcal{E}} \circ \tau^{(3)} \circ \tau^{(2)}$ writes
$$
H_{\mathcal{E}} \circ \tau^{(3)} \circ \tau^{(2)}= Z_2^{\mathcal{E}} + Z_4^{\mathcal{E}} + \sum_{m=6}^r Z^{(m)}_{N^3} + \mathrm{R^{(res)}}\circ \tau^{(2)} + \mathrm{R^{(rat)}}
$$
where $Z_2^{\mathcal{E}}$ is given by \eqref{def:Z2}, $Z_4^{\mathcal{E}}$ is given by \eqref{Z4-KdV} and \eqref{Z4-BO}, $\mathrm{R^{(res)}} = \mathrm{R}^{(\mu_3>N)} + \mathrm{R}^{(I_{> N^3})}+ \mathrm{R}^{(or)}$ is the sum of the remainder terms of the resonant normal form (see Theorem \ref{thm-BNF}), $\mathrm{R^{(rat)}}$ is of order $r+1$, i.e.
\begin{equation}
\label{est:rem:thm:KtA}
\forall u \in V_2, \ \|\partial_x \nabla \mathrm{R^{(rat)}}(u)\|_{\dot{H}^s} \lesssim_{s,r} N^{10^5 r^2} \gamma^{-23 r + 133}  \| u \|_{\dot{H}^s}^{r}
\end{equation}
 and\footnote{Of course $Z^{(m)}$, as well as $\mathrm{R^{(rat)}}$ and $\mathrm{R^{(res)}}$,  depend on $\E$ but the notations are already heavy enough! } $Z^{(6)}\in \mathscr{A}_6^{(4),\mathcal{E}}$, $Z^{(m)}\in \mathscr{A}_m^{(6),\mathcal{E}}$, for $m\geq 7$, are some integrable Hamiltonians such that 
$$
\forall m\geq 6, \ C_{Z^{(m)}}^{(em)} \leq N^3, \ \ \ C_{Z^{(m)}}^{(str)} \lesssim_m 1, \ \ \ C_{Z^{(m)}}^{(\infty)} \lesssim_m N^{321 m}.
$$ 
Furthermore, $\tau^{(2)}$ preserves the high modes, i.e.
\begin{equation}
\label{idhighmodes}
\forall u \in V_2,\forall \ell \in \mathbb{Z}^*, \ |\ell|>N^3  \Rightarrow \ (\tau(u))_\ell = u_{\ell}
\end{equation}
and its differential is invertible and satisfies the estimate
$$
\forall u \in V_2, \ \|(\mathrm{d} \tau^{(2)}(u))^{-1} \|_{\mathscr{L}(\dot{H}^s)} \lesssim_r 1.
$$
\end{theorem}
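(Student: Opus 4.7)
The proof is a Birkhoff-type iteration carried out inside the class of rational Hamiltonians $\mathscr{H}_r^{(6),\mathcal{E}}$ built in Section~\ref{ratHam}. We take $\tau^{(3)}$ to be the resonant normal form map given by Theorem~\ref{thm-BNF} (there denoted $\tau^{(1)}$), so that
$$H_{\mathcal{E}} \circ \tau^{(3)} = Z_2^{\mathcal{E}} + Z_4^{\mathcal{E}} + Z_{6,\leq N^3}^{\mathcal{E}} + \mathrm{Res}_{\leq N^3} + \mathrm{R^{(res)}}.$$
The role of $\tau^{(2)}$ will be to eliminate the non-integrable resonant polynomial $\mathrm{Res}_{\leq N^3}$ appearing in Theorem~\ref{thm-BNF}, while $\tau^{(0)},\tau^{(1)}$ are the local inverses needed to close the commutative diagram.

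\textbf{Iterative construction of $\tau^{(2)}$.} We build $\tau^{(2)} = \Phi_{\chi_5}^1 \circ \Phi_{\chi_6}^1 \circ \cdots \circ \Phi_{\chi_r}^1$ as a composition of Lie transforms associated with rational Hamiltonians in the sharp subclasses of Definition~\ref{sharpclass}. At step $m \in \{5,6\}$ we pick $\chi_m \in \mathscr{H}_{m}^{(4),*,\mathcal{E}}$ solving a homological equation of the form
$$\{\chi_m,\, Z_2^{\mathcal{E}} + Z_4^{\mathcal{E}}\} + P_m = Z^{(m)} + (\text{higher order}),$$
where $P_m$ is the current degree-$m$ part and $Z^{(m)}$ its integrable projection; concretely each non-integrable monomial $c_\ell u^\ell$ with $\Irr(\ell)\neq \emptyset$ is divided by $\Delta^{(4),\mathcal{E}}_{\ell}$. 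For $m\geq 7$ we switch to $\chi_m \in \mathscr{H}_m^{(6),*,\mathcal{E}}$, solving against $Z_2^{\mathcal{E}} + Z_4^{\mathcal{E}} + Z_{6,\leq N^3}^{\mathcal{E}}$ and dividing by $\Delta^{(4,6),\mathcal{E}}_{\ell,N^3}$. Propositions~\ref{prop:stab_frac} and~\ref{prop:stab:subclasses} ensure that the Poisson brackets arising in the Taylor expansion $H \circ \Phi_{\chi_m}^1 = \sum_k \tfrac{1}{k!}\mathrm{ad}_{\chi_m}^k H$ remain in the relevant sharp subclass with controlled growth of $C^{(\infty)}$ (each bracket costing a factor $N^3$ by~\eqref{Cinfty}) and uniformly bounded $C^{(str)}$; Proposition~\ref{prop:Lie} guarantees that $\Phi_{\chi_m}^1$ is well defined on $\mathcal{U}_{\gamma/2^\sigma, N^3, \rho_{2r}}^{\mathcal{E},s} \cap B_s(0, 2^\sigma \varepsilon_0)$ under~\eqref{ultimate:CFL}; and Proposition~\ref{prop:stab_U} ensures that the non-resonance open sets are preserved between the nested layers $V_0 \subset V_1 \subset V_2 \subset V_3$. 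Preservation of the $L^2$ norm follows from the fact that each $\chi_m$ has numerators indexed by $\ell \in \mathcal{R}^{\mathcal{E}} \subset \mathcal{M}$ (zero momentum), so $\{\chi_m, \|\cdot\|_{L^2}^2\} = 0$.

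\textbf{Transmutation and main obstacle.} The crux of the proof, as flagged in Section~\ref{complications}, is handling the sixth-order integrable terms $Z_6^{\mathrm{fr}}$ generated when solving the degree-$5$ homological equation: they are rational rather than polynomial, so they cannot be incorporated into the pivot of subsequent homological equations without producing unmanageable denominators. Instead, at each iteration $m \geq 7$ we apply a \emph{transmutation} step: the offending $Z_6^{\mathrm{fr}}$-induced contributions are rewritten as Poisson brackets against pieces already in the class, and the regularization produced by solving a homological equation with coefficients supported in $\J_{n,N^3}^{\mathcal{E}}$ (Lemma~\ref{lem-chi}) trades them for terms of strictly higher degree that are absorbed in $\mathrm{R^{(rat)}}$. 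Tracking the cumulative growth of $C^{(\infty)}$ through the $r-4$ iterations, and inserting $C^{(de)}_\Gamma \leq \rho_r = 47r-314$ into the vector-field estimate of Proposition~\ref{prop:VF}, yields the bound $N^{10^5 r^2}\gamma^{-23r+133}\|u\|_{\dot H^s}^{r}$ in~\eqref{est:rem:thm:KtA}. The CFL condition~\eqref{ultimate:CFL}, with its very specific exponents $35$ on $\gamma$ and $10^5 r$ on $N^{-1}$, comes from chaining the two conditions of Proposition~\ref{prop:Lie} through the $\sim r$ steps, each step shrinking the radius and the non-resonance parameter by a bounded factor (cf.\ the nesting $V_0 \subset V_1 \subset V_2 \subset V_3$). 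Closeness to the identity~\eqref{est:close:138} propagates from estimate of Proposition~\ref{prop:Lie} with exponent $7/4$ after iteration. Finally, $\mathrm{R^{(res)}} \circ \tau^{(2)}$ inherits the bounds claimed through Theorem~\ref{thm-BNF}, since $\tau^{(2)}$ is close to the identity and preserves modes with $|\ell|>N^3$ by construction (each $\chi_m$ is supported on indices with $|\ell_1|\leq N^3$), yielding~\eqref{idhighmodes}.
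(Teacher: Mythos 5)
Your overall architecture (resonant normal form gives $\tau^{(0)},\tau^{(3)}$; rational iteration gives $\tau^{(1)},\tau^{(2)}$; homological equations against $Z_4^{\E}$ first and then against $Z_4^{\E}+Z_{6,\leq N^3}^{\E}$; Propositions~\ref{prop:stab_frac}, \ref{prop:stab:subclasses}, \ref{prop:Lie}, \ref{prop:stab_U} as the engine) is the right one, and your explanation of the CFL conditions and of the preservation of $L^2$, high modes, and the nested sets $V_\sigma$ is correct. But the description of the high-order iteration ($\mathfrak{r}\geq 7$) has a genuine gap, and it is precisely the part the paper flags as the main new difficulty.

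You write $\tau^{(2)} = \Phi_{\chi_5}^1 \circ \Phi_{\chi_6}^1 \circ \cdots \circ \Phi_{\chi_r}^1$, i.e.\ a single Lie transform per degree. This scheme does not close: solving the homological equation at degree $\mathfrak{r}$ against $Z_4^{\E}+Z_{6,\leq N^3}^{\E}$ (but necessarily excluding $Z_{6,N^3}^{\mathrm{fr}}$ from the pivot) produces, after conjugation, a brand-new non-integrable term $\{\chi^{(\mathfrak{r}),1}_{N^3},Z_{6,N^3}^{\mathrm{fr}}\}$ which by Proposition~\ref{prop:stab_frac} has order exactly $\mathfrak{r}$ again. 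A naive repeat would loop forever. The paper's way out is to perform \emph{two} successive homological resolutions at each $\mathfrak{r}$ (the "smoothing transformations" $\chi^{(\mathfrak{r}),1}$ then $\chi^{(\mathfrak{r}),2}$, the latter killing $\{\chi^{(\mathfrak{r}),1}_{N^3},Z_{6,N^3}^{\mathrm{fr}}\}$), and then to show that the resulting offender $\Lambda^{(\mathfrak{r})}_{N^3}:=\{\chi^{(\mathfrak{r}),2}_{N^3},Z_{6,N^3}^{\mathrm{fr}}\}$, although of order $\mathfrak{r}$ in $\mathscr{H}_{\mathfrak{r}}^{\E}$, in fact belongs to $\mathscr{H}_{\mathfrak{r}+2}^{(6),\E}$. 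This last step is the "transmutation," and your characterization of it is not right: it is not obtained by "solving a homological equation with coefficients supported in $\J_{n,N^3}^{\E}$" (Lemma~\ref{lem-chi}); there is no further homological equation at that stage. It is a purely algebraic re-indexing inside the class $\mathscr{H}^{\E}$ of Definition~\ref{def:rat:fract}: after the two steps, $\Lambda^{(\mathfrak{r})}_{N^3}$ carries the factor $\big(\Delta^{(4,6),\E}_{\Irr\,\ell,N^3}\big)^{-2}$ together with the product of two $Z_{6,N^3}^{\mathrm{fr}}$-type factors. One of the two $\Delta^{(4,6),\E}_{\Irr\,\ell,N^3}$ is then reclassified from a $\mathbf{k}$ (which costs $4$ in the order count $r=\#\ell-2\#\mathbf{h}-4\#\mathbf{k}$) to an $\mathbf{h}$ (which costs only $2$), thereby raising the declared order by $2$. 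Validating this reclassification requires the $C^{(di)}$ control, and that estimate is where Lemma~\ref{lem:origin} enters (via $|k^{(p)}_1|\lesssim (k^{(p)}_3)^2$ for $k^{(p)}\in\mathcal{R}_5^{\E}$ and the subsequence structure of $Z_{6,N^3}^{\mathrm{fr}}$), not Lemma~\ref{lem-chi}. Finally, the outcome of the transmutation is not dumped into $\mathrm{R^{(rat)}}$ except at the last two steps ($\mathfrak{r}\in\{r-1,r\}$): for $\mathfrak{r}<r-1$ it is folded into the higher-order piece $\Upsilon^{(\mathfrak{r}+2),\mathfrak{r}-1}$ and must survive the remaining iterations. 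Without the two-step-plus-transmutation structure the induction hypothesis cannot be propagated, so as written your proposal does not prove the theorem.

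A minor point: the rational fraction underlying $\chi_5$ belongs to $\mathscr{H}_3^{(4),*,\E}$ (not $\mathscr{H}_5$), and more generally $\chi^{(\mathfrak{r}),j}$ lives in $\mathscr{R}_{\mathfrak{r}-4}^{(6),*,\E}$, because dividing by the new $\mathbf{k}$-denominator $\Delta^{(4,6),\E}_{\Irr\,\ell,N^3}$ lowers the order of the class element by $4$ (see condition (iii) in Definition~\ref{def:rat:fract}). This is bookkeeping, but getting it wrong would also break the sharp-subclass stability used in Propositions~\ref{prop:stab:subclasses} and \ref{prop:stab:chichi}.
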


\begin{remark}
\label{rem:rgg7}
The assumption $r\gg 7 $ means that $r$ has to be larger than a universal constant that we do not try to determine. This assumption is only useful to ensure that if \eqref{ultimate:CFL} is satisfied then many conditions of the kind $\|u\|_{\dot{H}^s} \lesssim_{r,s} N^{-\alpha_r} \gamma^{\beta_r}$ are clearly satisfied (because it is enough to consider the dominant exponent with respect to $r$).
\end{remark}

We are going to prove this Theorem in three steps. The first one, essentially realized in the Theorem \ref{thm-BNF} consists in constructing the maps $\tau^{(0)}$ and $\tau^{(3)}$ to remove all the non-resonant monomials of order less than $r+1$. Then, we will remove the non integrable resonant monomials of order $5$ and $6$ by computing some averages with respect to the dynamics of $Z_4^{\mathcal{E}}$. Finally, we will remove all the resonant non integrable terms of order less than $r+1$ by computing some averages with respect to the dynamics of $Z_4^{\mathcal{E}}+Z_{6,N^3}^{\mathcal{E}}$.

The property \eqref{idhighmodes} is a direct byproduct of the following proof. Indeed, $\tau^{(2)}$ is designed as the composition of Hamiltonian flows with Hamiltonians depending on modes of indices smaller than $N^3$. Consequently, in the proof, we do not pay attention to \eqref{idhighmodes}.

Similarly, in the proof it is clear that the maps $\tau^{(0)},\dots,\tau^{(3)}$ preserve the $L^2$ norm. Indeed, they are constructed by composition of Hamiltonian flows that preserve the $L^2$ norms because the Hamiltonians are polynomials or rational fractions whose numerators are of the form $u^k$ with $k\in \mathcal{M}$.

\subsection{The resonant normal form.} To prove Theorem \ref{thm:KtA}, the first step consist in putting the system in resonant normal form which has been done in Theorem \ref{thm-BNF}. Provided that $\varepsilon_0 \lesssim_{r,s} N^{-3}$ is small enough (which is ensured by the assumption \eqref{ultimate:CFL}), by applying Theorem \ref{thm-BNF}, we get symplectic maps $\tau^{(0)} : B_s(0,4\varepsilon_0) \to B_s(0,8\varepsilon_0)$ and $\tau^{(3)} : B_s(0,8\varepsilon_0) \to \dot{H}^s$ (denoted $\tau^{(3)}$ in Theorem \ref{thm-BNF})  such that $\tau^{(3)}\circ \tau^{(0)} = \mathrm{id}_{\dot{H}^s}$ and
$$
H_{\mathcal{E}} \circ \tau^{(3)} = Z_2^{\mathcal{E}} + Z_4^{\mathcal{E}} + Z_{6,\leq N^3}^{\mathcal{E}} + \mathrm{Res}_{\leq N^3} + \mathrm{R}^{(\mu_3>N)} + \mathrm{R}^{(I_{> N^3})}+ \mathrm{R}^{(or)}
$$
where the different terms are precisely described in the statement of the Theorem \ref{thm-BNF}. Furthermore, by \eqref{tau}, we know that the maps $\tau^{(0)},\tau^{(3)}$ are closed to the identity. Provided that $\varepsilon_0^{3/8} \lesssim_{r} N^{-3}$ (which is ensured by the assumption \eqref{ultimate:CFL}), we deduce that they satisfy \eqref{est:close:138}.

Finally, we have to prove that if $\tau^{(0)}$ is restricted to $V_0$ then its takes its values in $V_1$. On the one hand, since $\varepsilon_0\lesssim 1$, we deduce of \eqref{est:close:138} that for $u\in V_0$, $\|\tau^{(0)}(u)\|_{\dot{H}^s}\leq 2 \|u\|_{\dot{H}^s} \leq 2 \varepsilon_0$. On the over hand, provided that $\varepsilon_0^{2} N^3 \lesssim_{r,s} \gamma^2 N^{-3\cdot 22\rho_{2r}}$, (which is ensured by the assumption \eqref{ultimate:CFL}), we deduce of Proposition \ref{prop:stab_U} that $\tau^{(0)}(u) \in \mathcal{U}_{ \gamma/2,N^3,\rho_{2r}}^{\mathcal{E},s}$ for $u\in V_0$. Consequently, $\tau^{(0)}$ maps $V_0$ into $V_1$.

\subsection{The two first rational steps: resolution of the quintic and sextic terms.}
Let us decompose $\mathrm{Res}_{\leq N^3} + Z_{6,\leq N^3}^{\mathcal{E}} $ as a sum of homogeneous polynomials
$$
\mathrm{Res}_{\leq N^3} + Z_{6,\leq N^3}^{\mathcal{E}}  = P_5 + P_6+ \dots + P_r
$$
where, as stated in Theorem \ref{thm-BNF}, the polynomials $P_m$, $m\geq 5$, are of the form 
\begin{equation}
\label{def:Pm}
P_m(u) = \sum_{\substack{k \in \mathcal{R}^{\mathcal{E}}_m \cap \mathcal D  \\ |k_1| \leq N^3 \\}} c_k^{(m)} \, u^k \ \ \mathrm{with} \  |c_k^{(m)}| \lesssim_{m}   N^{3m}.
\end{equation}
In this subsection and the following, we aim at removing these polynomials.

\subsubsection{Elimination of the quintic term} Following the strategy introduced in \cite{KillBill}, to remove $P_5$, we consider the solution $\chi_5$ of the homological equation $\{ \chi_5 , Z_4^{\mathcal{E}}\} = -P_5$ 
that is
\begin{equation}
\label{def:chi5}
\chi_5(u) := \sum_{\substack{k \in \mathcal{R}^{\mathcal{E}}_5 \cap \mathcal D  \\ |k_1| \leq N^3 \\}} c_k^{(5)} \, \frac{u^k}{2i\pi (k_1 \partial_{I_{k_1}} Z_4^{\mathcal{E}}(I) + \dots + k_5 \partial_{I_{k_5}} Z_4^{\mathcal{E}}(I))} =  \sum_{\substack{k \in \mathcal{R}^{\mathcal{E}}_5 \cap \mathcal D  \\ |k_1| \leq N^3 \\}} c_k^{(5)} \, \frac{u^k}{2i\pi \Delta_k^{(4),\mathcal{E}}}.
\end{equation}
Note that $\chi_5$ is a smooth function well defined on $V_{11/4}$ (we choose $11/4$, but any value strictly less than three would work). Furthermore, naturally, there exists $\Gamma^{(5)} \in \mathscr{H}_3^{(4),*,\mathcal{E}}$ such that 
$$\Gamma^{(5)}_{N^3} = \chi_5 \ \ \  \mathrm{with} \ \ \ C^{(\infty)}_{\Gamma^{(5)}} \lesssim N^{15}, \ C^{(em)}_{\Gamma^{(5)}} \leq N^3, \ C^{(str)}_{\Gamma^{(5)}}\lesssim 1, \ C^{(de)}_{\Gamma^{(5)}} = 5.$$
  
Provided that $\varepsilon_0^{3-11/4} \lesssim_{s}  N^{-15}  \gamma^{2} (N^3)^{-1-14 \cdot 25}$ and $\varepsilon_0^{1/4} \lesssim  \gamma^2  N^{-3\cdot 22 \cdot 5}$ (which is ensured by the assumption \eqref{ultimate:CFL}), Proposition \ref{prop:Lie}  { (applied with $r=3$, $\rho=5$) } proves that the Hamiltonian flows generated by $\pm \chi_5$ are well defined on $V_{11/4}$ for $t\in (0,1)$ and that these flows are closed to the identity
$$
\forall t \in (0,1),\forall u \in V_{11/4}, \ \|\Phi_ {\pm \chi_5}^t(u) - u\|_{\dot{H}^s} \leq \| u \|_{\dot{H}^s}^{7/4} \ \ \mathrm{and} \ \
\|(\mathrm{d}\Phi_ {\pm \chi_5}^t (u))^{-1}\|_{\mathscr{L}(\dot{H}^s)} \leq 2.
$$

Provided that $\varepsilon_0^{3/4} \lesssim \gamma^2  N^{-3\cdot22  \rho_{2r}}$ (which is ensured by the assumption \eqref{ultimate:CFL}), Proposition \ref{prop:stab_U} proves that, for $t\in (0,1)$, $\Phi_ {\chi_5}^t$ maps $V_{11/4}$ in $V_{3}$ and $\Phi_ {- \chi_5}^t$ maps $V_{1}$ in $V_{5/4}$.

Recalling that $Z_2^{\mathcal{E}}$ commutes with $\chi_5$ (see Remark \ref{rem:commuteZ2}), we have that on $V_{11/4}$ 
\begin{equation*}
H_{\mathcal{E}} \circ \tau^{(3)} \circ \Phi_ {\chi_5}^1 = Z_2^{\mathcal{E}}+Z_4^{\mathcal{E}} \circ \Phi_ {\chi_5}^1 + \sum_{m=5}^r P_m\circ \Phi_ {\chi_5}^1 + \mathrm{R^{(res)}} \circ \Phi_ {\chi_5}^1.
\end{equation*}
where $\mathrm{R^{(res)}} = \mathrm{R}^{(\mu_3>N)} + \mathrm{R}^{(I_{> N^3})}+ \mathrm{R}^{(or)}$ (see Theorem \ref{thm-BNF}) is the sum of the remainder terms of the resonant normal form. Then realizing a Taylor expansion of $P_m\circ \Phi_ {\chi_5}^t$ with respect to $t$, we have on $V_{11/4}$ 
$$
P_m\circ \Phi_ {\chi_5}^1 = P_m + \sum_{j=1}^{r-m} \frac1{j!} \mathrm{ad}_{\chi_5}^j P_m + \int_0^1 \frac{(1-t)^{r-m}}{(r-m)!} \mathrm{ad}_{\chi_5}^{r-m+1} P_m \circ \Phi_{\chi_5}^t  \ \mathrm{d}t
$$
and recalling that by construction  $\{ \chi_5 , Z_4^{\mathcal{E}}\} = -P_5$, we have
$$
Z_4^{\mathcal{E}} \circ \Phi_ {\chi_5}^1 = Z_4^{\mathcal{E}} - P_5 - \sum_{j=2}^{r-4} \frac1{j!} \mathrm{ad}_{\chi_5}^{j-1} P_5 - \int_0^1 \frac{(1-t)^{r-4}}{(r-4)!} \mathrm{ad}_{\chi_5}^{r-4} P_5 \circ \Phi_{\chi_5}^t \ \mathrm{d}t.
$$
Consequently, we have
\begin{equation}
\label{normform5}
H_{\mathcal{E}} \circ \tau^{(3)} \circ \Phi_ {\chi_5}^1 = Z_2^{\mathcal{E}}+Z_4^{\mathcal{E}}  + \sum_{m=6}^r Q_m^{(5)} + R^{(rat),5}+ \mathrm{R^{(res)}} \circ \Phi_ {\chi_5}^1
\end{equation}
where we have set
\begin{equation}
\label{def:Qm5}
Q_m^{(5)} = \sum_{p+q = m} \frac1{p !} \mathrm{ad}_{\chi_5}^p P_q - \frac1{(m-4) !} \mathrm{ad}_{\chi_5}^{m-5} P_5
\end{equation}
and
$$
R^{(rat),5} =  \sum_{m=5}^r \int_0^1 \frac{(1-t)^{r-m}}{(r-m)!} \mathrm{ad}_{\chi_5}^{r-m+1} P_m \circ \Phi_{\chi_5}^t \ \mathrm{d}t -  \int_0^1 \frac{(1-t)^{r-4}}{(r-4)!} \mathrm{ad}_{\chi_5}^{r-4} P_5 \circ \Phi_{\chi_5}^t \ \mathrm{d}t.
$$
Since $P_m$ can be identified with a rational fraction $\Upsilon^{(m)} \in \mathscr{H}_m^{(4),\mathcal{E}}$ such that
$$
P_m = \Upsilon^{(m)}_{N^3} \ \mathrm{and} \ C^{(\infty)}_{\Upsilon^{(m)}} \lesssim N^{3m}, \ C^{(em)}_{\Upsilon^{(m)}} \leq N^3, \ C^{(str)}_{\Upsilon^{(m)}}\lesssim_m 1, \ C^{(de)}_{\Upsilon^{(m)}} = m,
$$ 
by applying Proposition \ref{prop:stab_frac} and Proposition \ref{prop:stab:subclasses}, for all $m\geq 6$ there exists $\Xi^{(m),5} \in  \mathscr{H}_m^{(4),\mathcal{E}}$ such that 
\begin{equation}
\label{Q5intheclasse}
Q_m^{(5)}  = \Xi^{(m),5}_{N^3} \ \ \  \mathrm{with} \ \ \ C^{(\infty)}_{\Xi^{(m),5}} \lesssim_m N^{18m}, \ C^{(em)}_{\Xi^{(m),5}} \leq N^3, \ C^{(str)}_{\Xi^{(m),5}}\lesssim_m 1.
\end{equation}
Finally, we refer the reader to the subsection \ref{app:sub:rem} of the Appendix for the control of the remainder terms which leads to\footnote{ We could get a better estimate here, since the orders are smaller than 5, but it wouldn't do any good in the end.}
\begin{equation}
\label{control_R5rat}
\|R^{(rat),5}\|_{\dot{H}^s}\lesssim_{s,r} N^{321 (r+1) -2049}  \sqrt{\gamma}^{-\rho_{r+1}+r-1} N^{12 \,(\rho_{r+1})^2}  \| u \|_{\dot{H}^s}^{r}. 
\end{equation}
\subsubsection{Elimination of the sextic term} Now, we aim at removing the non-integrable part of $Q_6^{(m)}$ in the expansion \eqref{normform5} of $H_{\mathcal{E}} \circ \tau^{(3)} \circ \Phi_ {\chi_5}^1$. 

First, let us detail precisely the structure of $Q_6^{(m)}$. By definition of $Q_6^{(m)}$ (see \eqref{def:Qm5}) and $P_6$ it writes
$$
Q_6^{(m)} = P_6 + \frac12 \{ \chi_5,P_5\}.
$$
By a direct but tedious calculation, $\{ \chi_5,P_5\}$ writes on the form
\begin{equation}
\label{jmelance}
\{ \chi_5,P_5\} = \sum_{\substack{k \in \mathcal{R}^{\mathcal{E}}_8 \cap \mathcal D  \\ |k_1| \leq N^3 \\}} \sum_{\substack{\ell \in \mathcal{R}^{\mathcal{E}}_5 \\ (\ell_j)_{1\leq j\leq 4}\in \mathrm{ss}_4(k)  \\ |\ell_5|\leq N^3}} c_{\ell,k} \frac{u^k}{\Delta_{\ell}^{(4),\mathcal{E}}}  +  \sum_{\substack{\ell,h \in \mathcal{R}^{\mathcal{E}}_5 \cap \mathcal{D}\\ |h_1|\geq \kappa_\ell^{\mathcal{E}} \\ |h_1|,|\ell_1|\leq N^3} }  d_{\ell,h} \frac{u^h u^{\ell}}{(\Delta_{\ell}^{(4),\mathcal{E}})^2}  
\end{equation}
where $\mathrm{ss}_n(k)$ denotes the sub-sequences of $k$ of length $n$ and the coefficient $c_{\ell,k},d_{\ell,k}$ are such that $|c_{\ell,k}|,|d_{\ell,h}| \lesssim N^{3\cdot 3+15+15}=N^{39}$. The relation $|h_1|\geq \kappa_\ell^{\mathcal{E}}$ below could seem strange. Nevertheless, it comes from the computation of a Poisson bracket on the kind $\{ u^h, (\Delta_{\ell}^{(4),\mathcal{E}})^{-1}\}$. Indeed, recalling that by definition of $ \kappa_\ell^{\mathcal{E}}$, $\Delta_{\ell}^{(4),\mathcal{E}}$ is a linear function of actions of indices larger than or equal to  $\kappa_\ell^{\mathcal{E}}$, if we had $|h_1|< \kappa_\ell^{\mathcal{E}}$ then $u^h$ and $(\Delta_{\ell}^{(4),\mathcal{E}})^{-1}\}$ would commute.

In order to remove the non integrable part of $Q_6^{(m)}$, 
 we consider one solution $\chi_6$ of the homological equation $\{ \chi_6 , Z_4^{\mathcal{E}}\} = -\Pi_{\rm NI}Q_6^{(m)}$ where $\Pi_{\rm NI}$ denotes the projection on the non integrable part, 
that is

\begin{multline*}
\chi_6:=   \sum_{\substack{k \in \mathcal{R}^{\mathcal{E}}_6 \cap \mathcal D  \\ |k_1| \leq N^3 \\ \Irr \, k \neq \emptyset}} c_k^{(6)} \, \frac{u^k}{2i\pi \Delta_k^{(4),\mathcal{E}}} +  \sum_{\substack{k \in \mathcal{R}^{\mathcal{E}}_8 \cap \mathcal D  \\ |k_1| \leq N^3 \\ \Irr \ k \neq \emptyset}} \sum_{\substack{\ell \in \mathcal{R}^{\mathcal{E}}_5 \\ (\ell_j)_{1\leq j\leq 4}\in \mathrm{ss}_4(k)  \\ |\ell_5|\leq N^3}} \!\!\! \frac{u^k}{2i\pi \Delta_{\Irr \, k}^{(4),\mathcal{E}} \Delta_{\ell}^{(4),\mathcal{E}}}  c_{\ell,k} \\ +\sum_{\substack{\ell,h \in \mathcal{R}^{\mathcal{E}}_5 \cap \mathcal{D}\\ |h_1|\geq \kappa_\ell^{\mathcal{E}} \\ \Irr(\ell,h)\neq \emptyset \\ |h_1|,|\ell_1|\leq N^3}}  d_{\ell,h} \frac{u^h u^{\ell}}{2i\pi \Delta_{ \Irr(\ell,h)}^{(4),\mathcal{E}}(\Delta_{\ell}^{(4),\mathcal{E}})^2}  
\end{multline*}
where the coefficients $c_k^{(6)}$ are those of $P_6$ (see \eqref{def:Pm}) . Note that we have used that by Lemma \ref{int-order4} { all resonant term of order 4 are integrable and thus } if $k \in \mathcal{R}^{\mathcal{E}}_6$ and $\Irr \, k \neq \emptyset$ then $k$ is irreducible. By construction, it is clear that $\chi_6$ is a smooth function well defined on $V_{5/2}$ {  (since $11/4 >5/2$)}. Furthermore, naturally, there exists $\Gamma^{(6)} \in \mathscr{H}_4^{(4),*,\mathcal{E}}$ such that 
$$\Gamma^{(6)}_{N^3} = \chi_6 \ \ \  \mathrm{with} \ \ \ C^{(\infty)}_{\Gamma^{(6)}} \lesssim N^{39}, \ C^{(em)}_{\Gamma^{(6)}} \leq N^3, \ C^{(de)}_{\Gamma^{(6)}} = 10.$$ Nevertheless, contrary to the previous case, it is not completely obvious that $C^{(str)}_{\Gamma^{(6)}}\lesssim 1$. Indeed, we have to explain why $C^{(di)}_{\Gamma^{(6)}}\lesssim 1$.

First, note that this fact is obvious for the part of $\Gamma^{(6)}$ coming from the resolution of $P_6$. Then, we consider the part associated with $u^k / (\Delta_{\Irr \, k}^{(4),\mathcal{E}} \Delta_{\ell}^{(4),\mathcal{E}})$. Recalling that by Lemma \ref{int-order4} $\Irr \, k\in \mathrm{ss}_6(k)$, we have by Lemma \ref{lem:diot} that $\kappa_{\Irr \, k}^{\mathcal{E}} \lesssim |k_6|$. Furthermore, since $(\ell_j)_{1\leq j\leq 4}\in \mathrm{ss}_4(k)$, by Lemma \ref{lem:diot}, we have $\kappa_{\ell}^{\mathcal{E}} \lesssim |k_4|$. Consequently, we have $(\kappa_{\Irr \, k}^{\mathcal{E}} \kappa_{\ell}^{\mathcal{E}})^2 \lesssim |k_3|\dots |k_6|$. Finally, we have to consider the terms associated with $u^h u^{\ell} / (\Delta_{\Irr \, (h,\ell)}^{(4),\mathcal{E}} (\Delta_{\ell}^{(4),\mathcal{E}})^2)$. We have to consider to cases.

\begin{itemize}
\item \emph{Case $|h_2|>|\ell_1|$.} We have to estimate 
$$
\mathfrak{q}:=\frac{(\kappa_{\ell}^{\mathcal{E}})^4 (\kappa_{\Irr (\ell,h)}^{\mathcal{E}})^2 }{|\ell_1 \dots \ell_{5}||h_3 \dots h_{5}| }.
$$
Recalling that by Lemma \ref{lem:diot}, $\kappa_{\ell}^{\mathcal{E}} \lesssim |\ell_{5}|$, we have $\mathfrak{q}\lesssim  (\kappa_{\Irr (\ell,h)}^{\mathcal{E}})^2 / (|\ell_3||h_3 \dots h_{5}|)$.\\
Since both $h$ and $\ell$ are irreducible and since by Lemma \ref{int-order4}, $\# \Irr (\ell,h) \geq 6$, we have 
$$\kappa_{\Irr (\ell,h)}^{\mathcal{E}} \lesssim |(\Irr (\ell,h))_{\last}| \lesssim \min(|h_3|,|\ell_3|).$$
Consequently, we have $\mathfrak{q}\lesssim 1$.
\item \emph{Case $|h_2|\leq |\ell_1|$.} Denoting by $\mu_1,\mu_2$ the two largest number among $|k_1|,|k_2|,|\ell_1|,|\ell_2|$, we have to estimate
$$
\mathfrak{q}:=\frac{\mu_1 \mu_2(\kappa_{\ell}^{\mathcal{E}})^4 (\kappa_{\Irr (\ell,h)}^{\mathcal{E}})^2 }{|\ell_1 \dots \ell_{5}||h_1 \dots h_{5}| }.
$$
However, since $k,\ell \in \mathcal{M}_5$, we have ${\mu_2\leq |\ell_1|=}\mu_1 \lesssim |\ell_2|$. Consequently, we have 
$$
\mathfrak{q}\lesssim \frac{(\kappa_{\ell}^{\mathcal{E}})^4 (\kappa_{\Irr (\ell,h)}^{\mathcal{E}})^2 }{|\ell_3 \dots \ell_{5}||h_1 \dots h_{5}| }.
$$
Here it is important to recall that by assumption we have $\kappa_{\ell}^{\mathcal{E}}\leq|h_1|$ (see the remark just below \eqref{jmelance}). Consequently, since, as previously, we also have $\kappa_{\ell}^{\mathcal{E}} \lesssim |\ell_{5}|$ and $\kappa_{\Irr (\ell,h)}^{\mathcal{E}} \lesssim |h_3|\lesssim |h_2|$, we get $\mathfrak{q}\lesssim 1$.
\end{itemize}

Naturally, by definition of $\chi_6$ and $ Z_{6,\leq N^3}^{\mathcal{E}}$ (see Theorem \ref{thm-BNF}), $\chi_6$ is a solution of the homological equation
\begin{equation}
\label{def:hom:chi_6}
P_6 + \frac12 \{ \chi_5,P_5\} + \{\chi_6,Z_4^{\mathcal{E}}\} = Z_{6,\leq N^3}^{\mathcal{E}}+ Z_{6,N^3}^{\mathrm{fr}} =: Z^{(6)}_{N^3}
\end{equation}
where $Z^{(6)},Z_{6}^{\mathrm{fr}} \in \mathcal{A}^{(4),\mathcal{E}}_6$ are integrable Hamiltonians such that
$$
\forall \Gamma \in \{ Z^{(6)},Z_{6}^{\mathrm{fr}}\}, \ C^{(\infty)}_{\Gamma} \lesssim N^{39}, \ C^{(em)}_{\Gamma} \leq N^3, \ C^{(de)}_{\Gamma} = 10.
$$ 

Provided that $\varepsilon_0^{4-11/4} \lesssim_{s}  N^{-39} \gamma^{4} (N^3)^{-1-14 \cdot 100}$ and $\varepsilon_0^{1/4} \lesssim 2 \gamma^2  N^{-3\cdot 22 \cdot 10}$ (which is ensured by the assumption \eqref{ultimate:CFL}), Proposition \ref{prop:Lie} { (applied with $r=4$ and $\rho=C^{(de)}_{\Gamma}=10$)}
proves that the Hamiltonian flows generated by $\pm \chi_6$ are well defined on $V_{5/2}$ and that these flows are closed to the identity
$$
\forall t \in (0,1),\forall u \in V_{5/2}, \ \|\Phi_ {\pm \chi_6}^t(u) - u\|_{\dot{H}^s} \leq \| u \|_{\dot{H}^s}^{7/4} \ \ \mathrm{and} \ \
\|(\mathrm{d}\Phi_ {\pm \chi_6}^t (u))^{-1}\|_{\mathscr{L}(\dot{H}^s)} \leq 2.
$$

Provided that $\varepsilon_0^{3/4} \lesssim \gamma^2  N^{-3\cdot 22  \rho_{2r}}$ (which is ensured by the assumption \eqref{ultimate:CFL}), Proposition \ref{prop:stab_U} proves that, for $t\in (0,1)$, $\Phi_ {\chi_6}^t$ maps $V_{5/2}$ in $V_{11/4}$ and $\Phi_ {- \chi_6}^t$ maps $V_{5/4}$ in $V_{3/2}$.

Recalling that  the expansion of $H_{\mathcal{E}} \circ \tau^{(3)} \circ \Phi_ {\chi_5}^1$ is given by \eqref{normform5} and that $Z_2^{\mathcal{E}}$ commutes with $\chi_5$ (see Remark \ref{rem:commuteZ2}), we have that on $V_{5/2}$ 
\begin{equation*}
H_{\mathcal{E}} \circ \tau^{(3)} \circ \Phi_ {\chi_5}^1\circ \Phi_ {\chi_6}^1 = Z_2^{\mathcal{E}}+Z_4^{\mathcal{E}} \circ \Phi_ {\chi_6}^1 + \sum_{m=6}^r Q_m^{(5)}\circ \Phi_ {\chi_5}^1 + \mathrm{R^{(res)}} \circ \Phi_ {\chi_5}^1\circ \Phi_ {\chi_6}^1.
\end{equation*}
 Then, realizing a Taylor expansion of $P_m\circ \Phi_ {\chi_5}^t$ with respect to $t$, we have on $V_{5/2}$ 
$$
Q_m^{(5)}\circ \Phi_ {\chi_6}^1 = Q_m^{(5)} + \sum_{1\leq j\leq (r-m)/2} \frac1{j!} \mathrm{ad}_{\chi_6}^j Q_m^{(5)} + \int_0^1 \frac{(1-t)^{\lfloor(r-m)/2\rfloor}}{\lfloor(r-m)/2\rfloor!} \mathrm{ad}_{\chi_6}^{\lfloor(r-m)/2\rfloor+1} Q_m^{(5)}  \circ \Phi_{\chi_6}^t \ \mathrm{d}t
$$
and recalling that by construction $\{ \chi_6 , Z_4^{\mathcal{E}}\} = -Q_6^{(5)} + Z^{(6)}_{N^3}$ (see \eqref{def:hom:chi_6}), we have
\begin{multline*}
Z_4^{\mathcal{E}} \circ \Phi_ {\chi_6}^1 = Z_4^{\mathcal{E}} - Q_6^{(5)} + Z^{(6)}_{N^3} + \sum_{1\leq j\leq (r-4)/2} \frac1{j!} \mathrm{ad}_{\chi_6}^{j-1}  (Z^{(6)}_{N^3} - Q_6^{(5)} ) \\+ \int_0^1 \frac{(1-t)^{\lfloor r/2 \rfloor -2}}{(\lfloor r/2 \rfloor -2)!} \mathrm{ad}_{\chi_6}^{\lfloor r/2 \rfloor -1} (Z^{(6)}_{N^3} - Q_6^{(5)} )  \circ \Phi_{\chi_6}^t\ \mathrm{d}t.
\end{multline*}
Consequently, we have
\begin{equation}
\label{normform6}
H_{\mathcal{E}} \circ \tau^{(3)} \circ \Phi_ {\chi_5}^1 =Z_2^{\mathcal{E}}+ Z_4^{\mathcal{E}}  +Z^{(6)}_{N^3}+ \sum_{m=7}^r Q_m^{(6)} + R^{(rat),6}+ \mathrm{R^{(res)}} \circ \Phi_ {\chi_5}^1\circ \Phi_ {\chi_6}^1
\end{equation}
where we have set
\begin{equation}
\label{def:Qm6}
Q_m^{(6)} = \sum_{2p+q = m} \frac1{p !} \mathrm{ad}_{\chi_6}^p Q_q^{(6)} - \frac{\mathbb{1}_{m \in 2\mathbb{Z}}}{(m/2-2) !} \mathrm{ad}_{\chi_6}^{m/2-3} (Q_6^{(5)} - Z^{(6)}_{N^3} )
\end{equation}
and
\begin{multline*}
R^{(rat),6} =  R^{(rat),5}\circ  \Phi_ {\chi_6}^1 + \sum_{m=5}^r \int_0^1 \frac{(1-t)^{\lfloor(r-m)/2\rfloor}}{\lfloor(r-m)/2\rfloor!} \mathrm{ad}_{\chi_6}^{\lfloor(r-m)/2\rfloor+1} Q_m^{(5)}  \circ \Phi_{\chi_6}^t \ \mathrm{d}t \\
+   \int_0^1 \frac{(1-t)^{\lfloor r/2 \rfloor -2}}{(\lfloor r/2 \rfloor -2)!} \mathrm{ad}_{\chi_6}^{\lfloor r/2 \rfloor -1} (Z^{(6)}_{N^3} - Q_6^{(5)} )  \circ \Phi_{\chi_6}^t\ \mathrm{d}t.
\end{multline*}
Since $Q_m^{(5)}$ can be identified with a rational fraction $\Xi^{(m),5} \in \mathscr{H}_m^{(4),\mathcal{E}}$ (see \eqref{Q5intheclasse}) and that similarly $Z^{(6)}_{N^3} - Q_6^{(5)}$ can be identify with a rational fraction in $\mathscr{H}_6^{(4),\mathcal{E}}$ satisfying the same bounds as $\Xi^{(6),5}$ (rigorously it is nothing but a subset of $\Xi^{(6),5}$),
by applying Proposition \ref{prop:stab_frac} and Proposition \ref{prop:stab:subclasses}, for all $m\geq 7$, there exists $\Xi^{(m),6} \in  \mathscr{H}_m^{(4),\mathcal{E}}$ such that 
$$Q_m^{(6)}  = \Xi^{(m),6}_{N^3} \ \ \  \mathrm{with} \ \ \ C^{(\infty)}_{\Xi^{(m),6}} \lesssim_m N^{24m}, \ C^{(em)}_{\Xi^{(m),6}} \leq N^3, \ C^{(str)}_{\Xi^{(m),6}}\lesssim_m 1.$$

Finally, we refer the reader to the subsection \ref{app:sub:rem} of the Appendix for the control of the remainder terms which leads to have same control on $R^{(rat),6}$ that we had for $R^{(rat),5}$ (see \eqref{control_R5rat}).

\subsection{The high order rational steps. } Now we aim at removing the non integrable resonant terms of order higher than $7$. We are going to proceed by induction on $\mathfrak{r} \in \llbracket 7,r\rrbracket$ to prove that
there exist $2$ symplectic maps $\tau^{(1),\mathfrak{r}},\tau^{(2),\mathfrak{r}}$ making the following diagram to commute
\begin{equation*}
\xymatrixcolsep{3pc} \xymatrix{ V_{\frac32 + \frac12\frac{ \mathfrak{r}- 7}{r-7}} \ar[r]^{\mathrm{id}_{\dot{H}^s}} & V_{\frac52 - \frac12 \frac{ \mathfrak{r}- 7}{r-7}} \ar[d]^{\tau^{(2),\mathfrak{r}}} \\
 V_{1} \ar[u]^{\tau^{(1),\mathfrak{r}}} \ar[r]^{\mathrm{id}_{\dot{H}^s}} & V_3 },
\end{equation*}
close to the identity
\begin{equation}
\label{est:close:74}
\forall \sigma\in \{1,2\}, \  \|\tau^{(\sigma)}(u)-u\|_{\dot{H}^s} \lesssim_r \|u\|_{\dot{H}^s}^{7/4}
\end{equation}
 such that $H_{\mathcal{E}} \circ \tau^{(3)} \circ \tau^{(2),\mathfrak{r}}$ writes
\begin{equation}
\label{exp:it}
H_{\mathcal{E}} \circ \tau^{(3)} \circ \tau^{(2),\mathfrak{r}}(u) = Z_2^{\mathcal{E}} + Z_4^{\mathcal{E}} + \sum_{m=6}^{\mathfrak{r}} Z^{(m)}_{N^3} +  \sum_{m=\mathfrak{r}+1}^{r}  \Upsilon^{(m),\mathfrak{r}-1}_{N^3} + \mathrm{R^{(res)}}\circ \tau^{(2),\mathfrak{r}} + \mathrm{R^{(rat),\mathfrak{r}}}
\end{equation}
where the integrable Hamiltonians $Z^{(m)}_{N^3}$ are those described in Theorem \ref{thm-BNF} (and $Z^{(6)}_{N^3}=Z_{6,\leq N^3}^{\mathcal{E}}+ Z_{6,N^3}^{\mathrm{fr}}$ is given by \eqref{def:hom:chi_6}) satisfying for $m\geq 8$, $C^{(\infty)}_{Z^{(m)}_{N^3}} \lesssim_m N^{ 321m-2079}$, $ \mathrm{R^{(rat),\mathfrak{r}}}$ satisfies the same estimate \eqref{control_R5rat} as $R^{(rat),5}$, the norm of the invert of the differential of $\tau^{(2),\mathfrak{r}}$ is controlled by $\mathfrak{r}$ and, for $\mathfrak{r}+1\leq m \leq r$, $\Upsilon^{(m),\mathfrak{r}-1}\in \mathscr{H}_m^{(6),\mathcal{E}}$ satisfies
\begin{equation}
\label{est:xi:m}
C^{(\infty)}_{\Upsilon^{(m),\mathfrak{r}-1}} \lesssim_m N^{ 321m-2079}, \ C^{(em)}_{\Upsilon^{(m),\mathfrak{r}-1}} \leq N^3, \ C^{(str)}_{\Upsilon^{(m),\mathfrak{r}-1}}\lesssim_m 1.
\end{equation}

Note that the case $\mathfrak{r} = 6$ have been proven in the previous subsection. Consequently, here, we only focus on proving that if this normal form result holds at an index $\mathfrak{r}-1$ with $\mathfrak{r}\leq r$ then it also holds at the index $\mathfrak{r}$. 

In order to remove the non-integrable part of $\Upsilon^{(\mathfrak{r}),\mathfrak{r}-2}_{N^3}$, we are going to proceed in $3$ steps. In the two first steps, we solve some homological equations associated with $Z_4^{\mathcal{E}}+Z_{6,\leq N^3}^{\mathcal{E}}$. However, due to the Hamiltonian $Z_{6,N^3}^{\mathrm{fr}}$, it makes appear a new non-integrable term of order $\mathfrak{r}$. Therefore, a priori, these normal form steps seem useless. Neverthleless, actually, at each step, the terms of order $\mathfrak{r}$ become smoother (in some unusual sense). Then using this additional smoothness, we convert this term of order $\mathfrak{r}$ in a term of order $\mathfrak{r}+2$ just by transferring a denominator associated with a $\mathbf{k}$ (i.e. of order $4$) to a denominator  associated with a $\mathbf{h}$ (i.e. of order $2$ but with some derivatives to distribute). { We call this new step, the transmutation step.}

Let $Z^{(\mathfrak{r})}\in \mathscr{A}_{\mathfrak{r}}^{(6),\mathcal{E}}$ and $\Gamma^{(\mathfrak{r})}\in \mathscr{R}_{\mathfrak{r}}^{(6),\mathcal{E}}$ denote respectively the integrable and non integrable part of $\Upsilon^{(\mathfrak{r}),\mathfrak{r}-2}$, i.e. 
$$Z^{(\mathfrak{r})} \cup \Gamma^{(\mathfrak{r})} = \Upsilon^{(\mathfrak{r}),\mathfrak{r}-2}.$$

\subsubsection{A first smoothing transformation. } Let $\chi^{(\mathfrak{r}),1} \in \mathscr{R}_{\mathfrak{r}-4}^{(6),*,\mathcal{E}}$ be the solution of the homological equation
\begin{equation}
\label{hom:chir1}
\{  \chi^{(\mathfrak{r}),1}_{N^3},Z_4^{\mathcal{E}}+Z_{6,\leq N^3}^{\mathcal{E}}\} + \Gamma^{(\mathfrak{r})}_{N^3} = 0
\end{equation}
implicitly defined by
\begin{equation}
\label{def:chir1}
 \chi^{(\mathfrak{r}),1}_{N^3} = \sum_{(\ell,\mathbf{h},\mathbf{k},n,c) \in \Gamma^{(\mathfrak{r})}} c \, u^{\ell} \frac{f_{\ell,\mathbf{h},\mathbf{k},n,c}(I)}{2i\pi \Delta^{(4,6),\mathcal{E}}_{\Irr \, \ell,N^3}(I) }
\end{equation}
where $f_{\ell,\mathbf{h},\mathbf{k},n,c}(I)$ denotes the denominator of $\Gamma^{(\mathfrak{r})}_{N^3}$ naturally associated with $(\ell,\mathbf{h},\mathbf{k},n,c){ \in \Gamma^{(\mathfrak{r})}}$ (see Definition \ref{def:ev:frac}). Of course $\chi^{(\mathfrak{r}),1}$ satisfies the same estimates as $\Upsilon^{(\mathfrak{r}),\mathfrak{r}-2}$ (i.e. \eqref{est:xi:m}). Note that, here, the denominator  $ \Delta^{(4,6),\mathcal{E}}_{\Irr \, \ell,N^3}(I)$ is considered as a term of order $4$ (i.e. a $\mathbf{k}$). %Furthermore {\red However}, it is not sharp to write that $\chi^{(\mathfrak{r}),1} \in \mathscr{R}_{\mathfrak{r}-4}^{(6),*,\mathcal{E}}$ {\red  you have in mind $\mathscr{R}_{\mathfrak{r}-4}^{(4),*,\mathcal{E}}$ ?} but a more precise estimate would be useless here.

Provided that $\varepsilon_0^{\mathfrak{r}-4-11/4} \lesssim_{s,\mathfrak{r}}  N^{-321 \mathfrak{r}+2079} \sqrt{\gamma}^{\rho_{\mathfrak{r}}-\mathfrak{r}+4+2} (N^3)^{-1-14 \cdot \rho_{\mathfrak{r}}}$ and $\varepsilon_0^{1/4} \lesssim \gamma^2  N^{-3\cdot 22 \cdot \rho_{\mathfrak{r}}}$  (which is ensured by the assumption \eqref{ultimate:CFL}), Proposition \ref{prop:Lie} proves that the Hamiltonian flows generated by $\pm  \chi^{(\mathfrak{r}),1}_{N^3}$ are well defined on $V_{\frac52 - \frac12 \frac{ \mathfrak{r}- 7.5}{r-7}}$ and that these flows are close to the identity
$$
\forall t \in (0,1),\forall u \in V_{\frac52 - \frac12 \frac{ \mathfrak{r}- 7.5}{r-7}}, \ \|\Phi_ {\pm  \chi^{(\mathfrak{r}),1}_{N^3}}^t(u) - u\|_{\dot{H}^s} \leq \| u \|_{\dot{H}^s}^{7/4}, \ \
\|(\mathrm{d}\Phi_ {\pm  \chi^{(\mathfrak{r}),1}_{N^3}}^t (u))^{-1}\|_{\mathscr{L}(\dot{H}^s)} \leq 2.
$$

Provided that $\varepsilon_0^{3/4} \lesssim_{ r} \gamma^2  N^{-3\cdot 22  \rho_{2r}}$ (which is ensured by the assumption \eqref{ultimate:CFL}), Proposition \ref{prop:stab_U} proves that, for $t\in (0,1)$, $\Phi_ {-\chi^{(\mathfrak{r}),1}_{N^3}}^t$ maps $V_{\frac32 + \frac12 \frac{ \mathfrak{r}- 8}{r-7}}$ in $V_{\frac32 + \frac12 \frac{ \mathfrak{r}- 7.5}{r-7}}$ and $\Phi_ {\chi^{(\mathfrak{r}),1}_{N^3}}^t$ maps $V_{\frac52 - \frac12 \frac{ \mathfrak{r}- 7.5}{r-7}}$ in $V_{\frac52 - \frac12 \frac{ \mathfrak{r}- 8}{r-7}}$.

Denoting $\tau^{(2),\mathfrak{r}-1/2} = \tau^{(2),\mathfrak{r}-1}\circ  \Phi_ {\chi^{(\mathfrak{r}),1}_{N^3}}^1$ and recalling that the expansion of $H_{\mathcal{E}} \circ \tau^{(3)} \circ \tau^{(2),\mathfrak{r}-1}$ is given by \eqref{exp:it}, we have that on $V_{\frac52 - \frac12 \frac{ \mathfrak{r}- 7.5}{r-7}}$ 
\begin{multline*} %ca va y a pas grand monde ^^ !
H_{\mathcal{E}} \circ \tau^{(3)} \circ \tau^{(2),\mathfrak{r}-1/2}  = Z_2^{\mathcal{E}} + (Z_4^{\mathcal{E}}+Z_{6,\leq N^3}^{\mathcal{E}})\circ  \Phi_ {\chi^{(\mathfrak{r}),1}_{N^3}}^1  + Z_{6,N^3}^{\mathrm{fr}}\circ  \Phi_ {\chi^{(\mathfrak{r}),1}_{N^3}}^1
+ \sum_{m=8}^{\mathfrak{r}} Z^{(m)}_{N^3}\circ  \Phi_ {\chi^{(\mathfrak{r}),1}_{N^3}}^1 \\+ \Gamma^{(\mathfrak{r})}_{N^3}  \circ  \Phi_ {\chi^{(\mathfrak{r}),1}_{N^3}}^1+\sum_{m=\mathfrak{r}+1}^{r}  \Upsilon^{(m),\mathfrak{r}-2}_{N^3}\circ  \Phi_ {\chi^{(\mathfrak{r}),1}_{N^3}}^1+ \mathrm{R^{(res)}}\circ  \tau^{(2),\mathfrak{r}-1/2} + \mathrm{R^{(rat),\mathfrak{r}-1}}\circ  \Phi_ {\chi^{(\mathfrak{r}),1}_{N^3}}^1.
\end{multline*}
Recalling that $\chi^{(\mathfrak{r}),1}_{N^3}$ solves the homological equation \eqref{hom:chir1} and realizing, as previously,  a Taylor expansions of some of these terms, we get
\begin{multline}
\label{exp:tau12}
H_{\mathcal{E}} \circ \tau^{(3)} \circ \tau^{(2),\mathfrak{r}-1/2}  = Z_2^{\mathcal{E}} + Z_4^{\mathcal{E}} + \sum_{m=6}^{\mathfrak{r}} Z^{(m)}_{N^3} + \{ \chi^{(\mathfrak{r}),1}_{N^3}, Z_{6,N^3}^{\mathrm{fr}}\} + \sum_{n=\mathfrak{r}+1}^{r}  Q^{(\mathfrak{r}-1/2)}_{n} \\+ \mathrm{R^{(res)}}\circ\tau^{(2),\mathfrak{r}-1/2}  + \mathrm{R^{(rat),\mathfrak{r}-1/2}}
\end{multline}
where $Q^{(\mathfrak{r}-1/2)}_{n}$ is the Hamiltonian of order $n$ given by\footnote{the following sums hold on the indices $j$ and $m$ satisfying the prescribed conditions.}
\begin{multline*}
Q^{(\mathfrak{r}-1/2)}_{n} = \sum_{ \substack{j(\mathfrak{r} -6)+m =n\\ 8\leq m \leq \mathfrak{r} } } \frac1{j!} \mathrm{ad}^{j}_{\chi^{(\mathfrak{r}),1}_{N^3}} Z^{(m)}_{N^3} + \sum_{ \substack{j(\mathfrak{r} -6)+m =n\\ \mathfrak{r} + 1\leq m \leq r} } \frac1{j!} \mathrm{ad}^{j}_{\chi^{(\mathfrak{r}),1}_{N^3}} \Upsilon^{(m),\mathfrak{r}-2}_{N^3} + \sum_{ j(\mathfrak{r} -6)+6 =n }  \frac1{j!} \mathrm{ad}^{j}_{\chi^{(\mathfrak{r}),1}_{N^3}} Z_{6,N^3}^{\mathrm{fr}} \\
+\sum_{ j(\mathfrak{r} -6)+ \mathfrak{r}=n }  \left(\frac1{j!}-\frac1{(j+1)!}\right) \mathrm{ad}^{j}_{\chi^{(\mathfrak{r}),1}_{N^3}} \Gamma^{(\mathfrak{r})}_{N^3}
\end{multline*}
and $\mathrm{R^{(rat),\mathfrak{r}-1/2}}$ is given by
\begin{multline} %c'est ti pas mignon !
\label{la plus jolie formule de tout l univers}
\mathrm{R^{(rat),\mathfrak{r}-1/2}}= \mathrm{R^{(rat),\mathfrak{r}-1}}\circ  \Phi_ {\chi^{(\mathfrak{r}),1}_{N^3}}^1  + \sum_{m=8}^{\mathfrak{r}} \int_0^1 \frac{(1-t)^{\lfloor \frac{r-m}{\mathfrak{r}-6} \rfloor}}{\lfloor \frac{r-m}{\mathfrak{r}-6} \rfloor !}\mathrm{ad}^{1+\lfloor \frac{r-m}{\mathfrak{r}-6} \rfloor }_{\chi^{(\mathfrak{r}),1}_{N^3}} Z^{(m)}_{N^3}  \circ  \Phi_ {\chi^{(\mathfrak{r}),1}_{N^3}}^t \mathrm{d}t \\
 + \sum_{m=\mathfrak{r}+1}^r \int_0^1 \frac{(1-t)^{\lfloor \frac{r-m}{\mathfrak{r}-6} \rfloor}}{\lfloor \frac{r-m}{\mathfrak{r}-6} \rfloor !}\mathrm{ad}^{1+\lfloor \frac{r-m}{\mathfrak{r}-6} \rfloor }_{\chi^{(\mathfrak{r}),1}_{N^3}} \Upsilon^{(m),\mathfrak{r}-2}_{N^3}  \circ  \Phi_ {\chi^{(\mathfrak{r}),1}_{N^3}}^t \mathrm{d}t + \int_0^1 \frac{(1-t)^{\lfloor \frac{r-6}{\mathfrak{r}-6} \rfloor}}{\lfloor \frac{r-6}{\mathfrak{r}-6} \rfloor !}\mathrm{ad}^{1+\lfloor \frac{r-6}{\mathfrak{r}-6} \rfloor }_{\chi^{(\mathfrak{r}),1}_{N^3}} Z_{6,N^3}^{\mathrm{fr}}  \circ  \Phi_ {\chi^{(\mathfrak{r}),1}_{N^3}}^t \mathrm{d}t \\
 + \int_0^1 \left[ \frac{(1-t)^{\lfloor \frac{r-\mathfrak{r}}{\mathfrak{r}-6} \rfloor}}{\lfloor \frac{r-\mathfrak{r}}{\mathfrak{r}-6} \rfloor !} - \frac{(1-t)^{1+\lfloor \frac{r-\mathfrak{r}}{\mathfrak{r}-6} \rfloor}}{\lfloor 1+  \frac{r-\mathfrak{r}}{\mathfrak{r}-6} \rfloor !} \right]\mathrm{ad}^{1+\lfloor \frac{r-\mathfrak{r}}{\mathfrak{r}-6} \rfloor }_{\chi^{(\mathfrak{r}),1}_{N^3}} \Gamma^{(\mathfrak{r})}_{N^3}  \circ  \Phi_ {\chi^{(\mathfrak{r}),1}_{N^3}}^t \mathrm{d}t.
\end{multline}
% Si on a besoin des détails, c'est à la fin du tex dans bloc de détails 1

Since $\chi^{(\mathfrak{r}),1} \in \mathscr{R}_{\mathfrak{r}-4}^{(6),*,\mathcal{E}}$, $Z_{6}^{\mathrm{fr}} \in \mathcal{A}^{(4),\mathcal{E}}_6$, $Z^{(m)},\Upsilon^{(m),\mathfrak{r}-2}\in \mathscr{H}_m^{(6),\mathcal{E}}$, we deduce of  Proposition \ref{prop:stab_frac}, Proposition \ref{prop:stab:subclasses} and Proposition \ref{prop:stab:chichi} that there exists $\Upsilon^{(n),\mathfrak{r}-3/2}\in \mathscr{H}_n^{(6),\mathcal{E}}$ such that
$$Q^{(\mathfrak{r}-1/2)}_{n}  = \Upsilon^{(n),\mathfrak{r}-3/2}_{N^3} \ \ \  \mathrm{with} \ \ \ C^{(\infty)}_{\Upsilon^{(n),\mathfrak{r}-3/2}} \lesssim_n N^{ 321n-2079}, \ C^{(em)}_{\Upsilon^{(n),\mathfrak{r}-3/2}} \leq N^3, \ C^{(str)}_{\Upsilon^{(n),\mathfrak{r}-3/2}}\lesssim_n 1.$$

Finally, we refer the reader to the subsection \ref{app:sub:rem} of the Appendix for the control of the remainder terms which leads to have same control on $R^{(rat),\mathfrak{r}-1/2}$ that the one we had for $R^{(rat),5}$ in \eqref{control_R5rat}.

\subsubsection{A second smoothing transformation. } In the expansion \eqref{exp:tau12} of $H_{\mathcal{E}} \circ \tau^{(3)} \circ \tau^{(2),\mathfrak{r}-1/2}$, there is still an non-integrable term of order $\mathfrak{r}$ : $\{ \chi^{(\mathfrak{r}),1}_{N^3}, Z_{6,N^3}^{\mathrm{fr}}\}$. Indeed, applying Proposition \ref{prop:stab_frac}, there exists $\Gamma^{(\mathfrak{r}+1/2)}\in \mathscr{H}_{\mathfrak{r}}^{\mathcal{E}}$ such that 
$$\{ \chi^{(\mathfrak{r}),1}_{N^3}, Z_{6,N^3}^{\mathrm{fr}}\}  = \Gamma^{(\mathfrak{r}+1/2)}$$ and
\begin{equation}
\label{est:gamma:12}
 C^{(\infty)}_{\Gamma^{(\mathfrak{r}+1/2)}} \lesssim_{\mathfrak{r}} N^{ 321\mathfrak{r}-2079 + 48}, \ C^{(em)}_{\Gamma^{(\mathfrak{r}+1/2)}} \leq N^3, \ C^{(str)}_{\Gamma^{(\mathfrak{r}+1/2)}}\lesssim_{\mathfrak{r}} 1.
\end{equation}
Since $Z_{6}^{\mathrm{fr}} \in \mathcal{A}^{(4),\mathcal{E}}_6$ is an integrable Hamiltonian, considering the definition \eqref{def:chir1} of $\chi^{(\mathfrak{r}),1}_{N^3}$, we observe that if $(\ell,\mathbf{h},\mathbf{k},n,c) \in \Gamma^{(\mathfrak{r}+1/2)}$ then $\Irr \, \ell\neq \emptyset$ and there exists $\beta \in \mathbb{N}^3$ such that $\beta_1+\beta_2+\beta_3\leq \mathfrak{r}-7$ and
$$
n\leq 13 \mathfrak{r}-87 +\beta_1 +3 \hspace{1cm}  \# \mathbf{h}-n \leq \beta_2 \hspace{1cm}  \# \mathbf{k} \leq 4\mathfrak{r}-28 + \beta_3 +1.
$$
We refer the reader to the Propositions \ref{prop:stab:subclasses} and \ref{prop:stab:chichi} where similar estimates are explained in details and we also refer to the next subsection \ref{sub:sub:trans} where this term is computed precisely. As a consequence of these bounds on the number of denominators, as previously, we introduce $\chi^{(\mathfrak{r}),2} \in \mathscr{R}_{\mathfrak{r}-4}^{(6),*,\mathcal{E}}$ (see Definition \ref{sharpclass}) the solution of the homological equation
\begin{equation}
\label{hom:chir2}
\{  \chi^{(\mathfrak{r}),2}_{N^3},Z_4^{\mathcal{E}}+Z_{6,\leq N^3}^{\mathcal{E}}\} + \Gamma^{(\mathfrak{r}+1/2)}_{N^3} = 0
\end{equation}
implicitly defined by
\begin{equation}
\label{def:chir2}
 \chi^{(\mathfrak{r}),2}_{N^3} = \sum_{(\ell,\mathbf{h},\mathbf{k},n,c) \in \Gamma^{(\mathfrak{r}+1/2)}} c \, u^{\ell} \frac{f_{\ell,\mathbf{h},\mathbf{k},n,c}(I)}{2i\pi \Delta^{(4,6),\mathcal{E}}_{\Irr \, \ell,N^3}(I) }
\end{equation}
where $f_{\ell,\mathbf{h},\mathbf{k},n,c}(I)$ denotes the denominator of $\Gamma^{(\mathfrak{r}+1/2)}_{N^3}$ naturally associated with $(\ell,\mathbf{h},\mathbf{k},n,c)$ (see Definition \ref{def:ev:frac}). Of course $\chi^{(\mathfrak{r}),2}$ satisfies the same estimates as $\Gamma^{(\mathfrak{r}+1/2)}$ (i.e. \eqref{est:gamma:12}). Note that here the denominator  $ \Delta^{(4,6),\mathcal{E}}_{\Irr \, \ell,N^3}(I)$ is considered as a term of order $4$ (i.e. a $\mathbf{k}$).

Provided that $\varepsilon_0^{\mathfrak{r}-4-11/4} \lesssim_{s,\mathfrak{r}}  N^{ -321\mathfrak{r}+2079 -48} \sqrt{\gamma}^{\rho_{\mathfrak{r}}-\mathfrak{r}+4+2} (N^3)^{-1-14 \cdot \rho_{\mathfrak{r}}}$ and $\varepsilon_0^{1/4} \lesssim \gamma^2  N^{-3\cdot 22 \cdot \rho_{\mathfrak{r}}}$ (which is ensured by the assumption \eqref{ultimate:CFL}), Proposition \ref{prop:Lie} proves that the Hamiltonian flows generated by $\pm  \chi^{(\mathfrak{r}),2}_{N^3} $ are well defined on $V_{\frac52 - \frac12 \frac{ \mathfrak{r}- 7}{r-7}}$ and that these flows are closed to the identity
$$
\forall t \in (0,1),\forall u \in V_{\frac52 - \frac12 \frac{ \mathfrak{r}- 7}{r-7}}, \ \|\Phi_ {\pm  \chi^{(\mathfrak{r}),2}_{N^3}}^t(u) - u\|_{\dot{H}^s} \leq \| u \|_{\dot{H}^s}^{7/4}, \ \
\|(\mathrm{d}\Phi_ {\pm  \chi^{(\mathfrak{r}),2}_{N^3}}^t (u))^{-1}\|_{\mathscr{L}(\dot{H}^s)} \leq 2.
$$

Provided that $\varepsilon_0^{3/4} \lesssim_{ r} \gamma^2  N^{-3\cdot 22  \rho_{2r}}$ (which is ensured by the assumption \eqref{ultimate:CFL}), Proposition \ref{prop:stab_U} proves that, for $t\in (0,1)$, $\Phi_ {-\chi^{(\mathfrak{r}),2}_{N^3}}^t$ maps $V_{\frac32 + \frac12 \frac{ \mathfrak{r}- 7.5}{r-7}}$ in $V_{\frac32 + \frac12 \frac{ \mathfrak{r}- 7}{r-7}}$ and $\Phi_ {\chi^{(\mathfrak{r}),2}_{N^3}}^t$ maps $V_{\frac52 - \frac12 \frac{ \mathfrak{r}- 7}{r-7}}$ in $V_{\frac52 - \frac12 \frac{ \mathfrak{r}- 7.5}{r-7}}$.

Denoting $\tau^{(2),\mathfrak{r}} = \tau^{(2),\mathfrak{r}-1/2}\circ  \Phi_ {\chi^{(\mathfrak{r}),2}_{N^3}}^1$, recalling that the expansion of $H_{\mathcal{E}} \circ \tau^{(3)} \circ \tau^{(2),\mathfrak{r}-1/2}$ is given by \eqref{exp:tau12} and that $\chi^{(\mathfrak{r}),2}_{N^3}$ solves the homological equation \eqref{hom:chir2}, and 
 realizing, as previously,  a Taylor expansions of some of these terms, we get, on $V_{\frac52 - \frac12 \frac{ \mathfrak{r}- 7}{r-7}}$, that
\begin{multline}
\label{exp:tau12bis}
H_{\mathcal{E}} \circ \tau^{(3)} \circ \tau^{(2),\mathfrak{r}}  = Z_2^{\mathcal{E}} + Z_4^{\mathcal{E}} + \sum_{m=6}^{\mathfrak{r}} Z^{(m)}_{N^3} + \{ \chi^{(\mathfrak{r}),2}_{N^3}, Z_{6,N^3}^{\mathrm{fr}}\} + \sum_{n=\mathfrak{r}+1}^{r}  Q^{(\mathfrak{r})}_{n} \\+ \mathrm{R^{(res)}}\circ\tau^{(2),\mathfrak{r}}  + \mathrm{R^{(rat),\mathfrak{r}}}
\end{multline}
where $Q^{(\mathfrak{r})}_{n}$ is the Hamiltonian of order $n$ given by
\begin{multline*}
Q^{(\mathfrak{r})}_{n} = \sum_{ \substack{j(\mathfrak{r} -6)+m =n\\ 8\leq m \leq \mathfrak{r} } } \frac1{j!} \mathrm{ad}^{j}_{\chi^{(\mathfrak{r}),2}_{N^3}} Z^{(m)}_{N^3} + \sum_{ \substack{j(\mathfrak{r} -6)+m =n\\ \mathfrak{r} + 1\leq m \leq r} } \frac1{j!} \mathrm{ad}^{j}_{\chi^{(\mathfrak{r}),2}_{N^3}} \Upsilon^{(m),\mathfrak{r}-3/2}_{N^3} + \sum_{ j(\mathfrak{r} -6)+6 =n }  \frac1{j!} \mathrm{ad}^{j}_{\chi^{(\mathfrak{r}),2}_{N^3}} Z_{6,N^3}^{\mathrm{fr}} \\
+\sum_{ j(\mathfrak{r} -6)+ \mathfrak{r}=n }  \left(\frac1{j!}-\frac1{(j+1)!}\right) \mathrm{ad}^{j}_{\chi^{(\mathfrak{r}),2}_{N^3}} \Gamma^{(\mathfrak{r}+1/2)}_{N^3}
\end{multline*}
and $\mathrm{R^{(rat),\mathfrak{r}}}$ is given by the same formula as $\mathrm{R^{(rat),\mathfrak{r}-1/2}}$ (i.e. \eqref{la plus jolie formule de tout l univers}) but with the change of index $\mathfrak{r} \leftarrow \mathfrak{r}+1/2$.

Since $\chi^{(\mathfrak{r}),2} \in \mathscr{R}_{\mathfrak{r}-4}^{(6),*,\mathcal{E}}$, $Z_{6}^{\mathrm{fr}} \in \mathcal{A}^{(4),\mathcal{E}}_6$, $Z^{(m)},\Upsilon^{(m),\mathfrak{r}-3/2}\in \mathscr{H}_m^{(6),\mathcal{E}}$, we deduce of  Proposition \ref{prop:stab_frac}, Proposition \ref{prop:stab:subclasses} and Proposition \ref{prop:stab:chichi} that there exists $\Upsilon^{(n),\mathfrak{r}-1}\in \mathscr{H}_n^{(6),\mathcal{E}}$ such that
$$Q^{(\mathfrak{r})}_{n}  = \Upsilon^{(n),\mathfrak{r}-1}_{N^3} \ \ \  \mathrm{with} \ \ \ C^{(\infty)}_{\Upsilon^{(n),\mathfrak{r}-1}} \lesssim_n N^{ 321n-2079}, \ C^{(em)}_{\Upsilon^{(n),\mathfrak{r}-1}} \leq N^3, \ C^{(str)}_{\Upsilon^{(n),\mathfrak{r}-1}}\lesssim_n 1.$$

Finally, we refer the reader to the subsection \ref{app:sub:rem} of the Appendix for the control of the remainder terms which leads to have same control on $R^{(rat),\mathfrak{r}}$ that the one we had for $R^{(rat),5}$ in \eqref{control_R5rat}.

\subsubsection{Transmutation of a denominator and conclusion.} \label{sub:sub:trans}
After these two steps of normal form, the expansion \eqref{exp:tau12bis} of $H_{\mathcal{E}} \circ \tau^{(3)} \circ \tau^{(2),\mathfrak{r}} $ seems similar to the expansion $H_{\mathcal{E}} \circ \tau^{(3)} \circ \tau^{(2),\mathfrak{r}-1}$. Nevertheless, the non integrable term of order $\mathfrak{r}$ denoted $\Gamma^{(\mathfrak{r})}$ has been replaced by $ \{ \chi^{(\mathfrak{r}),2}_{N^3}, Z_{6,N^3}^{\mathrm{fr}}\}$, which, following Proposition \ref{prop:stab_frac}, is another term of  order $\mathfrak{r}$. Describing very carefully this term, we are going to explain why one of its denominators of order $4$ can be considered as a denominator of order $2$. It will prove that this term is actually a term of order $\mathfrak{r}+2$ and it will conclude the proof of this induction.

First, we have to describe precisely $Z_{6,N^3}^{\mathrm{fr}}$. By definition (see \eqref{def:hom:chi_6}), it is the integrable part of $\frac12 \{ \chi_5,P_5\}$. Recalling that $P_5$ is given by \eqref{def:Pm} and $\chi_5$ is given by \eqref{def:chi5}, $Z_{6,N^3}^{\mathrm{fr}}$ can be written as
$$
Z_{6,N^3}^{\mathrm{fr}}(I) = \sum_{\substack{k \in \mathcal{R}^{\mathcal{E}}_5 \cap \mathcal D  \\ |k_1| \leq N^3 \\}}  \sum_{\substack{h \in \mathrm{ss}(k)\\4\leq \# h }} c_{k,h} \frac{I^h}{(\Delta_{k}^{(4),\mathcal{E}})^{\# h-3}}
$$
where $c_{k,\ell}$ are some coefficients satisfying $|c_{k,h}|\lesssim N^{39}$ and $\mathrm{ss}(k)$ denotes the set of the subsequences of $k$. Consequently, if $\ell \in \mathcal{M}$, we have
$$
\{u^\ell,Z_{6,N^3}^{\mathrm{fr}}\} = u^\ell (\sum_{j=1}^{\# \ell} 2i\pi\ell_j \partial_{I_{\ell_j}}Z_{6,N^3}^{\mathrm{fr}})\\
= u^{\ell}\! \! \! \! \! \! \sum_{\substack{k \in \mathcal{R}^{\mathcal{E}}_5 \cap \mathcal D  \\ (\Irr \ell)_{\last}\leq |k_1| \leq N^3}}  \sum_{ \substack{h \in \mathrm{ss}(k)\\ 3\leq \# h}} c_{k,h,\ell} \frac{I^h}{(\Delta_{k}^{(4),\mathcal{E}})^{\# h-2}}
$$
where $c_{k,h,\ell}$ are some coefficients such that $|c_{k,h,\ell}|\lesssim_{\mathfrak{r}} N^{48}$. Let us justify the condition $(\Irr \ell)_{\last}\leq |k_1|$ in the sum above. The term of index $k$ refers to a Poisson bracket of the form $\{ u^\ell, I^h /(\Delta_{k}^{(4),\mathcal{E}})^{\# h-3} \}$ where $h$ is a subsequence of $k$.
However, as stated in Remark \ref{rem:useful}, $\Delta_{k}^{(4),\mathcal{E}}$ is a linear function of actions associated with indices no larger than $|k_1|$. Consequently, if $(\Irr \ell)_{\last}> |k_1|$, then $u^\ell$ and $I^h /(\Delta_{k}^{(4),\mathcal{E}})^{\# h-3}$ would { Poisson} commute. That is why such a term do not appear in the expansion of $\{u^\ell,Z_{6,N^3}^{\mathrm{fr}}\}$.

Recalling that $\chi^{(\mathfrak{r}),2}_{N^3}$, defined in \eqref{def:chir2}, is the usual solution of
$$
\{  \chi^{(\mathfrak{r}),2}_{N^3},Z_4^{\mathcal{E}}+Z_{6,\leq N^3}^{\mathcal{E}}\} + \{ \chi^{(\mathfrak{r}),1}_{N^3}, Z_{6,N^3}^{\mathrm{fr}}\} = 0
$$
where $\chi^{(\mathfrak{r}),1}_{N^3}$, defined in \eqref{def:chir1}, is the usual solution of
$$
\{  \chi^{(\mathfrak{r}),1}_{N^3},Z_4^{\mathcal{E}}+Z_{6,\leq N^3}^{\mathcal{E}}\} + \Gamma^{(\mathfrak{r})}_{N^3} = 0.
$$
Since  the denominator of $\Gamma^{(\mathfrak{r})}$ and $Z_{6,N^3}^{\mathrm{fr}}$ { are both  functions of the actions alone },  we never have to derive  the denominator of $\Gamma^{(\mathfrak{r})}$ when calculating  $ \{ \chi^{(\mathfrak{r}),2}_{N^3}, Z_{6,N^3}^{\mathrm{fr}}\} $. Therefore $ \{ \chi^{(\mathfrak{r}),2}_{N^3}, Z_{6,N^3}^{\mathrm{fr}}\} $ can be decomposed as
$$
 \sum_{(\ell,\mathbf{h},\mathbf{k},n,c) \in \Gamma^{(\mathfrak{r})}}  c \, u^{\ell} \frac{f_{\ell,\mathbf{h},\mathbf{k},n,c}(I)}{ (2i\pi \Delta^{(4,6),\mathcal{E}}_{\Irr \, \ell,N^3}(I) )^2}  \prod_{p=1}^2 \! \! \! \! \! \! \sum_{\substack{k^{(p)} \in \mathcal{R}^{\mathcal{E}}_5 \cap \mathcal D  \\ (\Irr\, \ell)_{\last}\leq |k_1^{(p)}| \leq N^3}}  \sum_{ \substack{h^{(p)} \in \mathrm{ss}(k^{(p)})\\ 3\leq \# h^{(p)} }}  c_{k^{(p)},h^{(p)},\ell} \frac{I^{h^{(p)}}}{(\Delta_{k^{(p)}}^{(4),\mathcal{E}})^{ \# h^{(p)}-2}}
$$
where  and $f_{\ell,\mathbf{h},\mathbf{k},n,c}(I)$ denotes the denominator of $\Gamma^{(\mathfrak{r})}_{N^3}$ naturally associated with $(\ell,\mathbf{h},\mathbf{k},n,c)$ (see Definition \ref{def:ev:frac}).

Considering one a the denominator $ \Delta^{(4,6),\mathcal{E}}_{\Irr \, \ell,N^3}$ as a term of order four (i.e. a new index for $\mathbf{k}$) and the other as a term of order two (i.e. a new index for $\mathbf{h}$), $ \{ \chi^{(\mathfrak{r}),2}_{N^3}, Z_{6,N^3}^{\mathrm{fr}}\}$ is naturally associated with $\Lambda^{(\mathfrak{r})} \in \mathscr{H}_{\mathfrak{r}+2}^{(6),\mathcal{E}}$, i.e.
$$
\{ \chi^{(\mathfrak{r}),2}_{N^3}, Z_{6,N^3}^{\mathrm{fr}}\} = \Lambda^{(\mathfrak{r})}_{N^3}
$$
such that $C^{(\infty)}_{\Lambda^{(\mathfrak{r})}} \lesssim_{\mathfrak{r}} N^{ 321\mathfrak{r}-2079 + 2\cdot 48}, \ C^{(em)}_{\Lambda^{(\mathfrak{r})}} \leq N^3$.  Nevertheless, contrary to the previous cases, it is not completely obvious that $C^{(str)}_{\Lambda^{(\mathfrak{r})}}\lesssim_{\mathfrak{r}} 1$. Indeed, we have to explain why $C^{(di)}_{\Lambda^{(\mathfrak{r})}}\lesssim_{\mathfrak{r}} 1$.

By construction, the numerators of $\Lambda^{(\mathfrak{r})}$ are of the form 
$$
u^{\ell'} = u^{\ell} I^{h^{(1)}} I^{h^{(2)}} \ \mathrm{where} \ \ell'\in \mathcal{R}^{\mathcal{E}} \cap \mathcal{D}.
$$
We aim at estimating 
$$
\frac{(\kappa_{\mathbf{h}_{1}}^{\mathcal{E}}\dots \kappa_{\mathbf{h}_{\last}}^{\mathcal{E}})^2 (\kappa_{k^{(1)}}^{\mathcal{E}})^{2 \# h^{(1)}-4}(\kappa_{k^{(2)}}^{\mathcal{E}})^{2 \# h^{(2)}-4} (\kappa_{\Irr \,\ell}^{\mathcal{E}})^2}{|\ell_3'\dots \ell_\last'|}.
$$
We recall that by Lemma \ref{lem:diot}, for all $\ell''\in \Irr$, we have $\kappa_{\ell''}^{\mathcal{E}} \lesssim_{\# \ell''} |\ell''_{\last}|$. Consequently, we only have to estimate
$$
\mathfrak{q}:=\frac{(\kappa_{\mathbf{h}_{1}}^{\mathcal{E}}\dots \kappa_{\mathbf{h}_{\last}}^{\mathcal{E}})^2 (k^{(1)}_{\last})^{2 \# h^{(1)}-4}(k^{(2)}_{\last})^{2 \# h^{(2)}-4} (\Irr \,\ell)_\last ^2}{|\ell_3'\dots \ell_\last'|}.
$$
Up to natural symmetries, we only have to consider two cases.
\begin{itemize}
\item \emph{Case $|\ell'_2|=|\ell_2|$}. Here we also necessary have $|\ell'_1|=|\ell_1|$. Consequently, we have
$$
\mathfrak{q} \leq C^{(di)}_{\Gamma^{(\mathfrak{r})}} \ \frac{ (k^{(1)}_{\last})^{2 \# h^{(1)}-4}(k^{(2)}_{\last})^{2 \# h^{(2)}-4} (\Irr \,\ell)_\last ^2}{(h^{(1)}_1 \dots h^{(1)}_{\last})^2(h^{(2)}_1 \dots h^{(2)}_{\last})^2 } \lesssim_{\mathfrak{r}} \frac{ (k^{(1)}_{\last})^{2 \# h^{(1)}-4}(k^{(2)}_{\last})^{2 \# h^{(2)}-4} (\Irr \,\ell)_\last ^2}{(h^{(1)}_1 \dots h^{(1)}_{\last})^2(h^{(2)}_1 \dots h^{(2)}_{\last})^2 }.
$$
Since $h^{(p)}$ is a subsequence of $k^{(p)}$, we have $|k^{(p)}_{\last}| \leq |h^{(p)}_{\last}|$ and consequently
$$
\mathfrak{q} \lesssim_{\mathfrak{r}}  \frac{  (\Irr \,\ell)_\last ^2}{(h^{(1)}_1 h^{(1)}_2 )^2(h^{(2)}_1 h^{(2)}_2 )^2} {\lesssim_{\mathfrak{r}}} \frac{  (\Irr \,\ell)_\last ^2}{(h^{(1)}_1  )^2(h^{(2)}_1  )^2}.
$$
Recalling that, by construction, for $p\in \{1,2\}$, $|(\Irr \,\ell)_\last| \leq |k^{(p)}_1|$, we get
$$
\mathfrak{q} \lesssim_{\mathfrak{r}} \prod_{p=1}^2 \frac{|k^{(p)}_1|  }{(h^{(p)}_1  )^2}.
$$
However, by construction, $h^{(p)}$ is a subsequence of $k$ with at least $3$ elements. Consequently, we have $|k^{(p)}_3|\leq |h^{(p)}_1|$ and so 
$$
\mathfrak{q} \lesssim_{\mathfrak{r}} \prod_{p=1}^2 \frac{|k^{(p)}_1|  }{(k^{(p)}_3  )^2}.
$$
Finally, recalling that $k^{(p)}\in \mathcal{R}^{\mathcal{E}}_5$ and applying Lemma \ref{lem:origin} { (again this Lemma is the key)}, we have 
$$|k^{(p)}_1| /(k^{(p)}_3  )^2 \leq 9$$
 and so we have proven that $\mathfrak{q} \lesssim_{\mathfrak{r}} 1$.

\item \emph{Case $|\ell'_2|>|\ell_2|$}. This case is much easier. Without loss of generality, we assume that $|\ell'_1| = |\ell'_2| = |h^{(1)}_1|$. Consequently, we have
$$
\mathfrak{q} \leq C^{(di)}_{\Gamma^{(\mathfrak{r})}} \ \frac{ (k^{(1)}_{\last})^{2 \# h^{(1)}-4}(k^{(2)}_{\last})^{2 \# h^{(2)}-4} (\Irr \,\ell)_\last ^2}{|\ell_1||\ell_2|(h^{(1)}_2 \dots h^{(1)}_{\last})^2(h^{(2)}_1 \dots h^{(2)}_{\last})^2 } \lesssim_{\mathfrak{r}} \frac{ (k^{(1)}_{\last})^{2 \# h^{(1)}-4}(k^{(2)}_{\last})^{2 \# h^{(2)}-4}}{(h^{(1)}_2 \dots h^{(1)}_{\last})^2(h^{(2)}_1 \dots h^{(2)}_{\last})^2}. 
$$
Since $h^{(p)}$ is a subsequence of $k^{(p)}$, we have $|k^{(p)}_{\last}| \leq |h^{(p)}_{\last}|$ and consequently,
$$
\mathfrak{q} \lesssim_{\mathfrak{r}}  \frac1{(h^{(1)}_2 )^2(h^{(2)}_1  h^{(2)}_{2})^2} \leq 1.
$$
\end{itemize}

Finally, we conclude this induction step by the change of notation
\begin{equation}
\label{prod:transmutation}
\begin{array}{lcrl}
\Upsilon^{(\mathfrak{r}+2),\mathfrak{r}-1}_{N^3} &\leftarrow& \Upsilon^{(\mathfrak{r}+2),\mathfrak{r}-1}_{N^3}+\Lambda^{(\mathfrak{r})}_{N^3} & \mathrm{if}\ \mathfrak{r}<r-1\\
\mathrm{R^{(rat),\mathfrak{r}}} &\leftarrow& \mathrm{R^{(rat),\mathfrak{r}}} + \Lambda^{(\mathfrak{r})}_{N^3}& \mathrm{else}.
\end{array}
\end{equation}
As before, we refer the reader to the subsection \ref{app:sub:rem} of the Appendix for the control of the remainder term induced by this change of notation which leads to have same control on $R^{(rat),r-1},R^{(rat),r}$ that the one we had for $R^{(rat),5}$ in \eqref{control_R5rat}.

\section{Description of the dynamics} \label{sec:dyn}

\subsection{Dynamical consequences of the rational normal form.} This subsection is devoted to the proof of the following theorem which in nothing but the dynamical part of the Theorem \ref{mainPDE}.
\begin{theorem} 
\label{thm:dyn}
Being given $\mathcal{E}\in \{ \mathrm{\ref{gKdV}}, \mathrm{\ref{gBO}}\}$, $r\gg 7$\footnote{see Remark \ref{rem:rgg7}.}, $s\geq s_0(r) := 10^7 r^2$, $N\gtrsim_{r,s} 1$, $\gamma \lesssim_{r,s} 1$ and $\varepsilon \lesssim_{r,s} 1$ satisfying
\begin{equation}
\label{ultimate:CFL2} 
\varepsilon \leq \gamma^{35} \ \mathrm{and} \ \varepsilon \leq N^{-10^7 r} \ \mathrm{and} \ N^{-s} \leq \varepsilon^r 
\end{equation}
if $u\in \mathcal{U}_{ \gamma,N^3,\rho_{2r}}^{\mathcal{E},s}$ ($\rho_{2r}$ being given by \eqref{def:rhor}) satisfies 
\begin{equation}
\label{lacouronne}
\varepsilon/2 \leq \| u\|_{\dot{H}^s} \leq \varepsilon
\end{equation}
then, as long as $|t|\leq \|u\|_{\dot{H}^s}^{-r/5}$, the solution of $\mathcal{E}$, initially equals to $u$ and denoted $\Phi_t^\E(u)$, exists and satisfies
$$
 \|\Phi_t^\E(u)\|_{\dot H^s}\leq 2 \|u\|_{\dot H^s}.
$$
Furthermore, there exist $C^1$ functions $\theta_k:\R_+\mapsto \R$, $k\in\Z^*,$ such that
\begin{equation}
\label{desc:dyn:2}
 \|\Phi_t^\E(u) -(e^{i\theta_k(t)}u_k)_{k\in\Z^*}\|_{\dot H^{s-1}}\leq  \|u\|^{3/2}_{\dot H^s}
\end{equation}
where $|\dot\theta_k-(2\pi)^{1+\alpha_{\E}} k|k|^{\alpha_{\E}}-2\pi k \partial_{I_k} Z^\E_4(I)|\leq |k| \|u\|^{5/2}_{\dot H^s}.$
\end{theorem}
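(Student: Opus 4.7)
The plan is to conjugate the dynamics to the rational normal form of Theorem \ref{thm:KtA} and then show by a double bootstrap that in the new variables both the Sobolev norm and the non-resonance condition persist for times of order $\varepsilon^{-r/5}$, while the angles evolve essentially under the integrable part of the normal form.

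Setting $\varepsilon_0 := \varepsilon$ in Theorem \ref{thm:KtA} (whose hypothesis \eqref{ultimate:CFL} follows from \eqref{ultimate:CFL2} once $\varepsilon \lesssim_{r,s} 1$), let $\tau := \tau^{(3)}\circ\tau^{(2)}$ and let $w(t)$ be the solution starting at $w_0 := \tau^{(1)}(u) \in V_2$ of the Hamiltonian system generated by $K := H_\E \circ \tau = Z_2^\E + Z_4^\E + \sum_{m=6}^r Z_{N^3}^{(m)} + R^{(\mathrm{res})}\circ\tau^{(2)} + R^{(\mathrm{rat})}$. By commutativity of the diagram and \eqref{est:close:138}, $\tau(w(t))$ equals $\Phi_t^\E(u)$ as long as $w(t) \in V_2$, and $\|w_0\|_{\dot H^s} = (1+O(\varepsilon^{5/8}))\|u\|_{\dot H^s}$. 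Run a bootstrap: assume $w(t) \in B_s(0, 3\varepsilon/2) \cap \mathcal{U}_{\gamma/2,N^3,\rho_{2r}}^{\E,s}$ for $t \in [0,T^\star)$. Since $Z_2^\E, Z_4^\E$ and the $Z^{(m)}_{N^3}$ are functions of the actions they Poisson-commute with $\|\cdot\|_{\dot H^s}^2$, so $\frac{d}{dt}\|w\|_{\dot H^s}^2 = \{\|\cdot\|_{\dot H^s}^2, R^{(\mathrm{res})}\circ\tau^{(2)}\}(w) + \{\|\cdot\|_{\dot H^s}^2, R^{(\mathrm{rat})}\}(w)$. The first bracket splits into the three pieces $R^{(\mu_3>N)}, R^{(I_{>N^3})}, R^{(\mathrm{or})}$ from Theorem \ref{thm-BNF}, each controlled by one item of Proposition \ref{prop-Ns} applied with $\tau = \tau^{(2)}$ — hypotheses (A1)--(A3) being immediate from Theorem \ref{thm:KtA} and \eqref{idhighmodes}. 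Using \eqref{Rmu}--\eqref{Ror} and the choice $N^{-s} \leq \varepsilon^r$ from \eqref{ultimate:CFL2}, each contribution is $\lesssim_r \varepsilon^{r+2-r/5}$. The second bracket is handled by Cauchy--Schwarz, $|\{\|\cdot\|_{\dot H^s}^2, R^{(\mathrm{rat})}\}(w)| \leq 2\|w\|_{\dot H^s}\|\partial_x\nabla R^{(\mathrm{rat})}(w)\|_{\dot H^{s-1}}$, and \eqref{est:rem:thm:KtA} together with $\varepsilon \leq N^{-10^7 r}$, $\varepsilon \leq \gamma^{35}$ and $s \geq 10^7 r^2$ again gives $\lesssim_r \varepsilon^{r+2-r/5}$.

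Integrating over $[0,\varepsilon^{-r/5}]$ yields $\|w(t)\|_{\dot H^s}^2 \leq \|w_0\|_{\dot H^s}^2 + C_r\varepsilon^{2+r/5}$, which keeps $\|w\|_{\dot H^s} \leq 3\varepsilon/2$. To close the bootstrap on the non-resonance condition, compute $\dot I_\ell = \{I_\ell, K\}$; only the remainders contribute, and the same analysis, now controlling the $\ell$-th component rather than a weighted sum, gives $|I_\ell(t)-I_\ell(0)|\ell^{2s-2} \leq C_r \varepsilon^{r+1-r/5} \leq \gamma^2 N^{-22 \rho_{2r}}\varepsilon^2/2$ for our choice of parameters. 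Proposition \ref{prop:stab_U} then guarantees $w(t) \in \mathcal{U}_{\gamma/2,N^3,\rho_{2r}}^{\E,s}$, closing the bootstrap and giving the stability estimate on $\Phi_t^\E(u)$ after composing with $\tau$.

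To extract the phase dynamics, write $w_k(t) = |w_k(0)|\,e^{i\widetilde\theta_k(t)}$ wherever $w_k(0)\neq 0$ (extending continuously otherwise). Since the integrable part of $K$ acts diagonally in actions-angles, $\dot{\widetilde\theta}_k(t) = 2\pi k \,\partial_{I_k}\bigl(Z_2^\E + Z_4^\E + \textstyle\sum_{m\geq 6} Z^{(m)}_{N^3}\bigr)(I(t))$ plus a remainder coming from the two $R$'s. The leading term yields the linear frequency $(2\pi)^{1+\alpha_\E} k|k|^{\alpha_\E}$; the $Z_4^\E$-contribution gives $2\pi k\,\partial_{I_k}Z_4^\E(I(0))$ up to $O(|k|\varepsilon^{5/2})$ since actions vary by $O(\varepsilon^{11/4})$; the $Z^{(m)}_{N^3}$ contributions and the remainders are $O(|k|\varepsilon^{5/2})$ by Proposition \ref{prop:VFtheta} and the parameter choice. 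Defining $\theta_k$ as the phase of $(\tau(w))_k$ — which differs from $\widetilde\theta_k$ by $O(\varepsilon^{5/8})$ because $\|\tau(w)-w\|_{\dot H^s} \leq \|w\|_{\dot H^s}^{13/8}$ — the estimate \eqref{desc:dyn:2} follows from $\|\Phi_t^\E(u) - (e^{i\theta_k(t)}u_k)_k\|_{\dot H^{s-1}} \leq \|\tau(w)-w\|_{\dot H^{s-1}} + \|(|w_k(0)|-|u_k|)e^{i\theta_k}\|_{\dot H^{s-1}} \lesssim \varepsilon^{13/8} \leq \|u\|_{\dot H^s}^{3/2}$, where the gain of one derivative in $\partial_x \nabla$ is what converts the $\dot H^s$-control into an $\dot H^{s-1}$-error. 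The main obstacle throughout is the simultaneous control of the $\dot H^s$-norm and of each individual action, which is what forces the gap $s \gtrsim r^2$ between truncation and smoothness exponents; this gap is exactly what makes the interplay between $\varepsilon \leq N^{-10^7 r}$ and $N^{-s} \leq \varepsilon^r$ consistent and is the technical heart of the argument.
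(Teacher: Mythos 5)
Your overall plan is the right one and matches the paper's own strategy: conjugate to the rational normal form of Theorem~\ref{thm:KtA}, run a bootstrap on the Sobolev norm and the non-resonance condition using Proposition~\ref{prop-Ns} and estimate \eqref{est:rem:thm:KtA}, and then read off the phase dynamics from the integrable part of the normal form. The bootstrap on the norm is handled exactly as in the paper (subsection~\ref{ma jolie sous section}). However, there are two problems, one quantitative and one structural.

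On the action-variation step: the paper closes the bootstrap on $\mathcal{U}_{\gamma/2,N^3,\rho_{2r}}^{\E,s}$ \emph{after} establishing the Duhamel estimate \eqref{whatweexpectaboutv}, deducing the bound $\sup_k k^{2s-2}||u_k(t)|^2-|u_k(0)|^2| \leq 3\|u(0)\|_{\dot H^s}^{5/2}$ and then invoking Proposition~\ref{prop:stab_U}. You instead bound $\dot I_\ell = \{I_\ell, K\}$ directly — a legitimate alternative — but your claimed bound $|I_\ell(t)-I_\ell(0)|\ell^{2s-2}\leq C_r\varepsilon^{r+1-r/5}$ is not achievable. From $\ell^{2s-2}|\{I_\ell,R\}|\lesssim\|w\|_{\dot H^{s-1}}\|\nabla R\|_{\dot H^s}$ and the estimates of Step~1.1--1.4 of subsection~\ref{sub:dynam:v} (the worst being $\|\partial_x\nabla R^{(\mathrm{rat})}\|_{\dot H^s}\lesssim_{r,s} N^{10^5r^2}\gamma^{-23r+133}\|u\|^r$, which after inserting \eqref{ultimate:CFL2} is only $\lesssim\varepsilon^{r/4}$), you get $\ell^{2s-2}|\dot I_\ell|\lesssim\varepsilon^{1+r/4}$, hence $|I_\ell(t)-I_\ell(0)|\ell^{2s-2}\lesssim\varepsilon^{1+r/20}$ after integration over $|t|\leq\varepsilon^{-r/5}$. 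This weaker exponent still closes the bootstrap through Proposition~\ref{prop:stab_U} when $r\gg 7$, so the conclusion survives, but the exponent you wrote down is off. Also note that the ``same analysis'' as Proposition~\ref{prop-Ns} does not literally apply componentwise — the cancellation in that proposition is structural to the full sum $\sum_\ell\ell^{2s}\{I_\ell,\cdot\}$; what you actually need is the $\|\partial_x\nabla R\|_{\dot H^{s-1}}$ control of subsection~\ref{sub:dynam:v}, not the commutator gain of Proposition~\ref{prop-Ns}.

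The structural gap is in the phase extraction. Your definition $w_k(t)=|w_k(0)|e^{i\widetilde\theta_k(t)}$ is not a definition: the modulus $|w_k(t)|$ changes, so no $\widetilde\theta_k$ can satisfy that identity. More seriously, defining $\theta_k$ as the argument of $(\tau(w))_k(t)$ does not produce a $C^1$ function when that Fourier mode vanishes or approaches zero, and the bound $|\dot\theta_k - \omega_k^\E|\leq |k|\|u\|^{5/2}_{\dot H^s}$ fails completely near such times since $\dot\theta_k$ can blow up. Your error decomposition $\|\Phi_t^\E(u) - (e^{i\theta_k}u_k)_k\|_{\dot H^{s-1}}\leq\|\tau(w)-w\|_{\dot H^{s-1}}+\|(|w_k(0)|-|u_k|)e^{i\theta_k}\|_{\dot H^{s-1}}$ is missing the dominant term $\|(|w_k(t)|-|w_k(0)|)\,\cdot\,\|_{\dot H^{s-1}}$ (plus the phase mismatch between $\widetilde\theta_k$ and $\theta_k$). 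The paper circumvents all of this by writing the transformed equation $\partial_t v_\ell = 2i\ell\pi\omega_\ell(t)v_\ell + R_\ell(t)$ and \emph{defining} $\theta_\ell(t):=2\ell\pi\int_0^t\omega_\ell(\mathfrak t)\,\mathrm{d}\mathfrak t$ (formula~\eqref{def:theta:primi}). This $\theta_\ell$ is tautologically $C^1$, its derivative is exactly $2\pi\ell\omega_\ell$ so the frequency estimate is an estimate on $\omega_\ell$ alone (Proposition~\ref{prop:VFtheta} and Lemma~\ref{lem:var:Z4:sharp} do the rest), and the Duhamel formula immediately gives $\|v(t)-(e^{i\theta_\ell(t)}v_\ell(0))_\ell\|_{\dot H^{s-1}}\leq|t|\sup\|R(\mathfrak t)\|_{\dot H^{s-1}}$, absorbing both modulus drift and phase perturbation in one step. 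This explicit construction of $\theta_\ell$ is not a cosmetic choice; it is what makes both conclusions of the theorem reachable, and your proof needs to adopt it.
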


\subsubsection{Setting of the proof}

We are going to proceed by bootstrap. We denote $V_{\sigma} = B_s(0,2^\sigma \varepsilon)\cap \mathcal{U}_{2^{-\sigma} \gamma,N^3,\rho_{2r}}^{\mathcal{E},s}$. By assumption we know that $u\in V_{0}$. We are going to prove that, assuming $u(t):=\Phi_t^\E(u)$ exists,
$u(t) \in V_2$ and $|t|\leq \|u(0)\|_{\dot{H}^s}^{-r/5}$, we have $u(t)\in V_{1}$ and $u(t)$ is described by \eqref{desc:dyn:2}.

 Of course, such a proof by bootstrap requires a local existence theorem for solutions of $\mathcal{E}$ in $\dot{H}^s(\mathbb{T})$. Even if we do not have found a precise reference of such a theorem in the literature, since $s$ is large, its proof would be classical and could be realized quite directly, adapting, for example, the proof of local well-posedness of the quasi-linear symmetric hyperbolic systems presented by Taylor in the section $1$ chapter $16$ of his book \cite{Tay}.

Naturally this result relies on the rational norm form Theorem \ref{thm:KtA} that we apply with $\varepsilon_0= 4 \varepsilon$ and $\gamma \leftarrow \gamma/4$. Note that with this change of notations, the indices of the set $V_\sigma$ introduced in Theorem \ref{thm:KtA} have to be increased of $2$ (for example, here, $\tau^{(0)}$ maps $V_2$ in $V_3$).

From now, we assume that $0<T\leq \|u(0)\|_{\dot{H}^s}^{-r/5}$ is such that if $|t|<T$ then $u(t)$ exists and belongs to $ V_2$. In this proof, we set
\begin{equation}
\label{eq:vfromu}
v(t) := \tau^{(1)}\circ \tau^{(0)}(u(t)).
\end{equation}
Since, for $|t|<T$, $ u(t) \in V_2$, we also have
\begin{equation}
\label{eq:ufromv}
u(t) = \tau^{(3)}\circ \tau^{(2)}(v(t)).
\end{equation}
Furthermore, since $\tau^{(1)},\tau^{(0)}$ are symplectic, $v(t)$ is solution of the Hamiltonian system
\begin{equation}
\label{sys:v}
\partial_t v(t) = \partial_x \nabla (H_{\E} \circ \tau^{(3)} \circ \tau^{(2)}) (v(t)).
\end{equation}

In order to prove that $u(t)\in V_1$, first we prove that $\|u(t)\|_{\dot{H}^s} \leq 2\|u(0)\|_{\dot{H}^s}$.
Indeed, since $\tau^{(1)}, \tau^{(0)}$ are closed to the identity (in the sense of \eqref{est:close:138}) and $\varepsilon$ is small enough, it follows of \eqref{eq:vfromu} that
$$
\| v(0) \|_{\dot{H}^s} \leq \|u_0\|_{\dot{H}^s}+ 2 \| u(0)\|_{\dot{H}^s}^{13/8} \leq (4/3) \|u_0\|_{\dot{H}^s}.
$$
Consequently, since, in the next subsection \ref{sub:growth:v}, we are going to prove that
\begin{equation}
\label{but_sous_section}
\| v(t)\|_{\dot{H}^s} \leq \| v(0)\|_{\dot{H}^s} + (1/3)\| u(0)\|_{\dot{H}^s},
\end{equation}
it follows of \eqref{eq:ufromv} and \eqref{est:close:138} that, provided that $\varepsilon$ is small enough, we have
$$
\| u(t) \|_{\dot{H}^s} \leq \| v(t) \|_{\dot{H}^s} +2 \| v(t)\|_{\dot{H}^s}^{13/8} \leq 2\|u(0)\|_{\dot{H}^s}.
$$

In the last subsection \ref{sub:dynam:v}, we are going to design $C^1$ functions $\theta_k:\R_+\mapsto \R$, $k\in\Z^*,$ such that
\begin{equation}
\label{whatweexpectaboutv}
 \|v(t) -(e^{i\theta_k(t)}v(0))_{k\in\Z^*}\|_{\dot H^{s-1}}\leq  \|u(0)\|^{10}_{\dot H^s}
\end{equation}
where denoting $J_k(t) = |v_k(t)|^2$
\begin{equation}
\label{passimalpourdesangles}
|\dot\theta_k-(2\pi)^{1+\alpha_{\E}} k|k|^{\alpha_{\E}}-2k\pi\partial_{I_k} Z^\E_4(J(t))|\leq (1/2) |k| \|u(0)\|^{5/2}_{\dot H^s}.
\end{equation}
However, $\tau^{(0)}, \dots,\tau^{(3)}$ being closed to the identity and $\varepsilon$ being small enough, we have
\begin{equation}
\label{uetvetbinysontpasloin}
\| v(t) - u(t)\|_{\dot{H}^s} + \| v(0) - u(0)\|_{\dot{H}^s}  \lesssim \| u(0) \|_{\dot{H}^s}^{13/8}
\end{equation}
and thus
\begin{equation}
\label{cestpourcaquonafaittoutca}
\|u(t) -(e^{i\theta_k(t)}u(0))_{k\in\Z^*}\|_{\dot H^{s-1}}\leq \|u(0)\|^{3/2}_{\dot H^s}.
\end{equation}

We deduce of \eqref{cestpourcaquonafaittoutca} that
\begin{equation*}
\begin{split}
\sup_{k\in \mathbb{N}^*} k^{2s-2}| |u_k(t)|^2 - |u_k(0)|^2| &\leq (\| u(t)\|_{\dot{H}^{s-1}}+\| u(0)\|_{\dot{H}^{s-1}} ) \|u(t) -(e^{i\theta_k(t)}u(0))_{k\in\Z^*}\|_{\dot H^{s-1}} \\
&\leq 3 \|u(0)\|_{\dot H^{s}}^{5/2}.
\end{split}
\end{equation*}
Consequently, provided that $6 \sqrt{\varepsilon} \leq \gamma^2 N^{22 \rho_{2r}}$ (which is ensured by \eqref{ultimate:CFL2}) by applying Proposition \ref{prop:stab_U}, we get that $u(t)\in \mathcal{U}_{ \gamma/2,N^3,\rho_{2r}}^{\mathcal{E},s}$ and so $u(t)\in V_1$ which conclude the bootstrap.

To conclude the proof we just have to establish the bound about the variation of the angles. Somehow, we would like to replace $|v(t)|^2$ by $|u(0)|^2$ in \eqref{passimalpourdesangles}. To do this, we are going to apply the following lemma about the variations of $\partial_{I_{k}} Z^\E_4$ (which is proven in the subsection \ref{proof:lem:var:Z4:sharp} of the Appendix).
\begin{lemma}
\label{lem:var:Z4:sharp}
 Let $u,v \in \dot{H}^1$ and $k \in \mathbb{N}^*$, if $\|u\|_{L^2} = \|v\|_{L^2}$ then we have
$$
|\partial_{I_{k}} Z^\E_4 (I) -  \partial_{I_{k}} Z^\E_4 (J) | \lesssim |k|^{-1}\|u-v\|_{\dot{H}^1} \|u\|_{L^2}
$$
where $I_\ell := |u_\ell|^2$ and $J_\ell := |v_\ell|^2$.
\end{lemma}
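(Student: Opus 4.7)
The plan is to argue by direct computation from the explicit formulas \eqref{Z4-KdV} and \eqref{Z4-BO}. First, I would differentiate $Z_4^{\mathcal{E}}$ with respect to $I_k$: the term $3 a_4 \|u\|_{L^2}^4$ contributes $12 a_4 \|u\|_{L^2}^2$ (recalling $\|u\|_{L^2}^2 = 2\sum_{p>0} I_p$), the diagonal term $I_k^2$ contributes a coefficient of the form $2(6 a_4 + O(|k|^{-\alpha_\mathcal{E}}))\, I_k$, and in the gBO case the bilinear sum $\sum_{0<p<q} q^{-1} I_p I_q$ produces the two off-diagonal contributions $k^{-1}\sum_{p<k} I_p$ and $\sum_{q>k} q^{-1} I_q$. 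The first crucial observation is that the hypothesis $\|u\|_{L^2} = \|v\|_{L^2}$ kills the contribution of the $\|u\|_{L^2}^4$ term in the difference $\partial_{I_k} Z_4^{\mathcal{E}}(I) - \partial_{I_k} Z_4^{\mathcal{E}}(J)$.

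The remainder of the argument is a collection of elementary bounds organized around the pointwise identity $|I_\ell - J_\ell| \leq (|u_\ell|+|v_\ell|)\, |u_\ell - v_\ell|$ together with the two embeddings $|u_\ell|+|v_\ell|\leq 2\|u\|_{L^2}$ and $|\ell|\,|u_\ell - v_\ell| \leq \|u-v\|_{\dot{H}^1}$. For the diagonal contribution, this directly gives $|I_k - J_k| \leq 2 |k|^{-1}\|u\|_{L^2}\|u-v\|_{\dot H^1}$, and the diagonal coefficient in $\partial_{I_k}Z_4^{\mathcal{E}}$ is uniformly bounded in $k$, so this term satisfies the claimed estimate.

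In the gBO case I would then treat the two off-diagonal sums by Cauchy--Schwarz. For the high-frequency tail $\sum_{q>k} q^{-1}(I_q - J_q)$, write
\[
\sum_{q>k} \frac{|I_q - J_q|}{q} \leq \Bigl(\sum_{q>k}(|u_q|+|v_q|)^2\Bigr)^{1/2}\Bigl(\sum_{q>k}\frac{|u_q - v_q|^2}{q^2}\Bigr)^{1/2} \leq 2\|u\|_{L^2}\Bigl(\sum_{q>k}\frac{\|u-v\|_{\dot H^1}^2}{q^4}\Bigr)^{1/2},
\]
where I used that $q^2|u_q-v_q|^2 \leq \|u-v\|_{\dot H^1}^2$; this yields $\lesssim k^{-3/2}\|u\|_{L^2}\|u-v\|_{\dot H^1}$, which is stronger than what is needed. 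For the low-frequency sum, the prefactor $k^{-1}$ is already in front, so I just bound $\sum_{p<k}|I_p-J_p|\leq 2\|u\|_{L^2}\|u-v\|_{L^2}\leq 2\|u\|_{L^2}\|u-v\|_{\dot H^1}$ by another Cauchy--Schwarz. Summing the three pieces produces the desired bound $\lesssim |k|^{-1}\|u-v\|_{\dot H^1}\|u\|_{L^2}$.

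There is no real obstacle here; the only point requiring a moment of care is the need to exploit the gain of one derivative $|u_k - v_k| \leq |k|^{-1}\|u-v\|_{\dot H^1}$ (rather than the trivial $\ell^2$ bound) in order to produce the factor $|k|^{-1}$, and symmetrically to use the $L^2$ bound $|u_k|+|v_k|\leq 2\|u\|_{L^2}$ rather than any weighted bound. The hypothesis $\|u\|_{L^2}=\|v\|_{L^2}$ is essential only to discard the $\|u\|_{L^2}^4$ contribution.
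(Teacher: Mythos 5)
Your proof is correct and follows essentially the same route as the paper's: differentiate the explicit formulas \eqref{Z4-KdV}, \eqref{Z4-BO}, use $\|u\|_{L^2}=\|v\|_{L^2}$ to cancel the $\|u\|_{L^2}^4$ contribution, and bound the remaining terms via $|I_\ell-J_\ell|\leq(|u_\ell|+|v_\ell|)\,|u_\ell-v_\ell|$ together with the embeddings $|u_\ell|+|v_\ell|\leq 2\|u\|_{L^2}$ and $|\ell|\,|u_\ell-v_\ell|\leq\|u-v\|_{\dot H^1}$. The only cosmetic difference is that the paper collapses the two off-diagonal sums for gBO into a single $\sum_{p\neq k}|I_p-J_p|/\max(k,p)\leq k^{-1}\sum_{p}|I_p-J_p|$ and applies Cauchy--Schwarz once, whereas you treat the high- and low-frequency tails separately (obtaining, for the tail $q>k$, the slightly stronger $k^{-3/2}$ decay, which is then discarded); both arrive at the stated estimate.
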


Since \ref{gKdV} and \ref{gBO} are homogeneous equations, we know by Noether's Theorem that $\| u(t) \|_{L^2} = \| u(0)\|_{L^2}$. Consequently, since the maps $\tau^{(0)},\dots,\tau^{(3)}$ preserve the $L^2$ norm we have $\| v(t) \|_{L^2} = \| u(t) \|_{L^2} = \| u(0)\|_{L^2} = \| v(0)\|_{L^2} $.

By Lemma \ref{lem:var:Z4:sharp} and the estimate \eqref{whatweexpectaboutv}, we deduce of \eqref{passimalpourdesangles} that
$$
|\dot\theta_k-(2\pi)^{1+\alpha_{\E}} k|k|^{\alpha_{\E}}-2k\pi\partial_{I_k} Z^\E_4(J(0))|\leq (3/4) |k| \|u(0)\|^{5/2}_{\dot H^s}.
$$
Finally applying once again Lemma \ref{lem:var:Z4:sharp} and using the estimate \eqref{uetvetbinysontpasloin}, we deduce that
$$
|\dot\theta_k-(2\pi)^{1+\alpha_{\E}} k|k|^{\alpha_{\E}}-2k\pi\partial_{I_k} Z^\E_4(I(0))|\leq |k| \|u(0)\|^{5/2}_{\dot H^s}.
$$
\subsubsection{Control of the Sobolev norm of $v$}\label{ma jolie sous section} 
\label{sub:growth:v} We aim at proving the estimate \eqref{but_sous_section}, i.e. that
$$
\| v(t)\|_{\dot{H}^s} \leq \| v(0)\|_{\dot{H}^s} + (1/3)\| u(0)\|_{\dot{H}^s}
$$
Since $\varepsilon$ is small, it is enough to prove that
$$
\| v(t)\|_{\dot{H}^s}^2 \leq \| v(0)\|_{\dot{H}^s}^2 + \varepsilon^3.
$$
Recalling that $v$ is a solution of the Hamiltonian system \eqref{sys:v}, we have
$$
\| v(t)\|_{\dot{H}^s}^2 = \| v(0)\|_{\dot{H}^s}^2 + \int_0^t \{ \|\cdot\|_{\dot{H}^s}^2,H_{\E} \circ \tau^{(3)} \circ \tau^{(2)} \}(v(\mathfrak{t})) \mathrm{d}\mathfrak{t}.
$$
Consequently, since by assumption $T<\varepsilon^{-r/5}$ and $\varepsilon$ is such that $\varepsilon \lesssim_{r,s} 1$, it is enough to prove that
\begin{equation}
\label{what_we_really_want_here}
 |\{ \|\cdot\|_{\dot{H}^s}^2,H_{\E} \circ \tau^{(3)} \circ \tau^{(2)} \}(v(t))|\lesssim_{r,s} \varepsilon^{r/5 + 4}.
\end{equation}
Since $\|\cdot\|_{\dot{H}^s}^2$ is integrable, it commutes with the others integrable Hamiltonians. Thus by construction of $H_{\E} \circ \tau^{(3)} \circ \tau^{(2)}$ (see Theorem  \ref{thm:KtA}), we have
\begin{multline*}
\{ \|\cdot\|_{\dot{H}^s}^2,H_{\E} \circ \tau^{(3)} \circ \tau^{(2)} \} = \{ \|\cdot\|_{\dot{H}^s}^2,\mathrm{R}^{(\mu_3>N)} \circ \tau^{(2)}\}+\{ \|\cdot\|_{\dot{H}^s}^2, \mathrm{R}^{(I_{> N^3})}\circ \tau^{(2)}\}\\+\{ \|\cdot\|_{\dot{H}^s}^2, \mathrm{R}^{(or)}\circ \tau^{(2)}\} +\{ \|\cdot\|_{\dot{H}^s}^2, \mathrm{R^{(rat)}}\}
\end{multline*}
where $\mathrm{R}^{(\mu_3>N)}, \mathrm{R}^{(I_{> N^3})}, \mathrm{R}^{(or)}$ are the remainder terms of the resonant normal form (see Theorem \ref{thm-BNF}) and $\mathrm{R^{(rat)}}$ is the remainder term of the rational normal form (see Theorem \ref{thm:KtA}). We are going to prove that the estimate \eqref{what_we_really_want_here} holds for each one of these Poisson brackets.

\noindent \emph{$*$ \emph{Control of $\{ \|\cdot\|_{\dot{H}^s}^2, \mathrm{R^{(rat)}}\}(v(t))$.}} First, let us just recall that by \eqref{eq:vfromu}, since $u(t)\in V_2$, we have $v(t)\in V_4$. Consequently, by \eqref{lacouronne}, we have
\begin{equation}
\label{le reblochon c'est bien bon}
\|v(t)\|_{\dot{H}^s}\lesssim \varepsilon.
\end{equation}
Applying the estimate \eqref{est:rem:thm:KtA} of the Hamiltonian vector field generated by $\mathrm{R^{(rat)}}$, we have directly that
\begin{multline*}
|\{ \|\cdot\|_{\dot{H}^s}^2, \mathrm{R^{(rat)}}\}(v(t))|  \lesssim_{s,r} N^{10^5 r^2} \gamma^{-23 r+133}  \| v(t) \|_{\dot{H}^s}^{r+1} \mathop{\lesssim_{s,r}}^{\eqref{le reblochon c'est bien bon}} N^{10^5 r^2} \gamma^{-23 r }  \varepsilon^{r+1} \\
\mathop{\lesssim_{s,r}}^{\eqref{ultimate:CFL2}} \varepsilon^{r+1-\frac{23 }{35}r- 10^{-2}r} \mathop{\lesssim_{s,r}}^{r \gg 7} \varepsilon^{r/5+4}.
\end{multline*}

To control the other Poisson brackets, we are going to use Proposition \ref{prop-Ns} with $N\leftarrow N^3$ and $\tau \leftarrow \tau^{(2)}$. It follows from Theorem  \ref{thm:KtA} that the assumptions (A1),(A2) and (A3) are satisfied with $\kappa_\tau\lesssim_r 1$.

 \noindent \emph{$*$ \emph{Control of $\{ \|\cdot\|_{\dot{H}^s}^2, \mathrm{R}^{(or)}\circ \tau^{(2)}\}(v(t))$.}} We recall that by Theorem \ref{thm-BNF}, $\mathrm{R}^{(or)}$ writes
$$
\mathrm{R}^{(or)}(u) = \sum_{\substack{k \in \mathcal{M} \\ \#k  \geq r+1 }} c_k \, u^k \ \ \mathrm{with} \ \  |c_k| \leq \rho^{\#k} N^{3 \#k -9}  \ \mathrm{and} \ \rho \lesssim_r 1.
$$
Therefore by (i) of Proposition \ref{prop-Ns}, provided that $\|v(t)\|_{\dot{H}^s} \lesssim_r   N^{-3}$ which is ensured by \eqref{ultimate:CFL2} and \eqref{le reblochon c'est bien bon}, we have
$$
\big|\{\|\cdot\|_{\dot{H}^s}^2,\mathrm{R}^{(or)}\circ\tau^{2}\}(v(t))\big|\lesssim_{s,r,\kappa_\tau} N^3 (N^3 \|v(t)\|_{\dot{H}^s})^{r+1} 
$$
Consequently, we deduce of \eqref{ultimate:CFL2} and \eqref{le reblochon c'est bien bon} that
$\big|\{\|\cdot\|_{\dot{H}^s}^2,\mathrm{R}^{(or)}\circ\tau^{2}\}(v(t))\big| \lesssim_{r,s}  \varepsilon^{r/5+4}.$

\noindent \emph{$*$ \emph{Control of $\{ \|\cdot\|_{\dot{H}^s}^2, \mathrm{R}^{(\mu_3>N)}\}(v(t))$.}} We recall that by Theorem \ref{thm-BNF}, $\mathrm{R}^{(\mu_3>N)}$ writes
$$
\mathrm{R}^{(\mu_3>N)}(u) =  \sum_{\substack{k \in \mathcal{M}\cap \mathcal D \\ 4\leq  \#k  \leq r \\ k_3 \geq N }} c_k\, u^k \ \ \mathrm{with} \ \ |c_k| \lesssim_{\#k }   N^{3{ \#k -9}}.
$$
Then by applying (ii) of Proposition \ref{prop-Ns} (with $K=N$), we get
$$
|\{ \|\cdot\|_{\dot{H}^s}^2, \mathrm{R}^{(\mu_3>N)}\}(v(t))| \lesssim_{s,r}N^3 N^{-s+2} N^{3r-9} \|v(t)\|_{\dot H^s}^4
$$
Since by  \eqref{ultimate:CFL2}, $N^{-s} \leq \varepsilon^r$, we deduce of \eqref{ultimate:CFL2} and \eqref{le reblochon c'est bien bon} that
$$
|\{ \|\cdot\|_{\dot{H}^s}^2, \mathrm{R}^{(\mu_3>N)}\}(v(t))| \lesssim_{r,s}  \varepsilon^{r/5+4}.
$$
\noindent \emph{$*$ \emph{Control of $\{ \|\cdot\|_{\dot{H}^s}^2, \mathrm{R}^{(I_{> N^3})}\circ \tau^{(2)}\}(v(t))$.}} We recall that by Theorem \ref{thm-BNF}, $\mathrm{R}^{(I_{> N^3})}$ writes
$$
\mathrm{R}^{(I_{> N^3})}(u) = \sum_{\ell = N^3+1}^{\infty} \sum_{\substack{k \in \mathcal{M}\cap \mathcal D \\ 3\leq  \#k  \leq r-2 }} c_{\ell,k} \, I_\ell \, u^k \ \ \mathrm{with} \ \  |c_{\ell,k}| \lesssim_{ \#k }   N^{3( \#k +2) -9}.
$$
Consequently, by applying (iii) of Proposition \ref{prop-Ns}, we get
$$
\big|\{ \|\cdot\|_{\dot{H}^s}^2, \mathrm{R}^{(I_{> N^3})}\circ \tau^{(2)}\}(v(t))\big|\lesssim_{s,r}N^{-6(s-1)} N^{3r-9 }\|v(t)\|^{5}_{\dot{H}^s}.
$$
As previously, we deduce that $\big|\{ \|\cdot\|_{\dot{H}^s}^2, \mathrm{R}^{(I_{> N^3})}\circ \tau^{(2)}\}(v(t))\big| \lesssim_{r,s}  \varepsilon^{r/5+4}.$

\subsubsection{Dynamics of the Hamiltonian system in the new variables}
\label{sub:dynam:v}
We are going to design $C^1$ functions $\theta_k:\R_+\mapsto \R$, $k\in\Z^*,$ such that $v(t)$ is closed to $(e^{i\theta_k(t)}v_k(0))_k$ (see \eqref{whatweexpectaboutv}).

We recall that $v$ is solution of the Hamiltonian system \eqref{sys:v}. We note that this system can be rewritten
$$
\partial_t v_{\ell}(t) = 2i\ell\pi \omega_\ell(t) v_{\ell}(t) + R_\ell(t)
$$
where, denoting $Z= Z_2^{\mathcal{E}} + Z_4^{\mathcal{E}} + Z^{(6)}_{N^3} + \dots + Z^{(r)}_{N^3}$ and $c_{\ell,k}$ the coefficients of $\mathrm{R}^{(I_{> N^3})}$ (defined in Theorem \ref{thm-BNF} and satisfying $|c_{\ell,k}|\lesssim_{\#k} N^{3(\# k+2)}$), we have set
$$
\omega_{\ell}(t) = \partial_{I_\ell} Z(J(t)) + \mathbb{1}_{\ell > N^3} P^{(\ell)}\circ \tau^{(2)}(v(t)) 
$$
and $R(t) $ is given by
$$ \partial_x \nabla (\mathrm{R}^{(\mu_3>N)} \circ \tau^{(2)} +   \mathrm{R}^{(or)}\circ \tau^{(2)} + \mathrm{R^{(rat)}})(v(t)) + \sum_{\ell = N^3+1 }^{\infty} |v_{\ell}(t)|^2 \partial_x \nabla (P^{(\ell)}\circ \tau^{(2)})(v(t))$$
with 
\begin{equation}\label{PPP}P^{\ell}(u):=\sum_{\substack{k \in \mathcal{M}\cap \mathcal D \\ 3\leq \# k \leq r-2 }} c_{\ell,k}   u^k.\end{equation}
Note that by construction, since the Hamiltonians are real valued, $\omega_{\ell}(t)\in \mathbb{R}$. 

Applying the Duhamel formula, it comes
\begin{equation}
\label{merciDudu}
\|v(t) -(e^{i\theta_\ell(t)}v(0))_{\ell \in\Z^*}\|_{\dot H^{s-1}} \leq |t| \sup_{|\mathfrak{t}|\leq t} \| R(\mathfrak{t}) \|_{\dot H^{s-1}}
\end{equation}
where we have set\footnote{Here $\theta_{\ell}$ is only well defined for $|t|<T$. Nevertheless, using a localizing function, it could be easily extended.} 
\begin{equation}
\label{def:theta:primi}
\theta_{\ell}(t) = 2\ell\pi \int_0^t \ \omega_{\ell} (\mathfrak{t}) \ \mathrm{d}\mathfrak{t}.
\end{equation}

\noindent \emph{$\bullet$ Step 1 : Control of $\| R(\mathfrak{t}) \|_{\dot H^{s-1}}$. } By construction $R(t) $ is the sum of $4$ kinds of terms. By applying the triangle inequality, we control them one by one.

\medskip

\noindent \emph{$*$ Step 1.1 : Control of $R^{(1)}:=\| \nabla (\mathrm{R}^{(\mu_3>N)} \circ \tau^{(2)})(v(t)) \|_{\dot H^{s}}$. } \\
The map $\tau^{(2)}$ being symplectic, denoting $\nu(t) = \tau^{(2)}(v(t))\in V_3$, we have
$$
R^{(1)} \leq \| (\mathrm{d}\tau^{(2)})^{-1}(v(t))\|_{\mathscr{L}(\dot{H}^s)}  \| \nabla \mathrm{R}^{(\mu_3>N)} (\nu(t)) \|_{\dot H^{s}}\lesssim_r  \| \nabla \mathrm{R}^{(\mu_3>N)} (\nu(t)) \|_{\dot H^{s}}.   
$$
We recall that $\mathrm{R}^{(\mu_3>N)}$ writtes (see \eqref{Rmu})
$$
\mathrm{R}^{(\mu_3>N)}(u) =  \sum_{\substack{k \in \mathcal{M}\cap \mathcal D \\ 4\leq \# k \leq r \\ |k_3| \geq N }} c_k\, u^k \ \ \mathrm{with} \ \ |c_k| \lesssim_{\# k}   N^{3{\# k}}.
$$
Consequently, for $\ell \in \mathbb{N}^*$, we have
\begin{multline*}
\ell^{s}|\partial_{u_{-\ell}}\mathrm{R}^{(\mu_3>N)} (\nu(t))|  \leq  \sum_{n=4}^r N^{3n} \sum_{i=1}^n  \sum_{\substack{k \in \mathcal{M}_n\cap \mathcal D \\ |k_3| \geq N \\ k_i=-\ell}}  \ell^{s} \prod_{j\neq i}|\nu_{k_j}(t)| \\
\leq \sum_{n=4}^r N^{3n} \sum_{i=1}^n \sum_{\substack{k_1+\dots+k_n = \ell \\ k\in \mathcal{D} \\ |k_2| \geq N }}  \ell^{s} |\nu^{k}(t)| \nu_{k_j}(t)| \\
\leq N^{-s+1}  \sum_{n=4}^r n^{s+1} N^{3n} \sum_{\substack{k_1+\dots+k_n = \ell   }}   | k_1|^s |\nu^{k_1}(t)|  | k_2|^{s-1} |\nu^{k_2}(t)| \prod_{j\geq 3}|\nu_{k_j}(t)|.
\end{multline*}
Consequently, recalling that $\nu(t)\in V_3$, applying a Young inequality (and a triangle inequality for the sum for $n=4,\dots,r$), we get 
$$
R^{(1)} \lesssim_{r,s} N^{-s} \sum_{n=4}^r N^{3n+1} \| \nu(t)\|_{\dot{H}^s}^n \mathop{\lesssim_{r,s}}^{\eqref{ultimate:CFL2}} \varepsilon^{r+3} \mathop{\lesssim_{r,s}}^{\eqref{lacouronne}} \| u(0)\|_{\dot{H}^s}^{r+3}. 
$$

\noindent \emph{$*$ Step 1.2 : Control of $R^{(2)}:=\| \nabla (\mathrm{R}^{(or)} \circ \tau^{(2)})(v(t)) \|_{\dot H^{s}}$. } \\
As previously, we naturally have
$$
R^{(2)} \lesssim_r  \| \nabla \mathrm{R}^{(or)}(\nu(t)) \|_{\dot{H}^s}.
$$
We recall that $\mathrm{R}^{(or)}$ writes (see \ref{Ror})
$$
\mathrm{R}^{(or)}(u) = \sum_{n=r+1}^{\infty} \mathrm{R}^{(n)}(u) := \sum_{n=r+1}^{\infty}  \sum_{k \in \mathcal{M}_n } c_k \, u^k \ \ \mathrm{with} \ \  |c_k|  \lesssim_r M^{\#k} N^{3\#k-9} \ \mathrm{and}\ M\lesssim_r 1.
$$
Realizing the same estimates we did at the previous step naturally leads to have
$$
\| \nabla \mathrm{R}^{(n)}(\nu(t))\|_{\dot{H}^s} \lesssim_r n^{s+1} M^n N^{3n} \|\nu(t)\|_{\dot{H}^s}^{n-1} \lesssim_r (8 M \varepsilon N^3)^{n-1}  n^{s+1} N^{-6}.
$$
Consequently, since $N$ is large enough with respect to $r$ and $s$, we have
$$
R^{(2)} \lesssim_r N^{-6} \sum_{n >r}  (8 M \varepsilon N^3)^{n-1}  n^{s+1}   \mathop{\leq}^{\eqref{ultimate:CFL2}} (8 M \varepsilon N^3)^{r} \leq \varepsilon^{r-1}.
$$
Thus by \eqref{lacouronne}, we get $R^{(2)} \lesssim_r \|u(0)\|_{\dot{H}^s}^{r-1}.$

\noindent \emph{$*$ Step 1.3 : Control of $\mathrm{R^{(rat)}}(v(t))$.} Theorem \ref{thm:KtA} states in \eqref{est:rem:thm:KtA} that
$$
 \|\partial_x \nabla \mathrm{R^{(rat)}}(v(t))\|_{\dot{H}^s} \lesssim_{s,r} N^{10^5 r^2} \gamma^{-23 r + 133}  \| v(t) \|_{\dot{H}^s}^{r}.
$$
Consequently, by the assumption \eqref{ultimate:CFL2}, we have 
\begin{multline*}
 \|\partial_x \nabla \mathrm{R^{(rat)}}(v(t))\|_{\dot{H}^s} \lesssim_{s,r} N^{10^5 r^2} \gamma^{-23 r + 133} \varepsilon^r  \lesssim_{s,r} N^{10^5 r^2} \varepsilon^{12r/35} \\ \simeq_{s,r} (N^{10^7 r} \varepsilon)^{r/100}  \varepsilon^{\frac{12\, r}{35} - \frac{r}{100} }  \mathop{\lesssim_{r,s}}^{\eqref{lacouronne}} \| u(0)\|_{\dot{H}^s}^{r/4}.
\end{multline*}
\noindent \emph{$*$ Step 1.4 : Control of the remainder term induced by $\mathrm{R}^{(I_{> N^3})}$.}
Naturally, by applying the triangle inequality, we have
\begin{equation*}
\begin{split}
\| \sum_{\ell = N^3+1 }^{\infty} |v_{\ell}(t)|^2 \partial_x \nabla (P^{(\ell)}\circ \tau^{(2)})(v(t)) \|_{\dot{H}^s} 
&\lesssim \sum_{\ell = N^3+1 }^{\infty} \ell^{-2s} \| v(t)\|_{\dot{H}^s}^2  \| \partial_x \nabla (P^{(\ell)}\circ \tau^{(2)})(v(t)) \|_{\dot{H}^s} \\
&\lesssim N^{-2s+2} \varepsilon^2 \sup_{\ell>N}  \| \partial_x \nabla (P^{(\ell)}\circ \tau^{(2)})(v(t)) \|_{\dot{H}^s} \\
&\lesssim_r N^{-2s+2} \varepsilon^2 \sup_{\ell>N}  \| \partial_x \nabla P^{(\ell)}(\nu(t)) \|_{\dot{H}^s} 
\end{split}
\end{equation*}
Considering the estimates of the two first sub-steps, it is clear that
$$
\| \partial_x \nabla P^{(\ell)}(\nu(t))  \|_{\dot{H}^s} \leq r^{s+1} N^{3r+1} \| \nu(t)\|_{\dot{H}^s}^3.  
$$
It follows that by \eqref{ultimate:CFL2} and \eqref{lacouronne}, we have
$$
\| \sum_{\ell = N^3+1 }^{\infty} |v_{\ell}(t)|^2 \partial_x \nabla (P^{(\ell)}\circ \tau^{(2)})(v(t)) \|_{\dot{H}^s} \lesssim_{r,s} \|u(0)\|_{\dot{H}^s}^{2r}.
$$
\noindent \emph{$*$ Step 1.5 : Conclusion.} We have proven that while $|t|<T$ we have 
$$
 \| R(\mathfrak{t}) \|_{\dot H^{s-1}} \lesssim_{r,s} \| u(0)\|_{\dot{H}^s}^{r/4}.
$$
Consequently, since $T< \|u(0)\|_{\dot{H}^s}^{-r/5}$, \eqref{merciDudu} leads to (for $r\geq 200$)
$$
\|v(t) -(e^{i\theta_\ell(t)}v(0))_{\ell \in\Z^*}\|_{\dot H^{s-1}} \lesssim_{r,s} \|u(0)\|_{\dot{H}^s}^{r/4} \|u(0)\|_{\dot{H}^s}^{-r/5} \leq \|u(0)\|_{\dot{H}^s}^{10}.
$$

\noindent \emph{$\bullet$ Step 2 : Control of $E_{\ell}:=|\dot\theta_\ell-(2\pi)^{1+\alpha_{\E}} \ell|\ell|^{\alpha_{\E}}-2\ell\pi\partial_{I_{\ell}} Z^\E_4(J(t))|$. }
Naturally, we assume that $\ell>0$. By definition of $\theta_{\ell}$ (see \eqref{def:theta:primi}), we have
$$
\frac{E_{\ell}}{2\pi \ell } \leq | \partial_{I_\ell} Z^{(6)}_{N^3}(J(t)) + \dots + \partial_{I_\ell} Z^{(r)}_{N^3}(J(t))  | + |P^{(\ell)}\circ \tau^{(2)}(v(t))|.
$$
On the one hand, in view of \eqref{PPP}, by applying the Young inequality and using \eqref{ultimate:CFL2},\eqref{lacouronne},
 we have
$$
|P^{(\ell)}\circ \tau^{(2)}(v(t)) | \lesssim_r \sum_{n=3}^{r-2} N^{3(n+2)}\| \nu(t)\|_{\dot{H}^s}^n \lesssim_r \| u(0)\|_{\dot{H}^s}^{11/4}
$$
On the other hand, by applying the result of the Proposition \ref{prop:VFtheta}, we have for $n\geq 8$
$$
|\partial_{I_\ell} Z^{(n)}_{N^3}(J(t))| \lesssim_{n,s} N^{321 n}  \sqrt{\gamma}^{-47n + 314 +n-2} N^{1+12 \,(47n)^2}  \| u(0) \|_{\dot{H}^s}^{n-2} \mathop{\lesssim_{n,s}}^{\eqref{ultimate:CFL2} }  \| u(0) \|_{\dot{H}^s}^{3}
$$
and for $n=6$
$$
|\partial_{I_\ell} Z^{(6)}_{N^3}(J(t))| \lesssim_{s} N^{39}  \sqrt{\gamma}^{-6} N^{1+12 \,10^2}  \| u(0) \|_{\dot{H}^s}^{4} \mathop{\lesssim_s}^{\eqref{ultimate:CFL2} }  \| u(0) \|_{\dot{H}^s}^{3}.
$$
Finally, since $\varepsilon$ is small enough, we have $E_{\ell} \leq (1/2) |\ell| \| u(0)\|_{\dot{H}^s}^{5/2}$.

\subsection{Proof of Theorem \ref{mainPDE} : $\mathcal{V}_{r,s}^{\mathcal{E}}$ and its geometry} Note that if $r_1<r_2$ and the Theorem \ref{mainPDE} holds for $r_2$ then it also holds for $r_1$. Hence, without loss of generality, we assume that $r\gg 7$. Considering the Theorem \ref{thm:dyn}, the second part of the Theorem \ref{mainPDE} holds if we set
\begin{equation}
\label{def_of_Vrs}
\mathcal{V}_{r/5,s}^{\mathcal{E}} = \bigcup_{\varepsilon=0}^{\varepsilon_0(r,s) } \bigcup_{\gamma = \varepsilon^{1/35} }^{\gamma_0(r,s) } \bigcup_{ N= \varepsilon^{-r/s}  }^{ \varepsilon^{-1/(10^7 r)}} \mathcal{U}_{ \gamma,N^3,\rho_{2r}}^{\mathcal{E},s} \cap (B_s(0,\varepsilon) \setminus B_s(0,\varepsilon/2)).
\end{equation}
where $\rho_{r}$ is given by \eqref{def:rhor} and $\varepsilon_0(r,s),\gamma_0(r,s)$ are given by the Theorem \ref{thm:dyn}.

Note that by construction it is clear that $\mathcal{V}_{r/5,s}^{\mathcal{E}}$ is invariant by translation of the angles (i.e. it satisfies \eqref{def:inv_trans_ang}).

The other properties we aim at establishing on $\mathcal{V}_{r/5,s}^{\mathcal{E}}$ relies on probabilities estimates.
Consequently, from now $(I_k)_{k\in \mathbb{N}^{*}}$ denotes a sequence of random variables for which we assume that
\begin{itemize}
\item the actions are independent
\item $I_k$ is uniformly distributed in $J_k + \varepsilon_0^2 (4\zeta(\nu))^{-1}(0,k^{-2s-\nu})$
\end{itemize}
where $\nu\in(1,9]$ is a given constant, $J_k\geq 0$ and $\varepsilon_0>0$. We denote by $u$ the random function  defined by
$$
u(x) = \sum_{k = 1}^{\infty} 2\sqrt{I_k} \cos(2\pi k x).
$$

The following proposition is our main probability result even if it is essentially a corollary of the Proposition \ref{prop:sum:proba}.
\begin{proposition}
\label{prop:ultimate:proba}
 For all $s>s_0(r) = 10^{7}r^2$, all $\lambda \in (0,1)$, all $\varepsilon_0 \lesssim_{r,s,\nu,\lambda} 1$, all $\gamma \leq \gamma_0(r,s) $, if
 \begin{equation}
 \label{assump:proba:final}
\|u\|_{L^\infty\dot{H}^s}^2 = 2\sum_{k=1}^{\infty} J_k |k|^{2s} + \frac{\varepsilon_0^2}2 \leq \varepsilon_0^2 \ \ \mathrm{and}\ \ \varepsilon_0 \leq \gamma^{35}
 \end{equation}
then
$$
\mathbb{P}\left( u\in \mathcal{V}_{r/5,s}^{\mathcal{E}} \right) \geq 1 - \lambda \gamma.
$$
\end{proposition}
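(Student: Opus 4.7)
The plan is to deduce Proposition~\ref{prop:ultimate:proba} directly from Proposition~\ref{prop:sum:proba}. The key observation is that membership $u\in\mathcal{V}_{r/5,s}^{\mathcal{E}}$ in \eqref{def_of_Vrs} only requires the \emph{existence} of a valid triple $(\varepsilon,\gamma,N)$ with $u\in\mathcal{U}_{\gamma,N^3,\rho_{2r}}^{\mathcal{E},s}\cap(B_s(0,\varepsilon)\setminus B_s(0,\varepsilon/2))$, whereas Proposition~\ref{prop:sum:proba} already yields (with probability at least $1-\lambda\gamma$) such membership for \emph{every} truncation $N$ in a whole range. So the proof reduces to choosing the parameters carefully and checking that all the constraints fit simultaneously.

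Concretely, I would first apply Proposition~\ref{prop:sum:proba} with its parameter $r$ replaced by $\rho_{2r}$ and its truncation parameter replaced by $N^3$, keeping $\gamma$, $\lambda$, and $\sigma=\varepsilon_0^2/(4\zeta(\nu))$ unchanged. The hypothesis $\|u\|_{L^\infty\dot{H}^s}^2\le 4\sigma\zeta(\nu)$ is exactly our \eqref{assump:proba:final}, and the smallness requirement $\sigma\lesssim_{r,s,\nu,\lambda}1$ is ensured by taking $\varepsilon_0$ small. This produces a favourable event $\Omega^\star$ of probability $\ge 1-\lambda\gamma$ on which
\[
u\in\mathcal{U}_{\gamma,N^3,\rho_{2r}}^{\mathcal{E},s}\qquad\text{whenever}\qquad 1\lesssim_{r,s,\nu}N^3\le(\gamma\,\|u\|_{\dot{H}^s}^{-2})^{1/(21\rho_{2r}+5)}.
\]

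On $\Omega^\star$ I would build the witness triple by setting $\varepsilon=\|u\|_{\dot{H}^s}$, keeping $\gamma$, and taking $N=\lfloor\varepsilon^{-1/(10^7 r)}\rfloor$. The verifications split into four routine checks. First, $\|u\|_{\dot{H}^s}=\varepsilon\in(\varepsilon/2,\varepsilon]$ and $\varepsilon\le\varepsilon_0\le\varepsilon_0(r,s)$ once $\varepsilon_0$ is taken small enough in terms of $r,s$. Second, $\gamma\ge\varepsilon^{1/35}$ is immediate from $\varepsilon\le\varepsilon_0\le\gamma^{35}$. Third, the range $\varepsilon^{-r/s}\le N\le\varepsilon^{-1/(10^7 r)}$ collapses to the exponent inequality $r/s\le 1/(10^7 r)$, i.e.\ $s\ge s_0(r)=10^7 r^2$. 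Fourth, we must check the two bounds on $N^3$ defining $\Omega^\star$: the lower bound $N^3\gtrsim_{r,s,\nu}1$ holds for $\varepsilon$ sufficiently small, and the upper bound is the main computational point.

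The hard part is precisely this last inequality. Taking logarithms and using $\|u\|_{\dot H^s}=\varepsilon<1$, the upper bound becomes
\[
\frac{3\log(1/\varepsilon)}{10^7 r}\le \frac{\log\gamma+2\log(1/\varepsilon)}{21\rho_{2r}+5}.
\]
Since $\rho_{2r}=94r-314$ we have $21\rho_{2r}+5=O(r)$, and since $\gamma\ge\varepsilon^{1/35}$ forces $\log\gamma/\log(1/\varepsilon)\ge-1/35$, the right-hand side is bounded below by a positive universal constant of order $1/r$, comfortably larger than the left-hand side, which is of order $1/(10^7 r)$. This is exactly why the constant $10^7$ appears in the definition of $s_0(r)$. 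Once the four checks pass, the triple $(\varepsilon,\gamma,N)$ witnesses $u\in\mathcal{V}_{r/5,s}^{\mathcal{E}}$ on $\Omega^\star$, completing the proof.
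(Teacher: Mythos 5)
Your proposal is correct and follows essentially the same route as the paper: apply Proposition~\ref{prop:sum:proba} with $r$ replaced by $\rho_{2r}$ and $\sigma=\varepsilon_0^2/(4\zeta(\nu))$, then verify (after choosing $\varepsilon\approx\|u\|_{\dot H^s}$, keeping $\gamma$, and taking $N$ near $\varepsilon^{-1/(10^7r)}$) that the exponent inequalities $r/s<1/(10^7r)$ and $10^7r\ge 63\rho_{2r}+15$ make the constraints of the union \eqref{def_of_Vrs} compatible with the admissible range for the truncation. The only minor slips are cosmetic: you need $s>10^7r^2$ strictly (not $\ge$) so that an integer fits between $\varepsilon^{-r/s}$ and $\varepsilon^{-1/(10^7r)}$ for small $\varepsilon$, and $\varepsilon$ should be taken slightly larger than $\|u\|_{\dot H^s}$ so that $u$ lies strictly inside the annulus $B_s(0,\varepsilon)\setminus B_s(0,\varepsilon/2)$.
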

\begin{proof} 
By applying Proposition \ref{prop:sum:proba}, provided that $\varepsilon_0 \lesssim_{r,s,\nu} 1$, we have a probability larger than $1-\lambda \gamma$ to draw $u$ such that 
\begin{equation}
\label{cestpasmalnon}
1\lesssim_{r,s,\nu} N \leq (\gamma \| u\|_{\dot{H}^s}^{-2})^{1/(63\rho_{2r}+15)} \ \Rightarrow \ u\in \mathcal{U}_{\gamma,N^3,\rho_{2r}}^{\mathcal{E},s}.
\end{equation}
Consequently from now on we assume that $u$ satisfies \eqref{cestpasmalnon}. By construction of $\mathcal{V}_{r/5,s}^{\mathcal{E}}$ and recalling that by assumption $\varepsilon_0\leq \gamma^{35}$ we just have to check that there exists $N$ such that
$$
1\lesssim_{r,s,\nu} N \leq ( \| u\|_{\dot{H}^s}^{-2+\frac1{35}})^{1/(63\rho_{2r}+15)}  \ \mathrm{and}\ (\|u \|_{\dot{H}^s}/2)^{-r/s} \leq N \leq (2\|u \|_{\dot{H}^s})^{-1/(10^7 r)}.
$$
On the one hand, since $10^7 r \geq 63\rho_{2r}+15$, it is clear that 
$$(\|u \|_{\dot{H}^s}/2)^{-1/(10^7 r)} \leq ( \| u\|_{\dot{H}^s}^{-2+\frac1{35}})^{1/(63\rho_{2r}+15)}.$$
On the other hand, since by assumption $s>s_0(r) = 10^{7}r^2$, provided $\varepsilon_0 \lesssim_{r,s} 1$,  
 we have 
$$
\exists N \in ((\|u \|_{\dot{H}^s}/2)^{-r/s} , (2\|u \|_{\dot{H}^s})^{-1/(10^7 r)}) \cap \mathbb{N} .
$$
%in fact the condition is \varepsilon_0 \leq (1/2)^{ ((r/s) + 1/(10^7r) +1)(1/(10^7r) - r/s ) }.
\end{proof}

In the following corollary, we prove that $\mathcal{V}_{r/5,s}^{\mathcal{E}}$ is \emph{asymptotically of full measure} in the sense of the Theorem \ref{mainPDE} (see \eqref{def:asympt_full_measure} and set $\varepsilon = \varepsilon_0/ (2\sqrt{\zeta(\nu)})$).
\begin{corollary} If $J=0$, $\varepsilon_0 \lesssim_{r,s,\nu} 1$ then
\begin{equation*}
 \mathbb{P}(u \in \mathcal{V}_{r/5,s}^{\mathcal{E}}) \geq 1-\left(\frac{\varepsilon_0}{2\sqrt{\zeta(\nu)}}\right)^{\frac1{35}}.
\end{equation*}
\end{corollary}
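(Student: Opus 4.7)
The plan is to derive this corollary as a direct application of Proposition \ref{prop:ultimate:proba} with a carefully calibrated choice of the parameters $\gamma$ and $\lambda$. Since $J = 0$, the first part of the hypothesis \eqref{assump:proba:final}, namely $\|u\|_{L^\infty \dot{H}^s}^2 = \varepsilon_0^2/2 \leq \varepsilon_0^2$, holds automatically, so the only real task is to pick $\gamma$ and $\lambda$ so that the constraint $\varepsilon_0 \leq \gamma^{35}$ is tight and the resulting probability bound $1 - \lambda \gamma$ matches the claimed $1 - (\varepsilon_0/(2\sqrt{\zeta(\nu)}))^{1/35}$.

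The natural choice is $\gamma := \varepsilon_0^{1/35}$, which just saturates the constraint $\varepsilon_0 \leq \gamma^{35}$ of Proposition \ref{prop:ultimate:proba}; together with the assumption $\varepsilon_0 \lesssim_{r,s,\nu} 1$ this automatically ensures $\gamma \leq \gamma_0(r,s)$. To absorb the factor $(2\sqrt{\zeta(\nu)})^{-1/35}$ into the probability bound, I would then set
\[
\lambda := (2\sqrt{\zeta(\nu)})^{-1/35}.
\]
Since $\nu > 1$ implies $\zeta(\nu) \geq 1$, we have $2\sqrt{\zeta(\nu)} \geq 2$, so $\lambda \in (0,1)$ as required by Proposition \ref{prop:ultimate:proba}. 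Plugging these values into the conclusion of the proposition yields
\[
\mathbb{P}(u \in \mathcal{V}_{r/5,s}^{\mathcal{E}}) \geq 1 - \lambda \gamma = 1 - \frac{\varepsilon_0^{1/35}}{(2\sqrt{\zeta(\nu)})^{1/35}} = 1 - \left(\frac{\varepsilon_0}{2\sqrt{\zeta(\nu)}}\right)^{\frac{1}{35}},
\]
which is exactly the claim.

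There is essentially no obstacle here, as all the substantive probabilistic work (Borel--Cantelli-style summation over multi-indices in $\mathcal{MI}_{\mathrm{mult}}$, the anti-concentration estimates of Lemmas \ref{lem:proba:Z4} and \ref{lem:proba:Z6}, and the stability estimates of Proposition \ref{prop:stab_U}) has already been done upstream and packaged in Proposition \ref{prop:ultimate:proba}. The only mild thing to check is that the distribution of the actions in this subsection -- uniform on $\varepsilon_0^2 (4\zeta(\nu))^{-1}(0, k^{-2s-\nu})$ with $J=0$ -- is exactly the one allowed by the statement of Proposition \ref{prop:ultimate:proba}, and that the implicit constants in $\varepsilon_0 \lesssim_{r,s,\nu,\lambda} 1$ depend continuously on $\nu$ (so that absorbing the factor $\lambda = (2\sqrt{\zeta(\nu)})^{-1/35}$ into the admissible smallness of $\varepsilon_0$ causes no difficulty). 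The final step is simply to observe that the corollary matches the form \eqref{def:asympt_full_measure} stated in Theorem \ref{mainPDE} after the natural rescaling $\varepsilon = \varepsilon_0/(2\sqrt{\zeta(\nu)})$, which corresponds to normalizing the actions to be distributed in $\varepsilon^2 (0, k^{-2s-\nu})$.
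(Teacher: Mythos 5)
Your proof is correct and takes the same route as the paper: apply Proposition~\ref{prop:ultimate:proba} with $\lambda = (2\sqrt{\zeta(\nu)})^{-1/35}$ and $\gamma = \varepsilon_0^{1/35}$. The paper's proof is a single sentence stating only the choice of $\lambda$ (leaving the choice $\gamma = \varepsilon_0^{1/35}$ implicit), and you have simply spelled out the same calculation with all the verifications ($\lambda\in(0,1)$ since $\zeta(\nu)>1$, the hypothesis \eqref{assump:proba:final} holds since $J=0$, and the smallness $\varepsilon_0\lesssim_{r,s,\nu,\lambda}1$ reduces to $\varepsilon_0\lesssim_{r,s,\nu}1$ because $\lambda$ depends only on $\nu$).
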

\begin{proof}
It is enough to apply the Proposition \ref{prop:ultimate:proba}, with $\lambda= (2\sqrt{\zeta(\nu)})^{-1/35}$.
\end{proof}

In the following corollary of the Proposition \ref{prop:ultimate:proba}, we prove that $\mathcal{V}_{r/5,s}^{\mathcal{E}}$ is \emph{asymptotically dense}.
\begin{corollary} 
If $\|v\|_{\dot{H}^s} \lesssim_{r,s} 1$ then there exists $w\in \mathcal{V}_{r/5,s}^{\mathcal{E}}$ such that
\begin{equation}
\label{plusdurqueprevu}
\| v - w\|_{\dot{H}^s} \leq \frac{\| v \|_{\dot{H}^s}}{|\log(\| v \|_{\dot{H}^s})|}.
\end{equation}
\end{corollary}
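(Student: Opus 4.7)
The plan is to apply Proposition \ref{prop:ultimate:proba} directly, using $v$ itself (suitably reduced) as the centre of the random perturbation.

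\textbf{Step 1: Reduction to non-negative Fourier coefficients.} Writing $v_k = |v_k|\,e^{i\phi_k}$ with $\phi_{-k}=-\phi_k$, set $|v|(x) := 2\sum_{k\geq 1}|v_k|\cos(2\pi kx)$. If $\tilde w \in \mathcal{V}_{r/5,s}^{\mathcal{E}}$ is close to $|v|$, then the function $w$ with Fourier coefficients $w_k := \tilde w_k\, e^{i\phi_k}$ satisfies $\|w-v\|_{\dot H^s}=\|\tilde w-|v|\|_{\dot H^s}$ and belongs to $\mathcal{V}_{r/5,s}^{\mathcal{E}}$ by the angle-invariance \eqref{def:inv_trans_ang}. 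So one may assume $v$ has non-negative real Fourier coefficients.

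\textbf{Step 2: Random perturbation and extraction of $w$.} Apply Proposition \ref{prop:ultimate:proba} with $J_k := v_k^2$, $\varepsilon_0 := \sqrt{2}\,\|v\|_{\dot H^s}$ (which saturates the hypothesis \eqref{assump:proba:final}), $\gamma := \varepsilon_0^{1/35}$, $\lambda := 1/2$, and any fixed $\nu \in (1,9]$. Since $\|v\|_{\dot H^s} \lesssim_{r,s} 1$, the smallness conditions of the proposition are met and it gives $\mathbb{P}(u \in \mathcal{V}_{r/5,s}^{\mathcal{E}}) \geq 1-\gamma/2 > 0$. Pick a deterministic realisation $w$ of $u$ lying in $\mathcal{V}_{r/5,s}^{\mathcal{E}}$.

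\textbf{Step 3: Distance bound.} By construction $w_k = \sqrt{I_k}$ with $I_k \in [v_k^2,\,v_k^2 + \varepsilon_0^2\,k^{-2s-\nu}/(4\zeta(\nu))]$, so the elementary inequality $(\sqrt{a+b}-\sqrt a)^2 \leq b$ for $a,b\geq 0$ yields
$$(\sqrt{I_k}-v_k)^2 \leq I_k - v_k^2 \leq \frac{\varepsilon_0^2\,k^{-2s-\nu}}{4\zeta(\nu)},$$
and summing in $k$,
$$\|w-v\|_{\dot H^s}^2 = 2\sum_{k\geq 1}k^{2s}(\sqrt{I_k}-v_k)^2 \leq \tfrac{\varepsilon_0^2}{2} = \|v\|_{\dot H^s}^2.$$

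\textbf{Main obstacle.} The above $L^\infty$-type bound only delivers $\|w-v\|_{\dot H^s}\leq \|v\|_{\dot H^s}$, not the sharper $\|v\|_{\dot H^s}/|\log\|v\|_{\dot H^s}|$. Since the constraint \eqref{assump:proba:final} forces $\varepsilon_0 \geq \sqrt{2}\|v\|_{\dot H^s}$, one cannot shrink the perturbation scale below $\|v\|_{\dot H^s}$ deterministically. The plan for the refinement (which matches the remark after the theorem that $|\log|$ could even be replaced by $|\log|^\tau$, any $\tau>0$) is to \emph{intersect} the non-resonance event with the concentration event $\{\|u-v\|_{\dot H^s}\leq \|v\|_{\dot H^s}/|\log\|v\|_{\dot H^s}|\}$ and show that the intersection still has positive probability. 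For this one uses the identity $\sqrt{I_k}-v_k = (I_k-v_k^2)/(\sqrt{I_k}+v_k)$: on modes where $v_k$ is not too small, the $k$-th contribution to $\|u-v\|_{\dot H^s}^2$ is $\sim (I_k-v_k^2)^2/(4v_k^2)$, which is much smaller than the worst-case $I_k-v_k^2$, and a Chebyshev-type estimate applied to a well-chosen functional of $u$, combined with a careful tuning of $\nu$ and (if necessary) a preliminary truncation $v\leadsto v^{\leq K(v)}$ at a $v$-dependent frequency where $\|v-v^{\leq K}\|_{\dot H^s}$ is half of the target tolerance, produces a realisation satisfying both events simultaneously.
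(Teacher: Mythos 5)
Your Steps 1 and 2 are correct and match the paper: reduce by angle invariance to $v$ with non-negative real Fourier coefficients, take $J_k=v_k^2$, $\varepsilon_0=\sqrt2\,\|v\|_{\dot H^s}$, $\nu=9$, $\gamma=\varepsilon_0^{1/35}$, and apply Proposition \ref{prop:ultimate:proba} to get $\mathbb{P}(u\in\mathcal{V}^{\E}_{r/5,s})\geq 1-\|v\|_{\dot H^s}^{1/35}$. You also correctly identify the obstacle: the pointwise worst-case bound only gives $\|u-v\|_{\dot H^s}\leq\|v\|_{\dot H^s}$, and one must instead show that the concentration event $E:=\{\|u-v\|_{\dot H^s}\leq\|v\|_{\dot H^s}/|\log\|v\|_{\dot H^s}|\}$ and the non-resonance event intersect with positive probability. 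This high-level plan is exactly the paper's.

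However, what you call the ``refinement'' is precisely the missing content of the proof, and the route you sketch would not deliver it. The paper needs a \emph{lower bound} $\mathbb{P}(E)\geq\|v\|_{\dot H^s}^{1/36}$, where the exponent $1/36<1/35$ is what makes the two events overlap. Your proposed Chebyshev-type estimate can only give $\mathbb{P}(E)\geq 1-C\,|\log\|v\|_{\dot H^s}|^2$, which is vacuous (negative) for small $\|v\|$. Your use of the identity $\sqrt{I_k}-v_k=(I_k-v_k^2)/(\sqrt{I_k}+v_k)$ also does not help: the quadratic gain degenerates exactly on the modes where $v_k$ is small or zero, and those are precisely the modes where the perturbation $I_k-v_k^2\sim\|v\|^2 k^{-2s-\nu}$ is comparable to $v_k^2$ itself. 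The paper does not use that identity at all. Instead it keeps the crude Minkowski bound $\|u-v\|_{\dot H^s}^2\leq 2\sum_k k^{2s}|I_k-J_k|=4\|v\|_{\dot H^s}^2\sum_k X_k k^{-9}$ (with $X_k$ independent uniform in $(0,1)$), and directly computes the probability that all the $X_k$ with small $k$ are themselves small: with $\eta=1/|\log\|v\|_{\dot H^s}|$ and $\lambda=2\pi^2/3$,
\begin{equation*}
\mathbb{P}\Big(4\sum_k X_k k^{-9}\leq\eta^2\Big)\;\geq\;\mathbb{P}\big(\forall k,\ \lambda X_k\leq k^7\eta^2\big)\;=\!\!\!\prod_{k:\,k^7\eta^2<\lambda}\!\frac{k^7\eta^2}{\lambda}\;\geq\;\Big(\frac{\eta^2}{\lambda}\Big)^{(\lambda/\eta^2)^{1/7}}\;\geq\;\|v\|_{\dot H^s}^{1/36}.
\end{equation*}
Since $\|v\|_{\dot H^s}^{1/36}+(1-\|v\|_{\dot H^s}^{1/35})>1$ for $\|v\|_{\dot H^s}$ small, the intersection is non-empty and any $w$ in it satisfies \eqref{plusdurqueprevu}. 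This product-of-uniforms estimate, and the numerology $1/36<1/35$ tying it to the choice $\gamma=\varepsilon_0^{1/35}$ in Step 2, is the heart of the proof and is absent from your proposal.
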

\begin{proof} Since $\mathcal{V}_{r/5,s}^{\mathcal{E}}$ is invariant by translation of the angles (i.e. it satisfies \eqref{def:inv_trans_ang}), without loss of generality we can assume that $v$ is of the form
$$
v(x) = \sum_{k = 1}^{\infty} 2\sqrt{J_k} \cos(2\pi k x).
$$
We set $\nu = 9$ and $\varepsilon_0 =\sqrt{2} \|v\|_{\dot{H}^s}$. Provided that $\|v\|_{\dot{H}^s}\lesssim_{r,s} 1$, applying Proposition \ref{prop:ultimate:proba} with $\gamma = \varepsilon_0^{1/{35}}$ and $\lambda = 2^{-1/70}$, we have
$$
\mathbb{P}\left( u\in \mathcal{V}_{r/5,s}^{\mathcal{E}} \right) \geq 1 - \|v\|_{\dot{H}^s}^{1/35}.
$$

Now we aim at estimating the probability that $\| u - v\|_{\dot{H}^s} \leq \| v\|_{\dot{H}^s}/ |\log ( \| v\|_{\dot{H}^s})|$. First, we observe that by applying the Minkowski inequality, we have
$$
\| u - v\|_{\dot{H}^s}^2 = 2\sum_{k=1}^{\infty} k^{2s} |\sqrt{I_k} - \sqrt{J_k}|^2 \leq 2\sum_{k=1}^{\infty} k^{2s}  |I_k-J_k|.
$$
Then we recall that, by construction, there exists some random variables $(X_k)_{k\geq 1}$, independent and uniformly distributed in $(0,1)$ such that
$$
I_k-J_k = 2 \|v\|_{\dot{H}^s}^2 X_k k^{-2s - 9}.
$$
Consequently, defining $\lambda = 2\pi^2/3$ and $\eta = -\log(\| v\|_{\dot{H}^s})$, provided that $\|v\|_{\dot{H}^s}$ is small enough, we have
\begin{multline*}
\mathbb{P}(\| u - v\|_{\dot{H}^s} \leq \| v\|_{\dot{H}^s} \eta) \geq \mathbb{P}(4\sum_{k=1}^{\infty} k^{-2 } X_k\leq \eta^2 ) \geq \mathbb{P}(\forall k\geq 1, \ \lambda X_k \leq k^7\eta^2 ) \\
= \prod_{\lambda \geq k^7 \eta^2} \frac{k^7 \eta^2}{\lambda } 
\geq \big(\frac{\eta^2}{\lambda }\big)^{(\frac{\lambda }{\eta^2})^{1/7}} \geq e^{- (\frac{\lambda }{\eta^2})^{1/4}} \geq e^{-1/(36\eta)} = \| v\|^{1/36}.
\end{multline*}
Finally, provided that $\|v\|_{\dot{H}^s}$ is small enough, we have proven that
$$
\mathbb{P}(\| u - v\|_{\dot{H}^s} \leq \| v\|_{\dot{H}^s} \eta) + \mathbb{P}\left( u\in \mathcal{V}_{r/5,s}^{\mathcal{E}} \right) >1
$$
which ensures that the intersection of these events is not empty and so the existence of $w\in \mathcal{V}_{r/5,s}^{\mathcal{E}}$ satisfying \eqref{plusdurqueprevu}.

\end{proof}

\section{Appendix}
\subsection{Proof of Lemma \ref{lem:diot}}
\label{app:proof:diot}
We denote $n= \#m$. We aim at proving that $\kappa_{m,k}^{\mathrm{\ref{gKdV}}}=k_{n}$
and if $a_3= 0 \ \mathrm{or} \ m_1+\dots+m_n = 0$ then we have $\kappa_{m,k}^{\mathrm{\ref{gBO}}} = k_n$ else we have $\kappa_{m,k}^{\mathrm{\ref{gBO}}}\leq 2 n-1$.

\medskip

In view of \eqref{deltaKdV} and assumption \eqref{assump:gKdV}, the case $\mathcal{E} = \mathrm{\ref{gKdV}}$ is clear. So we only focus on the case $\mathcal{E} = \mathrm{\ref{gBO}}$.  
If $a_3= 0$ or $m_1+\dots+m_n = 0$, then applying \eqref{deltaBO}  we obtain $(\delta^{\mathrm{\ref{gBO}}}_{m,k})_{p}=0$  for all $p < k_n $ while $(\delta^{\mathrm{\ref{gBO}}}_{m,k})_{k_n}=-12a_4 m_nk_n\neq 0$ in view of \eqref{assump:gBO}.    Consequently, we have $\kappa_{m,k}^{\mathrm{\ref{gBO}}} = k_n$. 

Now we assume that we have $a_3 \neq 0$, $m_1+\dots+m_n \neq 0$ and $\kappa_{m,k}^{\mathrm{\ref{gBO}}}> 2 \, n - 2$. As a consequence, for $\ell\in \llbracket1,2\, n-2\rrbracket$, we have $(\delta^{\mathrm{\ref{gBO}}}_{m,k})_{\ell}=0$. In particular for $\ell=1$ we get from \eqref{deltaBO}
 $$
-12 \, a_4\, m_n\,\mathbb{1}_{k_n=1} - \frac{18\, a_3^2}{\pi} (m_1+\dots+m_n)=0
$$
and thus $k_n=1$.\\
We are going to prove that 
\begin{equation}
\label{eq:funfun}
\forall j\in \llbracket 2,n \rrbracket, \ (\delta^{\mathrm{\ref{gBO}}}_{m,k})_{k_j} =  (\delta^{\mathrm{\ref{gBO}}}_{m,k})_{k_j +1} = (\delta^{\mathrm{\ref{gBO}}}_{m,k})_{k_j +2} =0 \ \Rightarrow \ k_{j-1}\in \{ k_j +1,k_j +2 \}.
\end{equation}
Before proving it, let us explain how we get the upper bound on $\kappa_{m,k}^{\mathrm{\ref{gBO}}}$ from \eqref{eq:funfun}.  Since $(\delta^{\mathrm{\ref{gBO}}}_{m,k})_{\ell}=0$ for $\ell\in \llbracket1,2\, n-2\rrbracket$ we deduce from \eqref{eq:funfun} that $k_{j-1}-k_j \leq 2$ for $j\in \llbracket 2,n \rrbracket$. But since $k_n=1$, we get
$$
k_1 = k_n + \sum_{j=2}^n k_{j-1}-k_j \leq 1 + 2(n-1) = 2n-1.
$$
Since by \eqref{eq:bienvu}, we have $\kappa_{m,k}^{\mathrm{\ref{gBO}}} \leq k_1$. We deduce that $\kappa_{m,k}^{\mathrm{\ref{gBO}}} = 2\,n-1$.

It remains to prove \eqref{eq:funfun}. To do that we assume that there exists ${j_{\star}}\in \llbracket 2,n \rrbracket$ such that 
$ (\delta^{\mathrm{\ref{gBO}}}_{m,k})_{k_{j_{\star}} +1}=(\delta^{\mathrm{\ref{gBO}}}_{m,k})_{k_{j_{\star}} +2} =0$ and $k_{{j_{\star}}-1}>k_{j_{\star}} +2$, and we are going  to prove that $(\delta^{\mathrm{\ref{gBO}}}_{m,k})_{k_{j_{\star}}} \neq 0$. By assumption, we have by \eqref{deltaBO}
$$
\forall \ell \in \{ k_{j_{\star}} +1,k_{j_{\star}} +2\},\ \sum_{j>j_{\star}} m_j + \frac1{\ell}\sum_{j\leq j_{\star}} m_j\, k_j =0
$$
As a consequence, we have $\displaystyle \sum_{j>j_{\star}} m_j = \sum_{j\leq j_{\star}} m_j\, k_j  = 0.$ Thus, we have
$$
\sum_{k_j\geq k_{j_\star}} m_j + \frac1{k_{j_\star}}\sum_{k_j< k_{j_\star}} m_j\, k_j = \sum_{k_j> k_{j_\star}} m_j + \frac1{k_{j_\star}}\sum_{k_j\leq k_{j_\star}} m_j\, k_j =0.
$$
Thus, we get $(\delta^{\mathrm{\ref{gBO}}}_{m,k})_{k_{j_{\star}}} = -12\, a_4 \, m_{k_{j_\star}} \, k_{j_\star} \neq 0.$

\subsection{Control of the remainder terms in the rational normal form process}
\label{app:sub:rem}
In this subsection, we explain how we control the remainder terms in the rational normal form process.
The remainder terms we meet are some linear combinations of $3$ kinds of terms.

\begin{itemize}
\item \emph{Type {\rm I}: An old remainder term in new variables.} Such terms are of the form $R\circ \tau$ where $R$ is an Hamiltonian defined on a set of the form $V_{\lambda_2}$ and $\tau$ is a symplectic map from $V_{\lambda_1}$ to $V_{\lambda_2}$ where $2\leq \lambda_1 < \lambda_2\leq 5/2$. Furthermore, the invert of the differential of $\tau$ is invertible and its norm is smaller than $2$.

Since $\tau$ is symplectic, if $u\in V_{\lambda_1}$, we have 
\begin{equation*}
\begin{split}
\| \partial_x \nabla (R\circ \tau(u)) \|_{\dot{H}^s} = \|  \partial_x (\mathrm{d}\tau(u))^* (\nabla R)(\tau(u)) \|_{\dot{H}^s}  &=  \|  (\mathrm{d}\tau(u))^{-1} \partial_x  (\nabla R)(\tau(u)) \|_{\dot{H}^s} \\ &\leq 2 \sup_{v \in V_{\lambda_2}} \| \partial_x \nabla R(v) \|_{\dot{H}^s}
\end{split}
\end{equation*}
where $(\mathrm{d}\tau(u))^*$ denotes the $L^2$ adjoint of $\mathrm{d}\tau(u)$. Consequently, the vector field associated with $R\circ \tau$ is controlled by the vector field associated with $R$.
\item \emph{Type {\rm II}: A remainder term of a Taylor expansion.} Such terms are of the form
$$
R=\int_0^1  \Xi_{N^3} \circ \tau^{(t)}\, g(t) \mathrm{d}t
$$
where $\|g\|_{L^{\infty}}\leq 1$, for $t\in (0,1)$, $\tau^{(t)}$ is a symplectic transformation mapping a set of the form $V_{\lambda_1}$ in a set of the form $V_{\lambda_2}$, the differential of $\tau^{(t)}$ is invertible and the norm of its invert is smaller than $2$, $\Xi\in \mathscr{H}_m^{(6),\mathcal{E}}$ with\footnote{note that it is to obtain this estimate that we have paid a lot of attention to the order of our Taylor expansions} $r+1\leq m \leq 2r$ and such that $C^{(em)}_{\Xi} \leq N^3$, $C^{(str)}_{\Xi}\lesssim_r 1$ and $C^{(\infty)}_{\Xi}\leq N^{321 m -2049}$ (note that all these results on $\Xi$ rely on the application of Proposition \ref{prop:stab_frac}, \ref{prop:stab:subclasses} and \ref{prop:stab:chichi}).

Consequently, by Proposition \ref{prop:VF}, for $u\in V_{\lambda_1}=B_s(0,2^{\lambda_1} \varepsilon_0)\cap \mathcal{U}_{2^{-\lambda_1} \gamma,N^3,\rho_{2r}}^{\mathcal{E},s}$, we have
\begin{equation}
\label{est:rem:sharp}
\begin{split}
\| \partial_x \nabla R(u) \|_{\dot{H}^s} &=\| g\|_{L^{\infty}} \int_0^1 \| (\mathrm{d}\tau^{(t)}(u))^{-1} \partial_x (\nabla \Xi_{N^3}) \circ \tau^{(t)}   \|_{\dot{H}^s} \\ 
&\lesssim_{s,r}  N^{321 m -2049}  \sqrt{\gamma}^{-\rho_{m}+m-2} N^{3\cdot12 \,(\rho_{m})^2}  \| u \|_{\dot{H}^s}^{m-1} \\
&\mathop{\lesssim_{s,r}}^{\eqref{ultimate:CFL}} N^{321 (r+1) -2049}  \sqrt{\gamma}^{-\rho_{r+1}+r-1} N^{3\cdot12 \,(\rho_{r+1})^2}  \| u \|_{\dot{H}^s}^{r}
\end{split}
\end{equation}
Note to apply this proposition it has been crucial to have the index $\rho_{2r}$ in the definition of $V$.

\item \emph{Type {\rm III}: The product of a transmutation.} The last kind of remainder terms are the terms $\Lambda^{(r-1)},\Lambda^{(r)}$ appearing in \eqref{prod:transmutation}. As a straightforward application of Proposition with the estimate we have established on $C^{(\infty)}_{\Lambda^{(\mathfrak{r})}},C^{(em)}_{\Lambda^{(\mathfrak{r})}},C^{(str)}_{\Lambda^{(\mathfrak{r})}}$, $\| \partial_x \nabla \Lambda^{r-1}(u) \|_{\dot{H}^s}$ and $\| \partial_x \nabla \Lambda^{r}(u) \|_{\dot{H}^s}$ satisfy the same estimate as $\| \partial_x \nabla R(u) \|_{\dot{H}^s}$ above.

Finally, note that the final estimate we write in \eqref{est:rem:thm:KtA} for the remainder term is a direct consequence of the estimate \eqref{est:rem:sharp}.
\end{itemize}

\subsection{Proof of Lemma \ref{lem:var:Z4:sharp}.} 
\label{proof:lem:var:Z4:sharp} We use the explicit formula of $Z_4^{\E}$ established in Theorem \ref{thm-BNF}.

\noindent \emph{$\bullet$ Case $\E = \mathrm{\ref{gKdV}}$.} Since $\|u\|_{L^2}=\|v\|_{L^2}$, we have
$$
|\partial_{I_{k}} Z^{\mathrm{\ref{gKdV}}}_4 (I) -  \partial_{I_{k}} Z^{\mathrm{\ref{gKdV}}}_4 (J) |  =  \left|12\, a_4+ \frac{3\, a_3^2}{ \pi^2 \, k^2} \right| |I_k-J_k| \lesssim_a |k|^{-1} \|u-v\|_{\dot{H}^1} \|u\|_{L^2}.
$$
\noindent \emph{$\bullet$ Case $\E = \mathrm{\ref{gBO}}$.} Since $\|u\|_{L^2}=\|v\|_{L^2}$, we have
\begin{multline*}
|\partial_{I_{k}} Z^{\mathrm{\ref{gBO}}}_4 (I) -  \partial_{I_{k}} Z^{\mathrm{\ref{gBO}}}_4 (J) |  \leq \left|12\, a_4+ \frac{18\, a_3^2}{ \pi \, k^2} \right| |I_k-J_k| + \frac{18\, a_3^2}{\pi} \sum_{p\neq k} \frac{|I_p-J_p|}{\min(k,p)} \\
\lesssim_a |k|^{-1} \|u-v\|_{\dot{H}^1} \|u\|_{L^2}.
\end{multline*}


\begin{thebibliography}{10}

\bibitem[Bam99]{Bam99} 
{\sc D. Bambusi}, 
\emph{{N}ekhoroshev theorem for small amplitude solutions in nonlinear {S}chr\"o\-dinger equations}, 
\href{https://doi.org/10.1007/PL00004696}{Math. Z.}, 130 (1999), 345-387. 


\bibitem[Bam03]{Bam03} 
{\sc D. Bambusi}, 
\emph{Birkhoff Normal Form for Some Nonlinear PDEs}, \href{https://doi.org/10.1007/s00220-002-0774-4}{Commun. Math. Phys.} (2003) 234: 253.



\bibitem[BDGS07]{BDGS07} 
{\sc D. Bambusi,  J.M. Delort, B. Gr\'ebert, J. Szeftel}, 
\emph{Almost global existence for Hamiltonian semilinear Klein‐Gordon equations with small Cauchy data on Zoll manifolds}, \href{https://doi.org/10.1002/cpa.20181}{Comm. Pure Appl. Math.}, 60: 1665-1690 (2007).

\bibitem[BG06]{BG06} 
{\sc D. Bambusi, B. Gr\'ebert}, 
\emph{Birkhoff normal form for partial differential equations with tame modulus}, \href{https://projecteuclid.org/euclid.dmj/1163170201}{Duke Math. J.} 135 (2006), no. 3, 507--567.



\bibitem[BFG18]{KillBill}
{\sc J. Bernier, E. Faou, B. Gr\'ebert}, 
\emph{Rational normal forms and stability of small solutions to nonlinear Schr\"odinger equations}, 
\href{https://arxiv.org/abs/1812.11414}{arXiv: 1812.11414}.

\bibitem[Ber19]{Ber19} 
{\sc M. Berti}, 
\emph{KAM Theory for Partial Differential Equations}, \href{https://doi.org/10.4208/ata.OA-0013}{Anal. Theory Appl.}, Vol. 35, No. 3 (2019),235-267.


\bibitem[BD17]{BD17} 
{\sc M. Berti, J.M. Delort}, 
\emph{Almost global solutions of capillary-gravity water waves equations on the circle}, \href{https://doi.org/10.1007/978-3-319-99486-4}{UMI Lecture Notes, 2017} (awarded UMI book prize 2017).

\bibitem[BFF20]{BFF20} 
{\sc M. Berti, R. Feola, L. Franzoi}, 
\emph{Quadratic Life Span of Periodic Gravity-capillary Water Waves}, \href{https://doi.org/10.1007/s42286-020-00036-8}{Water Waves}, 2020


\bibitem[BFP20]{BFP20} 
{\sc M. Berti, R. Feola, F. Pusateri}, 
\emph{Birkhoff Normal form for Gravity Water Waves}, \href{https://doi.org/10.1007/s42286-020-00024-y}{Water Waves}, 2020


\bibitem[BMP20]{BMP20} 
{\sc L. Biasco, J. E. Massetti, M. Procesi}, 
\emph{An Abstract Birkhoff Normal Form Theorem and Exponential Type Stability of the 1d NLS}, \href{https://doi.org/10.1007/s00220-019-03618-x}{Communications in Mathematical Physics}, 375:3, pages 2089-2153, 2020. 

 \bibitem[Bou96]{Bou96}
{\sc J. Bourgain},
 \emph{On the growth in time of higher order Sobolev norms of smooth solutions of Hamiltonian PDE},
\href{https://doi.org/10.1155/S1073792896000207}{International Mathematics Research Notices}, \textbf{6} (1996), 277--304.




\bibitem[Bou00]{Bou00}
{\sc J. Bourgain},
\emph{On diffusion in high-dimensional Hamiltonian systems and PDE.}
\href{https://doi.org/10.1007/BF02791532}{J. Anal. Math.}, 80  (2000) 1--35. 

%\bibitem[Bou03]{Bou03}
%{\rm J. Bourgain},
%{\em Remarks on stability and diffusion  in high-dimensional Hamiltonian systems and partial differential equations.  }
%Ergod. Th. \& Dynam. Sys., \textbf{24} (2003) 1331--1357.  

\bibitem[CF12]{CF12}
{\sc R. Carles, E. Faou}, 
{\em Energy cascades for NLS on the torus}, 
\href{https://doi.org/10.3934/dcds.2012.32.2063}{Discrete Contin. Dyn. Syst.} \textbf{32} (2012) 2063--2077. 

%\bibitem[CKO12]{CKO12}
%{\rm J. Colliander, S. Kwon, T. Oh}, {\em   A remark on normal forms and the ?upside-down? I-method for periodic NLS: Growth of higher Sobolev norms}, 
%Journal d'Analyse Math\'ematique, \textbf{118} (2012) 55--82. 
%

\bibitem[CKSTT10]{CKSTT10}
{\sc J.  Colliander,  M.  Keel,  G.  Staffilani,  H.  Takaoka, T.  Tao},
\emph{Transfer of energy to high frequencies in the cubic defocusing nonlinear Schr\"odinger equation}. \href{https://doi.org/10.1007/s00222-010-0242-2}{Invent. Math.}, 181(1):39–113, 2010.

\bibitem[Del12]{Del12}
{\sc J.M. Delort},
\emph{A quasi-linear Birkhoff normal forms method. Application to the quasi-linear Klein-Gordon equation on $\mathbb{S}^1$}. \href{https://doi.org/10.24033/ast.909}{Astérisque},  341, 2012.

\bibitem[FGL13]{FGL13}
{\sc E. Faou, L.   Gauckler,  C.   Lubich},
\emph{Sobolev Stability of Plane Wave Solutions to the Cubic Nonlinear Schrödinger Equation on a Torus}. \href{https://doi.org/10.1080/03605302.2013.785562}{Communications in Partial Differential Equations},  38:1123–1140, 2013.



\bibitem[FI19]{FI19}
{\sc R. Feola, F. Iandoli.},
\emph{Long time existence for fully nonlinear NLS with small Cauchy data on the circle}. \href{https://doi.org/10.2422/2036-2145.201811-003}{Annali della Scuola Normale Superiore di Pisa (Classe di Scienze)}, 2019, (to appear).

\bibitem[FI20]{FI20}
{\sc R. Feola, F. Iandoli},
\emph{A non-linear Egorov theorem and Poincaré-Birkhoff normal forms for quasi-linear pdes on the circle}. \href{https://arxiv.org/abs/2002.12448}{arxiv: 2002.12448}.



\bibitem[GK19]{GK19}
{\sc P. G\'erard, T. Kappeler},
\emph{On the integrability of the Benjamin-Ono equation on the torus}, \href{https://arxiv.org/abs/1905.01849v2}{arXiv 1905.01849v2}.

\bibitem[GG12]{GG12}
{\sc P. G\'erard, S. Grellier},
\emph{Effective integrable dynamics for a certain nonlinear wave equation}, \href{https://doi.org/10.2140/apde.2012.5.1139}{Analysis \& PDE}, Vol. 5 (2012), No. 5, 1139–1155.

\bibitem[Gli64]{Gli64}
{\sc J. Glimm},
\emph{Formal Stability of Hamiltonian Systems}, \href{https://doi.org/10.1002/cpa.3160170408}{CPAM}, \textbf{4}  (1964) 509--526 .


\bibitem[Gr{\'e}07]{G}
{\sc B. Gr{\'e}bert}, 
\emph{Birkhoff normal form and {H}amiltonian {PDE}s}, \href{https://smf.emath.fr/publications/forme-normale-de-birkhoff-et-edp-hamiltoniennes}{Partial differential equations and applications, S\'emin. Congr.},
  vol.~15, Soc. Math. France, Paris, 2007, pp.~1--46. 
  
   \bibitem[GIP09]{GIP}
 {\sc B. Gr\'ebert, R. Imekraz and \'E. Paturel}, 
 \emph{Normal forms for semilinear quantum harmonic oscillators}
  \href{https://doi.org/10.1007/s00220-009-0800-x}{Commun. Math. Phys.}  291, 763--798 (2009).


 \bibitem[GrK14]{GK14}
{\sc B. Gr\'ebert, T. Kappeler},
\emph{ The defocusing NLS and its normal form}, \href{https://doi.org/10.4171/131}{EMS
Series of Lectures in Mathematics}, EMS, Z\"urich, 2014

 \bibitem[GT12]{GT12}
{\sc B. Gr\'ebert, L. Thomann},
 \emph{Resonant dynamics for the quintic non linear Schr\"odinger equation}, 
    \href{https://doi.org/10.1016/j.anihpc.2012.01.005}{Annales de l'I.H.P. Analyse non lin\'eaire},   \textbf{29} (2012), 455--477.
    
     \bibitem[GV11]{GV} 
     {\sc B. Gr\'ebert, C. Villegas-Blas,}
\emph{On the energy exchange between resonant modes in nonlinear Schr\"odinger equations},
   \href{https://doi.org/10.1016/j.anihpc.2010.11.004}{Annales de l'I.H.P. Analyse non lin\'eaire}, \textbf{28} (2011), no. 1, 127--134.

    


\bibitem[GuK14]{GK13} 
{\sc H. Guan, S. Kuksin}, 
\emph{The KdV equation under periodic boundary conditions and its perturbations}, \href{https://doi.org/10.1088/0951-7715/27/9/r61}{Nonlinearity}, 27 R61, 2014


\bibitem[GK15]{GK15}
{\sc M. Guardia, V. Kaloshin}, 
\emph{Growth of Sobolev norms in the cubic defocusing nonlinear Schrödinger equation},
\href{https://doi.org/10.4171/JEMS/499}{J. Eur. Math. Soc.}, 17(1), 2015, pp. 71–149


 \bibitem[HP17]{HP17}
 {\sc E. Haus, M. Procesi}.
  \emph{K{AM} for beating solutions of the quintic {NLS}}.
  \href{https://doi.org/10.1007/s00220-017-2925-7}{Comm. Math. Phys.},
  \textbf{354} (2017), 1101--1132.


\bibitem[KP03]{KP} 
{\sc T. Kappeler, J. P\"oschel},
\emph{KdV \& KAM},
\href{https://doi.org/10.1007/978-3-662-08054-2}{Springer-Verlag Berlin Heidelberg}, Volume 45,  2003

\bibitem[KST08]{KST} 
{\sc T. Kappeler, B. Schaad, P. Topalov},
\emph{mKdV and its Birkhoff coordinates},
\href{https://doi.org/10.1016/j.physd.2008.03.018}{Phys. D}, 237 (2008), no. 10-12, 1655-1662. 


\bibitem[Khi]{Khinchine}
{\sc A. Ya. Khinchine},
\emph{Continued fractions}, transl. from russian by P. Wynn,
Groningen : P. Noordhoff, 1963.

\bibitem[Kuk87]{Kuk87}
{\sc S. Kuksin},
\emph{Hamiltonian perturbations of infinite-dimensional linear systems with imaginary spectrum},
\href{https://doi.org/10.1007/BF02577134}{Funktsional. Anal. i Prilozhen.}, 21(3):22--37, 95, 1987.

\bibitem[KP96]{KuP}
{\sc S. Kuksin, J. P\"oschel},
\emph{Invariant Cantor Manifolds of Quasi-Periodic Oscillations for a Nonlinear Schrodinger Equation},
\href{https://doi.org/10.2307/2118656}{Annals of Mathematics}, 143:149-179, 1996.

\bibitem[JY11]{LY11}
{\sc J. Liu, X. Yuan},
\emph{A KAM Theorem for Hamiltonian Partial Differential Equations with Unbounded Perturbations},
\href{https://doi.org/10.1007/s00220-011-1353-3}{Comm. Math. Phys.}, 307 (2011) 629-673.

\bibitem[Mos60]{Mos60}
{\sc J. Moser},
\emph{On the elimination of the irrationality condition and Birkhoff's concept of complete stability},
Bol. Soc. Mat. Mexicana, (2), 1960.

%\bibitem[Mos79]{Mos79}
%{\sc J. Moser},
%\emph{Lectures on Hamiltonian Systems},
%\href{}{Memoirs AMS}, 81, 1979.



%\bibitem[Sta97]{Sta97}
%{\sc G. Staffilani}, {\em On the growth of high Sobolev norms of solutions for KdV and Schr\"odinger equations}, 
%Duke Math. J. \textbf{86} (1997), no. 1, 109--142.

\bibitem[Tay]{Tay}
{\sc M. Taylor},
\emph{Partial Differential Equations III}, Nonlinear Equations,
\href{https://10.1007/978-1-4419-7049-7}{Springer-Verlag New York}, second edition, 2011.


%\bibitem[Soh11]{Soh11}
%{\sc V. Sohinger}, {\em Bounds on the growth of high Sobolev norms of solutions to Nonlinear Schr\"odinger Equations on $\mathbb{S}^1$}, Differential and Integral Equations, \textbf{24}, Numbers 7-8 (2011), 653--718.

\bibitem[Way90]{Way90}
{\sc C.E. Wayne},
\emph{Periodic and quasi-periodic solutions of nonlinear wave equations via KAM theory},
\href{https://doi.org/10.1007/BF02104499}{Comm. Math. Phys.}, 127(3):479--528, 1990.

\end{thebibliography}
\end{document}